\documentclass[a4paper]{article}

\usepackage{
amsmath,
amsthm,
amscd,
amssymb,
wasysym
}
\usepackage{comment}
\usepackage{tikz}
\usetikzlibrary{positioning,arrows,calc}
\usepackage{xspace}
\usepackage{subcaption}
\usepackage{verbatim}
\usepackage{verbatimbox}

\usepackage{pgfplotstable}

\setcounter{tocdepth}{3}
\usepackage{graphicx}

\usepackage{mathtools}
\usepackage{url}

\theoremstyle{plain}
\newtheorem{theorem}{Theorem}
\newtheorem{lemma}[theorem]{Lemma}
\newtheorem{definition}[theorem]{Definition}
\newtheorem{corollary}[theorem]{Corollary}
\newtheorem{proposition}[theorem]{Proposition}

\newtheorem{question}[theorem]{Question}

\newtheoremstyle{derp}
{3pt}
{3pt}
{}
{}
{\upshape}
{:}
{.5em}
{}
\theoremstyle{derp}
\newtheorem{example}{Example}

\newcommand{\R}{\mathbb{R}}
\newcommand{\Q}{\mathbb{Q}}
\newcommand{\Z}{\mathbb{Z}}
\newcommand{\C}{\mathcal{C}}
\newcommand{\D}{\mathcal{D}}
\newcommand{\T}{\mathcal{T}}
\newcommand{\N}{\mathbb{N}}

\newcommand\xqed[1]{%
  \leavevmode\unskip\penalty9999 \hbox{}\nobreak\hfill
  \quad\hbox{#1}}
\newcommand\qee{\xqed{$\fullmoon$}}

\makeatletter
\def\namedlabel#1#2{\begingroup
   \def\@currentlabel{#2}%
   \label{#1}\endgroup
}
\makeatother

\newcommand{\geod}{\mathrm{geod}}

\newcommand{\FinSet}{\mathrm{FinSet}}
\newcommand{\supp}{\mathrm{supp}}

\newcommand{\uu}{\vec{u}}
\newcommand{\vv}{\vec{v}}

\makeatletter
\tikzset{
    zero color/.initial=white,
    zero color/.get=\zerocol,
    zero color/.store in=\zerocol,
    one color/.initial=red,
    one color/.get=\onecol,
    one color/.store in=\onecol,
    two color/.initial=blue,
    two color/.get=\twocol,
    two color/.store in=\twocol,
    cell wd/.initial=1,
    cell wd/.get=\cellwd,
    cell wd/.store in=\cellwd,
    cell ht/.initial=1,
    cell ht/.get=\cellht,
    cell ht/.store in=\cellht,
}


\makeatletter
\DeclareFontFamily{OMX}{MnSymbolE}{}
\DeclareSymbolFont{MnLargeSymbols}{OMX}{MnSymbolE}{m}{n}
\SetSymbolFont{MnLargeSymbols}{bold}{OMX}{MnSymbolE}{b}{n}
\DeclareFontShape{OMX}{MnSymbolE}{m}{n}{
    <-6>  MnSymbolE5
   <6-7>  MnSymbolE6
   <7-8>  MnSymbolE7
   <8-9>  MnSymbolE8
   <9-10> MnSymbolE9
  <10-12> MnSymbolE10
  <12->   MnSymbolE12
}{}
\DeclareFontShape{OMX}{MnSymbolE}{b}{n}{
    <-6>  MnSymbolE-Bold5
   <6-7>  MnSymbolE-Bold6
   <7-8>  MnSymbolE-Bold7
   <8-9>  MnSymbolE-Bold8
   <9-10> MnSymbolE-Bold9
  <10-12> MnSymbolE-Bold10
  <12->   MnSymbolE-Bold12
}{}

\let\llangle\@undefined
\let\rrangle\@undefined
\DeclareMathDelimiter{\llangle}{\mathopen}%
                     {MnLargeSymbols}{'164}{MnLargeSymbols}{'164}
\DeclareMathDelimiter{\rrangle}{\mathclose}%
                     {MnLargeSymbols}{'171}{MnLargeSymbols}{'171}
\makeatother

\DeclareMathOperator{\bs}{\setminus}
\DeclareMathOperator{\di}{/}

\newcommand{\drawitman}{
\begin{tikzpicture}[scale = 0.2]

  \pgfplotstablegetrowsof{\matrixfile} 
  \pgfmathtruncatemacro{\totrow}{\pgfplotsretval}
  \pgfplotstablegetcolsof{\matrixfile} 
  \pgfmathtruncatemacro{\totcol}{\pgfplotsretval}
  
  \pgfplotstableforeachcolumn\matrixfile\as\col{
    \pgfplotstableforeachcolumnelement{\col}\of\matrixfile\as\colcnt{%
      \ifnum\colcnt=0
        \fill[white]($ -\pgfplotstablerow*(0,\cellht) + \col*(\cellwd,0) $) rectangle +(\cellwd,\cellht);
      \fi
      \ifnum\colcnt=1
        \fill[black!50!white]($ -\pgfplotstablerow*(0,\cellht) + \col*(\cellwd,0) $) rectangle+(\cellwd,\cellht);
      \fi
      \ifnum\colcnt=2
        \fill[red]($ -\pgfplotstablerow*(0,\cellht) + \col*(\cellwd,0) $) rectangle+(\cellwd,\cellht);
      \fi
      \ifnum\colcnt=3
        \fill[green]($ -\pgfplotstablerow*(0,\cellht) + \col*(\cellwd,0) $) rectangle+(\cellwd,\cellht);
      \fi
      \ifnum\colcnt=4
        \fill[blue]($ -\pgfplotstablerow*(0,\cellht) + \col*(\cellwd,0) $) rectangle+(\cellwd,\cellht);
      \fi
      \ifnum\colcnt=5
        \fill[yellow]($ -\pgfplotstablerow*(0,\cellht) + \col*(\cellwd,0) $) rectangle+(\cellwd,\cellht);
      \fi
    }
  }
 \draw[black!20!white] (0, -19) grid (20,1);
\end{tikzpicture}
}

\newcommand{\braitman}{
\begin{tikzpicture}[scale = 0.2]

  \pgfplotstablegetrowsof{\matrixfile} 
  \pgfmathtruncatemacro{\totrow}{\pgfplotsretval}
  \pgfplotstablegetcolsof{\matrixfile} 
  \pgfmathtruncatemacro{\totcol}{\pgfplotsretval}
  
  \pgfplotstableforeachcolumn\matrixfile\as\col{
    \pgfplotstableforeachcolumnelement{\col}\of\matrixfile\as\colcnt{%
      \ifnum\colcnt=0
        \fill[white]($ -\pgfplotstablerow*(0,\cellht) + \col*(\cellwd,0) $) rectangle +(\cellwd,\cellht);
      \fi
      \ifnum\colcnt=a
        \fill[red]($ -\pgfplotstablerow*(0,\cellht) + \col*(\cellwd,0) $) rectangle+(\cellwd,\cellht);
      \fi
      \ifnum\colcnt=b
        \fill[green]($ -\pgfplotstablerow*(0,\cellht) + \col*(\cellwd,0) $) rectangle+(\cellwd,\cellht);
      \fi
      \ifnum\colcnt=*
        \fill[black!50!white]($ -\pgfplotstablerow*(0,\cellht) + \col*(\cellwd,0) $) rectangle+(\cellwd,\cellht);
      \fi
      \ifnum\colcnt=A
        \fill[blue]($ -\pgfplotstablerow*(0,\cellht) + \col*(\cellwd,0) $) rectangle+(\cellwd,\cellht);
      \fi
      \ifnum\colcnt=B
        \fill[yellow]($ -\pgfplotstablerow*(0,\cellht) + \col*(\cellwd,0) $) rectangle+(\cellwd,\cellht);
      \fi
    }
  }
 \draw[black!20!white] (0, -19) grid (20,1);
\end{tikzpicture}
}

\makeatletter
\def\squarecorner#1{
    %
    \pgf@x=\the\wd\pgfnodeparttextbox%
    \pgfmathsetlength\pgf@xc{\pgfkeysvalueof{/pgf/inner xsep}}%
    \advance\pgf@x by 2\pgf@xc%
    \pgfmathsetlength\pgf@xb{\pgfkeysvalueof{/pgf/minimum width}}%
    \ifdim\pgf@x<\pgf@xb%
        \pgf@x=\pgf@xb%
    \fi%
    %
    \pgf@y=\ht\pgfnodeparttextbox%
    \advance\pgf@y by\dp\pgfnodeparttextbox%
    \pgfmathsetlength\pgf@yc{\pgfkeysvalueof{/pgf/inner ysep}}%
    \advance\pgf@y by 2\pgf@yc%
    \pgfmathsetlength\pgf@yb{\pgfkeysvalueof{/pgf/minimum height}}%
    \ifdim\pgf@y<\pgf@yb%
        \pgf@y=\pgf@yb%
    \fi%
    %
    \ifdim\pgf@x<\pgf@y%
        \pgf@x=\pgf@y%
    \else
        \pgf@y=\pgf@x%
    \fi
    %
    \pgf@x=#1.5\pgf@x%
    \advance\pgf@x by.5\wd\pgfnodeparttextbox%
    \pgfmathsetlength\pgf@xa{\pgfkeysvalueof{/pgf/outer xsep}}%
    \advance\pgf@x by#1\pgf@xa%
    \pgf@y=#1.5\pgf@y%
    \advance\pgf@y by-.5\dp\pgfnodeparttextbox%
    \advance\pgf@y by.5\ht\pgfnodeparttextbox%
    \pgfmathsetlength\pgf@ya{\pgfkeysvalueof{/pgf/outer ysep}}%
    \advance\pgf@y by#1\pgf@ya%
}
\makeatother

\pgfdeclareshape{square}{
    \savedanchor\northeast{\squarecorner{}}
    \savedanchor\southwest{\squarecorner{-}}

    \foreach \x in {east,west} \foreach \y in {north,mid,base,south} {
        \inheritanchor[from=rectangle]{\y\space\x}
    }
    \foreach \x in {east,west,north,mid,base,south,center,text} {
        \inheritanchor[from=rectangle]{\x}
    }
    \inheritanchorborder[from=rectangle]
    \inheritbackgroundpath[from=rectangle]
}

\newcommand{\cor}{{\angle\!\!\cdot\;\!}}

\title{Cutting Corners}

\author{
Ville Salo \\
vosalo@utu.fi
}

\begin{document}
\maketitle

\begin{abstract}
We define and study a class of subshifts of finite type (SFTs) defined by a family of allowed patterns of the same shape where, for any contents of the shape minus a corner, the number of ways to fill in the corner is the same. The main results are that for such an SFT, a locally legal pattern of convex shape is globally legal, and there is a measure that samples uniformly on all convex sets. Under suitable computability assumptions, this measure can be sampled, and legal configurations counted and enumerated, effectively and efficiently. We show by example that these subshifts need not admit a group (more generally unital magma or quasigroup) structure by shift-commuting continuous operations. Our approach to convexity is axiomatic, and only requires an abstract convex geometry that is ``midpointed with respect to the shape''. We construct such convex geometries on several groups, in particular all strongly polycyclic groups and free groups. We also show some other methods for sampling finite patterns, one based on orderings and one based on contructing new ``independent sets'' from old. We also show a link to conjectures of Gottshalk and Kaplansky.
\end{abstract}

\section{Introduction}

In multidimensional symbolic dynamics (and a fortiori symbolic dynamics on groups), for many of the natural finitely presented objects, in particular SFTs and sofics, most natural questions are undecidable. In fact, given a sofic shift $X$ (in any standard way), one can prove analogously to Rice's theorem \cite{Ri53} that it is in general impossible to say anything about it algorithmically. The situation is not much ``better'' for SFTs; non-emptiness \cite{Be66} and the extension problem (does a given pattern appear in a fixed SFT) \cite{Wa61} are undecidable, we refer to \cite{JeRa15} for a discussion of the history and state-of-the-art on the basic tiling problem.

The fact we cannot expect to ever figure out the basic finitely-presented objects can be off-putting. One common solution to this dilemma is to restate the undecidability results in the language of mathematical logic: for example the fact we cannot compute the entropies of subshifts of finite type can be refined to the statement that the set of entropies is precisely the set of $\Pi^0_1$ (or right computabily enumerable) reals \cite{HoMe10}. Some other invariants that turn out to have a recursion-theoretic characterization are possible sets of directions of periodicity \cite{JeMoVa20}, Cantor-Bendixson ranks of countable SFTs \cite{BaDuJe08,SaTo13,To18} and directions of nonexpansivity \cite{Zi16}. The theory of SFTs on groups has a similar flavor, but with the added component of geometric group theory, for example on any product of finitely-generated infinite groups with decidable word problem, entropies have the same characterization as on $\Z^2$ \cite{Ba19}.

The complementary solution to the dilemma, and the one pursued in the present paper, is to try to find subclasses of SFTs where some typical behaviors of SFTs appear, but some things can also be decided. In this paper, we study a class of SFTs defined in a purely combinatorial way, by defining the SFT by a family of allowed patterns of the same shape where, if all but a corner is filled (arbitrarily), the number of ways to legally fill the corner is the same. We call these the subshifts \emph{totally extremally permutive (TEP)}. On the groups $\Z^d$, the language of every TEP subshift is decidable (uniformly in the description of the forbidden patterns), and there is a very natural invariant measure on the subshift -- the TEP measure --, which samples uniformly (thus with maximal entropy) on all convex sets. The following theorem is a simplified version of Theorem~\ref{thm:UCE} and Theorem~\ref{thm:Measure}.

\begin{theorem}
\label{thm:Intro}
Let $A$ be a finite alphabet, $G$ a countable group, $S \subset G$ finite, and $\T \subset A^S$ a finite set. Suppose $|\T| = |A^{|S|-1}|$ and $\forall s \in S: \T|_{S \setminus \{s\}} = A^{S \setminus \{s\}}$. Let $X = \{x \in A^G \;|\; \forall g \in G: gx|_S \in \T\}$. If $G$ admits a translation-invariant $S^{-1}S$-midpointed convex geometry, then
\begin{itemize}
\item locally legal patterns are globally legal: a pattern $P$ of convex shape appears in a configuration of $X$ if and only if all translates of $S$ that fit in its domain contain a pattern in $\T$, and
\item $X$ admits a translation-invariant measure, the \emph{TEP measure}, such that for all convex sets $C$, $\mu|_C$ is the uniform distribution on $X|_C$.
\end{itemize}
\end{theorem}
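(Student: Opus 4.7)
The plan is to handle both items via a single combinatorial engine driven by the convex geometry. Note first that the TEP hypothesis $|\T| = |A|^{|S|-1}$ together with the surjectivity $\T|_{S\setminus\{s\}} = A^{S\setminus\{s\}}$ for every $s$ makes each corner-projection $\T \to A^{S\setminus\{s\}}$ a \emph{bijection}: any assignment of values on $S\setminus\{s\}$ has a unique completion in $\T$.

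I would first establish a chain lemma: the translation-invariant $S^{-1}S$-midpointed convex geometry allows any convex $C_0$ to be enlarged to $G$ by a chain $C_0 \subset C_1 \subset C_2 \subset \cdots$ of convex sets with $C_{n+1} = C_n \cup \{g_n\}$. Call step $n$ \emph{forced} if there exist $h \in G$ and $s \in S$ with $hs = g_n$ and $h(S \setminus \{s\}) \subset C_n$, and \emph{free} otherwise. A key geometric observation is that any translate $hS \subset C_{n+1}$ containing $g_n$ automatically satisfies $h(S \setminus \{h^{-1}g_n\}) \subset C_n$, so every $S$-translate in $C_{n+1}$ through $g_n$ is a forced-case witness. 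The role of $S^{-1}S$-midpointedness is then to guarantee that distinct witnesses assign the same value at $g_n$: two translates $h_1 S, h_2 S$ both containing $g_n$ lie in a common translate of $S^{-1}S$, and the midpoint structure should let one propagate the bijectivity of corner-projections across the overlap.

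Given this, local-implies-global follows by induction along the chain starting from $C_0 = C$ with the locally legal pattern $P$: at forced steps, bijectivity of the corner-projection pins down the unique value at $g_n$ that makes every witness $S$-translate carry a pattern in $\T$; at free steps, any value is acceptable because no new $S$-window is created inside $C_{n+1}$. The limit configuration $x \in A^G$ satisfies $gx|_S \in \T$ for every $g \in G$, hence lies in $X$ and extends $P$.

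For the measure, define $\mu_C$ on each convex $C$ as the uniform distribution on locally legal patterns in $A^C$, which by the first item equals $X|_C$. For convex $C' \subset C$, the chain from $C'$ to $C$ contributes a factor of $1$ at each forced step and $|A|$ at each free step, and whether a step is forced or free is purely geometric --- it does not depend on the pattern values. Hence every locally legal pattern on $C'$ admits exactly $|A|^{k(C,C')}$ locally legal extensions to $C$ for some geometric constant $k(C,C')$, so the marginal of $\mu_C$ onto $C'$ is uniform and coincides with $\mu_{C'}$. The Kolmogorov extension theorem assembles the $\mu_C$ into a Borel measure $\mu$ on $A^G$; translation invariance is inherited from that of both the TEP rule and the convex geometry. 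The main obstacle throughout is the forced-step consistency, i.e.\ unpacking the precise $S^{-1}S$-midpointed axiom to show rigorously that all corner-projections through $g_n$ prescribe the same value; everything else is bookkeeping.
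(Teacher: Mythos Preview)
Your overall architecture matches the paper's: build an anti-shelling $C_0\subset C_1\subset\cdots$ with $|C_{n+1}\setminus C_n|=1$, classify each step as free or forced, use the forced/free dichotomy to extend locally legal patterns and to check Kolmogorov consistency of the uniform measures. The bijectivity of the corner projections, the compatibility of marginals via the purely geometric extension count, and the appeal to translation-invariance of both $\T$ and $\C$ for invariance of $\mu$ are all correct and are exactly what the paper does (Lemma~\ref{lem:UniformExtensions}, Theorem~\ref{thm:UCE}, Theorem~\ref{thm:Measure}).

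The one place where your proposal goes wrong is the resolution of the forced-step consistency. You propose that when several translates $h_1S,h_2S\subset C_{n+1}$ pass through $g_n$, the $S^{-1}S$-midpoint structure lets you ``propagate the bijectivity of corner-projections across the overlap'' so that they all prescribe the \emph{same} value. There is no such propagation argument: two different corner-projections of $\T$ have no reason to agree, and nothing in the TEP hypothesis relates the value forced by $h_1S$ to the value forced by $h_2S$. What $S^{-1}S$-midpointedness actually gives you is that this situation \emph{cannot occur}: if $g_n=h_1s_1=h_2s_2$ with $s_1\neq s_2$, then $g_n s_1^{-1}s_2=h_1s_2$ and $g_n s_2^{-1}s_1=h_2s_1$ both lie in $C_n$, and since $C_n$ is convex, $S^{-1}S$-midpointedness forces $g_n\in\overline{\{g_n s_1^{-1}s_2,\,g_n s_2^{-1}s_1\}}\subset C_n$, a contradiction. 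So there is at most one witness at each forced step (this is the paper's ``unique corner positioning'' property, Lemma~\ref{lem:UCP}), and the consistency issue evaporates. Once you replace your propagation idea with this uniqueness argument, the rest of your outline goes through. You should also note that the existence of the chain (anti-shelling) is not automatic from the closure-operator axioms alone; it uses the anti-exchange axiom, which the paper packages as the ``corner addition property'' (Lemma~\ref{lem:AECE}, Lemma~\ref{lem:GrowthSequences}).
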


The assumption on $\T$ can be stated as ``any content is allowed on $S \setminus \{s\}$, and this determines a unique symbol for $s$'' (in fact, it is enough for this to be true when $s$ is a ``corner'' of $S$).
As suggested in the paragraph above, this theorem can be seen as a solution to the undecidability dilemma: if further $G$ has decidable word problem and the convex geometry is decidable, it follows that the language of $X$ is decidable and the measure $\mu$ is computable; this is the case for $\Z^d$ with its standard convex geometry (real convex sets intersected with the lattice).

See Section~\ref{sec:Convexity} for the definition of translation-invariant $S^{-1}S$-midpointed convex geometries, see Theorem~\ref{thm:UCE} and Theorem~\ref{thm:Measure} for more precise statements. Uniformly random samples from TEP measures in the classical multidimensional setting and on the free group can be seen in Figure~\ref{fig:Samples}.

The prime example of a TEP subshift is the Ledrappier three-dot subshift, so it makes sense to compare the properties of TEP subshifts with those of algebraic subshifts, see \cite{Sc95} for a general setting. In the case of the groups $\Z^d$, it is known that the language of a group shift is always decidable uniformly in the defining forbidden patterns \cite{KiSc88}, and of course there is a very nice shift-invariant measure, and thus a ``most natural way'' to sample the subshift, namely the Haar measure.

Like in the case of group shifts, examples of TEP subshifts can be built from finite group (see Example~\ref{ex:Quasigroup}), and when a TEP subshift happens to be a group shift, the TEP measure is the Haar measure. However, even TEP subshifts built directly from a (nonabelian) group operation need not be group shifts: we show in Example~\ref{ex:NotConjugate} that the TEP subshift sampled in Figure~\ref{fig:S3triangle} is not conjugate to any group shift, and more generally that it does not admit continuous shift-commuting quasigroup or unital magma operations. In this example, the subshift clearly has an algebra connection (and it has subshifts that are group shifts, which we use in the proof). In Example~\ref{ex:Arbitrary}, we show that we can inject arbitrary functions into TEP subshifts in the coordinates that are not corners, so it seems unlikely that a generic TEP subshift has a stronger link to algebra than a generic SFT, although it is hard to formalize such a statement (let alone prove it).

Our definition was introduced as a generalization of the TEP cellular automata of \cite{SaTo13b} to subshifts; indeed the spacetime subshift \cite{Sa14} of a TEP cellular automaton is a TEP subshift, and TEP subshifts give rise to certain permutive cellular automata in the abelian case (though this $\Z^d$-specific theory is not studied in the present paper).

The definition of TEP is also motivated by, and close in definition with, the corner deterministic shapes which arise in the study of Nivat's conjecture (see \cite{CyKr15}) and which have been studied in \cite{GuKaZi15,FrKr19a}, in the latter under the name \emph{polygonal subshifts}, mainly on the group $\Z^2$. This more general class has (to some extent) a dynamical characterization beyond what is obvious from the definition, and exhibits interesting directional entropy properties. However, it is not much better than general SFTs in terms of decidability properties, in that the language of an SFT admitting a corner deterministic shape is not uniformly decidable in the allowed patterns \cite{Lu10}, although it has been recently proved that the polygonal SFTs arising from Nivat's conjecture \emph{do} have decidable languages \cite{KaMo20} (as indeed Nivat's conjecture predicts).

As can be seen in Theorem~\ref{thm:Intro}, we take an axiomatic approach to convexity, in that we formulate all statements in terms of abstract convex geometries (more generally so-called ``convexoids''). Though the formalism takes a few pages to set up, it has the benefit of separating the geometric discussion from the symbolic dynamics arguments. It also allows us to generalize the results to other groups. We construct convex geometries with the necessary properties for several groups, in particular all strongly polycyclic groups and free groups.

The main geometric properties we need from our convex geometries are variants of \emph{midpointentedness}, that a convex set containing $g$ and $g^{-1}$ also contains the identity. The convex geometries we give on free groups are \emph{midpointed}, and we report also a construction of such natural convex geometries due to Yves de Cornulier on f.g.\ torsion-free nilpotent groups of small nilpotency class. On general polycyclic groups and groups like the Baumslag-Solitar group $\Z[1/2] \wr \Z$ and the lamplighter group $\Z_2 \wr \Z$ groups, we obtain only translation-invariant convex geometries that are midpointed with respect to a given finite set of elements $g \in S$ (we show that on $\Z[1/2] \wr \Z$ one cannot do better than this). The bare minimum needed to obtain meaningful TEP corollaries is a (not necessarily translation-invariant) $S$-midpointed convex geometry for a given finite set $S$, and the exisence of such a convex geometry is characterized in terms of a well-ordering condition on the group.

On a high level, what is provided by the convex geometry is a meaningful notion of ``independent/free'' coordinates and ``determined'' coordinates in a configuration, namely as we go up an ``anti-shelling'', we encounter free coordinates (where we may pick any symbol) and determined coordinates where we have a unique choice (or one of $k$ choices). We also show some other methods for finding sets of free coordinates, one based on orderings (\emph{contours}, Section~\ref{sec:Contours}) and one based on contructing new ``independent sets'' from old (the \emph{solitaire}, Section~\ref{sec:Solitaire}).

The existence of an midpointed convex geometry requires that the acting group is torsion-free, and conversely our results show that on torsion-free groups admitting such a convex geometry, constructing legal configurations for a TEP subshift is easy. We prove that the problem of whether a ``linear'' TEP subshift contains at least two points on every torsion-free group sits between Kaplansky's unit conjecture, and the conjunction of Gottshalk's surjunctivity conjecture and Kaplansky's unit conjecture holds. Thus, without a convex geometry, even for particularly simple TEP subshifts, the issue of building legal configurations is non-trivial.


\setlength{\abovecaptionskip}{5pt}

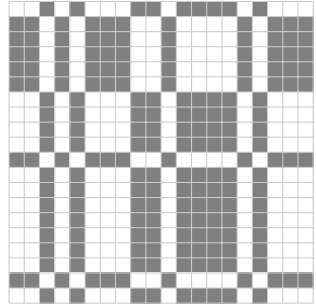
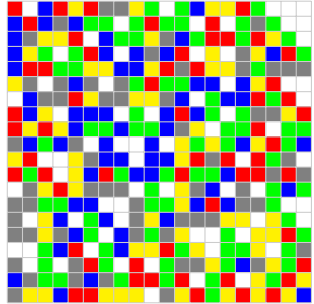
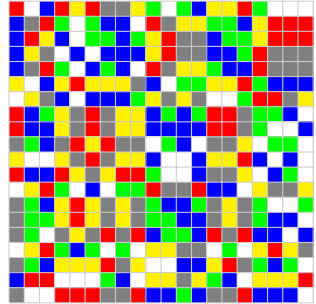
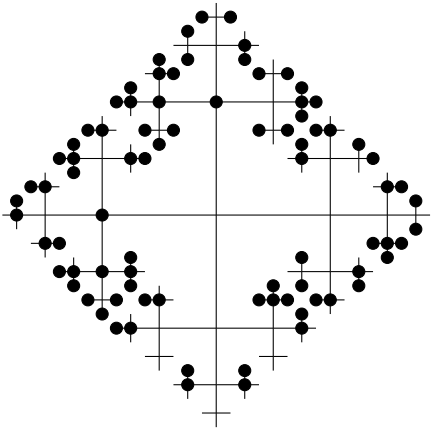
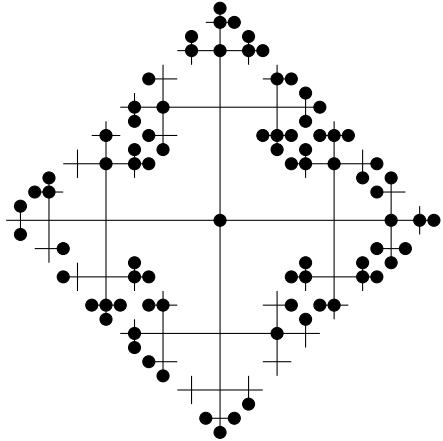
\begin{figure}
\begin{minipage}[t]{0.5\textwidth}
\pgfplotstableread{bintriangle.cvs}{\matrixfile}
\begin{center} \drawitman{} \end{center}
\vspace*{-4mm}
\subcaption{$x_{(1,0)} \equiv x_{(0,1)} + x_{(1,1)} \bmod 2$}
\vspace*{2mm}
\end{minipage}
\begin{minipage}[t]{0.5\textwidth}
\pgfplotstableread{binsquare.cvs}{\matrixfile}
\begin{center} \drawitman{} \end{center}
\vspace*{-4mm}
\subcaption{$x_{(1,0)} \equiv x_{(0,0)} + x_{(0,1)} + x_{(1,1)} \bmod 2$}
\vspace*{2mm}
\end{minipage}
\begin{minipage}[t]{0.5\textwidth}
\pgfplotstableread{S3triangle.cvs}{\matrixfile}
\begin{center} \drawitman{} \end{center}
\vspace*{-4mm}
\subcaption{$x_{(1,0)} = x_{(0,1)} \circ x_{(1,1)}$ in $S_3$}
\vspace*{2mm}
\label{fig:S3triangle}
\vspace*{2mm}
\end{minipage}
\begin{minipage}[t]{0.5\textwidth}
\pgfplotstableread{S3square.cvs}{\matrixfile}
\begin{center} \drawitman{} \end{center}
\vspace*{-4mm}
\subcaption{$x_{(1,0)} = x_{(0,0)} \circ x_{(0,1)} \circ x_{(1,1)}$ in $S_3$}
\vspace*{2mm}
\end{minipage}
\begin{minipage}[t]{0.5\textwidth}
\begin{center}
\begin{tikzpicture}[scale = 0.75]
\draw (0, 0) -- (2, 0);
\draw (2, 0) -- (3.0, 0.0);
\draw (3.0, 0.0) -- (3.5, 0.0);
\draw (3.5, 0.0) -- (3.75, 0.0);
\draw (3.5, 0.0) -- (3.5, 0.25);
\filldraw (3.5,0.25) circle (3pt);
\draw (3.5, 0.0) -- (3.5, -0.25);
\filldraw (3.5,-0.25) circle (3pt);
\draw (3.0, 0.0) -- (3.0, 0.5);
\filldraw (3.0,0.5) circle (3pt);
\draw (3.0, 0.5) -- (3.25, 0.5);
\filldraw (3.25,0.5) circle (3pt);
\draw (3.0, 0.5) -- (2.75, 0.5);
\draw (3.0, 0.5) -- (3.0, 0.75);
\draw (3.0, 0.0) -- (3.0, -0.5);
\filldraw (3.0,-0.5) circle (3pt);
\draw (3.0, -0.5) -- (3.25, -0.5);
\filldraw (3.25,-0.5) circle (3pt);
\draw (3.0, -0.5) -- (2.75, -0.5);
\filldraw (2.75,-0.5) circle (3pt);
\draw (3.0, -0.5) -- (3.0, -0.75);
\filldraw (3.0,-0.75) circle (3pt);
\draw (2, 0) -- (2.0, 1.0);
\draw (2.0, 1.0) -- (2.5, 1.0);
\draw (2.5, 1.0) -- (2.75, 1.0);
\filldraw (2.75,1.0) circle (3pt);
\draw (2.5, 1.0) -- (2.5, 1.25);
\filldraw (2.5,1.25) circle (3pt);
\draw (2.5, 1.0) -- (2.5, 0.75);
\draw (2.0, 1.0) -- (1.5, 1.0);
\filldraw (1.5,1.0) circle (3pt);
\draw (1.5, 1.0) -- (1.25, 1.0);
\draw (1.5, 1.0) -- (1.5, 1.25);
\filldraw (1.5,1.25) circle (3pt);
\draw (1.5, 1.0) -- (1.5, 0.75);
\draw (2.0, 1.0) -- (2.0, 1.5);
\filldraw (2.0,1.5) circle (3pt);
\draw (2.0, 1.5) -- (2.25, 1.5);
\draw (2.0, 1.5) -- (1.75, 1.5);
\filldraw (1.75,1.5) circle (3pt);
\draw (2.0, 1.5) -- (2.0, 1.75);
\draw (2, 0) -- (2.0, -1.0);
\draw (2.0, -1.0) -- (2.5, -1.0);
\filldraw (2.5,-1.0) circle (3pt);
\draw (2.5, -1.0) -- (2.75, -1.0);
\draw (2.5, -1.0) -- (2.5, -0.75);
\draw (2.5, -1.0) -- (2.5, -1.25);
\filldraw (2.5,-1.25) circle (3pt);
\draw (2.0, -1.0) -- (1.5, -1.0);
\draw (1.5, -1.0) -- (1.25, -1.0);
\draw (1.5, -1.0) -- (1.5, -0.75);
\filldraw (1.5,-0.75) circle (3pt);
\draw (1.5, -1.0) -- (1.5, -1.25);
\filldraw (1.5,-1.25) circle (3pt);
\draw (2.0, -1.0) -- (2.0, -1.5);
\filldraw (2.0,-1.5) circle (3pt);
\draw (2.0, -1.5) -- (2.25, -1.5);
\draw (2.0, -1.5) -- (1.75, -1.5);
\filldraw (1.75,-1.5) circle (3pt);
\draw (2.0, -1.5) -- (2.0, -1.75);
\draw (0, 0) -- (-2, 0);
\filldraw (-2,0) circle (3pt);
\draw (-2, 0) -- (-3.0, 0.0);
\draw (-3.0, 0.0) -- (-3.5, 0.0);
\filldraw (-3.5,0.0) circle (3pt);
\draw (-3.5, 0.0) -- (-3.75, 0.0);
\draw (-3.5, 0.0) -- (-3.5, 0.25);
\filldraw (-3.5,0.25) circle (3pt);
\draw (-3.5, 0.0) -- (-3.5, -0.25);
\draw (-3.0, 0.0) -- (-3.0, 0.5);
\filldraw (-3.0,0.5) circle (3pt);
\draw (-3.0, 0.5) -- (-2.75, 0.5);
\draw (-3.0, 0.5) -- (-3.25, 0.5);
\filldraw (-3.25,0.5) circle (3pt);
\draw (-3.0, 0.5) -- (-3.0, 0.75);
\draw (-3.0, 0.0) -- (-3.0, -0.5);
\filldraw (-3.0,-0.5) circle (3pt);
\draw (-3.0, -0.5) -- (-2.75, -0.5);
\filldraw (-2.75,-0.5) circle (3pt);
\draw (-3.0, -0.5) -- (-3.25, -0.5);
\draw (-3.0, -0.5) -- (-3.0, -0.75);
\draw (-2, 0) -- (-2.0, 1.0);
\draw (-2.0, 1.0) -- (-1.5, 1.0);
\filldraw (-1.5,1.0) circle (3pt);
\draw (-1.5, 1.0) -- (-1.25, 1.0);
\filldraw (-1.25,1.0) circle (3pt);
\draw (-1.5, 1.0) -- (-1.5, 1.25);
\draw (-1.5, 1.0) -- (-1.5, 0.75);
\draw (-2.0, 1.0) -- (-2.5, 1.0);
\filldraw (-2.5,1.0) circle (3pt);
\draw (-2.5, 1.0) -- (-2.75, 1.0);
\filldraw (-2.75,1.0) circle (3pt);
\draw (-2.5, 1.0) -- (-2.5, 1.25);
\filldraw (-2.5,1.25) circle (3pt);
\draw (-2.5, 1.0) -- (-2.5, 0.75);
\filldraw (-2.5,0.75) circle (3pt);
\draw (-2.0, 1.0) -- (-2.0, 1.5);
\filldraw (-2.0,1.5) circle (3pt);
\draw (-2.0, 1.5) -- (-1.75, 1.5);
\draw (-2.0, 1.5) -- (-2.25, 1.5);
\filldraw (-2.25,1.5) circle (3pt);
\draw (-2.0, 1.5) -- (-2.0, 1.75);
\draw (-2, 0) -- (-2.0, -1.0);
\filldraw (-2.0,-1.0) circle (3pt);
\draw (-2.0, -1.0) -- (-1.5, -1.0);
\filldraw (-1.5,-1.0) circle (3pt);
\draw (-1.5, -1.0) -- (-1.25, -1.0);
\draw (-1.5, -1.0) -- (-1.5, -0.75);
\filldraw (-1.5,-0.75) circle (3pt);
\draw (-1.5, -1.0) -- (-1.5, -1.25);
\filldraw (-1.5,-1.25) circle (3pt);
\draw (-2.0, -1.0) -- (-2.5, -1.0);
\filldraw (-2.5,-1.0) circle (3pt);
\draw (-2.5, -1.0) -- (-2.75, -1.0);
\filldraw (-2.75,-1.0) circle (3pt);
\draw (-2.5, -1.0) -- (-2.5, -0.75);
\draw (-2.5, -1.0) -- (-2.5, -1.25);
\filldraw (-2.5,-1.25) circle (3pt);
\draw (-2.0, -1.0) -- (-2.0, -1.5);
\draw (-2.0, -1.5) -- (-1.75, -1.5);
\filldraw (-1.75,-1.5) circle (3pt);
\draw (-2.0, -1.5) -- (-2.25, -1.5);
\filldraw (-2.25,-1.5) circle (3pt);
\draw (-2.0, -1.5) -- (-2.0, -1.75);
\filldraw (-2.0,-1.75) circle (3pt);
\draw (0, 0) -- (0, 2);
\filldraw (0,2) circle (3pt);
\draw (0, 2) -- (1.0, 2.0);
\draw (1.0, 2.0) -- (1.5, 2.0);
\filldraw (1.5,2.0) circle (3pt);
\draw (1.5, 2.0) -- (1.75, 2.0);
\filldraw (1.75,2.0) circle (3pt);
\draw (1.5, 2.0) -- (1.5, 2.25);
\filldraw (1.5,2.25) circle (3pt);
\draw (1.5, 2.0) -- (1.5, 1.75);
\filldraw (1.5,1.75) circle (3pt);
\draw (1.0, 2.0) -- (1.0, 2.5);
\draw (1.0, 2.5) -- (1.25, 2.5);
\filldraw (1.25,2.5) circle (3pt);
\draw (1.0, 2.5) -- (0.75, 2.5);
\filldraw (0.75,2.5) circle (3pt);
\draw (1.0, 2.5) -- (1.0, 2.75);
\draw (1.0, 2.0) -- (1.0, 1.5);
\draw (1.0, 1.5) -- (1.25, 1.5);
\filldraw (1.25,1.5) circle (3pt);
\draw (1.0, 1.5) -- (0.75, 1.5);
\filldraw (0.75,1.5) circle (3pt);
\draw (1.0, 1.5) -- (1.0, 1.25);
\draw (0, 2) -- (-1.0, 2.0);
\filldraw (-1.0,2.0) circle (3pt);
\draw (-1.0, 2.0) -- (-1.5, 2.0);
\filldraw (-1.5,2.0) circle (3pt);
\draw (-1.5, 2.0) -- (-1.75, 2.0);
\filldraw (-1.75,2.0) circle (3pt);
\draw (-1.5, 2.0) -- (-1.5, 2.25);
\filldraw (-1.5,2.25) circle (3pt);
\draw (-1.5, 2.0) -- (-1.5, 1.75);
\draw (-1.0, 2.0) -- (-1.0, 2.5);
\filldraw (-1.0,2.5) circle (3pt);
\draw (-1.0, 2.5) -- (-0.75, 2.5);
\filldraw (-0.75,2.5) circle (3pt);
\draw (-1.0, 2.5) -- (-1.25, 2.5);
\draw (-1.0, 2.5) -- (-1.0, 2.75);
\filldraw (-1.0,2.75) circle (3pt);
\draw (-1.0, 2.0) -- (-1.0, 1.5);
\draw (-1.0, 1.5) -- (-0.75, 1.5);
\filldraw (-0.75,1.5) circle (3pt);
\draw (-1.0, 1.5) -- (-1.25, 1.5);
\filldraw (-1.25,1.5) circle (3pt);
\draw (-1.0, 1.5) -- (-1.0, 1.25);
\filldraw (-1.0,1.25) circle (3pt);
\draw (0, 2) -- (0.0, 3.0);
\draw (0.0, 3.0) -- (0.5, 3.0);
\filldraw (0.5,3.0) circle (3pt);
\draw (0.5, 3.0) -- (0.75, 3.0);
\draw (0.5, 3.0) -- (0.5, 3.25);
\draw (0.5, 3.0) -- (0.5, 2.75);
\filldraw (0.5,2.75) circle (3pt);
\draw (0.0, 3.0) -- (-0.5, 3.0);
\draw (-0.5, 3.0) -- (-0.75, 3.0);
\draw (-0.5, 3.0) -- (-0.5, 3.25);
\filldraw (-0.5,3.25) circle (3pt);
\draw (-0.5, 3.0) -- (-0.5, 2.75);
\filldraw (-0.5,2.75) circle (3pt);
\draw (0.0, 3.0) -- (0.0, 3.5);
\draw (0.0, 3.5) -- (0.25, 3.5);
\filldraw (0.25,3.5) circle (3pt);
\draw (0.0, 3.5) -- (-0.25, 3.5);
\filldraw (-0.25,3.5) circle (3pt);
\draw (0.0, 3.5) -- (0.0, 3.75);
\draw (0, 0) -- (0, -2);
\draw (0, -2) -- (1.0, -2.0);
\draw (1.0, -2.0) -- (1.5, -2.0);
\filldraw (1.5,-2.0) circle (3pt);
\draw (1.5, -2.0) -- (1.75, -2.0);
\draw (1.5, -2.0) -- (1.5, -1.75);
\filldraw (1.5,-1.75) circle (3pt);
\draw (1.5, -2.0) -- (1.5, -2.25);
\draw (1.0, -2.0) -- (1.0, -1.5);
\filldraw (1.0,-1.5) circle (3pt);
\draw (1.0, -1.5) -- (1.25, -1.5);
\filldraw (1.25,-1.5) circle (3pt);
\draw (1.0, -1.5) -- (0.75, -1.5);
\filldraw (0.75,-1.5) circle (3pt);
\draw (1.0, -1.5) -- (1.0, -1.25);
\filldraw (1.0,-1.25) circle (3pt);
\draw (1.0, -2.0) -- (1.0, -2.5);
\draw (1.0, -2.5) -- (1.25, -2.5);
\draw (1.0, -2.5) -- (0.75, -2.5);
\draw (1.0, -2.5) -- (1.0, -2.75);
\draw (0, -2) -- (-1.0, -2.0);
\draw (-1.0, -2.0) -- (-1.5, -2.0);
\filldraw (-1.5,-2.0) circle (3pt);
\draw (-1.5, -2.0) -- (-1.75, -2.0);
\filldraw (-1.75,-2.0) circle (3pt);
\draw (-1.5, -2.0) -- (-1.5, -1.75);
\draw (-1.5, -2.0) -- (-1.5, -2.25);
\draw (-1.0, -2.0) -- (-1.0, -1.5);
\filldraw (-1.0,-1.5) circle (3pt);
\draw (-1.0, -1.5) -- (-0.75, -1.5);
\draw (-1.0, -1.5) -- (-1.25, -1.5);
\filldraw (-1.25,-1.5) circle (3pt);
\draw (-1.0, -1.5) -- (-1.0, -1.25);
\draw (-1.0, -2.0) -- (-1.0, -2.5);
\draw (-1.0, -2.5) -- (-0.75, -2.5);
\draw (-1.0, -2.5) -- (-1.25, -2.5);
\draw (-1.0, -2.5) -- (-1.0, -2.75);
\draw (0, -2) -- (0.0, -3.0);
\draw (0.0, -3.0) -- (0.5, -3.0);
\filldraw (0.5,-3.0) circle (3pt);
\draw (0.5, -3.0) -- (0.75, -3.0);
\draw (0.5, -3.0) -- (0.5, -2.75);
\filldraw (0.5,-2.75) circle (3pt);
\draw (0.5, -3.0) -- (0.5, -3.25);
\draw (0.0, -3.0) -- (-0.5, -3.0);
\filldraw (-0.5,-3.0) circle (3pt);
\draw (-0.5, -3.0) -- (-0.75, -3.0);
\draw (-0.5, -3.0) -- (-0.5, -2.75);
\filldraw (-0.5,-2.75) circle (3pt);
\draw (-0.5, -3.0) -- (-0.5, -3.25);
\draw (0.0, -3.0) -- (0.0, -3.5);
\draw (0.0, -3.5) -- (0.25, -3.5);
\draw (0.0, -3.5) -- (-0.25, -3.5);
\draw (0.0, -3.5) -- (0.0, -3.75);
\end{tikzpicture}
\end{center}
\vspace*{-4mm}
\subcaption{$x_{1_{F_2}} = x_a + x_b + x_{a^{-1}} + x_{b^{-1}}$ in $\Z_2^{F_2}$ }
\vspace*{2mm}
\label{fig:FreeLedra}
\vspace*{2mm}
\end{minipage}
\begin{minipage}[t]{0.5\textwidth}
\begin{center}
\begin{tikzpicture}[scale = 0.75]
\filldraw (0,0) circle (3pt);
\draw (0, 0) -- (2, 0);
\draw (2, 0) -- (3.0, 0.0);
\filldraw (3.0,0.0) circle (3pt);
\draw (3.0, 0.0) -- (3.5, 0.0);
\filldraw (3.5,0.0) circle (3pt);
\draw (3.5, 0.0) -- (3.75, 0.0);
\filldraw (3.75,0.0) circle (3pt);
\draw (3.5, 0.0) -- (3.5, 0.25);
\draw (3.5, 0.0) -- (3.5, -0.25);
\draw (3.0, 0.0) -- (3.0, 0.5);
\draw (3.0, 0.5) -- (3.25, 0.5);
\draw (3.0, 0.5) -- (2.75, 0.5);
\filldraw (2.75,0.5) circle (3pt);
\draw (3.0, 0.5) -- (3.0, 0.75);
\filldraw (3.0,0.75) circle (3pt);
\draw (3.0, 0.0) -- (3.0, -0.5);
\draw (3.0, -0.5) -- (3.25, -0.5);
\filldraw (3.25,-0.5) circle (3pt);
\draw (3.0, -0.5) -- (2.75, -0.5);
\filldraw (2.75,-0.5) circle (3pt);
\draw (3.0, -0.5) -- (3.0, -0.75);
\filldraw (3.0,-0.75) circle (3pt);
\draw (2, 0) -- (2.0, 1.0);
\filldraw (2.0,1.0) circle (3pt);
\draw (2.0, 1.0) -- (2.5, 1.0);
\draw (2.5, 1.0) -- (2.75, 1.0);
\filldraw (2.75,1.0) circle (3pt);
\draw (2.5, 1.0) -- (2.5, 1.25);
\draw (2.5, 1.0) -- (2.5, 0.75);
\filldraw (2.5,0.75) circle (3pt);
\draw (2.0, 1.0) -- (1.5, 1.0);
\filldraw (1.5,1.0) circle (3pt);
\draw (1.5, 1.0) -- (1.25, 1.0);
\filldraw (1.25,1.0) circle (3pt);
\draw (1.5, 1.0) -- (1.5, 1.25);
\filldraw (1.5,1.25) circle (3pt);
\draw (1.5, 1.0) -- (1.5, 0.75);
\draw (2.0, 1.0) -- (2.0, 1.5);
\filldraw (2.0,1.5) circle (3pt);
\draw (2.0, 1.5) -- (2.25, 1.5);
\filldraw (2.25,1.5) circle (3pt);
\draw (2.0, 1.5) -- (1.75, 1.5);
\filldraw (1.75,1.5) circle (3pt);
\draw (2.0, 1.5) -- (2.0, 1.75);
\draw (2, 0) -- (2.0, -1.0);
\draw (2.0, -1.0) -- (2.5, -1.0);
\filldraw (2.5,-1.0) circle (3pt);
\draw (2.5, -1.0) -- (2.75, -1.0);
\filldraw (2.75,-1.0) circle (3pt);
\draw (2.5, -1.0) -- (2.5, -0.75);
\filldraw (2.5,-0.75) circle (3pt);
\draw (2.5, -1.0) -- (2.5, -1.25);
\draw (2.0, -1.0) -- (1.5, -1.0);
\filldraw (1.5,-1.0) circle (3pt);
\draw (1.5, -1.0) -- (1.25, -1.0);
\filldraw (1.25,-1.0) circle (3pt);
\draw (1.5, -1.0) -- (1.5, -0.75);
\filldraw (1.5,-0.75) circle (3pt);
\draw (1.5, -1.0) -- (1.5, -1.25);
\draw (2.0, -1.0) -- (2.0, -1.5);
\filldraw (2.0,-1.5) circle (3pt);
\draw (2.0, -1.5) -- (2.25, -1.5);
\draw (2.0, -1.5) -- (1.75, -1.5);
\filldraw (1.75,-1.5) circle (3pt);
\draw (2.0, -1.5) -- (2.0, -1.75);
\draw (0, 0) -- (-2, 0);
\draw (-2, 0) -- (-3.0, 0.0);
\draw (-3.0, 0.0) -- (-3.5, 0.0);
\draw (-3.5, 0.0) -- (-3.75, 0.0);
\draw (-3.5, 0.0) -- (-3.5, 0.25);
\filldraw (-3.5,0.25) circle (3pt);
\draw (-3.5, 0.0) -- (-3.5, -0.25);
\filldraw (-3.5,-0.25) circle (3pt);
\draw (-3.0, 0.0) -- (-3.0, 0.5);
\filldraw (-3.0,0.5) circle (3pt);
\draw (-3.0, 0.5) -- (-2.75, 0.5);
\draw (-3.0, 0.5) -- (-3.25, 0.5);
\filldraw (-3.25,0.5) circle (3pt);
\draw (-3.0, 0.5) -- (-3.0, 0.75);
\filldraw (-3.0,0.75) circle (3pt);
\draw (-3.0, 0.0) -- (-3.0, -0.5);
\draw (-3.0, -0.5) -- (-2.75, -0.5);
\filldraw (-2.75,-0.5) circle (3pt);
\draw (-3.0, -0.5) -- (-3.25, -0.5);
\draw (-3.0, -0.5) -- (-3.0, -0.75);
\draw (-2, 0) -- (-2.0, 1.0);
\filldraw (-2.0,1.0) circle (3pt);
\draw (-2.0, 1.0) -- (-1.5, 1.0);
\filldraw (-1.5,1.0) circle (3pt);
\draw (-1.5, 1.0) -- (-1.25, 1.0);
\filldraw (-1.25,1.0) circle (3pt);
\draw (-1.5, 1.0) -- (-1.5, 1.25);
\filldraw (-1.5,1.25) circle (3pt);
\draw (-1.5, 1.0) -- (-1.5, 0.75);
\draw (-2.0, 1.0) -- (-2.5, 1.0);
\draw (-2.5, 1.0) -- (-2.75, 1.0);
\draw (-2.5, 1.0) -- (-2.5, 1.25);
\draw (-2.5, 1.0) -- (-2.5, 0.75);
\draw (-2.0, 1.0) -- (-2.0, 1.5);
\filldraw (-2.0,1.5) circle (3pt);
\draw (-2.0, 1.5) -- (-1.75, 1.5);
\draw (-2.0, 1.5) -- (-2.25, 1.5);
\draw (-2.0, 1.5) -- (-2.0, 1.75);
\draw (-2, 0) -- (-2.0, -1.0);
\draw (-2.0, -1.0) -- (-1.5, -1.0);
\filldraw (-1.5,-1.0) circle (3pt);
\draw (-1.5, -1.0) -- (-1.25, -1.0);
\filldraw (-1.25,-1.0) circle (3pt);
\draw (-1.5, -1.0) -- (-1.5, -0.75);
\filldraw (-1.5,-0.75) circle (3pt);
\draw (-1.5, -1.0) -- (-1.5, -1.25);
\draw (-2.0, -1.0) -- (-2.5, -1.0);
\draw (-2.5, -1.0) -- (-2.75, -1.0);
\filldraw (-2.75,-1.0) circle (3pt);
\draw (-2.5, -1.0) -- (-2.5, -0.75);
\draw (-2.5, -1.0) -- (-2.5, -1.25);
\draw (-2.0, -1.0) -- (-2.0, -1.5);
\filldraw (-2.0,-1.5) circle (3pt);
\draw (-2.0, -1.5) -- (-1.75, -1.5);
\filldraw (-1.75,-1.5) circle (3pt);
\draw (-2.0, -1.5) -- (-2.25, -1.5);
\filldraw (-2.25,-1.5) circle (3pt);
\draw (-2.0, -1.5) -- (-2.0, -1.75);
\filldraw (-2.0,-1.75) circle (3pt);
\draw (0, 0) -- (0, 2);
\draw (0, 2) -- (1.0, 2.0);
\draw (1.0, 2.0) -- (1.5, 2.0);
\draw (1.5, 2.0) -- (1.75, 2.0);
\filldraw (1.75,2.0) circle (3pt);
\draw (1.5, 2.0) -- (1.5, 2.25);
\filldraw (1.5,2.25) circle (3pt);
\draw (1.5, 2.0) -- (1.5, 1.75);
\filldraw (1.5,1.75) circle (3pt);
\draw (1.0, 2.0) -- (1.0, 2.5);
\filldraw (1.0,2.5) circle (3pt);
\draw (1.0, 2.5) -- (1.25, 2.5);
\filldraw (1.25,2.5) circle (3pt);
\draw (1.0, 2.5) -- (0.75, 2.5);
\draw (1.0, 2.5) -- (1.0, 2.75);
\draw (1.0, 2.0) -- (1.0, 1.5);
\filldraw (1.0,1.5) circle (3pt);
\draw (1.0, 1.5) -- (1.25, 1.5);
\filldraw (1.25,1.5) circle (3pt);
\draw (1.0, 1.5) -- (0.75, 1.5);
\filldraw (0.75,1.5) circle (3pt);
\draw (1.0, 1.5) -- (1.0, 1.25);
\filldraw (1.0,1.25) circle (3pt);
\draw (0, 2) -- (-1.0, 2.0);
\filldraw (-1.0,2.0) circle (3pt);
\draw (-1.0, 2.0) -- (-1.5, 2.0);
\filldraw (-1.5,2.0) circle (3pt);
\draw (-1.5, 2.0) -- (-1.75, 2.0);
\draw (-1.5, 2.0) -- (-1.5, 2.25);
\draw (-1.5, 2.0) -- (-1.5, 1.75);
\filldraw (-1.5,1.75) circle (3pt);
\draw (-1.0, 2.0) -- (-1.0, 2.5);
\draw (-1.0, 2.5) -- (-0.75, 2.5);
\draw (-1.0, 2.5) -- (-1.25, 2.5);
\filldraw (-1.25,2.5) circle (3pt);
\draw (-1.0, 2.5) -- (-1.0, 2.75);
\draw (-1.0, 2.0) -- (-1.0, 1.5);
\draw (-1.0, 1.5) -- (-0.75, 1.5);
\draw (-1.0, 1.5) -- (-1.25, 1.5);
\filldraw (-1.25,1.5) circle (3pt);
\draw (-1.0, 1.5) -- (-1.0, 1.25);
\filldraw (-1.0,1.25) circle (3pt);
\draw (0, 2) -- (0.0, 3.0);
\filldraw (0.0,3.0) circle (3pt);
\draw (0.0, 3.0) -- (0.5, 3.0);
\filldraw (0.5,3.0) circle (3pt);
\draw (0.5, 3.0) -- (0.75, 3.0);
\filldraw (0.75,3.0) circle (3pt);
\draw (0.5, 3.0) -- (0.5, 3.25);
\filldraw (0.5,3.25) circle (3pt);
\draw (0.5, 3.0) -- (0.5, 2.75);
\draw (0.0, 3.0) -- (-0.5, 3.0);
\filldraw (-0.5,3.0) circle (3pt);
\draw (-0.5, 3.0) -- (-0.75, 3.0);
\draw (-0.5, 3.0) -- (-0.5, 3.25);
\filldraw (-0.5,3.25) circle (3pt);
\draw (-0.5, 3.0) -- (-0.5, 2.75);
\draw (0.0, 3.0) -- (0.0, 3.5);
\filldraw (0.0,3.5) circle (3pt);
\draw (0.0, 3.5) -- (0.25, 3.5);
\filldraw (0.25,3.5) circle (3pt);
\draw (0.0, 3.5) -- (-0.25, 3.5);
\draw (0.0, 3.5) -- (0.0, 3.75);
\filldraw (0.0,3.75) circle (3pt);
\draw (0, 0) -- (0, -2);
\draw (0, -2) -- (1.0, -2.0);
\filldraw (1.0,-2.0) circle (3pt);
\draw (1.0, -2.0) -- (1.5, -2.0);
\draw (1.5, -2.0) -- (1.75, -2.0);
\draw (1.5, -2.0) -- (1.5, -1.75);
\filldraw (1.5,-1.75) circle (3pt);
\draw (1.5, -2.0) -- (1.5, -2.25);
\draw (1.0, -2.0) -- (1.0, -1.5);
\draw (1.0, -1.5) -- (1.25, -1.5);
\filldraw (1.25,-1.5) circle (3pt);
\draw (1.0, -1.5) -- (0.75, -1.5);
\draw (1.0, -1.5) -- (1.0, -1.25);
\draw (1.0, -2.0) -- (1.0, -2.5);
\draw (1.0, -2.5) -- (1.25, -2.5);
\draw (1.0, -2.5) -- (0.75, -2.5);
\draw (1.0, -2.5) -- (1.0, -2.75);
\draw (0, -2) -- (-1.0, -2.0);
\draw (-1.0, -2.0) -- (-1.5, -2.0);
\filldraw (-1.5,-2.0) circle (3pt);
\draw (-1.5, -2.0) -- (-1.75, -2.0);
\draw (-1.5, -2.0) -- (-1.5, -1.75);
\draw (-1.5, -2.0) -- (-1.5, -2.25);
\filldraw (-1.5,-2.25) circle (3pt);
\draw (-1.0, -2.0) -- (-1.0, -1.5);
\filldraw (-1.0,-1.5) circle (3pt);
\draw (-1.0, -1.5) -- (-0.75, -1.5);
\draw (-1.0, -1.5) -- (-1.25, -1.5);
\filldraw (-1.25,-1.5) circle (3pt);
\draw (-1.0, -1.5) -- (-1.0, -1.25);
\draw (-1.0, -2.0) -- (-1.0, -2.5);
\draw (-1.0, -2.5) -- (-0.75, -2.5);
\draw (-1.0, -2.5) -- (-1.25, -2.5);
\filldraw (-1.25,-2.5) circle (3pt);
\draw (-1.0, -2.5) -- (-1.0, -2.75);
\filldraw (-1.0,-2.75) circle (3pt);
\draw (0, -2) -- (0.0, -3.0);
\draw (0.0, -3.0) -- (0.5, -3.0);
\draw (0.5, -3.0) -- (0.75, -3.0);
\draw (0.5, -3.0) -- (0.5, -2.75);
\draw (0.5, -3.0) -- (0.5, -3.25);
\filldraw (0.5,-3.25) circle (3pt);
\draw (0.0, -3.0) -- (-0.5, -3.0);
\draw (-0.5, -3.0) -- (-0.75, -3.0);
\draw (-0.5, -3.0) -- (-0.5, -2.75);
\draw (-0.5, -3.0) -- (-0.5, -3.25);
\draw (0.0, -3.0) -- (0.0, -3.5);
\draw (0.0, -3.5) -- (0.25, -3.5);
\filldraw (0.25,-3.5) circle (3pt);
\draw (0.0, -3.5) -- (-0.25, -3.5);
\filldraw (-0.25,-3.5) circle (3pt);
\draw (0.0, -3.5) -- (0.0, -3.75);
\filldraw (0.0,-3.75) circle (3pt);
\end{tikzpicture}
\end{center}
\vspace*{-4mm}
\subcaption{$x_{1_{F_2}} = x_b + x_a + x_{ab}$ in $\Z_2^{F_2}$ }
\vspace*{2mm}
\label{fig:FreeU}
\vspace*{2mm}
\end{minipage}
\caption{Uniform samples from some TEP subshifts. Example (a) is the Ledrappier example. The TEP subshifts in (a), (b), (e), (f) are group shifts, and the TEP measure is the Haar measure, while (c) is not even conjugate to a group shift, see Example~\ref{ex:NotConjugate}. Meanings of colors can be deduced. Examples (e) and (f) are on the free group.}
\label{fig:Samples}
\end{figure}


\section{Definitions}
\label{sec:Definitions}

For $G$ a set, write $S \Subset G$ for $S \subset G \wedge |S| < \infty$. Write $\FinSet(G) = \{S \;|\; S \Subset G\}$. We have $0 \in \N$. The quantifiers $\exists^{\leq k}$ and $\exists^k$ mean ``exists at most $k$'' and ``exist exactly $k$'', respectively. We use the notation $0^S$ for the unique element of $\{0\}^S$. Acting groups are discrete. Conjugation in a group is $h^g = ghg^{-1}$. ``Measure'' refers to a Borel probability measure.

For basic theory and examples of groups, see standard references \cite{LySc15,Ha00,Sc12}. The \emph{strongly polycyclic groups} are the smallest family of groups containing the trivial group and such that $G$ is strongly polycyclic whenever $1 \rightarrow K \rightarrow G \rightarrow \Z \rightarrow 1$ is an exact sequence and $K$ is strongly polycyclic. When $1 \rightarrow K \rightarrow G \rightarrow H \rightarrow 1$ is exact $G$ is a \emph{group extension of $K$ by an action of $H$}. If $F$ is a field and $G$ a group, the \emph{group ring $F[G]$} is the ring of formal sums $\sum c_g g$ where $c_g \in F$ for all $g \in G$ and $c_g \neq 0$ for only finitely many $g$ (the ring structure is the obvious one). We write $\Z[\frac1n]$ for smallest subgroup of the additive group of rational numbers containing $1/n^k$ for all $k \in \N$, so $\Z[\frac12]$ is the additive group of dyadic rationals. For a group property $P$ (i.e.\ family of groups), a group is \emph{locally-$P$} if all its finitely-generated subgroups have property $P$. \emph{Torsion-free} means all nontrivial elements have infinite order.

The \emph{(right) Cayley graph} of a group $G$ with generators $N$ is the simple graph with vertices $G$ and edges $\{\{g, gn\} \;|\; n \in N \setminus \{1\}\}$. When $G$ is finitely generated, we usually pick $N$ as a set of generators (usually symmetric), and leave its choice implicit. The Cayley graph is then connected.

If $G$ is a group and $A$ a finite set, a \emph{pattern} is an element of $A^S$ for $S \subset G$, and a \emph{finite} pattern is one where $S$ is finite. We call $S$ its \emph{domain}, or sometimes \emph{shape} in the finite case, and in general the term ``shape'' is used for finite subsets of a group $G$. Finite (discrete) sets $A$ used to label elements of groups are called \emph{alphabets}, and their elements are called \emph{symbols}. Elements of groups being labeled are sometimes called \emph{cells}. We sometimes write the pattern $a^S$ with $S = \{s\}$ a singleton as $s \mapsto a$. Sometimes we write the domain of a pattern as a tuple rather than a set, so we can specify its contents as a tuple as well.

The \emph{full shift} on a group $G$ with alphabet $A$ is the set $A^G$ with the product topology, and with a $G$-action given by $gx_h = x_{g^{-1}h}$. Its elements $x \in A^G$ are called \emph{configurations}. A \emph{subshift} is a closed $G$-invariant subset. An \emph{SFT} is a subshift of the form $X = \{x \in A^G \;|\; \forall g \in G: gx \in U \}$ where $U \subset A^G$ is clopen. We can write a clopen $C$ as a set of patterns $\T \subset A^S$ for some $S \Subset G$, and we call $\T$ a set of \emph{allowed patterns} for the SFT $X$, and say $X$ is \emph{defined by} $\T$.

Let $G$ be a group, $S \Subset G$ and let $\T \subset A^S$. If $P \in A^C$, write $gP \in A^{gC}$ for the pattern defined by $gP_{h} = P_{g^{-1}h}$. A pattern $P \in A^C$ for $C \subset G$ is \emph{$\T$-legal} if $\forall g \in G: gS \subset C \implies g^{-1}P|_S \in \T$. Thus SFTs are just the sets of configurations that are $\T$-legal as patterns, for some fixed $\T$. If $P \in A^C, Q \in A^D$ are patterns and $C \cap D = \emptyset$, define $P \sqcup Q \in A^{C \cup D}$ by $(P \sqcup Q)|_C = P$, $(P \sqcup Q)|_D = Q$. If $X \subset A^G$ is a subshift and $P \in A^C$ for $C \subset G$, write $P \sqsubset X \iff \exists x \in X: x|_C = P$ and say $P$ \emph{occurs in $X$}. The \emph{language} of a subshift is the set of patterns that occur in it. If an SFT is defined by allowed patterns $\T$, we also use the terms \emph{locally legal} for $\T$-legal patterns and \emph{globally legal} for patterns in the language.

Throughout, we mention some recursion-theoretic and complexity-theoretic corollaries. For decidability results, an intuitive understanding of computability should suffice. For complexity-theoretic claims, some familiarity with the basic theory may be needed, and we refer to \cite{ArBa09}.

For any computability-related discussion, we need computational presentations of groups. We use abstract presentations, to avoid a technical discussion. An \emph{encoded group} is a group together with a bijection with some sublanguage of $A^*$ for some finite alphabet $A$, which is part of the structure and usually left implicit (when useful, we see the group directly as a subset of $A^*$). We say an encoded group is \emph{computable} if the product and inversion of elements are computable operations and $G \subset A^*$ is decidable, and it is a \emph{polytime group} if they are computable in polynomial time and the language $G \subset A^*$ is in the complexity class $P$. One can always recode the alphabet to be $A = \{0,1\}$ in polynomial time, if needed.

For finitely-generated groups, ``computable encoded group'' is essentially a synonym for ``recursively presented group with decidable word problem'' in the standard sense of combinatorial group theory, in fact for such a group there is only one way to see it as an encoded group, up to computable bijection, and indeed there is always a computable bijection with the presentation of elements by lexicographically minimal products of generators. Thus, for the benefit of readers who skipped this discussion, we say a countable group has \emph{decidable word problem} if we are considering it as an encoded group (with respect to some encoding) with respect to which it is computable, and this indeed corresponds to the standard meaning of the term.

For polytime groups, representation issues are less trivial. For the group $\Z^d$, the most common encoding is presumably the presentation of vectors as a tuple of binary numbers, but it is crucial in our results to instead use the \emph{unary} computational presentation, i.e.\ $\vec v \in \Z^d$ is represented as (for example) the word $1^{f(\vec v_1)} 2^{f(\vec v_2)} 3^{f(\vec v_3)} \cdots d^{f(\vec v_d)} \in \{1, 2,..., d\}^*$, $f(n) = 2n$ for nonnegative $n$ and $f(n) = -2n-1$ for negative $n$. This is equivalent to the coding one obtains from the standard group presentation of $\Z^d$ as a finitely-presented group, up to polynomial time computable bijection.

If $G$ is encoded, then $\FinSet(G)$ and elements $A^C$ for $C \in \FinSet(G)$ can be also encoded as words, and we pick the standard encoding. Thus the language of a subshift on $G$ can be seen as a set of words, and we can speak of its decidability and computational complexity.

Write $\mathcal{M}(X)$ for the set of measures on a compact metrizable space $X$. A measure $\mu \in \mathcal{M}(X)$ for $X \subset A^G$ a subshift is \emph{computable} if $X$ has decidable language and given a finite pattern $P \in A^C$ with $P \sqsubset X$ and a rational number $\epsilon > 0$, we can compute a rational number in $[\mu([P])-\epsilon, \mu([P])+\epsilon]$ in finite time, where $[P] = \{x \in X \;|\; x|_C = P\}$. Equivalently, $\mu([P])$ is a lower semicomputable real number for each finite pattern $P$. We can \emph{perfectly sample} $\mu$ if we can, given access to a source of random bits, algorithmically enumerate a configuration, so that the resulting configuration is distributed according to the measure $\mu$. It is not hard to show that perfect samplability is equivalent to computability, see the appendix of \cite{BlMoRoAnAd14} for an analogous result on finite words.

If $X \subset A^G$ is a subshift, the \emph{marginal distribution} of a measure $\mu \in \mathcal{M}(X)$ on $C \Subset G$, denoted $\mu|_C$, is the measure $\mu|_C \in \mathcal{M}(A^C)$ defined by $\mu|_C(P) = \mu([P])$, where for $P \in A^C$ we denote by $[P]$ the \emph{cylinder} $\{x \in X \;|\; x|_C = P\}$.

In figures, we orient $\Z^2$ in figures so that the first axis increases to the right, and the second axis upward.

Basic knowledge of cellular automata comes up in some examples and discussion (but is not needed in any of the results). We refer to \cite{Ka05} for the basic theory.

\section{Convexity}
\label{sec:Convexity}

We define our abstract notion of convexity and give the technical results needed in our applications, we cite \cite{KoLoSc12} for a reference on set systems. 

\begin{definition}
\label{def:CG}
Let $G$ be a \emph{(ground) set}. A set $\C \subset \FinSet(G)$ is a \emph{convex pregeometry} if $\C = \{ \tau(S) \;|\; S \Subset G \}$ where $\tau$ is a closure operator on finite subsets of $G$, i.e. for all $S, T \Subset G$ we have
\begin{itemize}
\item $\tau(\emptyset) = \emptyset$,
\item $S \subset \tau(S)$,
\item $S \subset T \implies \tau(S) \subset \tau(T)$,
\item $\tau(\tau(S)) = \tau(S)$,
\end{itemize}
It is a \emph{convex geometry} if additionally the \emph{anti-exchange axiom}
\[ C \in \C \wedge C \cap \{y, z\} = \emptyset \wedge y \in \tau(C \cup \{z\}) \implies z \notin \tau(C \cup \{y\}) \]
holds.
\end{definition}

When the convex pregeometry is clear from context, we use the notation $\tau(C) = \overline{C}$ for the closure.
If $\C$ is a convex pregeometry (and sometimes even for more general $\C \subset \FinSet(G)$), sets in $\C$ are called \emph{convex} and abusing terminology we also say an infinite set $A \subset M$ is \emph{convex} if $\overline{B} \subset A$ for all $B \Subset A$.  More generally, for any family $\C \subset \FinSet(G)$ and $S \Subset G$, we write $\overline{S} = \bigcap_{S \subset C \in \C} C$ 
(if $\C$ is not a convex pregeometry, $\overline{S}$ may not be in $\C$).

The following lemma is essentially classical, and we leave the proof to the reader.

\begin{lemma}
\label{lem:Moore}
A family $\C \subset \FinSet(G)$ is a convex pregeometry if and only if it is closed under intersections, every finite set in $G$ is contained in some set in $\C$, and $\C$ contains the empty set.
\end{lemma}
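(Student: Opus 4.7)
The plan is to prove the two implications separately, treating this as a direct unpacking of definitions along standard lines for Moore families.

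For the forward direction, assume $\C = \{\tau(S) \mid S \Subset G\}$ for a closure operator $\tau$ on finite subsets. Membership of the empty set is immediate from $\tau(\emptyset) = \emptyset$, and every $S \Subset G$ lies in $\tau(S) \in \C$. For closure under intersections, suppose $A, B \in \C$; then $A, B \Subset G$ and they are fixed by $\tau$ (since $A = \tau(S_A)$ gives $\tau(A) = \tau(\tau(S_A)) = \tau(S_A) = A$, and likewise for $B$). Now $A \cap B$ is finite, and monotonicity gives $\tau(A \cap B) \subset \tau(A) \cap \tau(B) = A \cap B$, while the extensivity axiom yields the reverse inclusion. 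Hence $A \cap B = \tau(A \cap B) \in \C$. The same argument extends to any nonempty intersection (which is automatically finite, and which can be reduced to a binary intersection inside any chosen member of the family).

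For the reverse direction, assume $\C$ satisfies the three Moore-family conditions, and define
\[ \tau(S) = \bigcap_{S \subset C \in \C} C. \]
The family being intersected is nonempty by the covering hypothesis, so $\tau(S)$ is contained in some finite $C \in \C$ and is thus finite; moreover $\tau(S) \in \C$ by closure under (nonempty) intersections, after noting that intersecting with any one fixed $C_0 \supset S$ reduces the possibly large intersection to one among finitely many subsets of $C_0$. The four closure-operator axioms are then routine: $\tau(\emptyset) = \emptyset$ because $\emptyset \in \C$ is itself a candidate in the intersection; $S \subset \tau(S)$ by definition; monotonicity is immediate since $S \subset T$ enlarges the set of $C$'s contributing to $\tau(T)$; and idempotence follows because $\tau(S) \in \C$ contains $\tau(S)$, whence $\tau(\tau(S)) \subset \tau(S)$, with the reverse inclusion by extensivity. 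Finally, $\C = \{\tau(S) \mid S \Subset G\}$: every $\tau(S)$ is in $\C$ by the above, and conversely each $C \in \C$ satisfies $\tau(C) = C$ since $C$ is itself a member of the intersecting family.

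The only genuinely delicate point, and the place where I would slow down, is verifying that $\tau(S)$ actually lies in $\C$ in the reverse direction: one has to check that ``closed under intersections'' is meant (or can be interpreted) broadly enough to cover the potentially uncountable family $\{C \in \C \mid S \subset C\}$. The clean way around this is the reduction to a finite subfamily described above, which uses only binary intersections and the finiteness of members of $\C$; once this is in hand, the remaining verifications are formal.
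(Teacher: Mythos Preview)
Your argument is correct and is the standard Moore-family proof; the paper itself gives no proof (it calls the lemma ``essentially classical'' and leaves it to the reader), so there is nothing to compare against beyond noting that your approach is exactly the classical one the paper has in mind. One tiny wording slip: in the monotonicity step you say ``$S \subset T$ enlarges the set of $C$'s contributing to $\tau(T)$,'' but it is the family for $\tau(S)$ that is the larger one (more sets contain $S$ than contain $T$), hence $\tau(S)$, as an intersection over more sets, is the smaller; the conclusion $\tau(S)\subset\tau(T)$ is of course right.
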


The sets $\C = \{A \Subset \Z^d \;|\; A = \mathrm{conv}(A) \cap \Z^d\}$, where $\mathrm{conv}(A) \subset \R^d$ denotes the real convex hull, are well known to be a convex geometry. We call this \emph{the standard convex geometry of $\Z^d$}, and by default convex sets on $\Z^d$ will refer to these sets. These convex sets have many important additional properties:
\begin{itemize}
\item the convex hull of a finite set $B$ is of polynomial size and can be computed (as a finite set) in polynomial time, as a function of the maximal length vector in $B$ (for fixed $d$; recall also that we use unary notation for elements of $\Z^d$),
\item if $C \Subset \Z^d$ is convex, then $\vec v + C$ is convex for all $\vec v \in \Z^d$, and
\item if $C \Subset \Z^d$ is convex and $\{\vec u -\vec v, \vec u+\vec v\} \subset C$, then $\vec u \in C$.
\end{itemize}
For the first item, we give the easy argument in Proposition~\ref{prop:PolyTime}, and the latter two hold by the definition of a convex set. In sections~\ref{sec:AE} and~\ref{sec:TranslationInvariant}, we study the consequences and non-abelian analogs of the latter two properties. Especially the last item -- midpointedness -- play a key role in our results.

On specific groups, one can occasionally find notions of convex sets that seem natural, and we will see relatively natural convex geometries on at least free groups and some torsion-free nilpotent groups. A general, somewhat trivial way to satisfy the axioms of a convex geometry is to order the set $G$ with an order $<$ of type $\omega$ and declare the lower sets as convex. This idea is explored in Section~\ref{sec:NonTranslationInvariant}.

Another general way is building convex geometries from convex geometries on supersets or subsets. We list a few ``obvious'' constructions below. These are straightforward to prove, in each case by guessing the closure operation, verifying it gives the right sets, and then verifying anti-exchange.

\begin{lemma}
\label{lem:ToSubset}
Let $H \subset G$ be two sets and let $\C \subset \FinSet(G)$ be a convex geometry. Then $\{C \cap H \;|\; C \in \C\} \subset \FinSet(H)$ is a convex geometry on $H$.
\end{lemma}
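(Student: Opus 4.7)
The plan is to verify the two parts of Definition~\ref{def:CG} separately: first that the restricted family $\C_H = \{C \cap H \;|\; C \in \C\}$ is a convex pregeometry (using the criterion of Lemma~\ref{lem:Moore}), and then that it satisfies the anti-exchange axiom. The natural guess for the closure operator on $H$ is $\tau_H(S) = \overline{S}^G \cap H$ for $S \Subset H$ (writing $\overline{S}^G$ for closure in $\C$), and I expect to use this throughout, even though for the pregeometry part the Moore-family characterization in Lemma~\ref{lem:Moore} bypasses naming the closure explicitly.

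For the pregeometry part, I would check the three conditions of Lemma~\ref{lem:Moore}: $\emptyset = \emptyset \cap H \in \C_H$ since $\emptyset \in \C$; any $S \Subset H$ satisfies $S \subset \overline{S}^G \cap H \in \C_H$; and $\C_H$ is closed under intersections because $(C_1 \cap H) \cap (C_2 \cap H) = (C_1 \cap C_2) \cap H$ and $\C$ is intersection-closed. A brief lemma to state along the way is that $\tau_H(S) = \overline{S}^G \cap H$; the inclusion $\supset$ follows because $\overline{S}^G \cap H$ is itself a member of $\C_H$ containing $S$, and the inclusion $\subset$ follows because any $C \cap H \in \C_H$ containing $S$ has $C \supset \overline{S}^G$, hence $C \cap H \supset \overline{S}^G \cap H$.

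The anti-exchange step is the one that needs a small trick, because a generic $D = C \cap H \in \C_H$ is not itself in $\C$, so we cannot apply the anti-exchange axiom of $\C$ to $D$ directly. The fix is to replace $D$ by $E := \overline{D}^G \in \C$; since $D \subset C$ and $C \in \C$, we have $E \subset C$, hence $E \cap H \subset C \cap H = D$, and combined with $D \subset E$ this gives $E \cap H = D$. In particular, the hypothesis $D \cap \{y,z\} = \emptyset$ with $y, z \in H$ upgrades to $E \cap \{y,z\} = \emptyset$. From $y \in \tau_H(D \cup \{z\}) = \overline{D \cup \{z\}}^G \cap H \subset \overline{E \cup \{z\}}^G$, the anti-exchange axiom in $\C$ yields $z \notin \overline{E \cup \{y\}}^G$, and a fortiori $z \notin \overline{D \cup \{y\}}^G \cap H = \tau_H(D \cup \{y\})$, as desired. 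The only mildly subtle point is the observation $E \cap H = D$; once that is in hand, the argument is just the two-step reduction ``inflate $D$ to its $\C$-closure, apply anti-exchange upstairs, deflate back to $H$.''
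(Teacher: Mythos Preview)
Your proof is correct and matches the approach the paper sketches: guess the closure operator $\tau_H(S) = \overline{S}^G \cap H$, check the pregeometry axioms via Lemma~\ref{lem:Moore}, and lift anti-exchange from $\C$ by replacing $D$ with $E = \overline{D}^G \in \C$ (the key observation being $E \cap H = D$). One cosmetic slip: in your verification of $\tau_H(S) = \overline{S}^G \cap H$, the labels ``$\supset$'' and ``$\subset$'' are swapped --- the fact that $\overline{S}^G \cap H \in \C_H$ and contains $S$ yields $\tau_H(S) \subset \overline{S}^G \cap H$, and the other argument yields the reverse inclusion.
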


\begin{lemma}
\label{lem:FromPartition}
Let $(G_i)_i$ be a family of sets and $G = \bigsqcup_i G_i$ their disjoint union. Let $\C_i \subset \FinSet(G_i)$ be a convex geometry for each $i$. Then
\[ \{C \Subset G \;|\; \forall i: C \cap G_i \in \C_i\} \]
is a convex geometry on $G$.
\end{lemma}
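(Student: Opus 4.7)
The natural candidate for the closure operator is $\tau(S) = \bigcup_i \tau_i(S \cap G_i)$, where $\tau_i$ is the closure operator of $\C_i$ on $G_i$. Since $S$ is finite it meets only finitely many $G_i$, and each $\tau_i(S \cap G_i)$ is itself finite, so $\tau(S) \Subset G$. I would first verify the four axioms of a convex pregeometry coordinatewise: $\tau(\emptyset) = \bigcup_i \tau_i(\emptyset) = \emptyset$; $S \subset \tau(S)$ because $S \cap G_i \subset \tau_i(S \cap G_i)$ for each $i$ and $S = \bigsqcup_i (S \cap G_i)$; monotonicity is immediate from monotonicity of each $\tau_i$; and idempotence follows because $\tau(S) \cap G_j = \tau_j(S \cap G_j)$ (the other summands live in other $G_i$'s), so applying $\tau$ again gives $\tau_j(\tau_j(S \cap G_j)) = \tau_j(S \cap G_j)$ by idempotence in $\C_j$.

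Next, I would check that the convex sets produced by $\tau$ are exactly $\D := \{C \Subset G \;|\; \forall i: C \cap G_i \in \C_i\}$. If $C = \tau(S)$ then $C \cap G_j = \tau_j(S \cap G_j) \in \C_j$, so $C \in \D$. Conversely, if $C \in \D$, let $S = C$; then for each $i$, $C \cap G_i \in \C_i$ means $C \cap G_i = \tau_i(C \cap G_i)$, whence $\tau(C) = \bigcup_i (C \cap G_i) = C$, so $C$ lies in the image of $\tau$.

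Finally, I would verify the anti-exchange axiom. Suppose $C \in \D$, $C \cap \{y, z\} = \emptyset$, and $y \in \tau(C \cup \{z\})$. Let $y \in G_i$ and $z \in G_j$. Because $\tau(C \cup \{z\}) \cap G_i = \tau_i((C \cap G_i) \cup (\{z\} \cap G_i))$, the membership $y \in \tau(C \cup \{z\})$ gives two cases. If $i \neq j$, then $\{z\} \cap G_i = \emptyset$, so $y \in \tau_i(C \cap G_i) = C \cap G_i$ (since $C \cap G_i \in \C_i$ is already closed), contradicting $y \notin C$. If $i = j$, then anti-exchange in $\C_i$, applied to the convex set $C \cap G_i$ and the points $y, z \in G_i \setminus (C \cap G_i)$, yields $z \notin \tau_i((C \cap G_i) \cup \{y\}) = \tau(C \cup \{y\}) \cap G_i$, so $z \notin \tau(C \cup \{y\})$.

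There is no real obstacle here: the only mildly delicate point is making sure the case $y, z$ in different parts is handled, but it collapses immediately because $C \in \D$ is already closed in each coordinate, so a point of $G_i$ in $\tau(C \cup \{z\})$ with $z \notin G_i$ must already lie in $C$.
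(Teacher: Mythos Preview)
Your proof is correct and follows exactly the approach the paper sketches: it says these lemmas are ``straightforward to prove, in each case by guessing the closure operation, verifying it gives the right sets, and then verifying anti-exchange,'' and leaves the details to the reader. You have supplied precisely those details, with the expected coordinatewise closure $\tau(S) = \bigcup_i \tau_i(S \cap G_i)$ and the correct case split on whether $y$ and $z$ lie in the same part.
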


\begin{lemma}
\label{lem:FromUnion}
Let $(G_i)_{i \in \N}$ be a family of sets and $G = \bigcup_i G_i$ their increasing union. Let $\C_i \subset \FinSet(G_i)$ be a convex geometry for each $i$, such that whenever $i < j$, we have $\C_i \subset \C_j$ and $\C_i = \{C \cap G_i \;|\; C \in \C_j\}$.
Then
\[ \{C \Subset G \;|\; \forall i: C \subset G_i \implies C \in \C_i\} \]
is a convex geometry on $G$.
\end{lemma}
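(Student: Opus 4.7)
The plan is to follow the same recipe used for Lemma~\ref{lem:FromPartition}: guess the closure operator, show it is well-defined, verify the closure axioms, identify its image with the family in the statement, and finally check anti-exchange by pushing into a single $\C_i$.

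First I would define, for each $S \Subset G$, the closure $\tau(S) := \tau_i(S)$, where $\tau_i$ is the closure operator of $\C_i$ and $i$ is any index with $S \subset G_i$. The key preliminary claim is that this does not depend on $i$. Given $i \leq j$ with $S \subset G_i$, the set $\tau_i(S)$ lies in $\C_i \subset \C_j$ and contains $S$, so $\tau_j(S) \subset \tau_i(S)$. Conversely, $\tau_j(S) \cap G_i$ lies in $\C_i$ by the compatibility hypothesis $\C_i = \{C \cap G_i \mid C \in \C_j\}$ and contains $S$, so $\tau_i(S) \subset \tau_j(S) \cap G_i \subset \tau_j(S)$. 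Chaining the inclusions gives $\tau_i(S) = \tau_j(S)$, and moreover $\tau_j(S) \subset G_i$. This well-definedness is the step where the hypothesis is really used; it is the place I would expect to spend the most care.

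Once $\tau$ is well-defined, the four closure axioms are immediate by choosing a single large enough $i$: for monotonicity pick $i$ with $S \cup T \subset G_i$ and use monotonicity in $\C_i$; for idempotence note that $\tau(S) \subset G_i$ and then $\tau(\tau(S)) = \tau_i(\tau_i(S)) = \tau_i(S) = \tau(S)$. Let $\C^\ast := \{\tau(S) \mid S \Subset G\}$ be the induced convex pregeometry, and let $\C$ denote the family in the statement. For $\C^\ast \subset \C$: given $\tau(S)$ and any $j$ with $\tau(S) \subset G_j$, pick $k \geq \max(i,j)$ with $S \subset G_i$; then $\tau(S) = \tau_k(S) \in \C_k$ and $\tau(S) = \tau(S) \cap G_j \in \C_j$ by compatibility. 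For $\C \subset \C^\ast$: any $C \in \C$ satisfies $C \in \C_i$ for some large enough $i$, so $\tau_i(C) = C$ and hence $\tau(C) = C \in \C^\ast$.

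Finally, anti-exchange is inherited trivially. Suppose $C \in \C^\ast$, $C \cap \{y,z\} = \emptyset$, and $y \in \tau(C \cup \{z\})$. Choose $i$ with $C \cup \{y,z\} \subset G_i$. Then $C \in \C_i$, the containment $y \in \tau_i(C \cup \{z\})$ holds, and anti-exchange in $\C_i$ gives $z \notin \tau_i(C \cup \{y\}) = \tau(C \cup \{y\})$, as required. I expect no difficulty here beyond unpacking the well-definedness already established; the only genuine work in the whole lemma is the two-sided squeeze that makes the closure independent of the index $i$.
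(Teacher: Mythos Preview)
Your proof is correct and follows exactly the approach the paper sketches (``guess the closure operation, verify it gives the right sets, and then verify anti-exchange''); the paper leaves the details to the reader, and you have supplied them faithfully. The only substantive step is indeed the well-definedness of $\tau$, and your two-sided squeeze using $\C_i \subset \C_j$ and $\C_i = \{C \cap G_i \mid C \in \C_j\}$ is the intended argument.
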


If we define an infinite set to be convex if the closures of its finite subsets are contained in it, then the assumptions of Lemma~\ref{lem:FromUnion} can be equivalently phrased as ``$G_i$ is convex in $G_j$ and $\C_i$ is the restriction of $\C_j$ to $G_i$ in the sense of Lemma~\ref{lem:ToSubset}'' (we omit the proof of this equivalence).

In particular, on the direct union $\Z^\infty = \bigcup \Z^d$ (with the embeddings $\Z^d \cong \Z^d \times \{0\} \leq \Z^{d+1}$) we have a natural convex geometry obtained from Lemma~\ref{lem:FromUnion} applied to the standard convex geometries of the $\Z^d$.

\subsection{Anti-exchange, corner addition and convexoids}
\label{sec:AE}

\begin{definition}
If $\C \subset \FinSet(G)$ is a family of sets, the \emph{corners} of $C \in \C$ are the set
\[ \cor_\C C = \{a \in C \;|\; C \setminus \{a\} \in \C\}. \]
The \emph{lax corners} of $C \Subset G$ are the set
\[ \angle_\C C = \{a \in C \;|\; a \notin \overline{C \setminus \{a\}}\}. \]
\end{definition}

Usually we write $\angle = \angle_\C$ for the lax corners and $\cor = \cor_\C$ for corners, when it is clear which family of sets $\C$ is being discussed. When $\C$ is a convex geometry, for a convex set $C \in \C$ its corners are precisely its lax corners, and in general all corners are lax corners but we may have $\angle C \supsetneq \cor C$ even for $C \in \C$ and $\C$ a convexoid (defined later). Lax corners of $C$ are the elements that can be separated from other elements of $C$ by some set in $\C$.

The main way the anti-exchange axiom features in our applications is in terms of the following property.

\begin{definition}
Let $G$ be a set and $\C \subset \FinSet(G)$. Say $\C$ has the \emph{corner addition property} if
\[ \forall C, D \in \C: (C \subsetneq D \implies \exists a \in D \setminus C: C \cup \{a\} \in \C). \]
\end{definition}

In words, the corner addition property states that if we have two convex sets, one inside the other, then some element of the larger can be added to the smaller so that the resulting set is convex, and the added element is of course a corner of the new set.

\begin{lemma}
\label{lem:AECE}
Let $\C \subset \FinSet(G)$ be a convex pregeometry. Then the following are equivalent:
\begin{itemize}
\item $\C$ is a convex geometry,
\item $\C$ has the corner addition property.
\end{itemize}
\end{lemma}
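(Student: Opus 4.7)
The plan is to prove each direction directly, using the closure operator $\tau$ of the pregeometry together with the hypotheses in a fairly symmetric way.

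For the direction ``convex geometry implies corner addition'', I would start from $C \subsetneq D$ in $\C$ and pick an arbitrary $a_0 \in D \setminus C$. Either $\tau(C \cup \{a_0\}) = C \cup \{a_0\}$, in which case $a_0$ is the sought corner, or there exists some $a_1 \in \tau(C \cup \{a_0\}) \setminus (C \cup \{a_0\})$. Note $a_1 \in D \setminus C$ since $D$ is convex, so $\tau(C \cup \{a_0\}) \subset \tau(D) = D$, and $a_1 \notin C$ by anti-exchange would require checking, but simply $a_1 \notin C$ since otherwise $a_1 \in C \subset C \cup \{a_0\}$. Iterating, I form $a_0, a_1, a_2, \ldots$ with $a_{i+1} \in \tau(C \cup \{a_i\}) \setminus (C \cup \{a_i\})$. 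Then the key point is that $\tau(C \cup \{a_{i+1}\}) \subset \tau(C \cup \{a_i\})$ by idempotence, while anti-exchange (with $y = a_{i+1}$, $z = a_i$, noting $a_i, a_{i+1} \notin C$) gives $a_i \notin \tau(C \cup \{a_{i+1}\})$; so the inclusion is strict. Since the $\tau(C \cup \{a_i\})$ all sit inside the finite set $D$, this chain terminates, and termination occurs exactly when $\tau(C \cup \{a_n\}) = C \cup \{a_n\}$, providing the required corner.

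For the direction ``corner addition implies anti-exchange'', I would argue by contradiction. Suppose $C \in \C$, $y, z \notin C$, $y \in \tau(C \cup \{z\})$ and $z \in \tau(C \cup \{y\})$ (note $y \neq z$, since otherwise the hypothesis reads $y \in \tau(C \cup \{y\})$, true, but then the conclusion fails tautologically, so the axiom forces $y \neq z$). Let $D = \tau(C \cup \{y, z\}) \in \C$; this strictly contains $C$. Now iteratively apply corner addition to produce a chain $C = C_0 \subsetneq C_1 \subsetneq \cdots \subsetneq C_k = D$ with each $C_i \in \C$ and $C_{i} = C_{i-1} \cup \{a_i\}$ for some $a_i \in D \setminus C_{i-1}$. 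Let $i$ be the least index such that $a_i \in \{y,z\}$; by symmetry of the hypothesis in $y$ and $z$, I may assume $a_i = y$. Then $z \notin C_{i-1}$ by minimality of $i$, and monotonicity of $\tau$ gives $z \in \tau(C \cup \{y\}) \subset \tau(C_{i-1} \cup \{y\}) = C_i = C_{i-1} \cup \{y\}$, contradicting $z \notin C_{i-1}$ and $z \neq y$.

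Both directions are relatively short once the setup is fixed; the only mildly delicate point is the strict-descent argument in the forward direction, which needs both anti-exchange and idempotence of $\tau$ used together, and the degenerate case $y = z$ in the axiom, which I would dispatch at the start of the reverse direction. I do not foresee any serious obstacles, as this is essentially the standard equivalence between the anti-exchange and shelling characterizations of convex geometries, for which Lemma~\ref{lem:Moore} already supplies the intersection-closed framework.
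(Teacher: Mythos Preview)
Your proof is correct. In the forward direction your descending-chain argument is essentially the paper's ``pick $a$ with $|\tau(C\cup\{a\})|$ minimal'' argument unwound into an explicit iteration; these are the same proof.

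In the reverse direction you take a somewhat different route. The paper first shows $\overline{C\cup\{a\}} = \overline{C\cup\{b\}} = D$, then enlarges $C$ to a maximal convex $E \subsetneq D$ avoiding $a,b$, and argues that \emph{no} single element of $D\setminus E$ can be added to $E$ while staying convex, directly violating corner addition for the pair $E \subsetneq D$. You instead invoke corner addition repeatedly to build an anti-shelling from $C$ to $D=\tau(C\cup\{y,z\})$ and derive a contradiction at the first step where one of $y,z$ appears. Your version is a bit slicker: it avoids the separate maximization of $C$ and the verification that $\overline{C\cup\{a\}}=\overline{C\cup\{b\}}$, and it uses corner addition in the iterated ``anti-shelling'' form that the rest of the paper relies on anyway. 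The paper's version, on the other hand, exhibits an explicit pair $E\subsetneq D$ witnessing the failure of corner addition, which is marginally more informative. Your handling of the $y=z$ degeneracy is fine as a remark about the intended reading of the axiom.
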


\begin{proof}
We need to show that the anti-exchange axiom is equivalent to the corner addition property, under the convex pregeometry axioms. Suppose that corner addition fails, and $C, D \in \C, C \subsetneq D$ such that there does not exist $a \in D \setminus C$ such that $C \cup \{a\} \in \C$.

For each $a \in D \setminus C$, $\overline{C \cup \{a\}} \subset D$. Pick $a \in D \setminus C$ such that $\overline{C \cup \{a\}}$ has minimal cardinality. If $\overline{C \cup \{a\}} = C \cup \{a\}$, we are done. Otherwise, let $b \in \overline{C \cup \{a\}} \setminus (C \cup \{a\})$. We have
\[ \overline{C \cup \{b\}} \subset \overline{C \cup \{a, b\}} \subset \overline{\overline{C \cup \{a\}} \cup \{b\}} = \overline{\overline{C \cup \{a\}}} = \overline{C \cup \{a\}}. \]
Since $\overline{C \cup \{a\}}$ was picked to have minimal cardinality, we must have $\overline{C \cup \{b\}} = \overline{C \cup \{a\}}$. But then $a, b \notin C$, $C \in \C$, $a \in \overline{C \cup \{b\}}$ and $b \in \overline{C \cup \{a\}}$, contradicting anti-exchange.

Suppose then that anti-exchange fails, i.e.\ for some $C \in \C$ and $a, b \notin C$, we have $C \in \C$, $a \in \overline{C \cup \{b\}}$ and $b \in \overline{C \cup \{a\}}$. It is easy to show that $\overline{C \cup \{a\}} = \overline{C \cup \{b\}}$, denote this set by $D$. By possibly increasing $C$, we may further assume that if $C \subset E \subset D$ and $E \in \C$, then $E = C$ or $E = D$. This is because if $E$ is a maximal convex subset of $D$ containing $C$ and such that $a, b \notin E$, then $\overline{E \cup \{a\}} = \overline{E \cup \{b\}} = D$, and we may replace $C$ by $E$ without changing $D$.

From the maximality assumption on $C$, we have that if $c \in D \setminus C$, then $\overline{C \cup \{c\}} \cap \{a, b\} \neq \emptyset$, thus $\overline{C \cup \{c\}} = D$ for all $c \in D \setminus C$. Since $|D| \geq |C \cup \{a, b\}| \geq |C| + 2$, this contradicts the corner addition property for the pair $C \subsetneq D$.
\end{proof}

Corner addition has the benefit that stating it does not require the existence of a closure operation, rather it can be stated for any family of sets. We introduce a relaxed notion of convexity which turns out to be sufficient for our purposes.

\begin{definition}
\label{def:Convexoid}
Let $G$ be a set and $\C \subset \FinSet(G)$. We say $\C$ is a \emph{convexoid} if $\emptyset \in \C$, every $B \Subset G$ is contained in some $C \in \C$, and the corner addition property holds.
\end{definition}

We also call elements of a convexoid convex. The term ``convexoid'' is loosely based on the term ``greedoid'': the main difference between the definitions is (arguably) that comparison of cardinalities is replaced by set inclusion, whose relevance to convexity is clear from the above proof. Observe that, by Lemma~\ref{lem:Moore}, a convexoid is a convex geometry if and only if it is closed under intersections.

In general, there certainly exist convexoids that are not convex geometries -- $\{\emptyset, \{0\}, \{1\}, \{0,2\}, \{1, 2\}, \{0,1,2\}\}$ for instance. We do not know interesting examples of such convexoids on groups, and all of our constructions produce true convex geometries. Nevertheless, we state the main results for TEP for convexoids rather than convex geometries, since it makes the results a priori stronger, is precisely what is needed in the proof of the main theorem, and does not lengthen any of the proofs (in fact our experience is that not allowing the use of the algebraic properties of the closure operator often directs one to a \emph{simpler} proof).

\begin{definition}
Let $\C \subset \FinSet(G)$ be a convexoid. An \emph{anti-shelling (from $C_0$ to $C_n$)} is a list $(C_0, C_1, C_2, ..., C_n)$, where for all applicable $i$, $C_i \in \C$ and $C_{i+1} \setminus C_i = \{s_{i+1}\}$ for some elements $s_{i+1} \in G$. We also consider \emph{unbounded anti-shellings} $C_0, C_1, ...$. We then also require $\bigcup_i C_i = G$, and call the set $C_0$ the \emph{base} of the anti-shelling.
\end{definition}

The motivation of the term is that the reverse of a (bounded) antishelling is usually called a shelling in the setting of set systems. Since we need unbounded anti-shellings, this ordering seems more appropriate.

\begin{lemma}
\label{lem:GrowthSequences}
Let $G$ be a set and $\C \subset \FinSet(G)$ a convexoid. If $C,D \in \C$ and $C \subset D$ then there is an anti-shelling from $C$ to $D$, and every set $C \in \C$ is the base of an unbounded anti-shelling.
\end{lemma}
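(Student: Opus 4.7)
The plan is to derive both statements by iterated application of the corner addition property, which is exactly what a convexoid provides.

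For the first statement, I would set $C_0 = C$ and construct the anti-shelling greedily. Given $C_i \in \C$ with $C_i \subsetneq D$, the corner addition property hands us an element $s_{i+1} \in D \setminus C_i$ with $C_i \cup \{s_{i+1}\} \in \C$; set $C_{i+1} = C_i \cup \{s_{i+1}\}$. Since $C_i \subseteq D$ is maintained and $|C_{i+1}| = |C_i| + 1$, this terminates after exactly $|D| - |C|$ steps with $C_n = D$. This gives the required anti-shelling from $C$ to $D$.

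For the second statement, note that $G$ must be countable in order for any unbounded anti-shelling (whose members are finite) to exhaust it, so we enumerate $G = \{g_1, g_2, \ldots\}$ (handling the finite case is trivial by the first part, since then $G$ itself must lie in some $E \in \C$ and we anti-shell from $C$ to $E$). Starting from the given $C \in \C$ as the base, I proceed by diagonalization: at stage $k$, having reached some convex set $D_k \in \C$ (with $D_0 = C$), invoke the convexoid axiom that guarantees a convex set $E_k \in \C$ containing the finite set $D_k \cup \{g_k\}$, then use the first part of the lemma to produce a finite anti-shelling from $D_k$ to $E_k$, and concatenate its steps onto the anti-shelling built so far; set $D_{k+1} = E_k$. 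The concatenation yields an infinite sequence $C_0 \subsetneq C_1 \subsetneq \cdots$ of convex sets each differing from the next by a single element, with $g_k \in D_k \subseteq \bigcup_i C_i$ for every $k$, hence $\bigcup_i C_i = G$ as required.

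Neither part presents any real obstacle: the first is a direct induction using corner addition, and the second is a routine bookkeeping argument combining the covering axiom of a convexoid (every finite set lies in some convex set) with the first part. The only point requiring a little care is making sure the enumeration truly forces $\bigcup_i C_i = G$, which is why the diagonal step explicitly demands that $g_k$ be absorbed into $D_k$ by stage $k$.
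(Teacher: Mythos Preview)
Your proof is correct and follows essentially the same approach as the paper: greedy application of corner addition for the first part, and for the second, enumerate $G$, repeatedly absorb the next element by passing to a containing convex set and anti-shelling up to it. One tiny slip: you write $g_k \in D_k$, but your construction only guarantees $g_k \in E_k = D_{k+1}$; this does not affect the conclusion $g_k \in \bigcup_i C_i$.
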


\begin{proof}
Suppose $C, D \in \C$. To find an an anti-shelling $C_0 = C, C_1, C_2, ..., C_n = D$, $C_i \in \C$, simply apply the corner addition property to the pairs $(C_i, D)$ with $i$ taking values $0, ..., n = |D \setminus C| - 1$ in order. To find an unbounded anti-shelling, enumerate $G = \{a_1,a_2,...\}$, and iterate the following procedure starting from $C$, adding one group element at a time: Suppose we have constructed $C_0, ..., C_i$ so far. Then take the minimal $j$ such that $a_j \notin C_i$, let $D$ be any convex set containing $C_i \cup \{a_j\}$ and pick an anti-shelling $C_{i+1}, C_{i+2},  ..., C_{i+|D \setminus C_i|} = D$. Concatenate this to the end of $C_0, ..., C_i$. In the limit, this gives an anti-shelling whose union must contain every element $a_j$, thus $G = \bigcup_k C_k$.
\end{proof}

\begin{example}
\label{ex:ConvexOrdering}
Figure~\ref{fig:ConvexOrdering} shows the convex set $C \subset \Z^2$ (with respect to the standard convex geometry of $\Z^2$) obtained from the closed ball in $\R^2$ of radius $\sqrt{19}$ by discretizing, and an anti-shelling of it (the number $i$ represents the element $s_i$ in the definition of the anti-shelling). The anti-shelling was obtained by starting from an empty set $C_0 = \emptyset$, and iteratively adding a uniformly randomly picked element $a \in C \setminus C_i$ such that $C_{i+1} = C_i \cup \{a\}$ is convex. One can check that this is an anti-shelling by connecting the dots and drawing the successive convex hulls. (The resulting figure is included in Figure~\ref{fig:ConvexOrdering} for completeness, although only the process of building it is useful.)
\begin{figure}
\begin{center}
\begin{tikzpicture}
\input{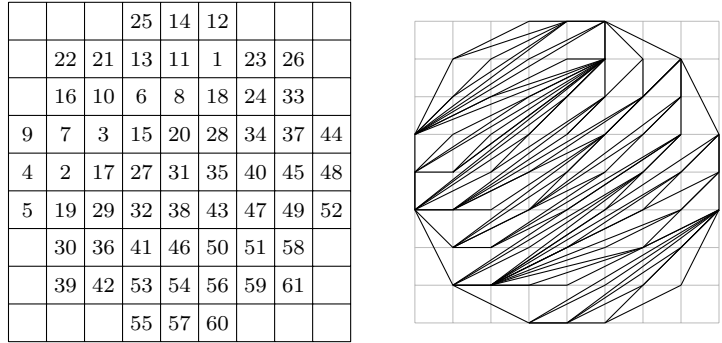}
\draw[step = 0.5, shift={(0.25,0.25)}] (-2.5,-2.5) grid (2,2);
\end{tikzpicture}
\;\;\;\;\;\;
\begin{tikzpicture}[scale = 0.5,baseline=-114]
\begin{scope}[shift={(0,0.5)}]
\draw[black!25!white] (0,-8) grid (8,0);
\draw[black!100!white] (6,-6) -- (2,-7);
\draw[black!100!white] (8,-3) -- (2,-7);
\draw[black!100!white] (4,-4) -- (1,-6);
\draw[black!100!white] (8,-5) -- (7,-7);
\draw[black!100!white] (4,-3) -- (1,-5);
\draw[black!100!white] (2,-3) -- (5,-1);
\draw[black!100!white] (8,-5) -- (5,-8);
\draw[black!100!white] (3,-2) -- (5,-1);
\draw[black!100!white] (1,-7) -- (0,-5);
\draw[black!100!white] (5,-4) -- (2,-6);
\draw[black!100!white] (6,-4) -- (2,-7);
\draw[black!100!white] (5,-1) -- (1,-4);
\draw[black!100!white] (7,-1) -- (7,-3);
\draw[black!100!white] (5,-7) -- (3,-8);
\draw[black!100!white] (7,-2) -- (5,-4);
\draw[black!100!white] (1,-4) -- (0,-4);
\draw[black!100!white] (3,-6) -- (1,-7);
\draw[black!100!white] (6,-2) -- (3,-4);
\draw[black!100!white] (5,-8) -- (3,-8);
\draw[black!100!white] (1,-3) -- (5,-1);
\draw[black!100!white] (8,-5) -- (6,-7);
\draw[black!100!white] (8,-5) -- (6,-6);
\draw[black!100!white] (2,-2) -- (5,-1);
\draw[black!100!white] (0,-3) -- (4,-1);
\draw[black!100!white] (6,-1) -- (4,-3);
\draw[black!100!white] (6,-4) -- (3,-6);
\draw[black!100!white] (0,-3) -- (0,-3);
\draw[black!100!white] (1,-4) -- (1,-4);
\draw[black!100!white] (0,-3) -- (5,0);
\draw[black!100!white] (5,-1) -- (0,-5);
\draw[black!100!white] (0,-3) -- (2,-2);
\draw[black!100!white] (0,-3) -- (2,-1);
\draw[black!100!white] (8,-4) -- (2,-7);
\draw[black!100!white] (5,-2) -- (4,-3);
\draw[black!100!white] (2,-1) -- (4,0);
\draw[black!100!white] (4,-8) -- (3,-8);
\draw[black!100!white] (5,-1) -- (4,-2);
\draw[black!100!white] (3,-4) -- (1,-5);
\draw[black!100!white] (3,-8) -- (1,-7);
\draw[black!100!white] (7,-1) -- (8,-3);
\draw[black!100!white] (4,-7) -- (1,-7);
\draw[black!100!white] (1,-6) -- (0,-5);
\draw[black!100!white] (7,-3) -- (2,-6);
\draw[black!100!white] (0,-3) -- (5,-1);
\draw[black!100!white] (1,-2) -- (4,0);
\draw[black!100!white] (1,-1) -- (4,0);
\draw[black!100!white] (5,-2) -- (1,-5);
\draw[black!100!white] (0,-3) -- (1,-2);
\draw[black!100!white] (0,-3) -- (1,-1);
\draw[black!100!white] (7,-3) -- (4,-5);
\draw[black!100!white] (3,0) -- (5,0);
\draw[black!100!white] (8,-4) -- (7,-5);
\draw[black!100!white] (7,-3) -- (6,-4);
\draw[black!100!white] (0,-3) -- (3,-1);
\draw[black!100!white] (6,-5) -- (2,-7);
\draw[black!100!white] (6,-2) -- (1,-5);
\draw[black!100!white] (2,-4) -- (0,-5);
\draw[black!100!white] (5,-1) -- (3,-3);
\draw[black!100!white] (7,-5) -- (2,-7);
\draw[black!100!white] (0,-5) -- (0,-4);
\draw[black!100!white] (5,-1) -- (5,-1);
\draw[black!100!white] (8,-3) -- (7,-4);
\draw[black!100!white] (7,-7) -- (5,-8);
\draw[black!100!white] (0,-4) -- (0,-4);
\draw[black!100!white] (4,-1) -- (5,-1);
\draw[black!100!white] (5,-3) -- (1,-6);
\draw[black!100!white] (7,-3) -- (1,-7);
\draw[black!100!white] (8,-5) -- (4,-8);
\draw[black!100!white] (7,-3) -- (5,-5);
\draw[black!100!white] (5,-6) -- (2,-7);
\draw[black!100!white] (8,-3) -- (8,-5);
\draw[black!100!white] (8,-3) -- (8,-4);
\draw[black!100!white] (3,-1) -- (5,0);
\draw[black!100!white] (7,-2) -- (6,-3);
\draw[black!100!white] (7,-1) -- (6,-2);
\draw[black!100!white] (4,-2) -- (0,-5);
\draw[black!100!white] (3,-7) -- (1,-7);
\draw[black!100!white] (5,-1) -- (2,-4);
\draw[black!100!white] (5,-2) -- (2,-4);
\draw[black!100!white] (4,-5) -- (2,-6);
\draw[black!100!white] (4,0) -- (5,0);
\draw[black!100!white] (0,-4) -- (5,-1);
\draw[black!100!white] (5,0) -- (6,-1);
\draw[black!100!white] (2,-5) -- (0,-5);
\draw[black!100!white] (0,-4) -- (1,-3);
\draw[black!100!white] (8,-5) -- (7,-6);
\draw[black!100!white] (7,-1) -- (3,-5);
\draw[black!100!white] (7,-2) -- (3,-5);
\draw[black!100!white] (0,-4) -- (3,-2);
\draw[black!100!white] (8,-5) -- (5,-7);
\draw[black!100!white] (3,-3) -- (0,-5);
\draw[black!100!white] (1,-4) -- (2,-3);
\draw[black!100!white] (7,-1) -- (7,-2);
\draw[black!100!white] (0,-3) -- (4,0);
\draw[black!100!white] (2,-6) -- (1,-6);
\draw[black!100!white] (3,-5) -- (1,-6);
\draw[black!100!white] (7,-1) -- (5,-3);
\draw[black!100!white] (0,-5) -- (0,-3);
\draw[black!100!white] (7,-4) -- (4,-6);
\draw[black!100!white] (5,0) -- (7,-1);
\draw[black!100!white] (7,-4) -- (2,-7);
\draw[black!100!white] (5,0) -- (5,-1);
\draw[black!100!white] (5,0) -- (5,-2);
\draw[black!100!white] (1,-1) -- (3,0);
\draw[black!100!white] (5,-5) -- (2,-7);
\draw[black!100!white] (7,-6) -- (4,-8);
\draw[black!100!white] (5,-3) -- (1,-5);
\draw[black!100!white] (6,-6) -- (3,-7);
\draw[black!100!white] (8,-5) -- (4,-7);
\draw[black!100!white] (5,-4) -- (1,-6);
\draw[black!100!white] (8,-4) -- (6,-6);
\draw[black!100!white] (1,-4) -- (5,-1);
\draw[black!100!white] (4,-6) -- (2,-7);
\draw[black!100!white] (6,-7) -- (4,-8);
\draw[black!100!white] (6,-1) -- (6,-2);
\draw[black!100!white] (1,-5) -- (0,-5);
\draw[black!100!white] (6,-4) -- (1,-7);
\draw[black!100!white] (7,-5) -- (5,-6);
\draw[black!100!white] (8,-5) -- (3,-8);
\draw[black!100!white] (7,-1) -- (4,-4);
\draw[black!100!white] (5,-3) -- (2,-5);
\draw[black!100!white] (6,-3) -- (3,-5);
\draw[black!100!white] (8,-3) -- (6,-5);
\draw[black!100!white] (2,-7) -- (1,-7);
\end{scope}
\end{tikzpicture}
\end{center}
\caption{An anti-shelling from $\emptyset$ to a ball.}
\label{fig:ConvexOrdering}
\end{figure} \qee
\end{example}

\subsection{Invariant convex geometries and midpoints}
\label{sec:TranslationInvariant}

If $G$ is a group, a family $\C \subset \FinSet(G)$ is \emph{invariant} if $C \in \C \implies \forall g \in G: gC \in \C$. The following definitions are specific to the setting of subsets of groups.

\begin{definition}
A family $\C \subset \FinSet(G)$ on a group $G$ is \emph{$S$-midpointed} if
\[ \forall g \in G, h \in S, C \in \C: g \in \overline{\{gh, gh^{-1}\}}, \]
and \emph{midpointed} if it is $G$-midpointed. Say $\C$ has \emph{unique corner positioning for $S \Subset G$ (or $S$-UCP)} if
\[ \forall C \in \C: a \in \angle C \implies \exists^{\leq 1} g: gS \subset C \wedge gS \ni a, \]
and say $\C$ has \emph{weak unique corner positioning for $S \Subset G$ (or $S$-UCP)} if
\[ \forall C \in \C: a \in \cor C \implies \exists^{\leq 1} g: gS \subset C \wedge gS \ni a. \]
We say $\C$ has UCP if it has UCP for all $S \Subset G$. We say $\C$ has (weak) UCP if it has (weak) $S$-UCP for all $S \Subset G$.
\end{definition}

Clearly $S$-UCP implies weak $S$-UCP, and clearly an invariant family $\C \subset \FinSet(G)$ is midpointed if and only if $1_G \in \overline{\{g, g^{-1}\}}$ for all $g \in G$. On occasion, we also need to talk about individual midpointed sets $C \Subset G$, meaning that the family $\{C\}$ is ($S$-)midpointed.

There is a simple connection between midpointedness and unique corner positioning:

\begin{lemma}
\label{lem:UCP}
Let $G$ be a group, $\C \subset \FinSet(G)$ a convexoid and $S \Subset G$. If $\C$ is $S^{-1}S$-midpointed, then it has $S$-UCP. Conversely, if $\C$ has weak $T$-UCP for all $T \subset S$, then $\C$ is $S^{-1}S$-midpointed.
In particular, weak UCP, UCP and midpointedness are equivalent concepts.
\end{lemma}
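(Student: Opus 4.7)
The plan is to prove the two implications separately and then combine them with the obvious inclusion $\cor C \subset \angle C$ to obtain the triple equivalence.

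For the forward direction I would fix $C \in \C$ with $a \in \angle C$, and suppose toward contradiction that two distinct group elements $g_1 \neq g_2$ satisfy $g_i S \subset C$ and $a \in g_i S$ for $i = 1,2$. Writing $a = g_1 s_1 = g_2 s_2$ with $s_i \in S$, the inequality $g_1 \neq g_2$ forces $s_1 \neq s_2$. The key observation is the identity $h := s_1^{-1} s_2 \in S^{-1}S$: unwinding the definitions, $ah = g_1 s_2 \in g_1 S \setminus \{a\} \subset C \setminus \{a\}$, and symmetrically $ah^{-1} = g_2 s_1 \in C \setminus \{a\}$. Applying $S^{-1}S$-midpointedness to the pair $(a, h)$ yields $a \in \overline{\{ah, ah^{-1}\}}$, and monotonicity of $\overline{\cdot}$ (immediate from its definition as an intersection) puts $a$ inside $\overline{C \setminus \{a\}}$, contradicting $a \in \angle C$.

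For the converse, suppose $\C$ has weak $T$-UCP for every $T \subset S$, fix $h \in S^{-1}S$ and $g \in G$, and aim to show $g \in \overline{\{gh, gh^{-1}\}}$. The case $h = 1$ is trivial. Otherwise write $h = s_1^{-1} s_2$ with $s_1 \neq s_2$ in $S$, and suppose toward contradiction some convex $D$ contains $\{gh, gh^{-1}\}$ but not $g$. The central step is to upgrade $D$ to a convex set $C$ in which $g$ is an actual corner (not merely a lax one), while keeping $gh, gh^{-1}$ inside. For this I would invoke Lemma~\ref{lem:GrowthSequences} to extend $D$ to an unbounded anti-shelling $D = D_0 \subsetneq D_1 \subsetneq \cdots$; letting $k$ be the first index with $g \in D_k$, the set $C := D_k$ satisfies $C \setminus \{g\} = D_{k-1} \in \C$, so $g \in \cor C$, and $gh, gh^{-1} \in D \subset C$. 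Taking $T = \{s_1, s_2\} \subset S$ and the two group elements $g'_i := g s_i^{-1}$ (distinct since $s_1 \neq s_2$), a direct computation gives $g'_1 T = \{g, gh\} \subset C$ and $g'_2 T = \{g, gh^{-1}\} \subset C$, each containing the corner $a := g$, in violation of weak $T$-UCP.

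The closing equivalence is formal: UCP implies weak UCP because $\cor C \subset \angle C$; weak UCP implies midpointedness by applying the converse direction with $S$ ranging over all finite subsets of $G$; and midpointedness implies UCP by applying the forward direction in the same way. The only delicate step is the construction of $C$ in the converse direction: the naive guess $C = D \cup \{g\}$ need not be convex, and the corner addition property does not let one dictate which element is added next. The anti-shelling from Lemma~\ref{lem:GrowthSequences} resolves this neatly, since the \emph{first} time $g$ is adjoined in the shelling it is automatically a corner of the current convex set.
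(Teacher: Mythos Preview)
Your proof is correct and follows essentially the same approach as the paper's own proof: the forward direction is identical, and in the converse direction both arguments produce a convex set in which $g$ is a genuine corner by running an anti-shelling until $g$ first appears. The only cosmetic difference is that the paper takes a bounded anti-shelling from the witnessing convex set to some convex superset containing $g$, whereas you invoke the unbounded anti-shelling from Lemma~\ref{lem:GrowthSequences} directly; your version is arguably slightly cleaner since it avoids having to first name that superset.
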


\begin{proof}
Suppose $S$-UCP fails. Then there exist $C \in \C$, $a \in C$ such that $a \in \angle C$ 
$S \Subset G$, and distinct $g_1, g_2 \in G$ such that $a \in g_iS \subset C$ for $i \in \{1, 2\}$. Let $a = g_1s_1 = g_2s_2$ Clearly $g_1 \neq g_2$ implies $s_1 \neq s_2$, and since $g_1 s_2, g_2 s_1 \in C$, we have $a s_1^{-1} s_2 = g_1 s_2 \in C$ and $a s_2^{-1} s_1 = g_2 s_1 \in C$. Thus, $C \setminus \{a\}$ contains $a h$ and $ah^{-1}$ for $h = s_1^{-1} s_2 \in S^{-1}S$. Since $a \in \angle C$, there is a set $D \in \C$ with $D \cap C = C \setminus \{a\}$, which contradicts the $S^{-1}S$-midpointedness of $\C$.

Suppose then that $\C$ is not $S^{-1}S$-midpointed, i.e.\ $g \notin \overline{\{gh, gh^{-1}\}}$ for some $g \in G, h \in S^{-1}S$, say $h = s_1^{-1}s_2$. By the definition of $\overline{\{gh, gh^{-1}\}}$, there exists a convex set $C'$ containing $\{gh, gh^{-1}\}$ but not $g$, and by the definition of a convexoid there exists $D \in \C$ containing $\{g, gh, gh^{-1}\}$. Pick an anti-shelling from $C'$ to $D$, and take $C$ to be the first set containing $g$. Then $\{gh, gh^{-1}, g\} \subset C$ and $g$ is a corner of $C$. Setting $T = \{s_1, s_2\} \subset S$, we have $g \in g s_1^{-1} T = \{g, gh\} \subset C$ and $g \in g s_2^{-1} T = \{g, gh^{-1}\} \subset C$. Thus $\C$ does not have weak $T$-UCP.
\end{proof}

\begin{definition}
If $\C \subset \FinSet(G)$, the set $\bigcup_{g \in G} g^{-1} \angle_{\C} gS$ is the set of \emph{translated lax corners} of $S$.
\end{definition}

Recall that the lax corners are elements $s \in S$ such that some convex set contains $S \setminus \{s\}$ but not $s$. The translated lax corners are obtained by taking the lax corners $\angle S$ of all translates of $S$ and translating them back inside $S$, equivalently taking the union of $\angle_{g\C} S$ over all $g \in G$. If $\C$ is invariant, $g^{-1} \angle gS = \angle S$ for all $g \in G$, i.e.\ all translated lax corners are lax corners.

\subsection{Polynomial time algorithm for convex sets on $\Z^d$}

The fact that ``the convex hull of a set of real vectors can be computed in polynomial time'' is a well-known fact in computational geometry. Nevertheless, it is non-trivial to find, in the literature, such a result that directly applies in our situation: algorithms are often specific to even prime dimensions, and even when not, they often give the extremal vertices instead of bounding half-spaces, or assume that points are in general position, and such algorithms will not really simplify the problem at hand. In any case, some postprocessing is needed to construct the actual discrete set from the description of the geometric convex hull. Thus, it seems easier to give a direct proof.

\begin{proposition}
\label{prop:PolyTime}
Consider $\Z^d$ as an encoded group, with the unary encoding. Let $\tau$ be the closure operator for the standard convex geometry of $\Z^d$. Then, given $S \Subset \Z^d$, the set $\tau(S)$ can be computed in polynomial time.
\end{proposition}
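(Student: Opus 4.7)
The plan is straightforward computational geometry; the main point is to verify that every step is polynomial when the input is encoded in unary. Let $n$ be the total input length. Since each vector $v \in S$ is encoded in unary with length at least $\|v\|_\infty$, every coordinate of every element of $S$ has absolute value at most $n$, and $|S| \le n$. In particular, $\tau(S) = \mathrm{conv}_{\R^d}(S) \cap \Z^d$ is contained in the bounding box $B = [-n,n]^d$, which has $(2n+1)^d$ lattice points. For fixed $d$, this is polynomial in $n$, so the output has polynomial size and can even be written down in polynomial time in the unary alphabet.

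The algorithm I would give enumerates the lattice points $p \in B$ and, for each one, decides whether $p \in \mathrm{conv}(S)$. Membership is a linear feasibility question: does there exist $(\lambda_v)_{v \in S}$ with $\lambda_v \ge 0$, $\sum_v \lambda_v = 1$ and $\sum_v \lambda_v v = p$? Two ways to resolve it in polynomial time:

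\textbf{(a) Linear programming.} This is a polynomial-size LP in $|S|+1$ variables with rational data of polynomial bit-length, hence solvable in polynomial time by e.g.\ the ellipsoid method.

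\textbf{(b) Carath\'eodory + linear algebra.} By Carath\'eodory's theorem, $p \in \mathrm{conv}(S)$ iff $p \in \mathrm{conv}(T)$ for some $T \subset S$ with $|T| \le d+1$. For each such $T = \{v_0,\dots,v_k\}$ one solves the $(k{+}1)$-variable system $\sum_i \lambda_i v_i = p$, $\sum_i \lambda_i = 1$ by Gaussian elimination (or Cramer's rule), obtaining rational $\lambda_i$ of polynomially-bounded bit-length, and then checks non-negativity. The number of subsets $T$ is $O(|S|^{d+1})$, polynomial for fixed $d$.

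Either variant gives a polynomial-time membership oracle, and running it on each of the $(2n+1)^d$ candidate points $p \in B$ yields the characteristic function of $\tau(S)$, from which one outputs the list of points in $\tau(S)$. The main ``obstacle'' is really just bookkeeping: one must check that the dimension $d$ is fixed (which it is, since we work inside the encoded group $\Z^d$), that the unary encoding makes the coordinate magnitudes and $|S|$ polynomial in $n$, and that the bit-length of the rationals produced by Cramer's rule (or by the LP solver) stays polynomial. These are all standard facts, so the proof is essentially the paragraph above together with the verification that the chosen subroutine is polynomial.
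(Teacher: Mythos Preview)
Your proof is correct and follows the same overall strategy as the paper: bound $\tau(S)$ inside a box of polynomially many lattice points (using the unary encoding), then test membership of each candidate point individually. The only difference is the membership subroutine. The paper phrases it dually, as a separating-hyperplane question---translate so the candidate is at the origin and ask whether the strict system $Ax > 0$ is feasible, where the rows of $A$ are the translated points of $S$---and solves this by a short hand-rolled elimination argument that tracks the growth of coefficients. You phrase it primally, as convex-combination feasibility, and invoke either an LP black box or Carath\'eodory plus Gaussian elimination. The paper in fact remarks immediately after its proof that in its own computer implementations it uses exactly your Carath\'eodory approach, so the two write-ups are very close; the paper's version just avoids citing an external LP result.
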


\begin{proof}
Let $R$ be the maximal value that appears in the vectors in $S$, and observe that, since we are working in unary notation, the list of all vectors in $[-R,R]^d$ is of polynomial size in the input size. Thus, it suffices to check, for every individual $\vec v \in [-R,R]^d$, whether $\vec v \in \tau(S)$.
By setting $T = S - \vec v$, we have reduced the problem to verifying in polynomial time whether $\vec 0 \in \tau(T)$ for a set $T$ of integer vectors of dimension $d$ and with entries in $[-2R, 2R]$.

Now, we recall that the convex hull of $S$ in $\R^d$ is precisely the intersection of all affine half-spaces of dimension $(d-1)$ that contain $S$. By the definition of a half-plane, it is thus enough to find out whether there is a real column vector $x \in \R^d$ such that the strict inequality $Ax > 0$ holds, where $A$ is the $|T|$-by-$d$ integer matrix whose rows are the vectors in $T$.

Such a system of inequalities can be solved for a rational matrix $A$ by using at most $O(|T|d)$ field operations on the matrix coefficients, so that each matrix entry takes part in at most $O(d)$ calculations. Thus, even with exact calculations with rational numbers, the time will be polynomial in $R$.

We outline this algorithm: If the first column of $A$ contains no negative coefficients, then it is enough to solve $B \vec y > 0$ where $B$ is the ${(\leq\!|T|)}$-by-$(d-1)$ matrix of $(d-1)$-suffixes of rows of $A$ having $0$ in the first coordinate, as $A \vec x > 0$ then holds for any $\vec x = (a, \vec y^T)^T$ with large enough $a$. A similar reduction happens if the first column of $A$ contains no positive coefficients. Thus, in these cases we can reduce $d$ to $d-1$ without applying any field operations to the input matrix and conclude by induction.

If the first column of $A$ contains both positive and negative coefficients, we observe that if $E$ is invertible and $E \vec v > 0 \iff \vec v > 0$, then $Ax > 0$ has a solution if and only if $EAx > 0$ has one. Thus, we can apply any elementary row operations with positive entries to $A$. This way we can eliminate all negative coefficients from the first column, with each individual entry taking part in only a constant number of field operations. Now apply the previous paragraph to solve the resulting problem, and by induction every coefficient will be involved in at most $O(d-1)$ additional field operations.
\end{proof}

In practical computer implementations, we have instead used Carath\'eodory's theorem and taken the union of convex hulls of simplices, as this is fast enough for small examples in dimension two and is very quick to implement.

The assumption that vectors are specified in unary notation rather than in binary is natural from the general point of view, as we are dealing with subshifts on groups -- for most groups, analogs of binary representations tend to be much less canonical, and much more difficult to work with. Specifying the inputs in unary is also necessary for polynomial-time computability of the convex hull if the output is listed explicitly: already in $\Z$ the convex hull of $\{0,n\}$ has $n+1$ elements, so the output complexity will not be polynomial in the input if the input is specified in binary.

One could of course use a more efficient coding of the output convex hull, as is typically done in computational geometry. There are some reasons why this is not natural in our context: such implicit convex sets cannot really be used as supports of patterns without revealing the hidden exponential (as the data of a pattern of such shape is exponential in the description of the shape), and one cannot write down (in binary notation) the exact number of legal patterns of such shapes in a TEP subshift in polynomial time, as the number is typically doubly exponential in an efficient description of the convex set.

\section{TEP}
\label{sec:UCEandTEP}

In this section, we define the TEP subshifts. One should keep in mind the case of the standard convex sets $\Z^d$. Recall that by Lemma~\ref{lem:UCP} they have UCP. We will construct ($S$-)UCP convex geometries on some other groups in Section~\ref{sec:ExamplesOfCG}.

\begin{definition}
Let $G$ be a group and $\C \subset \FinSet(G)$ a convexoid, $\emptyset \neq C \subset S \Subset G$ and $A$ a finite alphabet. We say a family of patterns $\T \subset A^S$ has \emph{$k$-uniform $C$-extensions} if
\[ \forall s \in C: \forall P \in A^{S \setminus \{s\}}: \exists^k a \in A: P \sqcup (s \mapsto a) \in \T. \]
We say $\T$ is \emph{($k$-)totally extremally permutive}\footnote{We immediately acknowledge that the term ``totally extremally permutive'' does not really make sense for $k > 1$, but it makes some sense for the case $k = 1$ (which is the notion we studied first), as a $1$-TEP family can be shown to set up a \emph{permutive} relation between the \emph{extremal} vertices (translated lax corners), and the function from possible contents of $S \setminus \{g\}$ to those of $g$ is indeed \emph{total}, i.e.\ defined on all of $A^{S \setminus \{g\}}$.} or \emph{($k$-)TEP} if it has uniform $g^{-1} \angle gS$-extensions for all $g \in G$. We say $\T$ has \emph{TEP} if it has $1$-TEP. If $C = \{c\}$ is a singleton, we speak of ($k$-)uniform $c$-extensions instead of $C$-extensions.
\end{definition}

We sometimes say an SFT $X \subset A^G$ is $k$-TEP if it is defined by an allowed set of patterns $\T \subset A^S$ which is $k$-TEP. This does not mean all patterns in $\T$ appear in $X$, although in the presence of a suitable convex geometry Theorem~\ref{thm:UCE} states precisely that the do.

We recall the key properties about the set of translated lax corners. First, we have $g^{-1} \angle gS \subset S$ for all $g \in G$, so $k$-uniform $S$-extensions implies $k$-TEP no matter what $\C$ is. Second, when $\C$ is an invariant convex geometry, $\bigcup_g g^{-1} \angle gS = \cor S$, the set of corners of $S$. The set $S$ is sometimes called the \emph{shape} of the TEP family or subshift.

The following describes how the number of legal patterns in a TEP subshift behaves then we step from a convex set to a larger one.

\begin{lemma}
\label{lem:UniformExtensions}
Let $G$ be a countable group, let $S \Subset G$, let $\C \subset \FinSet(G)$ be an $S$-UCP convexoid, let $A$ be a finite alphabet, let $\T \subset A^S$ have $k$-TEP, let $X$ be the SFT with allowed patterns $\T$, let $C, C \cup \{a\} \in \C$ with $a \notin C$, and let $P \in A^C$ be $\T$-legal. Let
\[ \ell = |\{Q \in A^{C \cup \{a\}} \;|\; Q|_{C} = P \mbox{ and } Q \mbox{ is $\T$-legal} \}|. \]
Then
\begin{itemize}
\item if $a \in gS \subset C \cup \{a\}$ for some $g \in G$, we have $\ell = k$, and
\item if such $g \in G$ does not exist, we have $\ell = |A|$.
\end{itemize}
\end{lemma}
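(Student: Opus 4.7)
The proof naturally splits along the two cases, with the second case being essentially immediate and the first case requiring the full strength of $S$-UCP and the $k$-TEP hypothesis.

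For the second case, suppose no $g$ with $a \in gS \subset C \cup \{a\}$ exists. Then every translate $g'S$ with $g'S \subset C \cup \{a\}$ satisfies $g'S \subset C$. For any choice of symbol $b \in A$, the extension $Q = P \sqcup (a \mapsto b)$ restricted to such a translate coincides with $P$ there, which is $\T$-legal. Hence every one of the $|A|$ choices of $b$ yields a $\T$-legal extension, so $\ell = |A|$.

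For the first case, I first want to argue that $a$ is a lax corner of $C \cup \{a\}$ and extract uniqueness of $g$ from $S$-UCP. Since $(C \cup \{a\}) \setminus \{a\} = C \in \C$, we have $a \in \cor(C \cup \{a\}) \subset \angle(C \cup \{a\})$. Then $S$-UCP applied to $C \cup \{a\}$ with the corner $a$ gives that at most one $g \in G$ satisfies $a \in gS \subset C \cup \{a\}$; by assumption such $g$ exists and is therefore unique. Write $s = g^{-1}a \in S$.

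Next I want to show $s \in g^{-1}\angle gS$, so I can apply the $k$-TEP hypothesis. We have $gS \setminus \{a\} = g(S \setminus \{s\}) \subset C$, and since $C \in \C$, the intersection defining $\overline{gS \setminus \{a\}}$ is contained in $C$, hence $a \notin \overline{gS \setminus \{a\}}$. Thus $a \in \angle gS$, equivalently $s \in g^{-1}\angle gS$. Now the $k$-TEP hypothesis says $\T$ has $k$-uniform $g^{-1}\angle gS$-extensions, so for the particular pattern $P' = g^{-1}P|_{g(S \setminus \{s\})} \in A^{S \setminus \{s\}}$ there are exactly $k$ symbols $b \in A$ such that $P' \sqcup (s \mapsto b) \in \T$.

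To conclude, I want to verify these $k$ choices correspond exactly to the $\T$-legal extensions $Q$ of $P$. For any other $g' \neq g$ with $g'S \subset C \cup \{a\}$, uniqueness forces $a \notin g'S$, so $g'S \subset C$ and the restriction of $Q$ to $g'S$ coincides with $P|_{g'S}$, which is $\T$-legal. Thus the only new constraint imposed by enlarging $C$ to $C \cup \{a\}$ is that $g^{-1}Q|_{gS} \in \T$, and this constraint cuts the $|A|$ choices down to exactly $k$, giving $\ell = k$. The one mildly delicate point is the verification that $a \in \angle gS$, because the ambient $\overline{\,\cdot\,}$ is only defined abstractly for convexoids; but the inclusion $\overline{gS \setminus \{a\}} \subset C$ is immediate from the definition $\overline{T} = \bigcap_{T \subset D \in \C} D$.
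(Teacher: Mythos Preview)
Your proof is correct and follows essentially the same approach as the paper's: both use $S$-UCP to get uniqueness of $g$, identify $a$ as a (lax) corner to invoke the $k$-TEP property at the translated lax corner $s = g^{-1}a$, and then observe that all other translates of $S$ inside $C \cup \{a\}$ already lie in $C$. Your justification that $a \in \angle gS$ via $\overline{gS \setminus \{a\}} \subset C$ is slightly more explicit than the paper's one-line ``because $C$ is convex'', but the content is identical.
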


\begin{proof}
By the $S$-UCP property, and the assumption that $C, C \cup \{a\}$ are convex, there is at most one $g$ such that $a \in gS \subset C \cup \{a\}$. If there is at least one such way, i.e.\ we are in the case of the first item of the lemma, then because $C$ is convex we have $a = gt \in \angle gS$, so $t \in g^{-1} \angle gS$. 
Enumerate the patterns over $S$ extending $g^{-1}(P)|_{S \setminus \{t\}}$ pattern as $Q'_1, ..., Q'_{|A|}$. By the $k$-TEP property there are exactly $\ell = k$ legal ways to extend $g^{-1}(P)|_{S \setminus \{t\}}$ to a pattern with support $S$, let these be $Q'_1, ..., Q'_k$. Translating back, we obtain that out of the patterns $Q_i = gQ'_i$ extending $P|_{gS \setminus \{a\}}$ to $gS$ exactly $Q_1,...,Q_k$ are $\T$-legal at $gS$.

It follows that there are at most $k$ extensions of $P$ to $C \cup a$. But for any $1 \leq i \leq k$, the pattern $P' \in C \cup \{a\}$ defined by $P'_g = P_g$ if $g \neq a$, $P'_g = (Q_i)_g$ otherwise, is locally legal: the contents of $gS$ is legal by the assumption on the $Q_i$, and all other translates of $S$ that are contained in $C \cup \{a\}$ are even contained in $C$ and we assumed $P$ is locally legal. Thus, there are exactly $k$ locally legal extensions, meaning $\ell = k$.

If we are in the case of the second item, i.e.\ there is no $g \in G$ such that $a \in gS \subset C \cup \{a\}$, then any extension of $P$ to $Q \in A^{C \cup \{a\}}$ is trivially $\T$-legal, so $\ell = |A|$.
\end{proof}

\subsection{Language and measure}

Our first main result states that $k$-TEP implies that every locally legal pattern supported on a convex set is globally legal.

\begin{theorem}
\label{thm:UCE}
Let $G$ be a countable group, $S \Subset G$, $\C \subset \FinSet(G)$ an $S$-UCP convexoid, $A$ a finite alphabet, $\T \subset A^S$ have $k$-TEP, and $X$ be the SFT with allowed patterns $\T$. If $C \in \C$ and $P \in A^C$ is $\T$-legal, then $P \sqsubset X$. If $G$ has decidable word problem and $\C$ is recursively enumerable, then $X$ has computable language, uniformly in the description of $\T$.
\end{theorem}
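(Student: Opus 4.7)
The plan for the first assertion is to grow $P$ along an unbounded anti-shelling, invoking Lemma~\ref{lem:UniformExtensions} at each step. By Lemma~\ref{lem:GrowthSequences}, since $C \in \C$ and $\C$ is a convexoid, $C$ is the base of an unbounded anti-shelling $C = C_0 \subset C_1 \subset \cdots$ with $C_{i+1} = C_i \cup \{a_{i+1}\}$ for some $a_{i+1} \notin C_i$ and $\bigcup_i C_i = G$. Starting from $P_0 := P$, I would inductively build a nested sequence of $\T$-legal patterns $P_i \in A^{C_i}$ with $P_{i+1}|_{C_i} = P_i$: by Lemma~\ref{lem:UniformExtensions}, regardless of whether $a_{i+1}$ is the corner of some translate of $S$ inside $C_{i+1}$, the number of $\T$-legal extensions of $P_i$ to $C_{i+1}$ equals $k$ or $|A|$, both at least one in the non-degenerate case, so at least one extension always exists. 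The pointwise limit $x \in A^G$ defined by $x|_{C_i} = P_i$ is then a configuration in $X$, since every translate $gS$ is eventually contained in some $C_i$, where $P_i|_{gS} \in \T$ by construction.

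For the computability claim, the natural approach is to give a decision procedure for the language. Given a finite pattern $Q \in A^D$, first enumerate $\C$ (using its recursive enumerability) until a convex set $C \supseteq D$ is produced; such $C$ exists by the convexoid axioms, so the search halts. Then enumerate the finitely many extensions $Q' \in A^C$ with $Q'|_D = Q$, and for each test local $\T$-legality: the relevant $g$ with $gS \subseteq C$ lie in the finite set $\{cs^{-1} \mid c \in C, s \in S\}$, which is enumerable using the decidable group operations, and the check $g^{-1} Q'|_S \in \T$ is a finite table lookup. Output ``$Q \sqsubset X$'' if at least one extension passes the test, and ``$Q \not\sqsubset X$'' otherwise. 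Correctness follows from the first part: a locally $\T$-legal extension on $C$ is globally legal, while conversely any $x \in X$ restricting to $Q$ gives a locally $\T$-legal extension $x|_C$ of $Q$ that the algorithm would accept.

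The heart of the argument is really Lemma~\ref{lem:UniformExtensions} combined with the anti-shelling of Lemma~\ref{lem:GrowthSequences}; given these, the first part is a one-step induction, and the decision procedure is essentially routine. The only mildly delicate point is uniformity in the description of $\T$: one must confirm that, given $S$, $A$, $\T$, the description of $G$ with decidable word problem, and the enumeration procedure for $\C$, the above steps can be carried out uniformly. This is immediate since each stage involves only finite data of uniformly bounded description. Note also that effectivity is used only for the computable-language statement; the existence of a global extension requires only the convexoid and TEP structure, with no recursion-theoretic hypothesis on $G$ or $\C$.
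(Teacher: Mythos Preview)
Your proposal is correct and follows essentially the same approach as the paper: extend along an unbounded anti-shelling via Lemma~\ref{lem:GrowthSequences} and Lemma~\ref{lem:UniformExtensions} for the first claim, and for computability enumerate a convex superset and test all finite extensions for local $\T$-legality. Your justification that both $k$ and $|A|$ are at least one (so an extension always exists) and your explicit description of the finite search space $\{cs^{-1} \mid c \in C,\, s \in S\}$ for the relevant translates are, if anything, slightly more detailed than the paper's own write-up.
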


Note that this theorem applies even if $\C$ is not invariant.

\begin{proof}
Suppose $P = P_0 \in A^C$ is $\T$-legal and $C \in \C$. By Lemma~\ref{lem:GrowthSequences} there is an unbounded anti-shelling $C_0 = C, C_1, C_2, ...$. By Lemma~\ref{lem:UniformExtensions}, for each $i \in \N$ we can find at least one $\T$-legal pattern $P_{i+1} \in A^{C_{i+1}}$ (in fact, we can find at least $k$). Since $\bigcup_i C_i = G$ is an increasing union, in the limit (under cellwise convergence), we obtain a configuration $x \in A^G$. It is in $X$ because its finite subpatterns are $\T$-legal. Thus, $P \sqsubset X$.

If $G$ is computable with respect to a fixed encoding, then given a pattern $P \in A^B$ with $B \Subset G$, we can easily check whether $P$ is $\T$-legal. Namely, we simply need to consider all possible translates of $S$ which fit inside $B$, and these can be enumerated easily. If $\C$ is recursively enumerable, then given a pattern $P \in A^B$ for a finite set $B \Subset G$, we enumerate any convex $C \in \C$ such that $C \supset B$. We then simply check whether we can $\T$-legally extend $P$ to a pattern $Q \in A^C$ with $Q|_B = P$, which, as we observed, is a decidable task. This can be done if and only if $P \sqsubset X$, proving that the language is computable.
\end{proof}

\begin{theorem}
\label{thm:Measure}
Let $G$ be a countable group, $S \Subset G$, $\C \subset \FinSet(G)$ an $S$-UCP convexoid, $A$ a finite alphabet, $\T \subset A^S$ have $k$-TEP, and $X$ be the SFT with allowed patterns $\T$. Then there exists a (unique) measure $\mu$ on $X$ such that for all $C \in \C$, the marginal distribution $\mu|_C$ is uniform on $X|_C$. If $\C$ is invariant, then $\mu$ is invariant. If $G$ has a decidable word problem and $\C$ is recursively enumerable, then this measure is computable uniformly in the description of $\T$ and can be sampled perfectly.
\end{theorem}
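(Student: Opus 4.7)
The plan is to specify $\mu$ by prescribing its finite-dimensional marginals on convex sets, verify consistency, extend to a Borel measure on $A^G$ by Daniell--Kolmogorov, and then check invariance, uniqueness, and the computability and sampling statements.

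For each $C \in \C$ let $\mu_C$ be the uniform distribution on the set of $\T$-legal patterns in $A^C$; by Theorem~\ref{thm:UCE} this set is exactly $X|_C$, so $\mu_C$ is well-defined and supported there. The key consistency step is to show $\mu_{C \cup \{a\}}|_C = \mu_C$ whenever $C, C \cup \{a\} \in \C$ and $a \notin C$. Lemma~\ref{lem:UniformExtensions} provides this directly: every $\T$-legal $P \in A^C$ admits exactly $\ell$ $\T$-legal extensions to $C \cup \{a\}$, where $\ell \in \{k, |A|\}$ depends only on whether some translate $gS$ sits inside $C \cup \{a\}$ with $a \in gS$, not on $P$. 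Summing gives $|X|_{C \cup \{a\}}| = \ell \cdot |X|_C|$, so the uniform distribution on $X|_{C \cup \{a\}}$ projects to the uniform distribution on $X|_C$. Iterating along an anti-shelling (Lemma~\ref{lem:GrowthSequences}) extends this to $\mu_D|_C = \mu_C$ for any $C \subset D$ in $\C$. For arbitrary $B \Subset G$ I would set $\mu_B := \mu_C|_B$ for any $C \in \C$ with $B \subset C$ (which exists by the convexoid axioms); independence of the choice of $C$ follows by taking a common convex superset of any two candidates and invoking the consistency just proved.

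The Daniell--Kolmogorov extension theorem on the compact metrizable product $A^G$ yields a unique Borel probability measure $\mu$ with these marginals, and $\mu$ is supported on $X$ because each $\mu_B$ is supported on $\T$-legal patterns. Uniqueness among measures with uniform marginals on convex sets follows because cylinders based on convex sets form a $\pi$-system generating the Borel $\sigma$-algebra. For translation invariance when $\C$ is invariant: for $C \in \C$ the translate $gC$ is also convex, and the shift is a bijection between $\T$-legal patterns on $C$ and on $gC$, so $|X|_{gC}| = |X|_C|$; a short computation with cylinders using $g^{-1}[P] = [g^{-1}P]$ shows $g\mu$ has uniform marginals on every convex set, hence $g\mu = \mu$ by uniqueness.

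Under the computability hypotheses, to compute $\mu([P])$ for $P \in A^B$ I would enumerate $\C$ to find some $C \supset B$, fix an anti-shelling $\emptyset = C_0, C_1, \ldots, C_n = C$ (which can be produced effectively along the lines of the proof of Lemma~\ref{lem:GrowthSequences}), and compute $|X|_{C_i}|$ inductively as a product of factors $k$ or $|A|$ using Lemma~\ref{lem:UniformExtensions}; deciding which factor applies at step $i$ reduces to checking, for finitely many $g$, whether $gS \subset C_{i+1}$ and the new point plays the role of a corner, all of which is decidable given a decidable word problem in $G$ and an r.e.\ listing of $\C$. The numerator $|\{Q \in X|_C : Q|_B = P\}|$ is then obtained by brute enumeration, since the language of $X$ is decidable by Theorem~\ref{thm:UCE}, and the exact rational $\mu([P])$ follows. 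Perfect sampling is then carried out by fixing an unbounded anti-shelling $\emptyset = C_0, C_1, \ldots$ of $G$ (Lemma~\ref{lem:GrowthSequences}) and, at each step, picking one of the $\ell$ legal extensions uniformly at random; the resulting configuration is distributed as $\mu$ because its finite marginals match those built above. The main obstacle is really packaged into Lemma~\ref{lem:UniformExtensions} — the fact that the number of extensions at an anti-shelling step is a constant independent of the pattern underneath — so the remaining work is standard extension-theorem bookkeeping plus a uniformity check for the algorithmic claims.
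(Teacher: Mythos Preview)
Your proof is correct and follows essentially the same approach as the paper: define $\mu_C$ as uniform on $\T$-legal patterns of convex shape $C$, use Lemma~\ref{lem:UniformExtensions} along anti-shellings (Lemma~\ref{lem:GrowthSequences}) to verify consistency, and apply a Kolmogorov-type extension; invariance and sampling are handled just as in the paper. Your write-up is in fact somewhat more explicit than the paper's (you spell out the extension to arbitrary $B$ via common convex supersets and the $\pi$-system uniqueness argument), but the underlying strategy and the key use of the constant-extension-count lemma are identical.
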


We call the above measure the \emph{TEP measure}. Of course, it may a priori depend on $\C$. This theorem is in some sense only as interesting as the convex geometry it is applied to. In particular, it does not seem particularly interesting dynamically when $\C$ is not invariant (its counting variants proved below may be more interesting in that case).

\begin{proof}
For the construction of the measure, we use the quantitative information that the number of extensions only depends on the shapes, given by Lemma~\ref{lem:UniformExtensions}. Associate to each $C \in \C$ the uniform measure $\mu_C$ on $X|_C$ (by the previous theorem, equivalently on the set of $\T$-legal patterns of shape $C$). If $C \subset D$ and $C, D \in \C$, using Lemma~\ref{lem:GrowthSequences} we can find an anti-shelling from $C$ to $D$.

By iterated application of Lemma~\ref{lem:UniformExtensions}, all $\T$-legal patterns $P \in A^C$ have the same number of extensions $m$ to $\T$-legal patterns in $A^D$, so if we denote by $\mu_D|_C$ the distribution that $\mu_D$ induces on patterns in $A^C$, and suppose there are $\ell$ many $\T$-legal patterns in $A^C$, by uniformity of $\mu_D$ we have
\[ \mu_D|_C(P) = \mu_D([P]) = m/\ell m = 1/\ell = \mu_C(P) \]
where $[P] = \{Q \in A^D \;|\; Q|_C = P\}$. Now by basic measure theory, there is a unique measure $\mu$ of $A^G$ such that $\mu|_C = \mu_C$ for all $C$ (for example it is easy to verify the assumptions of the Hahn-Kolmogorov theorem), which concludes the construction.

It is clear that this construction gives a shift-invariant measure when $\C$ is invariant, since the property of being $\T$-legal is invariant.

If $G$ has a decidable word problem, then to show that the measure is computable and perfectly samplable, we need only show the latter. For this, enumerate an anti-shelling (using recursive enumerability of $\C$) with base $\emptyset$, and sample the symbols at the new corners $s_i$ of the sets $C_i$ uniformly from the $k$ or $|A|$ possible choices depending on which case of Lemma~\ref{lem:UniformExtensions} we are in (which is decidable at each step by decidability of the word problem).
\end{proof}

The following is clear from the proof, and will be refined in the following section.

\begin{corollary}
Let $G$ be a countable group, $\C \subset \FinSet(G)$ an $S$-UCP convexoid. For a TEP subshift $X \subset A^G$ with defining shape $S$, the number of globally admissible patterns with domain $C \in \C$ is independent of the TEP family $\T \subset A^S$, and is always a power of $|A|$.
\end{corollary}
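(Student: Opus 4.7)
The plan is to invoke Lemma~\ref{lem:GrowthSequences} to pick an anti-shelling $C_0 = \emptyset, C_1, \ldots, C_n = C$ inside $\C$, then count locally legal patterns by induction along this chain using Lemma~\ref{lem:UniformExtensions}. Since TEP means $1$-TEP, at each step $i \to i+1$ the number of $\T$-legal extensions of a given $\T$-legal pattern $P \in A^{C_i}$ to $A^{C_{i+1}}$ is either $1$ or $|A|$, depending only on whether there is some $g \in G$ with the new corner element $a \in gS \subset C_{i+1}$. Crucially, which of the two cases occurs at step $i$ is a purely geometric condition that depends on $S$ and $\C$ but not on $\T$.

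Multiplying through, the number of $\T$-legal patterns in $A^C$ is $|A|^m$, where $m$ is the number of anti-shelling steps at which no translate of $S$ gets completed. This $m$ is determined by the geometric data $(C, S, \C)$ (and by the anti-shelling, but since the final count $|A|^m$ equals the number of $\T$-legal patterns in $A^C$, it must be the same for all anti-shellings). In particular, $|A|^m$ is independent of $\T$. Finally, Theorem~\ref{thm:UCE} identifies $\T$-legal patterns on convex shapes with globally admissible patterns, so the number of globally admissible patterns on $C$ is exactly this $\T$-independent power of $|A|$.

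There is essentially no obstacle: the work has all been done in Lemma~\ref{lem:UniformExtensions} (the dichotomy $1$ vs.\ $|A|$), Lemma~\ref{lem:GrowthSequences} (existence of the anti-shelling), and Theorem~\ref{thm:UCE} (local legality $\Leftrightarrow$ global admissibility on convex sets). The only thing to emphasize is that the binary choice between factors $1$ and $|A|$ at each step is dictated by the geometry of $\C$ and $S$ alone, which makes both conclusions -- independence of $\T$ and the power-of-$|A|$ form -- immediate from the multiplicative count.
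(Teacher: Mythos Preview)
Your proposal is correct and follows essentially the same approach as the paper, which simply notes that the corollary is ``clear from the proof'' of Theorem~\ref{thm:Measure}: build an anti-shelling from $\emptyset$ to $C$ via Lemma~\ref{lem:GrowthSequences}, apply Lemma~\ref{lem:UniformExtensions} at each step to get a factor of $1$ or $|A|$ determined purely by the geometry of $S$ and $\C$, and invoke Theorem~\ref{thm:UCE} to equate $\T$-legal with globally admissible on convex domains.
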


The following proposition is optimal in the case $G = \Z$, and its motivation should become clear in Section~\ref{sec:Kaplansky}.

\begin{proposition}
\label{prop:AtLeastPoints}
Let $G$ be a countable group, $S \Subset G$, $\C \subset \FinSet(G)$ an $S$-UCP convexoid, $\T \subset A^S$ have $k$-TEP, and $X$ be the SFT with allowed patterns $\T$. If $k = 1$, then $X$ has at least $|A|^{|S|-1}$ configurations. If $k \geq 2$, then $X$ is uncountable and homeomorphic to the Cantor set.
\end{proposition}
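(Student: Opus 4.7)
The plan is to proceed via anti-shellings from the empty set, using Lemma~\ref{lem:GrowthSequences} to obtain them and Lemma~\ref{lem:UniformExtensions} to count extensions. The pivotal quantitative observation is this: in \emph{any} anti-shelling $\emptyset = C_0 \subset C_1 \subset C_2 \subset \cdots$ starting from the empty set, each of the first $|S|-1$ steps is necessarily ``free'' (i.e., contributes a factor of $|A|$ rather than $k$). Indeed, at step $i$ we have $|C_i| = i$, so for $i \leq |S|-1$ no translate $gS$ can fit inside $C_i$, placing us in the second case of Lemma~\ref{lem:UniformExtensions}.

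For the case $k=1$, I will apply Lemma~\ref{lem:GrowthSequences} to produce an anti-shelling $\emptyset = C_0 \subset C_1 \subset \cdots$ with $C_{|S|-1} \in \C$. The observation above, together with iterated application of Lemma~\ref{lem:UniformExtensions}, gives exactly $|A|^{|S|-1}$ distinct $\T$-legal patterns of shape $C_{|S|-1}$. Since $C_{|S|-1}$ is convex, Theorem~\ref{thm:UCE} guarantees each such pattern extends to a configuration of $X$, and distinct patterns yield distinct configurations, so $|X| \geq |A|^{|S|-1}$.

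For $k \geq 2$, I first note $k \leq |A|$ (since $\T$ has $k$-uniform extensions at any corner, forcing $k$ distinct symbols in $A$), hence $|A| \geq 2$. Take an unbounded anti-shelling $\emptyset = C_0 \subset C_1 \subset \cdots$ with $\bigcup_i C_i = G$ from Lemma~\ref{lem:GrowthSequences}. By Lemma~\ref{lem:UniformExtensions}, at each step the number of $\T$-legal extensions is either $|A|$ or $k$, in either case $\geq 2$. The set of $\T$-legal patterns on the $C_i$ thus forms a rooted tree in which every node has at least two children; since every branch extends to a global configuration by Theorem~\ref{thm:UCE} and the tree is infinite, $X$ is uncountable. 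For the Cantor-set claim, $X$ is automatically nonempty, compact, metrizable and totally disconnected; the only thing to check is that it is perfect. Given $x \in X$ and a basic open neighborhood $[x|_B]$ with $B \Subset G$, pick any $C \in \C$ containing $B$ and continue an anti-shelling $C = C'_0 \subset C'_1 \subset \cdots$ to all of $G$. At step $1$ there are $\geq 2$ legal extensions of $x|_C$, yielding (via Theorem~\ref{thm:UCE}) at least two configurations of $X$ that agree with $x$ on $C \supset B$; hence $x$ is not isolated. Brouwer's characterization of the Cantor set finishes the proof.

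The argument is largely a bookkeeping exercise on top of Theorem~\ref{thm:UCE} and Lemma~\ref{lem:UniformExtensions}; the only conceptual step is the cardinality bound showing that the first $|S|-1$ anti-shelling steps are unconstrained, and there is no real obstacle. The mildest care is needed in the Cantor-set part, where one must remember to first pad the finite set $B$ to a convex $C \in \C$ before invoking the anti-shelling machinery (which demands convex sets as bases).
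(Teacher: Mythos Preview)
Your proposal is correct and follows essentially the same approach as the paper: obtain an anti-shelling from $\emptyset$, observe that the first $|S|-1$ steps are unconstrained since no translate of $S$ fits, invoke Theorem~\ref{thm:UCE} to globalize, and for $k\geq 2$ use that every step gives at least two choices to conclude perfectness. Your write-up is in fact slightly more careful than the paper's (explicitly noting $k\leq |A|$ so that the ``free'' steps also branch, and spelling out the padding of $B$ to a convex $C$ before continuing the anti-shelling).
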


\begin{proof}
The convex geometry $\C$ contains at least one set $C$ of cardinality exactly $|S|-1$, namely the $|S|$th set in any unbounded anti-shelling with base $\emptyset$. Any pattern in $A^C$ is $\T$-legal, because no translate of $S$ can fit inside $C$. Thus, any such pattern appears in a configuration of $X$ by Theorem~\ref{thm:UCE}.

If $k \geq 2$, Lemma~\ref{lem:UniformExtensions} gives at least two choices for each new element seen along an unbounded anti-shelling with base $\emptyset$, so there is a Cantor set of extensions for any pattern with convex domain (and any pattern is a subpattern of such a pattern). Thus $X$ is a (nonempty) perfect totally disconnected compact metrizable space, thus uncountable and homeomorphic to the Cantor set.
\end{proof}

We will show in Section~\ref{sec:NoMidpointed} that only torsion-free groups can admit midpointed convex geometries. Indeed, Proposition~\ref{prop:AtLeastPoints} fails on all non-torsion-free groups in the following sense.

\begin{example}
\label{ex:Finite}
Let $G$ have an element of finite order, and choose some $g \neq 1_G$ such that $g^n = 1_G$ for $n \geq 2$. Let $A = \Z_q$ for any $q > n$. Let $p \in A[G]$ be the element $1_G - g$ of the group ring $\Z_q[G]$. Then the configurations $x \in A^G$ satisfying $xp = 1^G$ (where $1 \in \Z_q$ is the additive generator) form an empty $1$-TEP SFT. \qee
\end{example}

\subsection{More or less algebraic examples}

In this section we give some basic examples of TEP subshifts. We begin with the linear examples, where in addition to the TEP structure we have the structure of a group shift, in fact a vector (space) shift over a finite field. We then illustrate why TEP really generalizes linear subshifts in a different direction than group shifts do, by showing an example of a TEP subshift that does not commute with any shift-commuting continuous magma operation that is either invertible or unital. In this example, non-algebraicity is easy to prove using its algebraic subsystems. The dependence of a TEP rule on the non-corner coordinates can be rather arbitrary, giving, presumably, even less algebraic examples. However, proving their non-algebraicity is more difficult.

The simplest and most important examples of TEP subshifts are the following ones, which sit in the intersection of the algebraic realm (group shifts) and the TEP realm.

\begin{example}
\label{ex:Ledrappier}
Let $G$ be a group.
let $F$ be a finite field, and let $p \in F[G]$ be any element of the group ring $F[G]$. Identify $x \in F^G$ with the formal sum $\sum_{g \in G} x_g \cdot g$. Then configurations $x \in F^G$ satisfying $x \cdot p = 0^G$ form a subshift of finite type. This gives us a TEP rule with shape $S$, where $S^{-1} = \{s^{-1} \;|\; s \in S\}$ is the support of $p$. In particular setting $G = \Z^2$, $\C$ is the standard convex geometry, $|F| = 2$ and $p = 1 + x^{-1} + y^{-1}$, we obtain that the \emph{Ledrappier subshift} $\{x \in \{0,1\}^{\Z^2} \;|\; x_{(1,0)} \equiv x_{(0,1)} + x_{(1,1)} \bmod 2\}$ is TEP.
\qee
\end{example}

Subshifts defined by $\{x \;|\; x \cdot p = 0 \}$ for $p \in F[G]$ as in the previous example are called \emph{linear TEP subshifts}. They are a classical family of group/vector shifts in symbolic dynamics. In the last Section~\ref{sec:Kaplansky} we show a connection to a conjecture of Kaplansky, and indeed what Proposition~\ref{prop:AtLeastPoints} gives for linear TEP subshifts on strongly polycyclic groups (which we later show admit $S$-UCP convex geometries for all finite sets $S$) is a known case of Kaplansky's unit conjecture.

If Theorem~\ref{thm:Measure} is applied to a linear TEP subshift, we obtain just the Haar measure. In particular the Haar measure of the Ledrappier subshift from Example~\ref{ex:Ledrappier} is its TEP measure.

\begin{theorem}
Let $G$ be a countable group, $A$ a finite group and $X \subset A^G$ a subshift that is a group under cellwise multiplication. If $X$ is simultaneously $k$-TEP with shape $S$, with respect to any $S$-UCP convexoid on $G$, then the TEP measure is equal to the Haar measure.
\end{theorem}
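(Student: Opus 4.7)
The plan is to invoke the uniqueness clause of Theorem~\ref{thm:Measure}. That theorem characterizes the TEP measure $\mu$ as the unique Borel probability measure on $X$ whose marginal $\mu|_C$ is uniform on $X|_C$ for every $C \in \C$. Thus it suffices to show that the Haar measure $\nu$ of $X$ has this same property. In fact I will verify the apparently stronger statement that $\nu|_C$ is uniform on $X|_C$ for every finite $C \Subset G$, without using convexity at all.

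For the setup, observe that $X$, being a closed subset of the compact topological group $A^G$ (with product topology and cellwise multiplication inherited from the group structure of $A$) that is additionally closed under cellwise multiplication by hypothesis, is itself a compact metrizable topological group, and thus admits a unique Haar probability measure $\nu$. For any $C \Subset G$, the coordinate projection $\pi_C \colon A^G \to A^C$ is a continuous homomorphism of topological groups, so its restriction $\pi_C|_X \colon X \to X|_C$ is a continuous surjective homomorphism onto the finite subgroup $X|_C$ of $A^C$. The standard fact that the push-forward of Haar measure under a continuous surjective homomorphism between compact groups is again Haar then gives that $(\pi_C|_X)_* \nu$ is the Haar measure of the finite group $X|_C$, which is simply the uniform distribution on $X|_C$.

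Combining these observations, $\nu$ has uniform marginals on every $C \in \C$ (and indeed on every finite set), so by the uniqueness asserted in Theorem~\ref{thm:Measure} we conclude $\nu = \mu$. I do not anticipate any serious obstacle: the heart of the argument is merely that uniform marginals on convex sets already pin down the measure, while the Haar measure satisfies the stronger property of uniformity on \emph{all} finite marginals for purely algebraic reasons. The only thing one might want to double-check explicitly is the standard measure-theoretic fact that Haar measure descends along surjective continuous homomorphisms of compact groups; this can either be quoted, or reproved on the fly by noting that $(\pi_C|_X)_*\nu$ is a probability measure on the finite group $X|_C$ which is invariant under left translation by $X|_C$ (since $\pi_C|_X$ is surjective and $\nu$ is left-$X$-invariant), hence equals the normalized counting measure by uniqueness of Haar on finite groups.
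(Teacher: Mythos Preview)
Your proof is correct but proceeds in the opposite direction from the paper's. The paper verifies that the TEP measure satisfies the defining properties of Haar measure (left-invariance under $X$-translation on cylinders with convex domain, then on all Borel sets; positivity on nonempty opens) and invokes the uniqueness of Haar. You instead verify that the Haar measure satisfies the defining property of the TEP measure (uniform marginals on convex sets) and invoke the uniqueness clause of Theorem~\ref{thm:Measure}. Your route is arguably cleaner: the key step reduces to the standard fact that Haar measure pushes forward to Haar under continuous surjective homomorphisms of compact groups, and as you note it yields the bonus observation that Haar is uniform on \emph{every} finite marginal, not just the convex ones. The paper's route, on the other hand, makes more explicit use of the specific interaction between the cellwise group operation and the TEP structure (a legal cylinder of convex shape is carried to another legal cylinder under group translation), which is perhaps more in the spirit of the surrounding discussion.
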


Note that we do not necessarily assume $X$ is linear TEP, simply that it admits each of these two types of structures.

\begin{proof}
Since $X$ is a compact group, it is unimodular, meaning the left and right Haar measures coincide. The Haar measure is the unique Borel (probability) measure $\mu$ that satisfies
\begin{itemize}
\item $\mu(xE) = \mu(E)$ for all $x \in X$ and Borel sets $E$, and
\item $\mu(U) > 0 \mbox{ for all nonempty open $U$}$.
\end{itemize}
It is thus enough to verify these for the TEP measure. Since the multiplication is cellwise, it is clear that the TEP measure satisfies $\mu(xE) = \mu(E)$ for cylinder sets $E$ with a convex domain and arbitrary $x \in G$, since a legal cylinder is translated to another legal cylinder under group translation. Thus it holds for all Borel sets, since the sets where this holds are easily seen to form a $\sigma$-algebra and every finite set is contained in a convex set of the convexoid $\C$ so the generated $\sigma$-algebra is the one of Borel sets. As for the second, any nonempty open set $U$ contains a nonempty cylinder, to which we give positive measure by the definition of the TEP measure.
\end{proof}

\begin{corollary}
Let $G$ be a countable group, $F$ a finite field and $X \subset F^G$ a linear TEP subshift for some convexoid. Then the TEP measure is the Haar measure (of $X$ as a compact subgroup of the compact group $(F^G, +)$ with cellwise addition).
\end{corollary}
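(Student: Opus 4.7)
The corollary is essentially immediate from the preceding theorem, so my plan is to check that linear TEP subshifts fit its hypotheses.

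First, I would verify that a linear TEP subshift $X = \{x \in F^G \mid x \cdot p = 0\}$ is a group under cellwise addition. The map $x \mapsto x \cdot p$ (viewing $F^G$ as a right $F[G]$-module) is $F$-linear, so its kernel $X$ is closed under cellwise addition and negation and contains the zero configuration. Hence, taking $A = (F,+)$ and cellwise multiplication to be cellwise addition in $F$, $X$ becomes a subshift that is simultaneously a group under cellwise ``multiplication'' in the sense of the preceding theorem.

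Second, by hypothesis $X$ is $k$-TEP for some convexoid on $G$, for some $k \ge 1$ (in the linear case it is in fact $1$-TEP, but this is irrelevant for the argument). Thus the preceding theorem applies verbatim and identifies the TEP measure on $X$ with the Haar measure of $X$ regarded as a compact topological group under cellwise addition. Since $X$ is a closed subgroup of the compact abelian group $(F^G,+)$, its Haar measure is exactly the one referenced in the statement.

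The only potentially subtle point is the compatibility of notation: the preceding theorem phrases things multiplicatively (``$A$ a finite group'', $X$ closed under cellwise multiplication, $xE$ translates of Borel sets), while here we use additive notation because $(F,+)$ is the underlying abelian group of the finite field. This is a purely notational matter, and no step of the proof of the preceding theorem relies on non-commutativity. So the argument is: instantiate the preceding theorem in the additive setting, observing that linearity of the defining condition $x\cdot p = 0$ gives the required group structure. There is no real obstacle.
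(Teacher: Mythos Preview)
Your proposal is correct and matches the paper's intended reasoning: the corollary is stated in the paper without proof, as an immediate consequence of the preceding theorem once one notes that a linear TEP subshift, being the kernel of the $F$-linear map $x \mapsto x\cdot p$, is a subgroup of $(F^G,+)$ under cellwise addition. Your remark about the additive versus multiplicative notation is apt and exactly the kind of triviality the paper is glossing over.
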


Now, we move on to the less algebraic examples.

\begin{example}
\label{ex:Quasigroup}
Let $G$ be a group admitting an $S$-UCP convex geometry $\C$ for a finite set $S \Subset G$. Let $A$ be a quasigroup (\cite{BuSa81}), that is, $A$ is a latin square, the multiplication operation $a*b$ extracts the element on row $a$ and column $b$, and operations $a/b$ and $a \bs b$ are determined by the identities
\[ (a*b)/b = a, (a/b)*b = a, a \bs (a*b) = b, a * (a \bs b) = b. \]

Let $E$ be a formal identity in the variables in $S$ and constants in $A$ such that every variable $s$ that is a translated lax corner of $S$ (meaning $s \in g^{-1} \angle gS$ for some $g \in G$) appears exactly once. Define a subshift $X \subset A^G$ by
\[ X = \{ x \in A^G \;|\; \forall g \in G: E(gx|_S) \}, \]
where $E(f)$ for $f : S \to A$ means that the identity $E$ holds with valuation $f$.

Letting $\T = \{P \in A^S \;|\; E(P)\}$, we see that $X$ is a TEP subshift: $\T$ is a TEP family because if all values except the value of some translated lax corner are known, then similarly in the previous example there is a unique possible value for $s$, obtained by using the quasigroup operations of $A$ to move the unique occurrence of $s$ to the LHS of the identity and everything else to the RHS. In particular any such $X$ is nonempty if $|S| \geq 2$.

For an example of solving, $S = \{a, b, c, d\}$, suppose the translated lax corners are $a, b, c$, and $e \in A$ is a constant, then the identity
\[ d \bs ((a * d) \di (c * d)) = e \di ((b \bs e) * d) \]
is of the required type. We can solve
\[ a = ((d * (e \di ((b \bs e) * d))) * (c * d)) \di d, \]
\[ b = e \di (((d \bs ((a * d) \di (c * d))) \bs e) \di d), \]
\[ c = ((d * (e \di ((b \bs e) * d))) \bs (a * d)) \di d, \]
giving the extension rule.
\qee
\end{example}

Of course, a quasigroup is a perfectly good universal algebra, so this is in some sense an algebraic way to construct subshifts. However, even if the quasigroup $A$ in the construction of the previous example is a group, typically the TEP subshift obtained is not a subgroup of $A^G$ under cellwise multiplication (although this does happen when $G$ is abelian). In other words, even if the system is in some sense algebraic, it is not algebraic in the same sense that group shifts are.

We now analyze the most basic example of a TEP subshift coming from a nonabelian group multiplication in more detail, and show that this subshift is not even conjugate to a group shift. We need the basic number theoretic fact that Pascal's triangles modulo different primes look different. Define \emph{Pascal's triangle} as $P \in \N ^{\N^2}$ by $P_{(0,0)} = 1$, $P_{(m,0)} = 0$ for $m > 0$ and inductively $P_{(m,n+1)} = P_{(m+1,n)} + P_{(m,n)}$.
Write $B_n = \{v \in \Z^2 \;|\; |v|_1 \leq n \}$ where $|\cdot|_1$ denotes the $\ell^1$ norm.

\begin{lemma}
Let $P_2, P_3 \subset \N^2$ be the sets of positions where the Pascal triangle contains a nonzero number modulo $2$ and $3$, respectively. Then for all $n \in \N$ and $i \in \{2,3\}$ there exists $v \in P_i$ such that $(v + B_n) \cap P_{5-i} = \emptyset$.
\end{lemma}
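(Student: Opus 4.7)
The plan is to work with the standard interpretation $P_{(m,n)} = \binom{n}{m}$ (the recursion in the statement appears to have a sign typo; it should read $P_{(m,n+1)} = P_{(m-1,n)} + P_{(m,n)}$ with the convention $P_{(-1,n)}=0$), so that by Lucas' theorem $(m,n) \in P_p$ iff each base-$p$ digit of $m$ is at most the corresponding digit of $n$. Since the two cases are symmetric, it suffices to treat $i=2$: I will produce $v \in P_2$ with $(v+B_n) \cap P_3 = \emptyset$, i.e., embed an odd entry inside a large block of entries vanishing modulo~$3$.

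Two base facts drive the construction. First, the row $N = 2^\ell - 1$ is entirely in $P_2$, because its binary expansion is $\underbrace{1\cdots1}_\ell$ and every $m \leq N$ has binary digits bounded by those of $N$. Second, if $N = 3^K + r$ with $0 \leq r < 3^K$, then the ternary expansion of $N$ is a leading $1$ followed by the ternary digits of $r$; since any number $m$ with $r < m < 3^K$ cannot be a ternary submask of $N$ (digitwise inequalities force $m \leq r$), the whole column block $[r+1,\, 3^K - 1]$ of row $N$ lies in $P_3^c$ by Lucas.

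Combining these, I would pick $K, \ell$ so that $N := 2^\ell - 1 = 3^K + r$ with $r \in [n,\, 3^K - 2n - 2]$, and then set $v = (v_1, N)$ for any $v_1 \in [r+n+1,\, 3^K - n - 1]$; the constraint on $r$ makes this interval nonempty, and $v \in P_2$ is immediate. To verify $(v+B_n) \cap P_3 = \emptyset$, take $(a,b)$ with $|a|+|b| \leq n$; then $N+b = 3^K + (r+b)$ with $r+b \in [0, 3^K)$, so by the same digit analysis the mod-$3$-nonzero columns of row $N+b$ lie in $[0, r+b] \cup [3^K,\, 3^K + r + b]$. The choice of $v_1$ gives $v_1 + a \geq v_1 - |a| \geq r + n + 1 - |a| \geq r + b + 1$ (using $|a| + |b| \leq n$ and $b \leq |b|$) and $v_1 + a \leq v_1 + n \leq 3^K - 1$, so $(v_1 + a,\, N + b) \in P_3^c$.

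The step I expect to be the main obstacle is producing the pair $(K, \ell)$ with the required separation. Taking $\ell = \lceil K \log_2 3 \rceil$ and $\beta_K = \{K \log_2 3\}$, one computes $2^\ell = 3^K \cdot 2^{1 - \beta_K}$, whence $2^\ell - 3^K = 3^K(2^{1-\beta_K} - 1)$ and $2 \cdot 3^K - 2^\ell = 3^K(2 - 2^{1-\beta_K})$; both lower bounds grow like $3^K$ as long as $\beta_K$ is bounded away from $0$ and $1$, say $\beta_K \in [1/4,\, 3/4]$. Since $\log_2 3$ is irrational, the orbit $\{K \log_2 3\}$ is dense in $[0,1)$, providing arbitrarily large $K$ with this property; any such $K$ with $3^K$ exceeding a suitable constant multiple of $n$ yields the required inclusion $2^\ell \in [3^K + n + 1,\, 2 \cdot 3^K - 2n - 1]$, closing the argument. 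The case $i = 3$ is verbatim with the roles of $2$ and $3$ swapped, using the irrationality of $\log_3 2$.
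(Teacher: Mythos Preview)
Your argument is correct and takes essentially the same approach as the paper: both pick the point $v$ in the row $N = 2^\ell - 1$ (which lies entirely in $P_2$), use equidistribution of the irrational rotation by $\log_2 3$ (equivalently $\log_3 2$) to arrange that $N$ falls into a prescribed window relative to a power of $3$, and then read off a large $P_3$-free ball from the ternary (Lucas/self-similar) structure. The paper is terser, fixing the specific column $v_1 = 2^{\ell-1}$ and citing ``self-similarity of $P_2$ and $P_3$'' rather than Lucas' theorem, but your more explicit digit computation and your freedom in the choice of $v_1$ are just a fleshed-out version of the same idea.
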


\begin{proof}
For $i = 2$, pick $p$ such that $2^p \in [\frac{4 \cdot 3^m}{3}, \frac{5 \cdot 3^m}{3}]$ for some $m$. Arbitrarily large such $(p, m)$ can be found by taking logarithms and using the minimality of the irrational rotation by $\log_3 2$. Now, using the self-similarity of $P_2$ and $P_3$, it is easy to check that
\[ (2^{p-1}, 2^p-1) \in P_2 \mbox{ and } (2^{p-1}, 2^p-1) + B_n \cap P_3 = \emptyset \]
for $n = \frac{3^m}{6}$. For $i = 3$ the proof is similar.
\end{proof}

%

\begin{example}
\label{ex:NotConjugate}
Let $A = S_3$ (the symmetric group on the set $\{1,2,3\}$) and consider the subshift $X$ defined as follows
\[ X = \{x \in A^{\Z^2} \;|\; \forall \vv: x_{\vv + (0,-1)} = x_{\vv + (-1,0)} \circ x_{\vv + (0,0)} \}. \]
In the orientation of Example~\ref{ex:ConvexOrdering}, $X$ is the $\Z^2$-subshift over alphabet $S_3$ where in every pattern $\begin{smallmatrix} a & b \\ & c \end{smallmatrix}$ we have $c(n) = a(b(n))$ for all $n \in \{1,2,3\}$. This rule also determines a TEP family of patterns $\T \subset S_3^S$ where $S = {\{(0,-1),(-1,0),(0,0)\}}$ and $X$ is the corresponding TEP subshift. This is the subshift sampled in Figure~\ref{fig:S3triangle} (though $S$ is slightly shifted for technical reasons).

We show that the subshift $X \subset S_3^{\Z^2}$ does not admit the structure of a unital magma or the structure of a quasigroup, with continuous shift-commuting operations. We begin with the case of a unital magma structure. Denote the magma operation by $\star$, and observe that the only possible unit is $0^{\Z^2}$, as it is the only fixed point.

Consider the cellular automaton (shift-commuting continuous function) $f : S_3^{\Z} \to S_3^{\Z}$ defined by $f(x)_i = x_{i-1} \circ x_i$. Denote $a = (1\; 2\; 3), b = (1 \; 3 \; 2), A = (1 \; 2), B = (2 \; 3), * = (1 \; 3)$ and we denote the identity permutation by empty space in the figure below. We refer to $\{a,b,A,B,*\}$ as nonzero symbols, and the \emph{nonzero support} of a configuration is the set of positions where nonzero symbols occur. Let $v_1 = (-2^n,-m), v_2 = (0,0)$ and $v' = (0,-1) \in S$, and consider the half-open stripe
\[ P = (((\R v_1 + \overline{S}) \setminus \R v_1 + v') \cap \Z^2 \]
where $\overline{S}$ denotes the convex hull inside $\R^2$. This stripe spans between $v_1$ and $v_2$ and is just the right width not to contain a translate of $S$. It is clearly convex, so by Theorem~\ref{thm:UCE}, any contents is legal on this strip. It is easy to see that the TEP rule then determines the entire configuration uniquely\footnote{In standard terminology, this line is \emph{coding}, or \emph{expansive} (for the partition given by basic cylinders).}, as long as $m > 0$: slide $S$ along either side of this strip to extend the known contents (Section~\ref{sec:Contours} uses a similar argument).

Now, consider what happens below this stripe for configuration $z$ defined by
\[ z_{v_1} = B, \; z_{v_2} = a, \; \forall v \in P \setminus \{v_1, v_2\}: z_v = 0. \]
The configuration is given as the spacetime diagram of the cellular automaton $f$ (i.e.\ each row is obtained by applying the local rule of $f$, which is just the TEP rule), and its behaviors from the lone $B$ and the long $a$ are easy to understand: since $\langle B \rangle \cong \Z_2$, $\langle a \rangle \cong \Z_3$ and the rule is simply addition, below $B$ we see Pascal's triangle modulo $2$, and below $a$ we see Pascal's triangle modulo $3$, as long as these triangles do not meet. We informally refer to these as the \emph{$2$-adic and the $3$-adic components} of the spacetime diagram, and informally refer to the self-similarity-inducing time steps when the spacetime splits into copies of itselfs as \emph{splits}. Now, we pick suitable $n$ and $m$. The choices are best explained by looking at Figure~\ref{fig:AdicsFight}.


\verbfilebox[\tiny]{adicsfight3}

\begin{figure}
\begin{tikzpicture}
\node () at (0,0) {\theverbbox};
\end{tikzpicture}
\caption{As the two components meet, bees turn into stars.}
\label{fig:AdicsFight}
\end{figure}

The choice in the figure is $n = 5$, so $-2^n = -32$, and pick $m = -22$, and what is shown is part of the configuration $z$ below the stripe $P$, which is actually a spacetime diagram of the cellular automaton $f$. The stripe $P$ is marked with dots. We have picked $m$ so that at the time when the $2$-adic component touches the $3$-adic component, both components are about to split. In numbers, we chose $n$ and $m$ so that $2^n+m = 2 \cdot 3^k$ for some $k$, namely $k = 4$.\footnote{We use $3$-adic splitting times of type $2 \cdot 3^k$ instead of $3^k$ so that the figure is of a suitable size; of course what is relevant is large $3$-adic norm.}

The point is now that because $Ba = (2 \; 3)(1 \; 2 \; 3) = (1 \; 3) = *$, and $\langle * \rangle = \Z_2$, the collision has the effect that the original $B$ continues producing the $2$-adic pattern, but over $*$ instead of $B$, while the $3$-adic area disappears after the split.

It is clear that we can produce this effect for arbitrarily large $n, m$, by taking $k$ and $n$ large and setting $m = 2 \cdot 3^k - 2^n$. Now consider $x, y \in X$ defined by
\[ x_{v_1} = B, \forall v \in P \setminus \{v_1\}: x_v = 0 \]
and
\[ y_{v_2} = a, \forall v \in P \setminus \{v_2\}: y_v = 0. \]
By unitality and because the contents of $P$ determine the entire configuration, we have $x \star y = z$. By the previous lemma, we can now find positions $v$ such that the $2$-adic pattern has a nonzero symbol (that is, $z_v = *$, and $x_v = B$), and $v$ is at distance at least $h$ from any element of the nonzero support of $y$. From the continuity and shift-commutation of the magma operation, we obtain that $\star : X^2 \to X$ has a \emph{radius} $r$ such that
\[x'|_{u + B_r} = x''|_{u + B_r} \wedge y'|_{u + B_r} = y''|_{u + B_r} \implies (x' \star y')_{u} = (x'' \star y'')_{u} \]
for all $u \in \Z^2$, where $B_r$ is the ball of radius $r$. As soon as $h \geq r$, the continuations of $x$ and $y$ above $P$ have , and we have
\[ * = z_{\vec v} = (x \star y)_{\vec v} = (x \star 0^{\Z^2})_{\vec v} = x_{\vec v} = B, \]
a contradiction.

The case of a quasigroup operation is proved similarly. Denote this operation again by $\star$ and let $\di$ and $\bs$ be the division operations. Let $z$ be the same configuration as above, and let $x$ and $y$ be as above. Define $y' = 0^{\Z^2} \bs y$, observe that $y'|_P$ contains only finitely many non-$c$ symbols for some $c \in S_3$. Let $w \in X$ be the unique configuration with $w_|P = c^P$ and let $x' = x \di w$. The quasigroup axioms imply that $x'|_P$ contains only finitely many nonzero symbols. Now by analyzing what happens on $P$ (which determines the entire configuration), we see that as soon as $(-m, -2^n)$ and $(0,0)$ are sufficiently far from each other, we have
\[ x' \star y' = z. \]
Since $w$ and $0^{\Z^2}$ both have finite $\Z^2$-orbits, $x' = x \di w$ and $y' = 0^{\Z^2} \bs y$ are images of $x$ and $y$ by continuous homeomorphisms on $X$ that commute with a finite-index subgroup of $\Z^2$. Thus, zero areas of $y$ surrounding $v$ turns into a periodic area in $y'$. Just like in the unital magma case, the rule has no way of knowing when it should produce the $2$-adic pattern of of $*$s instead of $B$s. \qee
\end{example}

The example does admit the structure of a magma with shift-invariant continuous operations, by setting $x * y = 1_{S_3}^{\Z^2}$ for all $x, y$ (this is even a semigroup), but we are not aware of any nontrivial magma structures on it. There are of course TEP subshifts without fixed points (even on $\Z^2$), which then trivially cannot have a unital magma structure, but we don't know any examples that ``trivially'' do not have a quasigroup structure. Once one has a full understanding on at least some configurations of the TEP subshift, non-algebraicity can often be proved as in the above example.

Of course, multiplying quasigroup elements together is just one way to obtain TEP subshifts. Since the definition of TEP is entirely syntactic, it is easy to produce examples that are less obviously algebraic. We show one way of injecting an arbitrary function into the definition of TEP.

\begin{example}
\label{ex:Arbitrary}
Let $G$ be a group admitting an $S$-UCP convex geometry $\C$ for a finite set $S \Subset G$. Let $A$ be a quasigroup. Let $C \subset S$ be the corners of $S$. Let $(A, *)$ be a quasigroup, define $g : A^C \to A$ by multiplying the elements of $A$ together in some order (or any other function such that $g^{-1}(a)$ has $1$-uniform $C$-extensions for every $a \in A$), and pick any function $f : A^{S \setminus C} \to A$. Let $a \in A$. Then
\[ \{ P \in A^S \;|\; f(P|_{S \setminus C})*g(P|_{C} = a \} \]
is a TEP family. Since $f$ is an arbitrary function, it is hard to imagine such examples being very algebraic in general. This does not rule out that all such effects cancel out on the large scale. \qee
\end{example}

We also note a famous class of non-examples of TEP subshifts: As noted already in the introduction, the decidability aspects of TEP subshifts given by Theorem~\ref{thm:UCE} differ considerably from those of SFTs defined by allowed patterns $\T \in A^S$ with the property
\[ \forall s \in \angle S: \forall P \in A^{S \setminus \{s\}}: \exists^{\leq 1} a \in A: P \sqcup (s \mapsto a) \in \T \]
which have been studied on $\Z^d$ equipped with the standard convex geometry in \cite{GuKaZi15,FrKr19a}, in the latter under the name \emph{polygonal subshift}: with this definition, the domino problem (nonemptiness) stays undecidable, at least for $S = \{0,1\}^2$ \cite{Lu10}. (To the author's knowledge, decidability of nonemptiness is open for other shapes, in particular $S = \{(0,0), (1,0), (0,1)\}$.) It follows that there cannot be an effective procedure for turning a polygonal subshift into a conjugate TEP subshift, even on the group $\Z^2$.

\subsection{Complexity-theoretic statements and counting}

Here we state some refinements of Theorem~\ref{thm:UCE} and Theorem~\ref{thm:Measure}, which imply that for $\Z^d$ both verification of legal pattern, and counting patterns, can be done efficiently.



\begin{theorem}
\label{thm:Counting}
Suppose $G$ is a polytime encoded group and $\C \subset \FinSet(G)$ is a polynomial time computable convexoid. Let $\T \subset A^S$ be a $k$-TEP family of allowed patterns for an SFT $X$. Given $S,k,|A|$ and a finite convex set $C$, we can compute the number of patterns in $X$ of shape $C$ in polynomial time in $S,C,k,|A|$.
\end{theorem}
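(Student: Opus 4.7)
The plan is to apply Lemma~\ref{lem:UniformExtensions} step by step along an anti-shelling from $\emptyset$ to $C$, and multiply the extension counts. By Theorem~\ref{thm:UCE} a pattern of shape $C$ occurs in $X$ if and only if it is $\T$-legal, so the task reduces to counting $\T$-legal patterns in $A^C$.

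First I would construct greedily an anti-shelling $\emptyset = C_0 \subsetneq C_1 \subsetneq \cdots \subsetneq C_{|C|} = C$, exactly as in the proof of Lemma~\ref{lem:GrowthSequences}: at stage $i$, enumerate the elements $a \in C \setminus C_i$ and accept the first one with $C_i \cup \{a\} \in \C$. The corner addition property ensures that some $a$ works, and each membership query to $\C$ is polynomial time by hypothesis, so the full anti-shelling is produced in $O(|C|^2)$ queries, each polynomial in $|S|$, $|C|$ and the group encoding.

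Next, for each newly added element $a_{i+1} \in C_{i+1} \setminus C_i$ I would decide, via Lemma~\ref{lem:UniformExtensions}, whether the number of $\T$-legal extensions from $C_i$ to $C_{i+1}$ is $k$ or $|A|$: test whether some $g \in G$ satisfies $a_{i+1} \in gS \subset C_{i+1}$. Such a $g$ is unique if it exists by $S$-UCP, and must be of the form $a_{i+1} s^{-1}$ for some $s \in S$; hence only $|S|$ candidates need to be checked, each with $|S|$ polytime group operations and set-membership lookups in the finite list $C_{i+1}$. Let $q$ count the steps where such a $g$ exists and $p = |C| - q$ the remaining steps. Lemma~\ref{lem:UniformExtensions} then gives the total count as exactly $|A|^{p} k^{q}$.

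This integer has bit-length $O(|C|(\log |A| + \log k))$ and can be output in binary in polynomial time by repeated squaring. Overall the procedure is polynomial in $|S|$, $|C|$, $k$, and $|A|$, as required. I do not foresee any genuine obstacle: the only nontrivial ingredient is that the greedy anti-shelling can always be extended, which is precisely the corner addition property defining a convexoid; everything else is local bookkeeping on top of Lemma~\ref{lem:UniformExtensions}.
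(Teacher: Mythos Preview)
Your proposal is correct and follows essentially the same approach as the paper's proof: greedily build an anti-shelling using the corner addition property (with polynomial-time membership tests in $\C$), then at each step invoke Lemma~\ref{lem:UniformExtensions} to decide whether the extension count is $k$ or $|A|$, and output the product $|A|^p k^q$. You supply more of the bookkeeping detail than the paper does (the $O(|C|^2)$ query bound, the observation that only the $|S|$ candidates $a_{i+1}s^{-1}$ need be tested, and the bit-length of the output), but the argument is the same.
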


Note that we do not need to see the actual patterns $\T$. The numbers $k$ and $|A|$ may be given in binary even though for the groups $\Z^d$ as acting groups we always use unary notation. Note also that the convexoid need only be verifiable in polynomial time, i.e.\ it is enough that $\C$ is in the complexity class $P$ under its natural encoding as a language -- we do not a priori need to be able to find a convex set containing a given finite set in polynomial time.

\begin{proof}
This is proved exactly as Theorem~\ref{thm:UCE}. Since $\C$ is polynomial time verifiable and has the corner addition property, we can in polynomial time construct an anti-shelling (by greedily adding corners). We always have $|A|$ or $k$ distinct ways to fill the corner symbol depending on whether we can position $S$ inside the current convex set so that it touches the newest corner, and we can check in polynomial time which case we are in because $G$ is a polytime encoded group. Simply calculate the resulting product (which will be of the form $k^m |A|^n$).
\end{proof}

\begin{example}
\label{ex:CountingExample}
Consider the convex set $C$ and its anti-shelling from Example~\ref{ex:ConvexOrdering}. Given any $k$-TEP rule, we can sample the contents of $C$ uniformly (among globally legal patterns) by iterating through the positions (which are always corners of the resulting convex set) in the stated order, and
\begin{itemize}
\item whenever the defining shape $S$ fits inside the convex shape built so far, and touches the newly-added corner, choose a symbol out of the ones determined by the rule uniformly at random,
\item if it does not fit in, pick the contents uniformly at random from the entire alphabet $A$.
\end{itemize}
Since the TEP measure samples uniformly on all convex sets, the produced sample is indeed uniform. This same ordering works for TEP subshifts defined with respect to any shape $S$. With the shape $S = \begin{smallmatrix} * & * \\ & * \end{smallmatrix}$ one can verify that along the ordering of Example~\ref{ex:ConvexOrdering}, the positions where we can pick an arbitrary symbol are
\[ 1,2,3,4,5,6,7,8,10,11,12,23,26,39,44 \]
so in any $k$-TEP subshift with this shape there are exactly $|A|^{15} k^{46}$ globally legal patterns of domain $C$. With shape
$S' = \begin{smallmatrix} * & * & * \\ & & * \end{smallmatrix}$ the free positions are
\[ 1,2,...,14,15, 19, 23, 26, 30, 39, 42, 44, \]
so $22$ in total, and thus in any $k$-TEP subshift with shape $S'$ there are exactly $|A|^{22} k^{39}$ globally legal patterns of domain $C$. \qee
\end{example}

\begin{theorem}
Suppose $G$ is a polytime encoded group and $\C \subset \FinSet(G)$ is a polynomial time computable convex geometry. Given the shape $S$ defining a $k$-TEP subshift $X$ over an alphabet $A$, a convex set $C \in \C$, and a pattern $P \in A^C$, we can check in polynomial time whether $P$ is in the language of $X$. If every set $B \in \FinSet(C)$ is contained in a convex set (whose desciption is) polynomial in (the description of) $B$, then the set of all $\T$-legal patterns is in the complexity class NP.
\end{theorem}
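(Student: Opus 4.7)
The plan is to reduce the first claim to Theorem~\ref{thm:UCE} plus an efficient enumeration of placements of $S$ inside $C$, and the second to a standard guess-and-verify argument. For the first part, since $C \in \C$, Theorem~\ref{thm:UCE} implies that $P \sqsubset X$ is equivalent to $P$ being $\T$-legal, so it suffices to check local legality. The key observation is that any $g \in G$ with $gS \subseteq C$ satisfies $g = c s^{-1}$ for some $c \in C$ and $s \in S$, so there are at most $|C|\cdot|S|$ candidate placements. I would enumerate all such pairs $(c,s)$, use the polynomial-time group operations to compute $g = cs^{-1}$ and then $gs'$ for every $s' \in S$, test membership $gs' \in C$ (decidable in polynomial time because $C$ is part of the input and equality in $G$ is polytime), and whenever the inclusion holds extract $g^{-1}P|_{gS} \in A^S$ and check whether it belongs to $\T$. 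The total running time is polynomial in the combined input size.

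For the NP claim, given a pattern $P \in A^B$ with arbitrary domain $B \in \FinSet(G)$, Theorem~\ref{thm:UCE} again says $P$ belongs to the language of $X$ if and only if there exist a convex $C' \in \C$ with $B \subseteq C'$ and a $\T$-legal extension $Q \in A^{C'}$ with $Q|_B = P$. Under the polynomial-size hypothesis, such a $C'$ has description polynomial in $|B|$, hence the pattern $Q$ also has polynomial description. The nondeterministic algorithm guesses the pair $(C', Q|_{C' \setminus B})$ as a witness and then verifies, in polynomial time, that $C' \in \C$ (using the polynomial-time verifier for $\C$), that $B \subseteq C'$, and that $Q$ is $\T$-legal by invoking the first part of the proof on the convex pattern $Q$. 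This puts the language in NP.

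The main obstacle, such as it is, lies in confirming that the set of translates $g$ with $gS \subseteq C$ can really be listed in polynomial time without enumerating all of $G$. This is handled by the observation that any such $g$ lies in $C \cdot S^{-1}$, so the polytime group operations suffice to generate, filter, and test all candidates. Everything else is a direct application of Theorem~\ref{thm:UCE} and the assumed polynomial-time access to $G$ and $\C$.
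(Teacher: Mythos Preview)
Your proposal is correct and follows essentially the same approach as the paper: reduce the first claim to checking $\T$-legality via Theorem~\ref{thm:UCE}, and handle the second by guessing a polynomial-size convex superset together with an extension and verifying. The paper's own proof is terser---it simply declares the first part ``trivial'' without spelling out the $g \in C S^{-1}$ bound on the number of placements---so your explicit enumeration argument is a welcome elaboration rather than a deviation.
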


\begin{proof}
The first claim is trivial, since given a pattern of convex shape, we simply need to check whether it is $\T$-legal, and the assumptions on the group imply this can be done in polynomial time. For the latter claim, given $P \in A^B$ for a finite set $B \Subset G$, nondeterministically guess a convex set $C \Supset B$ of polynomial size, and then a pattern $Q \in A^C$. Verify that $Q$ is $\T$-legal and $Q|_B = P$.
\end{proof}

Note that if $\C$ is a convex geometry, once we have guessed a convex set $C \supset B$ which is of polynomial size in the set of $B$, we can actually compute the convex hull of $B$ in deterministic polynomial time by using Lemma~\ref{lem:AECE} to drop elements one by one from $C$ while keeping it a convex superset of $B$. However, even if we assume that the true convex hull $C$ can be computed in polynomial time, we still have to guess the pattern $Q$, and even with $G = \Z^2$ it is not clear how to do this in polynomial time, indeed we do not know whether the language of a TEP subshift can be NP-complete for $G = \Z^2$ and the standard convex geometry.

While Theorem~\ref{thm:Counting} shows that the number of patterns of a convex shape can be computed quickly, it seems much harder to count the number of legal patterns on non-convex sets, and indeed these counts seem to behave in a complicated way. We give some pattern counts in the following example (the calculations were done by computer).

\begin{example}
\label{ex:Counts}
Let $A$, $X$ be as in Example~\ref{ex:NotConjugate}. Consider the following subsets of $\Z^2$, where $*$ marks the included elements, and $\cdot$ denotes a visible space, which we use to mark the convex hull
\[ \begin{smallmatrix} * & * & * & * \\
&*&* & \\
&  &  * & \\
& &  * & \end{smallmatrix} \;\;\;\;
\begin{smallmatrix} * & \cdot & *\\
& & \cdot \\
& & *
 \end{smallmatrix} \; \;\;\;
 \begin{smallmatrix} * & * & \cdot & *\\
& \cdot & \cdot & \cdot \\
& & \cdot & \cdot \\
& & & *
 \end{smallmatrix} \;\;\;\;
  \begin{smallmatrix} * & * & * & \cdot & *\\
&\cdot & \cdot & \cdot & \cdot \\
& & \cdot & \cdot & \cdot \\
& & & \cdot & \cdot \\
& & & & *
\end{smallmatrix} \;\;\;\;
\begin{smallmatrix}
* & \cdot & * &  &  \\
 & \cdot & \cdot & * &  \\
 & & * & \cdot & * \\
 & & & \cdot & \cdot \\
 & & & & *
\end{smallmatrix} \]
For the leftmost shape (which is convex), one can verify that the number of $\T$-legal patterns is precisely $7776 = 2^5 \cdot 3^5$, which as expected is a power of $|A| = 6$. As expected from Theorem~\ref{thm:UCE}, each of these $\T$-legal patterns extends to a legal configuration of the $4 \times 4$ square containing the shape.

By a direct calculation, one can show for the second shape $\{(0,0),(2,0),(2,-2)\}$ that the number of patterns of that shape which extend to a legal configuration (obtained by extending to a convex set and applying Theorem~\ref{thm:UCE}) is $108 = 2^2 \cdot 3^3$, for the third shape $\{(0,0),(1,0),(3,0),(3,-3)\}$ this number is $1080 = 2^3 \cdot 3^3 \cdot 5$, for the fourth shape $\{(0,0),(1,0),(2,0),(4,0),(4,-4)\}$ this number is $3456 = 2^7 \cdot 3^3$, and for the last shape the number is $5616 = 2^4 \cdot 3^3 \cdot 13$.

Even in the case of abelian group shifts over abelian groups, the number of patterns is not necessarily a power of the alphabet size $|A|$: For the sum rule over alphabet $A = \Z_6$ with the same shape defining a group shift in $\Z_6^{\Z^2}$, the corresponding numbers for these five shapes are $7776$ (of course), $108 = 2^2 \cdot 3^3$, $432 = 2^4 \cdot 3^3$ and $3888 = 2^4 \cdot 3^5$ and (again) $3888$, respectively. This also shows that the number of patterns of a given (non-convex) shape can depend on the rule.

For vector shifts such as the Ledrappier example, on the other hand, basic linear algebra shows that the number of patterns of any shape is a power of the cardinality of the underlying field. \qee
\end{example}

We have no theoretical understanding of the counts on non-convex sets. Based on our brief experimentation on the TEP subshift $X$ above, the counts typically have only $2$ and $3$ in their prime decomposition, but sporadically one sees other primes. We have seen $5$ several times, $13$ exactly once, and we have seen no other primes. We have not experimented systematically with other alphabet sizes, TEP shapes and rules, and with other groups, and have no idea whether this reflects a general trend.

\subsection{An easier way to count: contours}
\label{sec:Contours}

If a group admits a (left-)invariant order, then we associate to every convex set a subset that can be filled arbitrarily, and assuming $k$-uniform $g$-extensions for suitable $g$, there are $k$ ways to pick the value of each cell after that. More precisely, to each convex set $C$ we will associate a subset $E$ called its $S$-contour so that every possible filling of $E$ is $\T$-legal, and extends to a filling of $C$ in exactly $k^{|C| \setminus |E|}$ possible ways, whenever $\T \subset A^S$ has $k$-uniform extensions in the maximal coordinate of $S$. In other words, the ``free choices'' are all made before all the ``constrained choices''. The drawback of the method compared to using an anti-shelling are that it only works for finite sets, and the filling order depends on $S$.

Suppose $G$ is linearly ordered by a left-invariant order $<$, i.e. $h < k \iff gh < gk$ for all $g, h, k \in G$. Say that a $S\subset G$ is in \emph{good position} if $1_G \in S$ and $1_G$ is the $<$-maximal element of $S$ (i.e.\ all elements of $S$ are ``negative''). Note that we can turn any TEP family of patterns $\T \subset A^S$ into one with $S$ in good position, without changing the SFT it defines, by translating its patterns by the inverse of the maximal element of the shape $S$.

Let $S \Subset G$ be in good position. Let $C \subset G$, and let
\[ E = \{ g \in G \;|\; g S \not\subset C\}. \]
Then $E$ is called the \emph{$S$-contour} of $C$. More generally, if $S$ is not in good position, then we define the $S$-contour as the $gS$-contour for the unique $g$ such that $gS$ is in good position.

\begin{lemma}
\label{lem:Contour}
Let $\T \subset A^S$ with $S$ in good position, and suppose $\T$ has $k$-uniform $\{1_G\}$-extensions. Let $C \Subset G$ and let $E$ be the $S$-contour of $C$. Then for each pattern $P : E \to S$, there are exactly $k^{|C| \setminus |E|}$ many $\T$-legal patterns $Q : C \to A$ with $Q|_E = P$.
\end{lemma}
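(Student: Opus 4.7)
The plan is to fill in $C$ cell by cell, in a carefully chosen order determined by the left-invariant order $<$, using $P$ as a base. The key observation that makes this work is that because $S$ is in good position and $<$ is left-invariant, we have $g = \max gS$ for every $g \in G$. Consequently, the condition $gS \subset C$ forces $g = g \cdot 1_G \in C$, so $G \setminus E = C \setminus E$ is finite, and these are precisely the indices at which a $\T$-constraint on a would-be pattern $Q : C \to A$ extending $P$ can apply.

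First I would enumerate $C \setminus E = \{g_1 < g_2 < \cdots < g_n\}$ in $<$-increasing order. For each $g_i$, left-invariance gives $g_i s < g_i \cdot 1_G = g_i$ whenever $s \in S \setminus \{1_G\}$, so $g_i S \setminus \{g_i\}$ consists of elements of $C$ strictly less than $g_i$; each such element lies in $E$ or among $g_1, \ldots, g_{i-1}$. This is what lets me build $Q$ inductively: start with $Q|_E = P$, and then specify $Q(g_i)$ one index at a time.

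At step $i$ the values already chosen determine a specific pattern on $g_i S \setminus \{g_i\}$, and by the $k$-uniform $\{1_G\}$-extensions property of $\T$ (applied to the translate $g_i S$), there are exactly $k$ choices of $Q(g_i)$ for which $g_i^{-1} Q|_{g_i S} \in \T$. Every other $\T$-constraint on $Q$ lives at some index $h$ with $hS \subset C$, i.e.\ $h \in C \setminus E$, so it is exactly the constraint handled at one of the steps. Multiplying $k$ across the $n$ steps and noting the process is reversible (any $\T$-legal $Q$ extending $P$ is recovered by reading off its values at $g_1, \ldots, g_n$ in turn) gives exactly $k^n = k^{|C \setminus E|}$ legal extensions.

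The main thing to check — rather than a real obstacle — is that no $\T$-constraint is hidden outside $C \setminus E$: the above remark that $gS \subset C \Rightarrow g \in C$, together with $g \notin E$, takes care of this. Everything else is a straightforward induction driven by the left-invariant order, and the good-position hypothesis is precisely what makes the induction respect the $<$-ordering.
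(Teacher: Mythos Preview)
Your proof is correct and follows essentially the same approach as the paper: enumerate $C\setminus E$ in $<$-increasing order, use left-invariance and good position to see that $g_iS\setminus\{g_i\}$ lies in $E\cup\{g_1,\ldots,g_{i-1}\}$, and apply the $k$-uniform $\{1_G\}$-extension property at each step. The only cosmetic difference is that the paper verifies $\T$-legality of each intermediate pattern $E\cup\{g_1,\ldots,g_i\}$ by checking that the unique translate $gS$ through $g_i$ inside that set is $g_iS$, whereas you check legality of the final $Q$ directly by noting that every constraint $hS\subset C$ has $h\in C\setminus E$ and was handled at the corresponding step; both arguments are equivalent.
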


\begin{proof}
Every pattern $P : E \to A$ is $\T$-legal, because by the definition of $E$ there are no translates of $S$ contained in $E$. We consider a fixed such pattern and show that it has exactly $k^{|C| \setminus |E|}$ extensions. 

Let $D = C \setminus E$.
Enumerate the vectors in $D$ in the $<$-order, i.e.\ enumerate $g_1, g_2, g_3, \ldots, g_\ell$ where $g_{i+1}$ is the $<$-minimal vector in $D \setminus E_i$, where $E_i = E \cup \{g_1, \ldots, g_i\}$. Let us show that if $w \in \{1,2,...,k\}^i$ and $P_{w} : E_i \to A$ is $\T$-legal then there are exactly $k$ distinct $\T$-legal patterns $P_{wa} : E_{i+1} \to A$, $a \in \{1,2,...,k\}$, with $P_{wa}|_{E_i} = P_{w}$.


We have $g_{i+1} S \subset E_{i+1}$ because $g_{i+1} \in D$ and thus $g_{i+1} S \subset C$, and all vectors in $g_{i+1} S \setminus \{g_{i+1}\}$ are $<$-smaller than $g_{i+1}$ by left-invariance of $<$, so if they are not in $E$, they are among the $g_j$ with $j \leq i$. Since $\T$ has $k$-uniform $\{1_G\}$-extensions, we have exactly $k$ ways to fill the coordinate $g_{i+1}$ so that in the translate $g_{i+1} S$ we do not see a forbidden pattern of $X$. The patterns $P_{wa}$ are taken to be any enumeration of such patterns.

We claim that in fact the only translate $g S$ such that $g_{i+1} \in g S \subset E_{i+1}$ is the one with $g = g_{i+1}$. Namely, if $g \in E$, then $g S$ is not contained in $C$, if $g$ is among the $g_j$ with $j \leq i$ then $g S$ does not contain $g_{i+1}$, and finally if $g = g_j$ for some $j > i+1$, then $g S \ni g$ is not contained in $E_{i+1}$. This implies that all the patterns $P_{wa}$ are $\T$-legal, since any forbidden pattern in $P_{wa}$ would have to have a domain of the form $gS$ and contain the new coordinate $g_{i+1}$. 
\end{proof}

\begin{theorem}
\label{thm:Contour}
Let $G$ be a group and let $S \Subset G$ be in good position. Let $\C \subset \FinSet(G)$ be an $S$-UCP convexoid. Let $X \subset A^G$ be defined by a $k$-TEP family $\T \subset A^S$, and suppose $\T$ also has $k$-uniform $\{1_G\}$-extensions. Let $C \in \C$ and let $E$ be the $S$-contour of $C$. Then for each pattern $P : E \to S$, there are exactly $k^{|C| \setminus |E|}$ patterns $Q : C \to A$ with $Q|_E = P$ such that $Q \sqsubset X$.
\end{theorem}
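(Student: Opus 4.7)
The plan is to deduce Theorem~\ref{thm:Contour} directly by combining Lemma~\ref{lem:Contour} with Theorem~\ref{thm:UCE}. Lemma~\ref{lem:Contour} already gives the same counting statement, but for \emph{$\T$-legal} extensions rather than \emph{globally legal} ones; the job of Theorem~\ref{thm:UCE} is to bridge the two notions of legality, and this is the only role played by the $S$-UCP hypothesis and by the convexity of $C$.

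In more detail, I would first fix an arbitrary pattern $P : E \to A$ on the $S$-contour and apply Lemma~\ref{lem:Contour} (whose hypotheses are satisfied verbatim, since $S$ is in good position and $\T$ has $k$-uniform $\{1_G\}$-extensions, and no use of $\C$ or UCP is required). This yields exactly $k^{|C \setminus E|}$ many $\T$-legal patterns $Q : C \to A$ with $Q|_E = P$. Next, for each such $Q$, I would invoke Theorem~\ref{thm:UCE}: since $C \in \C$ and $\C$ is an $S$-UCP convexoid, and $\T$ has $k$-TEP, every $\T$-legal pattern on $C$ belongs to the language of $X$, so $Q \sqsubset X$.

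The converse direction is immediate from the definition of the SFT: any $Q : C \to A$ with $Q \sqsubset X$ is $\T$-legal, since every translate $gS \subset G$ of an $X$-configuration contains a pattern in $\T$, and in particular every translate $gS \subset C$ does. Thus the set of $\T$-legal patterns $Q : C \to A$ with $Q|_E = P$ coincides with the set of patterns $Q : C \to A$ with $Q|_E = P$ and $Q \sqsubset X$, giving the count $k^{|C \setminus E|}$.

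There is no real obstacle here — the content is entirely in the two cited results. The only thing to double-check is that the two hypotheses on $\T$ in the statement ($k$-TEP and $k$-uniform $\{1_G\}$-extensions) really are the hypotheses needed by the two ingredients (the former by Theorem~\ref{thm:UCE}, the latter by Lemma~\ref{lem:Contour}), and that $1_G \in S$ together with $S$ being in good position ensures $1_G$ is a translated lax corner in the sense required so that the notions of ``contour'' and ``free choice'' match up consistently across the two statements.
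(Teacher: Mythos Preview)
Your proposal is correct and matches the paper's own proof essentially verbatim: apply Lemma~\ref{lem:Contour} to get the count of $\T$-legal extensions, then invoke Theorem~\ref{thm:UCE} (using that $C$ is convex and $\T$ is $k$-TEP) to identify $\T$-legal patterns on $C$ with globally legal ones. Your final paragraph's worry about $1_G$ being a translated lax corner is unnecessary, since Lemma~\ref{lem:Contour} takes $k$-uniform $\{1_G\}$-extensions as a standalone hypothesis and does not require this.
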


\begin{proof}
Apply the previous lemma, and observe that since $C$ is convex and $\T$ is $k$-TEP, the $\T$-legal pattern $Q$ extends to a configuration of $X$.
\end{proof}

This can be used to obtain formulas for the number of globally admissible patterns of a particular shape. We give a somewhat trivial example, and count the number $(m \times n)$-rectangles that occur in a TEP subshift on $\Z^2$. Let $S \subset \Z^2$ be finite, and let $m_i = \max_{\uu, \vv \in S} |\uu_i - \vv_i| - 1$ for $i = 1, 2$. We say the \emph{width} of $A$ is $m_1$ and its \emph{height} is $m_2$.

\begin{lemma}
Let $X \subset A^{\Z^2}$ be a TEP subshift with defining shape $S$. Let $m_1$ and $m_2$ be the width and height of $S$, respectively, and suppose $n_1 \geq m_1$, $n_2 \geq m_2$. Then
\[ |X|_{[1, n_1] \times [1, n_2]}| = |A|^{n_1 m_2 + m_1 n_2 - m_1 m_2}. \]
\end{lemma}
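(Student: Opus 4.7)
The plan is to derive the formula as a direct application of the contour theorem (Theorem~\ref{thm:Contour}) to the rectangle $C = [1,n_1] \times [1,n_2]$, which is convex in the standard convex geometry of $\Z^2$ (and this convex geometry is $S$-UCP by Lemma~\ref{lem:UCP}, being midpointed).

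The first step is a normalization. Equip $\Z^2$ with the lexicographic order, which is translation-invariant, and translate the defining shape $S$ together with the family $\T$ so that the lex-maximum of $S$ sits at $1_G = (0,0)$; this does not change the subshift $X$ and places $S$ in good position. The lex-maximum of $S$ is always an extreme point of $\mathrm{conv}(S)$ (it is the extreme point in the direction $(1,\varepsilon)$ for all sufficiently small $\varepsilon > 0$), hence a lax corner of $S$ in the standard convex geometry. Thus $1_G \in \angle S$, so the TEP hypothesis, applied at $g = 1_G$, gives the $1$-uniform $\{1_G\}$-extensions required by Theorem~\ref{thm:Contour}.

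The combinatorial core is computing $|E \cap C|$, where $E$ is the $S$-contour of $C$. After translation $S$ has bounding-box ranges $m_1$ in the first coordinate and $m_2$ in the second; by a straightforward bounding-box argument, the set $D = \{g \in \Z^2 : gS \subset C\}$ is a discrete rectangle of size $(n_1 - m_1)(n_2 - m_2)$, and since $0 \in S$ we have $D \subset C$ automatically. Hence
\[ |E \cap C| = |C| - |D| = n_1 n_2 - (n_1 - m_1)(n_2 - m_2) = n_1 m_2 + m_1 n_2 - m_1 m_2. \]

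Finally, applying Theorem~\ref{thm:Contour} with $k = 1$, every one of the $|A|^{|E \cap C|}$ arbitrary fillings of the contour extends to exactly one globally admissible pattern on $C$, giving
\[ |X|_{[1,n_1] \times [1,n_2]}| = |A|^{n_1 m_2 + m_1 n_2 - m_1 m_2}. \]
There is essentially no obstacle; Theorem~\ref{thm:Contour} does all the real work, and the only genuinely new content is the bounding-box count of $D$. The mild subtlety is making sure the normalization step (translating $S$ so that $1_G$ is its lex-maximum) is compatible with the TEP hypothesis, which is immediate because TEP is translation-invariant and the lex-maximum is always a lax corner.
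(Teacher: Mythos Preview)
Your proof is correct and follows exactly the approach the paper takes: the paper's proof is literally the single sentence ``Clearly the size of the contour is $n_1 m_2 + m_1 n_2 - m_1 m_2$,'' and you have simply spelled out the normalization and bounding-box count that justify this. Your extra care in checking that the lex-maximum is a lax corner (so that TEP yields the required $1$-uniform $\{1_G\}$-extensions) and that $D \subset C$ is appropriate detail that the paper leaves implicit.
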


\begin{proof}
Clearly the size of the contour is $n_1 m_2 + m_1 n_2 - m_1 m_2$.
\end{proof}

Let us also reproduce the pattern count for the convex shape in Example~\ref{ex:Counts}.

\begin{example}
The group $\Z^2$ is left-invariantly ordered by the lexicographic order. Consider the shape $S = \begin{smallmatrix} * & * \\ & * \end{smallmatrix}$ defining a TEP subshift over an alphabet $A$ with $|A| = 6$. In Example~\ref{ex:Counts}, the contour of the first shape on the list is marked with $*$s, and numbers mark the order in which the rest of the cells are filled in
\[ \begin{smallmatrix} * & 1 & 3 & * \\
& * & 2 & \\
&  &  * & \\
& &  * & \end{smallmatrix} \]
The contents of the contour positions can be picked arbitrarily, and the rest is determined uniquely, so the number of patterns is $6^5 = 7776$, as indeed the computer claimed in Example~\ref{ex:Counts}. \qee
\end{example}

One can play with all the parameters to get more exotic examples:

\begin{example}
Pick the vector $\vec w = (\pi, 1)$ (i.e.\ the irrational mathematical constant $\pi \approx 3$) so that $\vec u \leq \vec v \iff (\vec v - \vec u) \cdot \vec w \geq 0$ (where $\cdot$ denotes the dot product) is an invariant (total) order on $\Z^2$. Consider a TEP subshift with shape $S = \begin{smallmatrix} * & * & * \\
& & * \end{smallmatrix}$, i.e.\ $S = \{(0,0), (1,0), (2,-1), (2,0)\}$. Let $E$ be the intersection of the closed ball of radius $\sqrt{19}$ with $\Z^2$ (which is clearly convex). Then the contour and ordering of the other cells as in the proof of Lemma~\ref{lem:Contour} is as follows
\begin{center}
\begin{tikzpicture}
\node () at (-2.0, -0.5) {*};
\node () at (-2.0, 0.0) {*};
\node () at (-1.5, -1.5) {*};
\node () at (-2.0, 0.5) {*};
\node () at (-1.5, -1.0) {*};
\node () at (-1.5, -0.5) {*};
\node () at (-1.5, 0.0) {*};
\node () at (-1.0, -1.5) {*};
\node () at (-1.5, 0.5) {*};
\node () at (-1.0, -1.0) {*};
\node () at (-1.5, 1.0) {*};
\draw (-1.25, -0.75) rectangle (-0.75, -0.25);\node () at (-1.0, -0.5) {1};
\node () at (-0.5, -2.0) {*};
\node () at (-1.5, 1.5) {*};
\draw (-1.25, -0.25) rectangle (-0.75, 0.25);\node () at (-1.0, 0.0) {2};
\draw (-0.75, -1.75) rectangle (-0.25, -1.25);\node () at (-0.5, -1.5) {3};
\draw(-1.25, 0.25) rectangle (-0.75, 0.75);\node () at (-1.0, 0.5) {4};
\draw (-0.75, -1.25) rectangle (-0.25, -0.75);\node () at (-0.5, -1.0) {5};
\node () at (-1.0, 1.0) {*};
\draw (-0.75, -0.75) rectangle (-0.25, -0.25);\node () at (-0.5, -0.5) {6};
\node () at (0.0, -2.0) {*};
\node () at (-1.0, 1.5) {*};
\draw (-0.75, -0.25) rectangle (-0.25, 0.25);\node () at (-0.5, 0.0) {7};
\draw (-0.25, -1.75) rectangle (0.25, -1.25);\node () at (0.0, -1.5) {8};
\draw (-0.75, 0.25) rectangle (-0.25, 0.75);\node () at (-0.5, 0.5) {9};
\draw (-0.25, -1.25) rectangle (0.25, -0.75);\node () at (0.0, -1.0) {\small 10};
\draw (-0.75, 0.75) rectangle (-0.25, 1.25);\node () at (-0.5, 1.0) {\small 11};
\draw (-0.25, -0.75) rectangle (0.25, -0.25);\node () at (0.0, -0.5) {\small 12};
\node () at (0.5, -2.0) {*};
\draw (-0.75, 1.25) rectangle (-0.25, 1.75);\node () at (-0.5, 1.5) {\small 13};
\draw (-0.25, -0.25) rectangle (0.25, 0.25);\node () at (0.0, 0.0) {\small 14};
\draw (0.25, -1.75) rectangle (0.75, -1.25);\node () at (0.5, -1.5) {\small 15};
\node () at (-0.5, 2.0) {*};
\draw (-0.25, 0.25) rectangle (0.25, 0.75);\node () at (0.0, 0.5) {\small 16};
\draw (0.25, -1.25) rectangle (0.75, -0.75);\node () at (0.5, -1.0) {\small 17};
\draw (-0.25, 0.75) rectangle (0.25, 1.25);\node () at (0.0, 1.0) {\small 18};
\draw (0.25, -0.75) rectangle (0.75, -0.25);\node () at (0.5, -0.5) {\small 19};
\draw (-0.25, 1.25) rectangle (0.25, 1.75);\node () at (0.0, 1.5) {\small 20};
\draw (0.25, -0.25) rectangle (0.75, 0.25);\node () at (0.5, 0.0) {\small 21};
\node () at (1.0, -1.5) {*};
\node () at (0.0, 2.0) {*};
\draw (0.25, 0.25) rectangle (0.75, 0.75);\node () at (0.5, 0.5) {\small 22};
\draw (0.75, -1.25) rectangle (1.25, -0.75);\node () at (1.0, -1.0) {\small 23};
\draw (0.25, 0.75) rectangle (0.75, 1.25);\node () at (0.5, 1.0) {\small 24};
\draw (0.75, -0.75) rectangle (1.25, -0.25);\node () at (1.0, -0.5) {\small 25};
\draw (0.25, 1.25) rectangle (0.75, 1.75);\node () at (0.5, 1.5) {\small 26};
\draw (0.75, -0.25) rectangle (1.25, 0.25);\node () at (1.0, 0.0) {\small 27};
\node () at (1.5, -1.5) {*};
\draw (0.25, 1.75) rectangle (0.75, 2.25);\node () at (0.5, 2.0) {\small 28};
\draw (0.75, 0.25) rectangle (1.25, 0.75);\node () at (1.0, 0.5) {\small 29};
\draw (1.25, -1.25) rectangle (1.75, -0.75);\node () at (1.5, -1.0) {\small 30};
\draw (0.75, 0.75) rectangle (1.25, 1.25);\node () at (1.0, 1.0) {\small 31};
\draw (1.25, -0.75) rectangle (1.75, -0.25);\node () at (1.5, -0.5) {\small 32};
\draw (0.75, 1.25) rectangle (1.25, 1.75);\node () at (1.0, 1.5) {\small 33};
\draw (1.25, -0.25) rectangle (1.75, 0.25);\node () at (1.5, 0.0) {\small 34};
\draw (1.25, 0.25) rectangle (1.75, 0.75);\node () at (1.5, 0.5) {\small 35};
\draw (1.25, 0.75) rectangle (1.75, 1.25);\node () at (1.5, 1.0) {\small 36};
\node () at (2.0, -0.5) {*};
\draw (1.25, 1.25) rectangle (1.75, 1.75);\node () at (1.5, 1.5) {\small 37};
\draw (1.75, -0.25) rectangle (2.25, 0.25);\node () at (2.0, 0.0) {\small 38};
\draw (1.75, 0.25) rectangle (2.25, 0.75);\node () at (2.0, 0.5) {\small 39};
\draw[step = 0.5, shift={(0.25,0.25)}] (-2.5,-2.5) grid (2,2);
\end{tikzpicture}
\end{center}
We see that there are $22$ free choices and $39$ constrained choices, confirming the number we obtained in Example~\ref{ex:CountingExample} by sampling along an anti-shelling. \qee
\end{example}

\begin{example}
\label{ex:FreeContour}
Just as happens with $\Z^2$, the space of left-invariant orderings of the two-generator free group $F_2$ is a Cantor set \cite{Na10}, so there are many choices for the order. Also, through a suitable interpretation, just like $\Z^2$, the free group admits a natural lexicographic order which is invariant by translations from both sides, which we summarize (see also \cite{DeNari14}): Apply the \emph{Magnus transformation} $M : F_2 \to \Z \langle a, b \rangle$ where $\Z \langle a, b \rangle$ is the (noncommutative) algebra of formal $\Z$-linear combinations of words in $\{a, b\}^*$, and the mapping is the group homomorphism (to the multiplicative semigroup of $\Z \langle a, b \rangle$) induced by $a \mapsto 1+a$ on the positive generators. Observe that indeed $1+a$ is invertible, and $M(a^{-1}) = 1 - a + a^2 - a^3 + \cdots$. Now define $u \leq v \iff M(u) \leq M(v)$, where power series are compared by ordering the variables first by length, then by the lexicographic order, and finally comparing the sequence of coefficients lexicographically. Let us call this the \emph{Magnus ordering}.

An invariant order is of course determined by the positive elements. Let us argue in pictures and embed $F_2$ into $\Z^2$ by its Cayley graph with respect to the free generators, so that it is a $4$-regular tree. Right multiplication by the generator $a$ is represented by a right edge, and right multiplication by $b$ represents is represented by an upward edge. Figure~\ref{fig:Magnus} shows the ball of radius $4$, with a black circle at nodes $u \in F_2$ satisfying $u \geq 1_{F_2}$ in the Magnus ordering.

\begin{figure}
\begin{center}
\begin{tikzpicture}
\filldraw (0,0) circle (3pt);
\draw (0, 0) -- (2, 0);
\filldraw (2,0) circle (3pt);
\draw (2, 0) -- (3.0, 0.0);
\filldraw (3.0,0.0) circle (3pt);
\draw (3.0, 0.0) -- (3.5, 0.0);
\filldraw (3.5,0.0) circle (3pt);
\draw (3.5, 0.0) -- (3.75, 0.0);
\filldraw (3.75,0.0) circle (3pt);
\draw (3.5, 0.0) -- (3.5, 0.25);
\filldraw (3.5,0.25) circle (3pt);
\draw (3.5, 0.0) -- (3.5, -0.25);
\filldraw (3.5,-0.25) circle (3pt);
\draw (3.0, 0.0) -- (3.0, 0.5);
\filldraw (3.0,0.5) circle (3pt);
\draw (3.0, 0.5) -- (3.25, 0.5);
\filldraw (3.25,0.5) circle (3pt);
\draw (3.0, 0.5) -- (2.75, 0.5);
\filldraw (2.75,0.5) circle (3pt);
\draw (3.0, 0.5) -- (3.0, 0.75);
\filldraw (3.0,0.75) circle (3pt);
\draw (3.0, 0.0) -- (3.0, -0.5);
\filldraw (3.0,-0.5) circle (3pt);
\draw (3.0, -0.5) -- (3.25, -0.5);
\filldraw (3.25,-0.5) circle (3pt);
\draw (3.0, -0.5) -- (2.75, -0.5);
\filldraw (2.75,-0.5) circle (3pt);
\draw (3.0, -0.5) -- (3.0, -0.75);
\filldraw (3.0,-0.75) circle (3pt);
\draw (2, 0) -- (2.0, 1.0);
\filldraw (2.0,1.0) circle (3pt);
\draw (2.0, 1.0) -- (2.5, 1.0);
\filldraw (2.5,1.0) circle (3pt);
\draw (2.5, 1.0) -- (2.75, 1.0);
\filldraw (2.75,1.0) circle (3pt);
\draw (2.5, 1.0) -- (2.5, 1.25);
\filldraw (2.5,1.25) circle (3pt);
\draw (2.5, 1.0) -- (2.5, 0.75);
\filldraw (2.5,0.75) circle (3pt);
\draw (2.0, 1.0) -- (1.5, 1.0);
\filldraw (1.5,1.0) circle (3pt);
\draw (1.5, 1.0) -- (1.25, 1.0);
\draw (1.5, 1.0) -- (1.5, 1.25);
\filldraw (1.5,1.25) circle (3pt);
\draw (1.5, 1.0) -- (1.5, 0.75);
\filldraw (1.5,0.75) circle (3pt);
\draw (2.0, 1.0) -- (2.0, 1.5);
\filldraw (2.0,1.5) circle (3pt);
\draw (2.0, 1.5) -- (2.25, 1.5);
\filldraw (2.25,1.5) circle (3pt);
\draw (2.0, 1.5) -- (1.75, 1.5);
\filldraw (1.75,1.5) circle (3pt);
\draw (2.0, 1.5) -- (2.0, 1.75);
\filldraw (2.0,1.75) circle (3pt);
\draw (2, 0) -- (2.0, -1.0);
\filldraw (2.0,-1.0) circle (3pt);
\draw (2.0, -1.0) -- (2.5, -1.0);
\filldraw (2.5,-1.0) circle (3pt);
\draw (2.5, -1.0) -- (2.75, -1.0);
\filldraw (2.75,-1.0) circle (3pt);
\draw (2.5, -1.0) -- (2.5, -0.75);
\filldraw (2.5,-0.75) circle (3pt);
\draw (2.5, -1.0) -- (2.5, -1.25);
\filldraw (2.5,-1.25) circle (3pt);
\draw (2.0, -1.0) -- (1.5, -1.0);
\draw (1.5, -1.0) -- (1.25, -1.0);
\draw (1.5, -1.0) -- (1.5, -0.75);
\draw (1.5, -1.0) -- (1.5, -1.25);
\draw (2.0, -1.0) -- (2.0, -1.5);
\filldraw (2.0,-1.5) circle (3pt);
\draw (2.0, -1.5) -- (2.25, -1.5);
\filldraw (2.25,-1.5) circle (3pt);
\draw (2.0, -1.5) -- (1.75, -1.5);
\draw (2.0, -1.5) -- (2.0, -1.75);
\filldraw (2.0,-1.75) circle (3pt);
\draw (0, 0) -- (-2, 0);
\draw (-2, 0) -- (-3.0, 0.0);
\draw (-3.0, 0.0) -- (-3.5, 0.0);
\draw (-3.5, 0.0) -- (-3.75, 0.0);
\draw (-3.5, 0.0) -- (-3.5, 0.25);
\draw (-3.5, 0.0) -- (-3.5, -0.25);
\draw (-3.0, 0.0) -- (-3.0, 0.5);
\draw (-3.0, 0.5) -- (-2.75, 0.5);
\draw (-3.0, 0.5) -- (-3.25, 0.5);
\draw (-3.0, 0.5) -- (-3.0, 0.75);
\draw (-3.0, 0.0) -- (-3.0, -0.5);
\draw (-3.0, -0.5) -- (-2.75, -0.5);
\draw (-3.0, -0.5) -- (-3.25, -0.5);
\draw (-3.0, -0.5) -- (-3.0, -0.75);
\draw (-2, 0) -- (-2.0, 1.0);
\draw (-2.0, 1.0) -- (-1.5, 1.0);
\filldraw (-1.5,1.0) circle (3pt);
\draw (-1.5, 1.0) -- (-1.25, 1.0);
\filldraw (-1.25,1.0) circle (3pt);
\draw (-1.5, 1.0) -- (-1.5, 1.25);
\filldraw (-1.5,1.25) circle (3pt);
\draw (-1.5, 1.0) -- (-1.5, 0.75);
\draw (-2.0, 1.0) -- (-2.5, 1.0);
\draw (-2.5, 1.0) -- (-2.75, 1.0);
\draw (-2.5, 1.0) -- (-2.5, 1.25);
\draw (-2.5, 1.0) -- (-2.5, 0.75);
\draw (-2.0, 1.0) -- (-2.0, 1.5);
\draw (-2.0, 1.5) -- (-1.75, 1.5);
\filldraw (-1.75,1.5) circle (3pt);
\draw (-2.0, 1.5) -- (-2.25, 1.5);
\draw (-2.0, 1.5) -- (-2.0, 1.75);
\draw (-2, 0) -- (-2.0, -1.0);
\draw (-2.0, -1.0) -- (-1.5, -1.0);
\draw (-1.5, -1.0) -- (-1.25, -1.0);
\filldraw (-1.25,-1.0) circle (3pt);
\draw (-1.5, -1.0) -- (-1.5, -0.75);
\filldraw (-1.5,-0.75) circle (3pt);
\draw (-1.5, -1.0) -- (-1.5, -1.25);
\draw (-2.0, -1.0) -- (-2.5, -1.0);
\draw (-2.5, -1.0) -- (-2.75, -1.0);
\draw (-2.5, -1.0) -- (-2.5, -0.75);
\draw (-2.5, -1.0) -- (-2.5, -1.25);
\draw (-2.0, -1.0) -- (-2.0, -1.5);
\draw (-2.0, -1.5) -- (-1.75, -1.5);
\draw (-2.0, -1.5) -- (-2.25, -1.5);
\draw (-2.0, -1.5) -- (-2.0, -1.75);
\draw (0, 0) -- (0, 2);
\filldraw (0,2) circle (3pt);
\draw (0, 2) -- (1.0, 2.0);
\filldraw (1.0,2.0) circle (3pt);
\draw (1.0, 2.0) -- (1.5, 2.0);
\filldraw (1.5,2.0) circle (3pt);
\draw (1.5, 2.0) -- (1.75, 2.0);
\filldraw (1.75,2.0) circle (3pt);
\draw (1.5, 2.0) -- (1.5, 2.25);
\filldraw (1.5,2.25) circle (3pt);
\draw (1.5, 2.0) -- (1.5, 1.75);
\filldraw (1.5,1.75) circle (3pt);
\draw (1.0, 2.0) -- (1.0, 2.5);
\filldraw (1.0,2.5) circle (3pt);
\draw (1.0, 2.5) -- (1.25, 2.5);
\filldraw (1.25,2.5) circle (3pt);
\draw (1.0, 2.5) -- (0.75, 2.5);
\filldraw (0.75,2.5) circle (3pt);
\draw (1.0, 2.5) -- (1.0, 2.75);
\filldraw (1.0,2.75) circle (3pt);
\draw (1.0, 2.0) -- (1.0, 1.5);
\filldraw (1.0,1.5) circle (3pt);
\draw (1.0, 1.5) -- (1.25, 1.5);
\filldraw (1.25,1.5) circle (3pt);
\draw (1.0, 1.5) -- (0.75, 1.5);
\draw (1.0, 1.5) -- (1.0, 1.25);
\filldraw (1.0,1.25) circle (3pt);
\draw (0, 2) -- (-1.0, 2.0);
\draw (-1.0, 2.0) -- (-1.5, 2.0);
\draw (-1.5, 2.0) -- (-1.75, 2.0);
\draw (-1.5, 2.0) -- (-1.5, 2.25);
\draw (-1.5, 2.0) -- (-1.5, 1.75);
\draw (-1.0, 2.0) -- (-1.0, 2.5);
\draw (-1.0, 2.5) -- (-0.75, 2.5);
\filldraw (-0.75,2.5) circle (3pt);
\draw (-1.0, 2.5) -- (-1.25, 2.5);
\draw (-1.0, 2.5) -- (-1.0, 2.75);
\draw (-1.0, 2.0) -- (-1.0, 1.5);
\draw (-1.0, 1.5) -- (-0.75, 1.5);
\filldraw (-0.75,1.5) circle (3pt);
\draw (-1.0, 1.5) -- (-1.25, 1.5);
\draw (-1.0, 1.5) -- (-1.0, 1.25);
\draw (0, 2) -- (0.0, 3.0);
\filldraw (0.0,3.0) circle (3pt);
\draw (0.0, 3.0) -- (0.5, 3.0);
\filldraw (0.5,3.0) circle (3pt);
\draw (0.5, 3.0) -- (0.75, 3.0);
\filldraw (0.75,3.0) circle (3pt);
\draw (0.5, 3.0) -- (0.5, 3.25);
\filldraw (0.5,3.25) circle (3pt);
\draw (0.5, 3.0) -- (0.5, 2.75);
\filldraw (0.5,2.75) circle (3pt);
\draw (0.0, 3.0) -- (-0.5, 3.0);
\draw (-0.5, 3.0) -- (-0.75, 3.0);
\draw (-0.5, 3.0) -- (-0.5, 3.25);
\draw (-0.5, 3.0) -- (-0.5, 2.75);
\draw (0.0, 3.0) -- (0.0, 3.5);
\filldraw (0.0,3.5) circle (3pt);
\draw (0.0, 3.5) -- (0.25, 3.5);
\filldraw (0.25,3.5) circle (3pt);
\draw (0.0, 3.5) -- (-0.25, 3.5);
\draw (0.0, 3.5) -- (0.0, 3.75);
\filldraw (0.0,3.75) circle (3pt);
\draw (0, 0) -- (0, -2);
\draw (0, -2) -- (1.0, -2.0);
\filldraw (1.0,-2.0) circle (3pt);
\draw (1.0, -2.0) -- (1.5, -2.0);
\filldraw (1.5,-2.0) circle (3pt);
\draw (1.5, -2.0) -- (1.75, -2.0);
\filldraw (1.75,-2.0) circle (3pt);
\draw (1.5, -2.0) -- (1.5, -1.75);
\filldraw (1.5,-1.75) circle (3pt);
\draw (1.5, -2.0) -- (1.5, -2.25);
\filldraw (1.5,-2.25) circle (3pt);
\draw (1.0, -2.0) -- (1.0, -1.5);
\filldraw (1.0,-1.5) circle (3pt);
\draw (1.0, -1.5) -- (1.25, -1.5);
\filldraw (1.25,-1.5) circle (3pt);
\draw (1.0, -1.5) -- (0.75, -1.5);
\filldraw (0.75,-1.5) circle (3pt);
\draw (1.0, -1.5) -- (1.0, -1.25);
\filldraw (1.0,-1.25) circle (3pt);
\draw (1.0, -2.0) -- (1.0, -2.5);
\filldraw (1.0,-2.5) circle (3pt);
\draw (1.0, -2.5) -- (1.25, -2.5);
\filldraw (1.25,-2.5) circle (3pt);
\draw (1.0, -2.5) -- (0.75, -2.5);
\draw (1.0, -2.5) -- (1.0, -2.75);
\filldraw (1.0,-2.75) circle (3pt);
\draw (0, -2) -- (-1.0, -2.0);
\draw (-1.0, -2.0) -- (-1.5, -2.0);
\draw (-1.5, -2.0) -- (-1.75, -2.0);
\draw (-1.5, -2.0) -- (-1.5, -1.75);
\draw (-1.5, -2.0) -- (-1.5, -2.25);
\draw (-1.0, -2.0) -- (-1.0, -1.5);
\draw (-1.0, -1.5) -- (-0.75, -1.5);
\draw (-1.0, -1.5) -- (-1.25, -1.5);
\draw (-1.0, -1.5) -- (-1.0, -1.25);
\draw (-1.0, -2.0) -- (-1.0, -2.5);
\draw (-1.0, -2.5) -- (-0.75, -2.5);
\draw (-1.0, -2.5) -- (-1.25, -2.5);
\draw (-1.0, -2.5) -- (-1.0, -2.75);
\draw (0, -2) -- (0.0, -3.0);
\draw (0.0, -3.0) -- (0.5, -3.0);
\filldraw (0.5,-3.0) circle (3pt);
\draw (0.5, -3.0) -- (0.75, -3.0);
\filldraw (0.75,-3.0) circle (3pt);
\draw (0.5, -3.0) -- (0.5, -2.75);
\filldraw (0.5,-2.75) circle (3pt);
\draw (0.5, -3.0) -- (0.5, -3.25);
\filldraw (0.5,-3.25) circle (3pt);
\draw (0.0, -3.0) -- (-0.5, -3.0);
\draw (-0.5, -3.0) -- (-0.75, -3.0);
\draw (-0.5, -3.0) -- (-0.5, -2.75);
\draw (-0.5, -3.0) -- (-0.5, -3.25);
\draw (0.0, -3.0) -- (0.0, -3.5);
\draw (0.0, -3.5) -- (0.25, -3.5);
\filldraw (0.25,-3.5) circle (3pt);
\draw (0.0, -3.5) -- (-0.25, -3.5);
\draw (0.0, -3.5) -- (0.0, -3.75);
\end{tikzpicture}
\end{center}
\caption{The ball of radius $4$ on the free group, with ``nonnegative'' nodes marked by black circles.}
\label{fig:Magnus}
\end{figure}
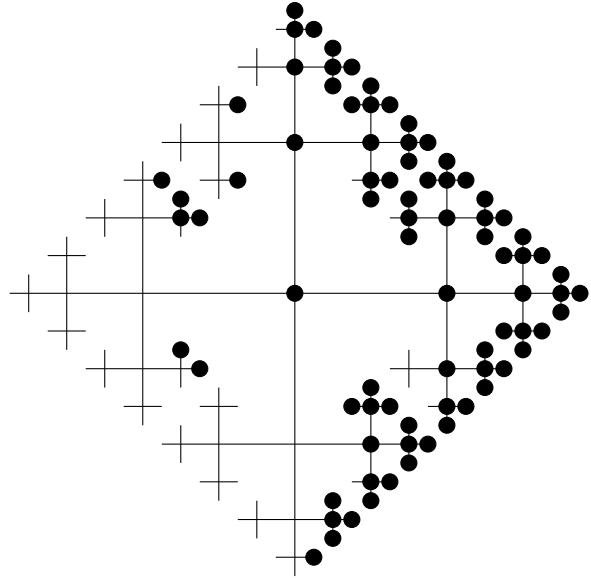

Now, consider the shape $S = \{1_{F_2}, a, b, a^{-1}, b^{-1}\}$,. We see from the figure that $a$ is the maximal element in the Magnus ordering, since $a^{-1} S$ is the only translate of $S$ that touches the identity and does not contain positive elements. Suppose thus that $\T \subset A^{a^{-1} S}$ has $1$-uniform $\{1_{F_2}\}$-extensions.

Next, we pick a set that we wish to tile. Though Lemma~\ref{lem:Contour} does not require convexity, it is natural to pick a convex set for some notion of convexity, so that if $\T$ is also TEP, the $\T$-legal patterns obtained are actually in the language of the subshift. In Section~\ref{sec:FreeGroup} we will define the tree convex sets, and the ball of radius $4$ is itself tree convex. Thus, let us tile that set. Figure~\ref{fig:Garbanzo} shows the $a^{-1}S$-contour with large black circles, and numbers show the order in which the Magnus ordering suggests we fill the rest. 

\begin{figure}
\begin{tikzpicture}
\input{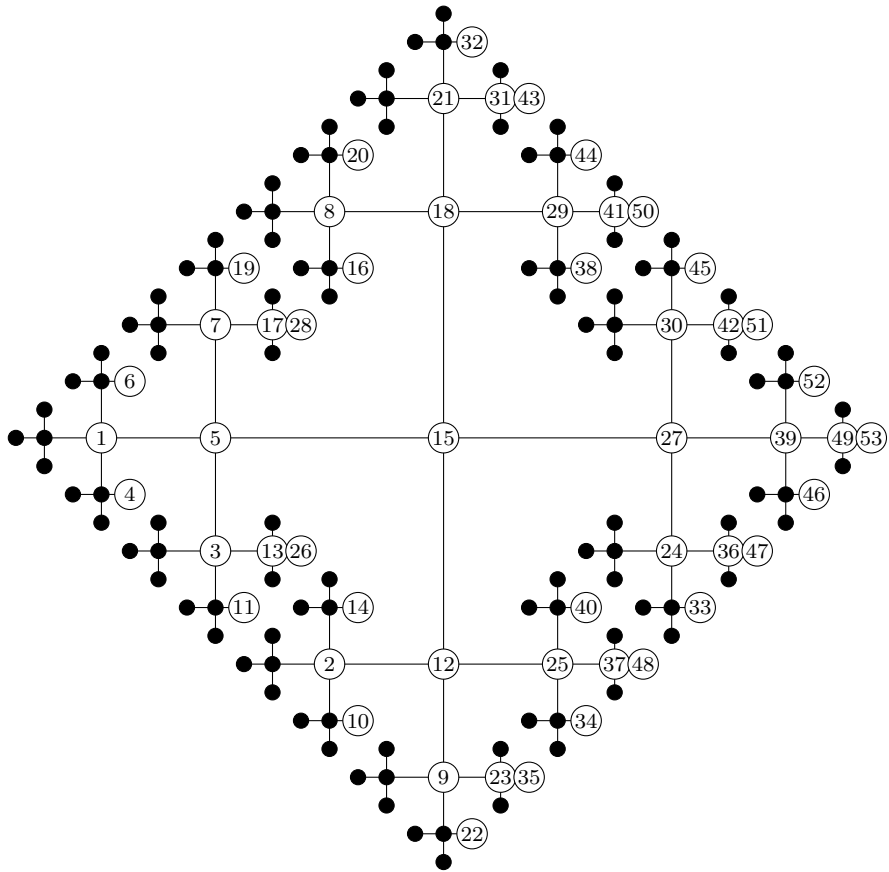}
\end{tikzpicture}
\caption{The $a^{-1}S$-contour and the order in which the Magnus ordering suggests we fill the rest.}
\label{fig:Garbanzo}
\end{figure}

One can verify that this indeed works out, i.e.\ if the contour itself can be picked arbitrarily without introducing a tiling error, and if we ``slide'' the shape $a^{-1}S$ along the ordering, the value of each cell is uniquely determined. In particular there are $|A|^{108}$ legal configurations of this shape. \qee
\end{example}

\begin{example}
\label{ex:CAExample}
In this example we assume some familiarity with cellular automata.
Consider a surjective cellular automaton on a full shift on $\Z$, and its spacetime subshift $X \subset A^{\Z^2}$, with time increasing downward. The local rule of the cellular automaton gives a family of allowed patterns of shape $S = (\{m,m+1,\cdots, n\} \times \{0\}) \cup \{(0,-1)\}$ which has $1$-uniform $(0,-1)$-extensions.

One can define a natural measure on its spacetime subshift, namely the measure which is uniform on the rows (the rows that appear in configurations are just the limit set of the cellular automaton, which is $A^\Z$ by surjectivity). Namely, it is classical that the resulting measure on the spacetime subshift is invariant under the $\Z^2$-action. For bipermutive cellular automata this is a particular case of a TEP subshift, and indeed the measure uniform on the lines is precisely the TEP measure, by Theorem~\ref{thm:Measure} and the fact the horizontal lines are convex sets.

For a non-bipermutive surjective cellular automaton the measure constructed this way will \emph{not} always have a uniform distribution on convex sets: Let $f : \Z_3^\Z \to \Z_3^\Z$ be the cellular automaton $f(x)_i = \min(1, x_i) + x_{i+1}$. This cellular automaton is surjective because it is right-permutive. If the cellular automaton is run downward, the distribution on the convex shape $((1,0), (0,-1))$ is that the contents $(a,b)$ has probability $2/9$ if $b = a+1$ (because there are two ways to choose the contents of $(0,0)$) and $1/9$ otherwise (then necessarily $b = a$ and we must choose the symbol $0$ in position $(0,0)$).

Note also that given any set, we can find a locally legal filling of it as in the proof of Lemma~\ref{lem:Contour}, by filling its contour with respect to any ordering having $(0,-1)$ as the maximal element of $S$ and then applying the local rule to the cells in the ordering we have chosen. This does \emph{not} always produce globally legal patterns by the same example as in the previous paragraph, since $((1,0), (0,-1))$ is its own $\{(0,0),(1,0),(0,-1)\}$-contour, but $(0,2)$ is not globally legal on it. \qee
\end{example}

\subsection{Subshift and solitaire of independence}
\label{sec:Solitaire}

In this section, we restrict to TEP subshifts, i.e.\ $k$-TEP for $k = 1$.

We introduce the independence subshift, and a closure property of it we call the solitaire (of independence). This allows one to construct more sets of the type studied in the previous section, where one can pick the contents arbitrarily.

The main ``applications'' of the solitaire that we are aware of (beyond just finding independent sets, of course) would require getting our hands dirty with some discrete geometry of $\Z^d$, and this is beyond the scope of this paper. Nevertheless, we find this solitaire rather fascinating, and feel it is worth presenting in the present text.

\begin{definition}
Let $G$ be a group, 
and let $X \subset A^G$ be a subshift. 
For $y \in \{0,1\}^G$ write $\supp(y) = \{g \in G \;|\; y_g = 1\}$ for its \emph{support}, and define the \emph{(topological) independence subshift} $I(X) \subset \{0,1\}^G$ by
\[ y \in I(X) \iff \forall P \in A^{\supp(y)}: \exists x \in X: x|_{\supp(y)} = P. \]
If $\mu \in \mathcal{M}(A^G)$ is an invariant measure, let the \emph{$\mu$-independence subshift} $I_{\mu}(X) \subset \{0,1\}^G$ be the set defined by
\[ y \in I_{\mu}(X) \iff \forall B \Subset \supp(y): \mu|_B \mbox{ is uniform on } X|_B. \]
\end{definition}

It is easy to show that indeed $I(X)$ and $I_\mu(X)$ are always subshifts, and also that they are \emph{down}, meaning $(y \in I(X) \wedge \forall g \in G: y'_g \leq y_g) \implies y' \in I(X)$. It is convenient to identify binary configurations with their supports when working with the solitaire, and we move freely back and forth. For $y \in I(X)$, we call $\supp(y)$ (or $y$ itself) an \emph{independent set}.

\begin{definition}
Let $G$ be a group, and fix $T \subset S \Subset G$. We give $\{0,1\}^G$ the structure of an undirected graph, the \emph{($(S,T)$-)solitaire graph} by using the following edges: $(y, y') \in E_{S,T} \subset (\{0,1\}^G)^2$ if there exist $g \in G$ and $a, b \in gT$ such that $a \neq b$ and
\[ y|_{G \setminus \{a, b\}} = y'|_{G \setminus \{a, b\}}, \]
\[ y|_{gS \setminus \{a, b\}} = y'|_{gS \setminus \{a, b\}} = 1^{gS \setminus \{a, b\}}, \]
\[ y_a \neq y_b \wedge y'_a \neq y'_b. \]
\end{definition}


We call two configurations $y, y' \in \{0,1\}^\Z$ \emph{($(S,T)$-)solitaire connected} if they are in the same component of the ($(S,T)$-)solitaire graph, and we call edges \emph{($(S,T)$-)solitaire moves}. By default, if $T$ is not mentioned and there is a fixed $S$-UCP convexoid on $G$, we assume $T = \bigcup_g g^{-1} \angle g S$ is the set of translated lax corners. There is a strong relation between fillings of sets in the same solitaire component in a TEP subshift.

\begin{lemma}
\label{lem:OtherSwapping}
Let $G$ be a group, $S \Subset G$. Suppose $X \subset A^G$ is defined by a family $\T \subset A^S$ which has $1$-uniform $T$-extensions. Suppose $y, y' \in \{0,1\}^G$ are $(S,T)$-solitaire connected, and let $N, N'$ be the supports of $y, y'$ respectively. Then there is a unique homeomorphism $\phi : X|_N \to X|_{N'}$ such that $x|_N = P \iff x|_{N'} = \phi(P)$ for all $x \in X$.
\end{lemma}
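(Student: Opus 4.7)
The plan is to first reduce to the case of a single solitaire move by composing the homeomorphisms along a solitaire path, and then handle that base case using the $1$-uniform $T$-extensions property of $\T$.

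For a single solitaire move from $y$ to $y'$, unpacking the definition I may assume (swapping $y, y'$ if needed) that $N = (N \cap N') \cup \{a\}$ and $N' = (N \cap N') \cup \{b\}$, where $a, b \in gT$ are distinct and $gS \setminus \{a, b\} \subset N \cap N'$. In particular $gS \setminus \{b\} \subset N$ and $gS \setminus \{a\} \subset N'$. Since $\T$ has $1$-uniform $T$-extensions and $g^{-1}b \in T$, for any pattern $Q \in A^{gS \setminus \{b\}}$ there is a unique symbol $c \in A$ with $g^{-1}(Q \sqcup (b \mapsto c)) \in \T$. This lets me define $\phi : X|_N \to A^{N'}$ by setting $\phi(P)|_{N \cap N'} = P|_{N \cap N'}$ and taking $\phi(P)_b$ to be the unique such $c$ determined by $P|_{gS \setminus \{b\}}$.

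I would verify four things. First, $\phi(x|_N) = x|_{N'}$ for every $x \in X$: this follows because $gx|_S \in \T$ forces $x_b$ to be exactly the unique TEP extension, and on $N \cap N'$ both sides are $x|_{N \cap N'}$. Second, this means $\phi$ is well-defined on $X|_N$ (different $x$ with the same $x|_N$ produce the same $x|_{N'}$) and takes values in $X|_{N'}$. Third, swapping the roles of $a$ and $b$ yields a map $\phi'$ with $\phi'(x|_{N'}) = x|_N$; by the intertwining relation $\phi$ and $\phi'$ are mutual inverses. Fourth, each coordinate of $\phi(P)$ depends on only finitely many coordinates of $P$, so $\phi$ and $\phi^{-1}$ are continuous, hence $\phi$ is a homeomorphism. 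Uniqueness is immediate from the characterizing relation, since every $P \in X|_N$ is $x|_N$ for some $x \in X$ and the image $\phi(P) = x|_{N'}$ is then forced.

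For a general solitaire path $y = y_0, y_1, \ldots, y_n = y'$ with supports $N_i$, I compose the step homeomorphisms $\phi_i : X|_{N_{i-1}} \to X|_{N_i}$; the composition still satisfies $\phi(x|_N) = x|_{N'}$ for all $x \in X$, and uniqueness again follows from this relation (independence of the chosen path is a consequence). The step that requires the most care is the indexing in a single move: one must keep straight that $\{a,b\} \subset gS$ and that $gS \setminus \{b\} \subset N$ and $gS \setminus \{a\} \subset N'$, since it is precisely this containment that permits the $1$-uniform $T$-extension to be invoked. Once this bookkeeping is in place, the TEP property does the real work and the rest of the verification is routine.
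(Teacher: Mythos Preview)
Your proof is correct and follows essentially the same approach as the paper: reduce to a single solitaire step by composition, then use the $1$-uniform $T$-extension property to deduce the value at $b$ from the values on $gS \setminus \{b\} \subset N$. Your write-up is in fact more careful than the paper's own proof --- you make the indexing explicit (the paper has a minor slip writing $x_{ga}, x_{gb}$ where $a,b$ are already in $gT$), and you spell out the well-definedness, continuity, and uniqueness arguments that the paper leaves implicit.
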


(Observe that the symmetric difference of $N$ and $N'$ is necessarily finite by definition of a solitaire move and connectedness.)

\begin{proof}
Uniqueness is obvious once existence is proved. Existence follows from proving existence of $\phi$ for a single step (by composing the $\phi$-maps for individual steps, since ``$\iff$'' is transitive). For existence of $\phi$ for a single solitaire step, observe that $1$-uniform $T$-extensions imply that, when considering configurations $x \in X$, the contents of $x|_{gS \setminus \{a, b\}}$ for $a, b \in gT$ put up a bijection between possible contents of $x_{ga}$ and $x_{gb}$, in the sense that knowing one implies the contents of the other, given that $x|_{gS \setminus \{a, b\}}$ is known. Thus, if $gS \setminus \{b\} \subset \supp(y)$ for some $y \in I(X)$, and the configuration $y'$ is obtained from $y$ by replacing $a$ by $b$ in the support, then the homeomorphism $\phi$ simply performs this deduction.
\end{proof}

\begin{lemma}
\label{lem:Swapping}
Let $G$ be a f.g.\ group, $S \Subset G$ and fix an $S$-UCP convexoid on $G$. Suppose $X \subset A^G$ is TEP with shape $S$ (resp.\ and $\mu \in \mathcal{M}(X)$ is any invariant measure). Then $I(X)$ (resp.\ $I_\mu(X)$) is a union of connected components of the $(S,T)$-solitaire graph, where $T$ is the set of translated lax corners.
\end{lemma}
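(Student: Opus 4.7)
The plan is to show that single solitaire moves preserve membership in $I(X)$ and $I_\mu(X)$, whence by transitivity along paths in the graph these sets are unions of connected components. Fix a single edge $y \sim y'$ witnessed by a triple $(g,a,b)$ as in the definition. Set $N := \supp(y)$, $N' := \supp(y') = (N \setminus \{a\}) \cup \{b\}$, and $C := gS \setminus \{a,b\}$; note $C \subset N \cap N'$. Lemma~\ref{lem:OtherSwapping} yields a homeomorphism $\phi \colon X|_N \to X|_{N'}$ with $\phi(x|_N) = x|_{N'}$ for every $x \in X$: it is the identity on $N \cap N'$-coordinates and at $b$ outputs the unique symbol forced by $1$-uniform $b$-extensions of $\T$ at the translate $gS$. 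Since the cylinders $[P]$ and $[\phi(P)]$ coincide as subsets of $X$ for every $P \in X|_N$, the map $\phi$ is measure-preserving: $\phi_\ast \mu|_N = \mu|_{N'}$. The same TEP deduction localises: for every $B \Subset N$ with $C \cup \{a\} \subset B$, restriction of $\phi$ yields a measure-preserving bijection $\tilde\psi_B \colon X|_B \to X|_{B^\ast}$, with $B^\ast := (B \setminus \{a\}) \cup \{b\} \Subset N'$.

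For the $I(X)$ half, given $B' \Subset N'$ it is enough to show $X|_{B'} = A^{B'}$. If $b \notin B'$ this is just $y \in I(X)$; otherwise take $B := (B' \setminus \{b\}) \cup C \cup \{a\} \Subset N$, note that $|B| = |B^\ast|$ and $X|_B = A^B$ by hypothesis. The bijection $\tilde\psi_B$ then forces $|X|_{B^\ast}| = |A|^{|B^\ast|}$, i.e.\ $X|_{B^\ast} = A^{B^\ast}$, and projecting to $B' \subset B^\ast$ concludes.

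For the $I_\mu(X)$ half, the same setup transports uniformity of $\mu|_B$ on $X|_B$ (the hypothesis, since $B \subset N$) to uniformity of $\mu|_{B^\ast}$ on $X|_{B^\ast}$, which already closes the argument whenever $B^\ast = B'$, in particular when $C \subset B'$. The main obstacle I foresee is the remaining case, $b \in B'$ with $C_1 := C \setminus B'$ nonempty, in which one must descend from $\mu|_{B^\ast}$ to $\mu|_{B'}$ by the forgetful projection along $C_1$. This preserves uniformity precisely when the fibres $\{R \in A^{C_1} : (P', R) \in X|_{B^\ast}\}$ over $P' \in X|_{B'}$ have constant cardinality, and my plan for that is to conjugate by $\tilde\psi_B^{-1}$ and identify these fibres with fibres of forgetful projections among finite subsets of $N$ (in particular $B_0 \cup \{a\}$, $B_0 \cup C_1$ and $B$, with $B_0 := B' \setminus \{b\}$), whose uniform marginals are guaranteed by the hypothesis $y \in I_\mu(X)$ and combine with the TEP bijection to cancel any $P'$-dependence.
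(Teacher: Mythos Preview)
Your argument for $I(X)$ is correct and is essentially the paper's proof: both transport fullness through the measure-preserving bijection $X|_B \cong X|_{B^\ast}$ (the paper writes $M,M'$ for $N\cap B_{n+k}$, $N'\cap B_{n+k}$) and then project down. For $I_\mu(X)$ you are in fact more careful than the paper, whose one line (``bijections on finite sets preserve the uniform measure'') only yields $\mu|_{B^\ast}$ uniform and never addresses the descent to $B'\subsetneq B^\ast$; you have correctly isolated this as the real obstacle.

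The gap is that your plan for that obstacle does not go through as stated. After conjugating by $\tilde\psi_B^{-1}$, the fibre over $P'=(P_0,c)\in X|_{B'}$ becomes
\[
\bigl\{R\in X|_B : R|_{B_0}=P_0,\ R_a=\alpha(P_0|_{C_0},R|_{C_1},c)\bigr\},
\]
where $\alpha$ is the TEP inverse at $a$. This is \emph{not} a fibre of any forgetful projection among $B_0,\ B_0\cup\{a\},\ B_0\cup C_1,\ B$: the constraint on $R_a$ is entangled with $R|_{C_1}$ through $\alpha$. Uniformity of all those marginals tells you, for instance, that for each fixed $\gamma\in A$ the count $|\{r\in A^{C_1}:(P_0,r,\gamma)\in X|_B\}|$ is independent of $(P_0,\gamma)\in X|_{B_0\cup\{a\}}$; but the count you need is $|\{r:(P_0,r,\alpha(P_0|_{C_0},r,c))\in X|_B\}|$, where the third coordinate moves with $r$, and I do not see how the listed ingredients force this to be independent of $(P_0,c)$. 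So either a genuinely new idea is required here, or the $I_\mu$ half needs a different route; as written neither you nor the paper has closed this step.
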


\begin{proof}
First he topological claim. It suffices to show that if $(y, y') \in E_{S,T}$ where $T$ is the set of translated lax corners, and $y \in I(X)$, then also $y' \in I(X)$. Observe that if $y$ and $y'$ are connected by solitaire moves, then their supports $N, N'$ differ in finitely many positions only. If the differences are contained in the ball $B_n$ of $G$ and $k$ is sufficiently large depending on the modulus of continuity of $\phi$, we see that $\phi$ also gives a bijection between $X|_M$ and $X|_{M'}$ for $M = N \cap B_{n+k}$ and $M' = N \cap B_{n+k}$, which are of the same cardinality. Thus
\[ y \in I(X) \implies X|_M = A^M \implies X|_{M'} = A^{M'} \implies y' \in I(X). \]
For the measure-theoretic claim, observe that bijections on finite sets preserve the uniform measure.
\end{proof}

Examples of independent sets can be produced very easily: whenever we obtain independent sets from contours, or by following any anti-shelling, we can apply the solitaire to produce more independent sets. The following example looks at the triangle shape on $\Z^2$.

\begin{example}
\label{ex:Solitaire}
Consider an arbitrary TEP subshift $X$ with the shape $S = \begin{smallmatrix} * & * \\ & * \end{smallmatrix}$. It is easy to see that the configuration $y$ with support $\Z \times \{0\}$ is in $I(X)$. This follows directly from Theorem~\ref{thm:UCE} because $\Z \times \{0\}$ is convex. Alternatively, it follows from basic theory of cellular automata by considering the spacetime subshift, similarly as in Example~\ref{ex:CAExample}. 

By applying the solitaire to this configuration $y$, one obtains a large family of independent sets. As an example, one can show that $X|_B = A^B$, where $B$ is the following subset of $\Z^2$.
\begin{center}
\pgfplotstableread{independent.cvs}{\matrixfile}
\begin{tikzpicture}[scale = 0.25]

  \pgfplotstablegetrowsof{\matrixfile} 
  \pgfmathtruncatemacro{\totrow}{\pgfplotsretval}
  \pgfplotstablegetcolsof{\matrixfile} 
  \pgfmathtruncatemacro{\totcol}{\pgfplotsretval}
  
  \pgfplotstableforeachcolumn\matrixfile\as\col{
    \pgfplotstableforeachcolumnelement{\col}\of\matrixfile\as\colcnt{%
      \ifnum\colcnt=0
        \fill[white]($ -\pgfplotstablerow*(0,\cellht) + \col*(\cellwd,0) $) rectangle +(\cellwd,\cellht);
      \fi
      \ifnum\colcnt=1
        \fill[black!50!white]($ -\pgfplotstablerow*(0,\cellht) + \col*(\cellwd,0) $) rectangle+(\cellwd,\cellht);
      \fi
    }
  }
 \draw[black!20!white] (-3, -9) grid (30,1);
\end{tikzpicture}
\end{center}
We generated this fact by applying the solitaire at random to the configuration with support $\{0,1,...,29\} \times \{0\}$. It follows that this set is solitaire-connected to a particular translate of the line $\{0,1,...,29\} \times \{0\}$, and its contents are in bijection with the contents of such a line.

For the triangle shape $S$, it is not hard to show that the solitaire-connected component of every finite-support configuration $y \in I(X)$ is finite, indeed the smallest convex set obtained by scaling the triangle and discretizing, which contains the support of the original set $\supp(y)$, cannot be increased by an application of a solitaire move. One can thus compute the entire connected component of a finite-support configuration.

The size of the solitaire-component of the set $\{0,1,...,m-1\} \times \{0,1,...,n-1\}$ is given for small $m,n$ in Table~\ref{tab:OrbitSizes}. Only the case $n = 1$ is about independent sets of course. The column $n = 1$ of the sequence looks rather exotic, and was not in the OEIS database in 2016 when we performed these calculations. Sequence A295928, added in 2017 \cite{TEPSolitaire}, has a similar-sounding definition and agrees with these entries. \qee
\end{example}

\setlength\tabcolsep{0.13cm}
\begin{table}
\centering
\begin{tabular}{|c||c|c|c|c|c|c|c|}
\hline
$m$\textbackslash$n$  & 1       & 2        & 3        & 4        & 5        & 6        & 7      \\
\hline
\hline
1                     & 1       & 3        & 16       & 122      & 1188     & 13844    & 185448 \\
\hline
2                     & 3       & 15       & 207      & 6252     & 339027   & 28920151 &        \\
\hline
3                     & 16      & 207      & 4971     & 292370   & 37248312 &          &        \\
\hline
4                     & 122     & 6252     & 292370   & 30354021 &          &          &        \\
\hline
5                     & 1188    & 339027   & 37248312 &          &          &          &        \\
\hline
6                     & 13844   & 28920151 &          &          &          &          &        \\
\hline
7                     & 185448  &          &          &          &          &          &        \\
\hline
8                     & 2781348 &          &          &          &          &          &        \\
\hline
\end{tabular}
\vspace*{0.3cm}
\caption{The size of the connected component of a rectangle of shape $m \times n$ in the solitaire with the triangle shape.}
\label{tab:OrbitSizes}
\end{table}

\begin{example}
There can be long-range dependencies in the homeomorphism $\phi$ even if the supports of $y, y'$ have a small difference, in the following sense. Consider $G = \Z^2$, $S = \{(0,0), (1,0), (0,1), (1,1)\}$ and the configurations
\[ y = \begin{smallmatrix} 
* & \cdot & \cdot & \cdot & \cdot & \cdot & \cdot & \cdot & \cdot & * \\
* & * & * & * & * & * & * & * & * & \cdot \\
  &   &   &   &   &   &   &   &   & * \\
\end{smallmatrix} \]
and
\[ y' = \begin{smallmatrix} 
  &   &   &   &   &   &   &   &   & * \\
* & * & * & * & * & * & * & * & * & \cdot \\
* & \cdot & \cdot & \cdot & \cdot & \cdot & \cdot & \cdot & \cdot & * \\
\end{smallmatrix} \]
where $*$ denotes $1$ and $\cdot$ denotes a $0$ in the convex hull of the set of $1$s.
These patterns are both easily seen to be in the connected component of the convex-support configuration
\[ \begin{smallmatrix} 
  &   &   &   &   &   &   &   &   & * \\
* & * & * & * & * & * & * & * & * & * \\
  &   &   &   &   &   &   &   &   & * \\
\end{smallmatrix}, \]
thus all three configurations are in $I(X)$ whenever $X$ is a TEP subshift with shape $S$. Now consider the bijection $\phi$ puts up between fillings of supports of $y$ and $y'$. If all coordinates but the two where the supports differ have been filled, we have a bijection between the possible fillings. It is easy to see that (in any TEP $X$ with shape $S$) this bijection (i.e.\ the dependency between the two cells where $y$ and $y'$ differ) depends on at least the top and bottom right coortinates, and may even depend on all the coordinates. \qee
\end{example}

We ask some questions about independent sets and the solitaire. The first question is a matter of linear algebra for the actual Ledrappier subshift, but seems difficult for general TEP subshifts with the triangle shape such as $X$ from Example~\ref{ex:Counts}. The second is a more open-ended question, and does not seem easy even for the Ledrappier subshift.

\begin{question}
Is the language of $I(X)$ polynomial-time verifiable for a TEP subshift $X$? 
\end{question}

\begin{question}
Can the connected components of $I(X)$ can be characterized? To what extent are the independent sets connected by moves of the solitaire? Is Lemma~\ref{lem:OtherSwapping} optimal in some sense?
\end{question}

\begin{question}
\label{q:Approachable}
Let $S \Subset \Z^2$ and $A \Subset \Z^2$ (given in unary). Is the connected solitaire-component of the configuration $y$ with support $A$ recognizable in polynomial time? What if $S$ is the Ledrappier shape? What if $A = \{0,1,...,n-1\} \times \{0\}$?
\end{question}

The experience of the author with Question~\ref{q:Approachable} is that randomly generated elements of the connected component of a line (such as the one seen in Example~\ref{ex:Solitaire}) can usually be rather easily renormalized to the original line by playing a leisurely round of the solitaire. However, we do this in an ad hoc fashion, and do not have a general algorithm.

In the case of linear TEP subshifts, the independence subshift corresponds to a standard object from matroid theory (known as a \emph{column matroid}). We recall the connection and the simple proof. Here, a \emph{matroid} on a countable set $G$ is a family $I \subset \FinSet(G)$ such that $\emptyset \in I$, $B \subset A \in I \implies B \in I$, and the \emph{augmentation property}
\[ A, B \in I \wedge |A| > |B| \implies \exists a \in A \setminus B: B \cup \{a\} \in A \]
holds.

\begin{proposition}
If $X$ is linear TEP and $\mu$ the Haar measure, then the finite supports of configurations in $I(X)$ and $I_\mu(X)$ form a matroid.
\end{proposition}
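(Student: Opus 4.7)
The plan is to reduce both claims to well-known facts about linear (vector) matroids and Haar measures on compact abelian groups. Throughout, a linear TEP subshift $X \subset F^G$ is a closed $F$-subspace of $F^G$, since the defining condition $x \cdot p = 0$ is $F$-linear. For each $g \in G$, let $e_g : X \to F$ denote the coordinate functional $e_g(x) = x_g$.

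First I would translate topological independence into linear algebra. For $B \Subset G$, the projection $X|_B \subset F^B$ is an $F$-subspace, so $X|_B = F^B$ if and only if the annihilator $(X|_B)^\perp \subset (F^B)^*$ is trivial. Under the identification $(F^B)^* \cong F^B$ via $(c_b)_{b \in B} \mapsto (\sum_b c_b y_b)$, this annihilator corresponds exactly to $F$-linear relations $\sum_{b \in B} c_b e_b = 0$ on $X$. Hence the finite supports appearing in $I(X)$ are precisely the sets $B \Subset G$ for which the family $(e_b)_{b \in B}$ is $F$-linearly independent in the algebraic dual $X^*$.

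Next I would invoke the classical vector matroid. Any family $(v_i)_{i \in I}$ of vectors in an $F$-vector space $V$ gives rise to a matroid on $I$ whose independent sets are the finite subfamilies that are linearly independent: the empty family is independent, linear independence is inherited by subfamilies, and the augmentation axiom is the Steinitz exchange lemma applied to bases of $\mathrm{span}(v_i : i \in A)$ and $\mathrm{span}(v_i : i \in B)$. Applying this to $(e_g)_{g \in G}$ in $X^*$ shows that the finite supports in $I(X)$ form a matroid. No step here seems genuinely hard; the only thing to check carefully is that the annihilator argument works over arbitrary finite fields, which it does since everything is $F$-linear algebra.

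Finally I would handle $I_\mu(X)$. For linear TEP, $X$ is a closed subgroup of the compact abelian group $(F^G,+)$, and the Haar measure $\mu$ is its unique translation-invariant Borel probability. For any finite $B \Subset G$, the projection $\pi_B : X \to X|_B$ is a continuous surjective group homomorphism onto the finite group $X|_B \le F^B$, so $\pi_B \mu$ is translation-invariant on $X|_B$, i.e.\ the normalized counting measure on $X|_B$. Viewed as a measure on $F^B$ this is exactly $\mu|_B$, which is therefore uniform on $X|_B$ for \emph{every} finite $B$. Consequently $I_\mu(X) = \{0,1\}^G$, and the finite supports form the free matroid on $G$, which is a matroid trivially. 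Thus in the linear case $I_\mu(X)$ does not refine the information in $I(X)$; the content of the proposition for the measure-theoretic independence subshift is only that the ambient family is a (degenerate) matroid.
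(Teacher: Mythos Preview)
Your proof is correct. Both you and the paper identify the finite supports of $I(X)$ with the column matroid of the family $(e_g)_{g \in G}$ of coordinate functionals on $X$, but the arguments diverge in execution. The paper re-proves augmentation from scratch: given independent sets $A,B$ with $|A|>|B|$, it uses independence of $A$ to pick, for each $v \in F^{A\setminus B}$, a lift $w_v \in X|_{A\cup B}$ vanishing on $A\cap B$, then applies pigeonhole on the restriction to $B\setminus A$ to produce a nonzero element of $X|_{A\cup B}$ supported in $A\setminus B$; any coordinate in that support can be adjoined to $B$. Your route is cleaner: you dualize, showing $X|_B = F^B$ iff the $e_b$ are linearly independent in $X^*$, and then invoke Steinitz exchange directly rather than re-deriving it.

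For $I_\mu(X)$ you go further than the paper. The paper's proof does not distinguish the two independence subshifts and simply runs the same augmentation argument. You observe that since $\pi_B : X \to X|_B$ is a surjective continuous homomorphism of compact abelian groups, $\mu|_B$ is automatically the Haar (hence uniform) measure on $X|_B$ for \emph{every} finite $B$, so $I_\mu(X) = \{0,1\}^G$ and the associated matroid is free. This is a genuinely sharper statement than what the paper proves, and it shows that in the linear case the measure-theoretic notion carries no information beyond the topological one.
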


\begin{proof}
We show the finite augmentation property. Let $F$ be a finite field. Let $A, B$ be two independent sets with $|A| > |B|$. Each vector $v \in X|_{A \setminus B} \subset F^{A \setminus B}$ appears as the restriction of some vector $w_v \in X|_{A \cup B}$ such that $w_v|_{A \cap B} = 0^{A \cap B}$, by the independence of $A$. 
Define the functions
\[ \chi(v) = w_v : F^{A \setminus B} \to F^{A \cup B} \]
and
\[ \pi(w) = w_v|_{B \setminus A} : F^{A \cup B} \to F^{B \setminus A}. \]
Since $|B| < |A|$, also $|B \setminus A| < |A \setminus B|$, and thus $\pi(\chi(v)) = \pi(\chi(u))$ for some $v \neq u$. It follows that the support of $w = \chi(v) - \chi(u)$ is contained in $A \setminus B$. Choose a coordinate $i$ such that $w_i \neq 0$. Then $B \cup \{i\}$ is independent.
\end{proof}

For a general TEP subshift $X$, we suspect $I(X)$ is not always a matroid.

\section{Examples of midpointed convex geometries}
\label{sec:ExamplesOfCG}

\begin{question}
Which groups admit an invariant midpointed convex geometry?
\end{question}

In this section, we construct examples of $S$-midpointed convex geometries for arbitrary finite sets $S$. By Lemma~\ref{lem:UCP}, we obtain UCP convex geometries from this, thus the results of Section~\ref{sec:UCEandTEP} can be directly applied.

The following results are shown: For the Heisenberg group and the free group, we give invariant (fully!) midpointed convex geometries. The midpointed convex geometry of the Heisenberg group in Section~\ref{sec:Heisenberg} is a special case of a construction of Yves de Cornulier \cite{ConvexMO}. We also show that $\Z^2$ admits a midpointed invariant convex geometry that properly contains the standard one.

We show that not all torsion-free abelian groups, nor f.g. metabelian groups, admit such convex geometries. Nevertheless, for all strongly polycyclic groups and indeed for a large class of groups obtained from group extensions (such as the Baumslag-Solitar group $\Z[1/2] \rtimes \Z$) we construct $S$-midpointed invariant convex geometries for all finite sets $S$. For an (a priori) even larger class, we construct $S$-midpointed convex geometries (that are not necessarily invariant), and show that this amounts to simply ordering the group in a suitable way.

\subsection{Finitely-generated free abelian groups}

On $\Z^d$, we have the standard convex geometry. The relevant facts about this geometry were already mentioned in Section~\ref{sec:Convexity}. On $\Z$, this is the only midpointed convex geometry that is invariant (this follows easily from by Lemma~\ref{lem:ProgsInside}, proved later). On $\Z^2$, this is not the end of the story:

\begin{proposition}
There exists a invariant midpointed convex geometry on $\Z^2$ that properly contains the standard convex geometry.
\end{proposition}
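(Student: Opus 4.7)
The plan is to enlarge $\C_{\mathrm{std}}$ explicitly by adjoining all translates of a single triangle in ``general position'' whose standard convex hull carries exactly one extra lattice point. I will take
\[ D := \{(0,0),(2,1),(1,2)\} \subset \Z^2, \]
so that $\tau_{\mathrm{std}}(D) = D \cup \{(1,1)\}$, and set
\[ \C' := \C_{\mathrm{std}} \cup \{D+v : v \in \Z^2\}. \]
Then $D \in \C' \setminus \C_{\mathrm{std}}$ witnesses strict containment, and translation invariance is immediate.

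The bookkeeping steps rest on two arithmetic properties of $D$: each of its three edge vectors $(2,1)$, $(1,2)$, $(-1,1)$ is primitive, and the six nonzero differences $d - d'$ with $d,d' \in D$ are pairwise distinct. The first makes every proper subset of any translate of $D$ standard-convex, and the second forces $|(D+v) \cap (D+v')| \leq 1$ whenever $v \neq v'$. Combined, these give intersection-closure of $\C'$ and the precise dichotomy that $\tau'(S) = S$ when $S$ is a translate of $D$, and $\tau'(S) = \tau_{\mathrm{std}}(S)$ otherwise; in particular, $\C'$ is already a convex pregeometry.

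The main obstacle is verifying the anti-exchange axiom (equivalently, corner addition via Lemma~\ref{lem:AECE}), which I would handle by splitting on the type of $C \in \C'$. If $C \in \C_{\mathrm{std}}$, then $\tau'(C \cup \{b\})$ strictly refines $\tau_{\mathrm{std}}(C \cup \{b\})$ only when $C \cup \{b\}$ is itself a translate of $D$; in that case $\tau'(C \cup \{b\}) = C \cup \{b\}$ and $a \in (C \cup \{b\}) \setminus C = \{b\}$ forces $a=b$, contradicting $a \neq b$. Otherwise $\tau' = \tau_{\mathrm{std}}$ on both sides and the conclusion is immediate from anti-exchange for $\C_{\mathrm{std}}$, with a symmetric check when $C \cup \{a\}$ happens to be a translate of $D$. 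If $C = D+v$, let $p := (1,1)+v$ be the unique lattice point of $\tau_{\mathrm{std}}(C) \setminus C$: when $b=p$, then $\tau'(C \cup \{b\}) = \tau_{\mathrm{std}}(D+v)$ and $a \in \{p\}$ again forces $a=b$; when $a=p$, then $\tau'(C \cup \{a\}) = \tau_{\mathrm{std}}(D+v) \in \C_{\mathrm{std}}$ cannot contain any $b \notin C$ different from $p=a$; and in the remaining case $a,b \notin \tau_{\mathrm{std}}(D+v)$, so I would invoke anti-exchange for $\C_{\mathrm{std}}$ applied to the standard-convex base $\tau_{\mathrm{std}}(D+v)$, using $\tau_{\mathrm{std}}(C \cup \{x\}) = \tau_{\mathrm{std}}(\tau_{\mathrm{std}}(C) \cup \{x\})$ to pull back the conclusion.

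Midpointedness will then follow in two lines: for $v \neq 0$, a containment $\{v,-v\} \subseteq D+w$ would require $2v$ to be one of the differences of $D$, but $\pm(2,1), \pm(1,2), \pm(1,-1)$ each have an odd coordinate; hence $\tau'(\{v,-v\}) = \tau_{\mathrm{std}}(\{v,-v\})$, which already contains $0$. The reason I choose a triangle in general position (rather than, say, adjoining $\{(0,0),(3,0)\}$ and its translates) is precisely to sidestep an immediate anti-exchange failure: with the collinear pair one would have $C = \{(0,0),(3,0)\}$ convex but $a=(1,0)$ and $b=(2,0)$ simultaneously in each other's $\tau'$-closure.
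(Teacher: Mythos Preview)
Your proposal is correct and follows essentially the same approach as the paper: adjoin to the standard convex geometry all translates of a single midpointed triangle whose edges are primitive and whose standard hull contains exactly one extra lattice point (you use $\{(0,0),(2,1),(1,2)\}$ where the paper uses $\{(0,0),(1,1),(2,5)\}$). The only cosmetic difference is that you verify anti-exchange directly by case analysis, while the paper checks the equivalent corner-addition property via Lemma~\ref{lem:AECE}; your bookkeeping (distinct differences forcing $|(D+v)\cap(D+v')|\le 1$, the dichotomy for $\tau'$, and the parity obstruction to $\{v,-v\}\subset D+w$) is in fact more explicit than the paper's.
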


\begin{proof}
Let $B$ be any set with $|B| = 3$ which is midpointed, is not convex for the standard geometry $\C$ but its $2$-subsets are, and its convex hull with respect to $\C$ has exactly one new element $\vec u$. For example, the set $B = \{(0,0), (1,1), (2, 5)\} \subset \Z^2$ has these properties; its convex hull contains only one new element $\vec u = (1,2)$. Add $B$ and all its translates to the standard convex geometry, call the new family of sets $\D$. It is easy to see that $\D$ is closed under intersections, because all sets of cardinality at most $2$ contained in $B$ are in $\C$, thus in $\D$. Every finite set is also still contained in a convex set of $\C$, thus of $\D$.

By Lemma~\ref{lem:AECE} it is enough to show prove the corner addition property for $\D$. Suppose thus $C \subsetneq D$ and both are in $\D$. If neither has a translate equal to $B$, the claim is clear. If $C = \vec v + B$ for some $\vec v \in \Z^2$, without loss of generality (by translating) we may assume $C = B$. We observe that necessarily $\vec u \in D$ because every convex set of $\C$ containing $B$ contains its convex hull, and any proper superset of $B$ in $\D$ is in $\C$. Thus, $\D \ni C \cup \{\vec u\} \subset D$ as required. If $D = \vec v + B$, the claim is easy to show using the fact all midpointed sets of cardinality $2$ contained in $B$ are in $\C$.
\end{proof}

On the other hand, in Section~\ref{sec:NoMidpointed} we will see that there is no invariant midpointed convex geometry on $\Z^2$ that properly contains the standard convex geometry and the set $B' = \{(0,0), (3,-1), (2,3)\} \subset \Z^2$, even though this set in itself is midpointed.

By a compactness argument, if there exists a invariant midpointed convex geometry on a group $G$, there exists a maximal one (under inclusion).

\begin{question}
What are the maximal extensions of the standard convex geometry of $\Z^d$? More generally, what are the maximal invariant midpointed convex geometries of $\Z^d$? What about convexoids?
\end{question}

It seems plausible that the ``pseudoconvex sets'' that can coexist with the standard ones must all be ``close'' to standard convex sets on a large scale, although we have no precise result of this form.

\subsection{Free groups}
\label{sec:FreeGroup}

First, we construct a convex geometry on a general tree. The only standard convex geometry we know on a tree is the family of geodesically convex sets. Unfortunately this family is not midpointed when considered on Cayley graphs of free groups.

Our convex geometry will instead be obtained by requiring that if the geodesic between two vertices of a convex set $C$ goes through the center of a ball, and the ball does not contain those vertices, then the ball is contained in $C$.

A \emph{tree} is a simple undirected graph $(V, E)$, $E \subset \{\{u, v\} \;|\; u, v \in V, u \neq v\}$ which is connected and does not contain a cycle. In a tree, there is a unique path of minimal length, i.e.\ a \emph{geodesic} $u = u_0, u_1, ..., u_k = v$ with $(u_i, u_{i+1}) \in E$ for all applicable $i$, between any two vertices $u, v \in V$. The vertices of the geodesic are contained in every path between $u$ and $v$. Write $\geod(u, v)$ for this path, and $d(u, v)$ for the length of this path (number of edges). It is useful to interepret $\geod(u, v)$ also topologically, as a subset of the geometric realization of the tree (as a $1$-complex), giving a meaning to unions of geodesics.

\begin{definition}
Let $T = (V, E)$ be a tree. Define the \emph{tree convex sets} $\C_T \subset \mathcal{P}(V)$ as the family of sets $C \Subset V$ such that
\[ \forall u, w, t \in C, v \in V: v \in \geod(u, w) \wedge d(v, t) < \min(d(v, u), d(v, w)) \implies t \in C. \]
\end{definition}

Observe that tree convex sets in the above sense are geodesically convex, since by taking $v \in \geod(u, w), v \notin \{u, w\}, t = v$, we have $0 = d(v, t) < 1 \leq \min(d(v, u), d(v, w))$. The converse does not hold.

\begin{theorem}
\label{thm:TreeConvex}
Let $T = (V, E)$ be a tree. Then the tree convex sets of $T$ form a convex geometry.
\end{theorem}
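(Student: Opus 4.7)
The plan is to invoke Lemma~\ref{lem:Moore} together with Lemma~\ref{lem:AECE}, reducing the theorem to three items: closure of $\C_T$ under intersections, the containment property that every finite subset of $V$ sits inside some member of $\C_T$, and the corner addition property.

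Closure under intersection is immediate from the definition: if $u, w \in C_1 \cap C_2$, then any $t$ forced by the defining implication lies in each $C_i$, hence in $C_1 \cap C_2$. For the containment property, the key step is to prove that every closed ball $B(v_0, r) = \{x \in V : d(v_0, x) \leq r\}$ is tree convex. Given $u, w \in B(v_0, r)$, $v \in \geod(u, w)$, and $t \in V$ with $d(v, t) < \min(d(v, u), d(v, w))$, I would apply the tree four-point condition to $\{u, w, v_0, t\}$: two of the three sums $d(u,w)+d(v_0,t)$, $d(u,v_0)+d(w,t)$, $d(u,t)+d(w,v_0)$ agree and dominate the third. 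In each case, combining this with $d(v,u) + d(v,w) = d(u,w)$, the triangle inequality, and the strict bounds $d(v, t) < d(v, u), d(v, w)$ yields $d(v_0, t) < r$ by a short calculation. Given finite $B$, take $v_0 \in B$ and $r = \max_{b \in B} d(v_0, b)$; then $B \subseteq B(v_0, r)$, which is a finite tree convex set in the locally finite trees of interest (Cayley graphs of finitely generated free groups).

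For corner addition, let $C \subsetneq D$ be tree convex. My plan is to build an anti-shelling from $D$ down to $C$ by iteratively removing corners of the current tree convex set that lie outside $C$; reversing this sequence exhibits the required $a \in D \setminus C$ with $C \cup \{a\}$ tree convex. The procedure rests on two sub-claims: \emph{(i)} every endpoint $x$ of a diameter pair of a tree convex set $D$ is a corner of $D$; and \emph{(ii)} if every diameter endpoint of $D$ lies in $C$, then $D = C$. Together, these guarantee that whenever $C$ is a proper tree convex subset of the current $D_i$, some diameter endpoint of $D_i$ lies in $D_i \setminus C$ and yields a corner to remove.

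The main obstacle is claim (i), which I would prove by contradiction. Suppose $x$, with diameter partner $y$, is not a corner: there exist $u, w \in D \setminus \{x\}$ and $v \in \geod(u, w)$ with $d(v, x) < \min(d(v, u), d(v, w))$. Let $m$ be the tree median of $\{u, w, x\}$; a short calculation shows $m \in \geod(u, w)$ and $d(m, x) < d(m, u), d(m, w)$. At $m$, the geodesics to $u$, $w$, $x$ begin with three pairwise distinct edges (by the uniqueness of the median), while the geodesic from $m$ to $y$ begins with a single edge that can coincide with at most one of them. Hence at least one of $m \in \geod(u, y)$ or $m \in \geod(w, y)$ holds; say the former, so $d(u, y) = d(u, m) + d(m, y)$ and
\[
d(x, y) \leq d(x, m) + d(m, y) < d(m, u) + d(m, y) = d(u, y) \leq \mathrm{diam}(D) = d(x, y),
\]
a contradiction. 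Claim (ii) follows by a similar median argument: for any diameter pair $(x, y) \subseteq C$ and any $t \in D$, either $t$ is itself a diameter partner of $x$ or $y$ (hence in $C$ by hypothesis), or the median of $\{x, y, t\}$ lies on $\geod(x, y) \subseteq C$ and witnesses $d(m, t) < \min(d(m, x), d(m, y))$, forcing $t \in C$ by tree convexity applied to $(x, y) \in C \times C$.
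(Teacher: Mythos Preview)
Your argument is correct and follows a genuinely different route from the paper. The paper exhibits the closure operator $\tau(C)=\bigcup_{u,w\in C}[uw]$, proves idempotency via the five-case analysis of Lemma~\ref{lem:Tech}, and then verifies the anti-exchange axiom directly by a branching argument on geodesics. You instead invoke Lemmas~\ref{lem:Moore} and~\ref{lem:AECE}: intersection-closure is immediate from the definition, containment comes from tree-convexity of balls, and corner addition from the diameter-endpoint argument (claims~(i) and~(ii)). Your approach sidesteps Lemma~\ref{lem:Tech} entirely, and the diameter argument is pleasantly conceptual; the paper's approach has the advantage of producing the hull operator explicitly. Two minor points. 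First, in claim~(i) the degenerate case $m=x$ (when $x\in\geod(u,w)$) deserves a word, since then only two rather than three directions leave $m$; but the same pigeonhole on the direction toward $y$ still yields $m\in\geod(u,y)$ or $m\in\geod(w,y)$ and the contradiction follows. Second, your restriction to locally finite trees is in fact necessary: if a vertex $c$ has infinitely many leaf neighbours together with two branches of length at least~$2$, then taking $u,w$ to be the tips of those branches and $v=c$ forces every leaf into any tree convex set containing $\{u,w\}$, so no finite tree convex set contains them and the containment axiom of Lemma~\ref{lem:Moore} fails. Thus the theorem as stated in the paper is slightly too general, and you are right to flag the hypothesis.
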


Before proving this, we introduce a bit of notation and prove a simple lemma. Fix a tree $T = (V, E)$ and for $u, w \in T$ write
\[ [uw] = \{u, w\} \cup \{t \in V \;|\; \exists v \in \geod(u, w): d(v, t) < \min(d(v, u), d(v, w)) \}. \]

\begin{lemma}
\label{lem:Tech}
\[ \forall u, w, t, p, s: (t \in [uw] \wedge s \in [tp]) \implies (s \in [wp] \cup [up] \cup [uw]) \]
\end{lemma}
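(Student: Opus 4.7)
The plan is to first establish a clean distance characterization: for $t \notin \{u, w\}$, one has $t \in [uw]$ if and only if $\max(d(t, u), d(t, w)) < d(u, w)$. Indeed, if the existential condition defining $[uw]$ holds for some $v \in \geod(u, w)$, a direct computation shows it must already hold at the projection $v_0 = \pi_{u,w}(t)$ (the closest point of $\geod(u, w)$ to $t$, equivalently the median $m(u, w, t)$); then the tree identities $d(t, u) = d(t, v_0) + d(v_0, u)$, $d(t, w) = d(t, v_0) + d(v_0, w)$ and $d(v_0, u) + d(v_0, w) = d(u, w)$ turn the condition $d(t, v_0) < \min(d(v_0, u), d(v_0, w))$ into the stated max inequality after a short algebra.

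With this characterization I assume $t \in [uw]$ and $s \in [tp]$ and dispose of the degenerate cases $t \in \{u, w\}$ and $s \in \{t, p, u, w\}$, each of which yields the conclusion directly. I then coordinatize the relevant subtree via the median $m = m(u, w, p)$ with parameters $\alpha = d(m, u)$, $\beta = d(m, w)$, $\gamma = d(m, p)$, together with the projection $v_0 = \pi_{u, w}(t)$, $\delta = d(v_0, m)$ and $\epsilon = d(v_0, t)$. The symmetry $u \leftrightarrow w$ (which swaps $[up]$ and $[wp]$ in the conclusion) lets me assume $v_0$ lies on the $u$-side of $m$, so $d(v_0, u) = \alpha - \delta$ and $d(v_0, w) = \beta + \delta$. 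The hypothesis $t \in [uw]$ then reads $\epsilon < \min(\alpha - \delta, \beta + \delta)$, yielding in particular the key inequality $\epsilon + \delta < \alpha$. Crucially, $\geod(t, p)$ decomposes as the concatenation of three segments: the branch from $t$ to $v_0$ of length $\epsilon$, the sub-geodesic of $\geod(u, w)$ from $v_0$ to $m$ of length $\delta$, and the branch from $m$ to $p$ of length $\gamma$.

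Letting $v_1 = \pi_{t, p}(s)$, $\sigma = d(v_1, t)$, $\tau = d(v_1, s)$, the hypothesis $s \in [tp]$ gives $0 < \sigma < \epsilon + \delta + \gamma$ and $\tau < \min(\sigma, \epsilon + \delta + \gamma - \sigma)$. I then case-split on the position of $v_1$ on $\geod(t, p)$. If $v_1$ lies on the segment from $t$ to $v_0$ (including $v_0$ itself), the tree structure forces $\pi_{u,w}(s)$ to be either $v_0$ or a point of $\geod(u, w)$ between $v_0$ and an endpoint, and from $\tau < \sigma \le \epsilon < \min(\alpha - \delta, \beta + \delta)$ one obtains $\max(d(s, u), d(s, w)) < \alpha + \beta$, hence $s \in [uw]$. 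If $v_1$ lies on the remaining portion from $v_0$ through $m$ to $p$, then writing $\sigma' = \sigma - \epsilon \in [0, \delta + \gamma]$, the explicit form of $\pi_{u, w}(s)$ together with $\tau < \delta + \gamma - \sigma'$ gives $d(s, u) < \alpha + \gamma$, while $\tau < \sigma = \epsilon + \sigma'$ combined with $\epsilon + \delta < \alpha$ gives $d(s, p) < \alpha + \gamma$; thus $s \in [up]$. The main obstacle is just the bookkeeping across these positions (five sub-cases in total), but each reduces to a one-line algebraic verification using only the two inequalities $\epsilon + \delta < \alpha$ and the $\tau$-bounds coming from $s \in [tp]$.
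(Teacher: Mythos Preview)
Your distance characterization of $[uw]$ is correct and a nice simplification. However, the ``crucial'' decomposition of $\geod(t,p)$ as the concatenation $t \to v_0 \to m \to p$ is not valid in general, and your subsequent case analysis relies on it.

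The issue arises precisely when $v_0 = m$ (i.e.\ $\delta = 0$) and the branches $\geod(v_0, t)$ and $\geod(v_0, p)$ leave $\geod(u,w)$ along the same edge. Concretely: take $\geod(u,w) = a_0,\dots,a_4$, $v_0 = m = a_2$, let $t$ be a neighbour of $a_2$ off the path (so $\epsilon = 1 < 2 = \min(\alpha,\beta)$, hence $t \in [uw]$), and let $p$ lie at distance $3$ from $t$ in the subtree hanging off $t$ away from $a_2$, say along $t, z_1, z_2, p$. Then $m(u,w,p) = a_2 = v_0$, $\gamma = d(a_2,p) = 4$, and you would claim $d(t,p) = \epsilon + \delta + \gamma = 5$; but in fact $\geod(t,p) = t,z_1,z_2,p$ has length $3$ and does not pass through $v_0$ at all. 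Now $v_1 = \pi_{t,p}(s)$ is a point of $\{t,z_1,z_2,p\}$; for $s = z_1$ (which is in $[tp]$), $v_1 = z_1$ lies neither on the segment from $t$ to $v_0$ nor on a segment ``from $v_0$ through $m$ to $p$'' inside $\geod(t,p)$, so your dichotomy does not cover it. More generally, once the decomposition fails, your identifications $\sigma' = \sigma - \epsilon$ and your formulas for $d(s,u)$, $d(s,p)$ via the assumed position of $v_1$ are no longer correct.

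This missing configuration is exactly what the paper handles in its cases (a) and (b): there the branch point $r$ where $\geod(t,p)$ leaves the tripod $\geod(t,v_0)\cup\geod(v_0,u)\cup\geod(v_0,w)$ lies strictly on $\geod(t,v_0)$, and one argues directly that $s \in [uw]$ (case~(a), when $q \in \geod(t,r)$) or $s \in [up]$ (case~(b), when $q \in \geod(r,p)$). Your framework can be repaired by introducing this point $r$ (the median $m(t,v_0,p)$, say) and adding the case $r \neq v_0$; the remaining inequalities are then of the same flavour as the ones you already wrote.
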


\begin{proof}
We have five cases, depending on where the geodesic from $p$ to $t$ deviates from the triangle formed by $u, w, t$, and where the path from $s$ to the geodesic between $p$ and $t$ branches off, up to swapping the roles of $u$ and $w$. These cases are listed as (a)-(e) of Figure~\ref{fig:Cases}. We also explain them in words: define $v$ as the last common node of $\geod(t, u)$ and $\geod(t, w)$, define $r$ as the last common node of $\geod(t, p)$ and $\geod(t,v) \cup \geod(v,u) \cup \geod(v,w)$ (where the latter is interpreted as the union of the vertex sets). Let $q$ be the last common node of $\geod(s, t)$ and $\geod(s, p)$.
 
\begin{figure}
\begin{subfigure}{.18\textwidth}
\begin{center}
{\includegraphics[scale=0.3,trim={5.7cm 0 17.5cm 0},clip]{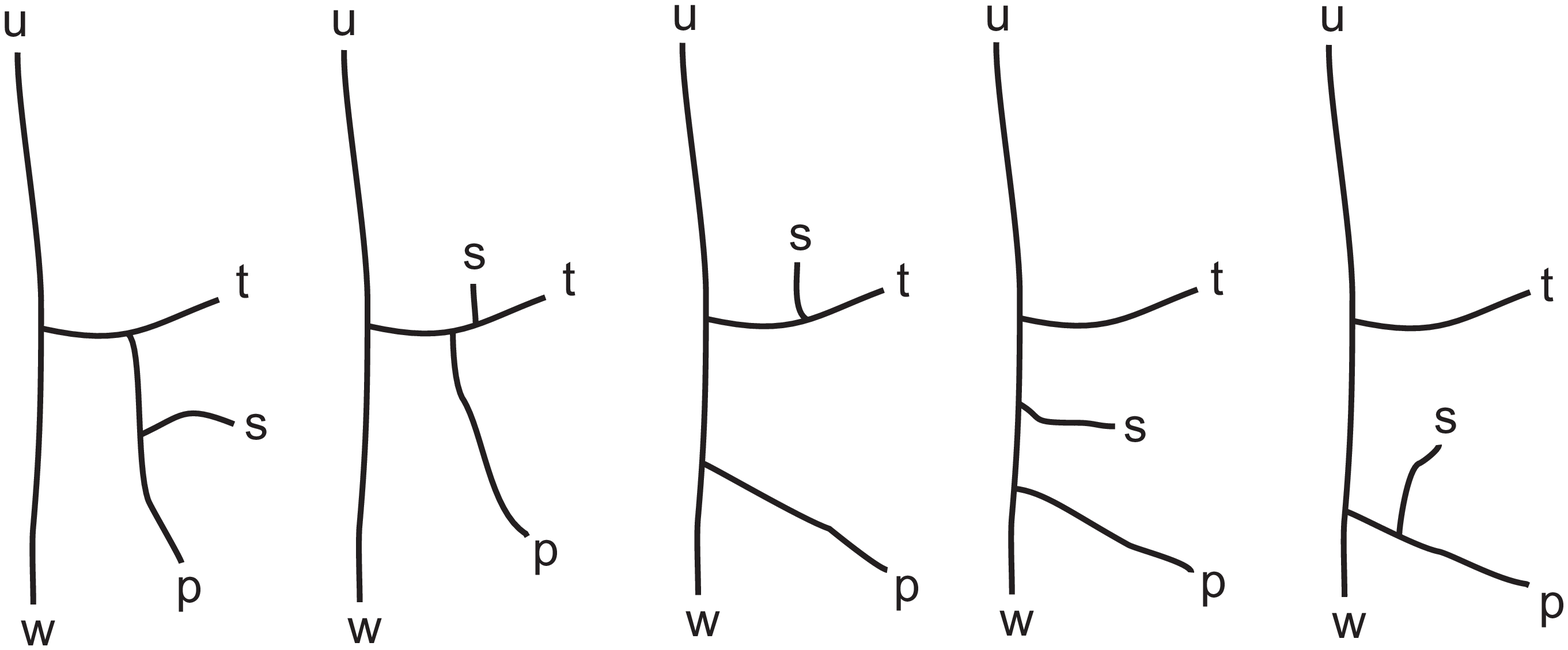}}
\end{center}
\caption{}
\end{subfigure}
\begin{subfigure}{.18\textwidth}
\begin{center}
{\includegraphics[scale=0.3,trim={0cm 0 22.7cm 0},clip]{fivecases}}
\end{center}
\caption{}
\end{subfigure}
\begin{subfigure}{.18\textwidth}
\begin{center}
{\includegraphics[scale=0.3,trim={11.8cm 0 11.2cm 0},clip]{fivecases}}
\end{center}
\caption{}
\end{subfigure}
\begin{subfigure}{.18\textwidth}
\begin{center}
{\includegraphics[scale=0.3,trim={17.3cm 0 6cm 0},clip]{fivecases}}
\end{center}
\caption{}
\end{subfigure}
\begin{subfigure}{.18\textwidth}
\begin{center}
{\includegraphics[scale=0.3,trim={23.2cm 0 0 0},clip]{fivecases}}
\end{center}
\caption{}
\end{subfigure}
\caption{The cases of Lemma~\ref{lem:Tech}.}
\label{fig:Cases}
\end{figure}

Now, (a) and (b) are the cases where $r$ lies on $\geod(t, v)$, and (a) and (b) are respectively the cases where $q$ lies on $\geod(t, r)$ or $\geod(r, p)$ respectively. The cases (c), (d) and (e) take care of the situations where $r$ does not lie on $\geod(t, v)$. By symmetry, we may assume it lies on $\geod(v, w)$. Then we have three cases where $q$ can lie, $\geod(t, v)$, $\geod(v, q)$ or $\geod(q, p)$, and these are respectively the cases (c), (d), (e). (We allow the length of the geodesics to be zero, so this covers all situations.)

By the known inequalities between lengths,
\begin{itemize}
\item in case (a), we have $s \in [uw]$,
\item in case (b), we have $s \in [up]$,
\item in case (c), we have $s \in [uw]$,
\item in case (d), we have $s \in [up]$, and
\item in case (e), we have $s \in [up]$.
\end{itemize}

Indeed, in case (a), $d(q, s) < d(q, t)$, $t \in [uw]$ and geodesics from $s$ to $u$ and $w$ go through $q$, so $s \in [uw]$. In case (b),
\begin{align*}
d(t, s) &= d(t, r) + d(r, s) \leq d(t, v) + d(r, s) \\
&< d(u, v) + d(r, s) \leq d(u, r) + d(r, s) = d(u, s)
\end{align*}
and $r$ separates $u$ and $t$ from $s$ and $p$, so we may replace $t$ by $u$ in $s \in [tp]$. In case (c), since $s \in [tp]$ we have
\begin{align*}
d(s, v) &= d(s, q) + d(q, v) < d(t, q) + d(q, v) = d(t, v) \\
& < \min(d(u, v), d(w, v)).
\end{align*}
Cases (d) and (e) are proved like case (b).
\end{proof}

We are now ready to prove Theorem~\ref{thm:TreeConvex}.

\begin{proof}[Proof of Theorem~\ref{thm:TreeConvex}]
For $C \Subset V$, define
\[ \tau(C) = C \cup \{t \in V \;|\; \exists u, w \in C, v \in \geod(u, w): d(v, t) < \min(d(v, u), d(v, w)) \}. \]
Equivalently, $\tau(S) = \bigcup_{u, w \in C} [uw]$.
It is clear that $\tau(C) = C$ for any tree convex set $C \Subset V$.

We show that $\tau$ is a closure operator. The only non-trivial thing to verify is idempotency. Suppose $s \in [tt']$ for some $t, t' \in \tau(C)$. It is enough to show that $s \in \tau(C)$. If $t, t' \in C$, we are done, so suppose $t \in \tau(C) \setminus C$. We then have $t \in [uw]$ for some $u, w \in C$. By the previous lemma, $s \in [wt'] \cup [ut'] \cup [uw]$. If $s \in [uw]$, then again $s \in \tau(C)$ and we are done, so by symmetry assume $s \in [ut']$. If $t' \in C$ we are done, so assume $t' = [u'w']$ for some $u', w' \in C$. Again by the previous lemma $s \in [u'w'] \cup [uu'] \cup [uw'] \subset \tau(C)$ as required.

Now, let us show the anti-exchange axiom. Suppose $C \in \C_T$ and $u, a \notin C$, $u \neq a$. Suppose $a \in \tau(C \cup \{u\})$. Then there exists $w \in C$ and $v \in V$ such that $d(v, a) < \min(d(v, u), d(v, w))$. We may assume $\geod(v, a), \geod(v, u), \geod(v, w)$ are edge-disjoint paths by picking the branching point $v$ at minimal distance from $a$.

Suppose now for a contradiction that we had some $c \in C$ and $b \in V$ such that $d(b, u) < \min(d(b, a), d(b, c))$ and $\geod(b, a), \geod(b, u), \geod(b, c)$ are edge-disjoint. Consider the geodesic $\geod(a, b)$, in particular where it branches off $\geod(v, a) \cup \geod(v, u) \cup \geod(v, w)$. We see that it must agree with some initial segment of $\geod(a, v)$, and then possibly some initial segment of $\geod(v, u)$ -- it is impossible for it to continue along $\geod(v, w)$ as then $\geod(b, u)$ would intersect $\geod(b, a)$.

It is impossible for $\geod(a, b)$ to branch off $\geod(a, v)$ before reaching $v$, as in this case we would have
\[ d(b, u) > d(v, u) > d(v, a) > d(b, a). \]

If $\geod(a, b)$ does reach $v$, then $b \in \geod(v, u)$, and
\[ d(b, a) = d(b, v) + d(v, a) < d(b, v) + d(v, w) = d(b, w), \]
thus $d(b, u) < \min(d(b, c), d(b, a)) < \min(d(b, c), d(b, w))$, and thus $u \in C$, a contradiction.
\end{proof}

Recall that elements of the free group with generating set $A$ are in bijection with reduced words over an alphabet $A^{\pm} = A \cup \{a^{-1} \;|\; a \in A\}$ where $A$ is the free generating set, where a word is \emph{reduced} if $aa^{-1}$ and $a^{-1}a$, $a \in A$, do not occur as subwords \cite{LySc15}. The group operation is concatenation followed by reducing the word by removing, or \emph{canceling}, such subwords $aa^{-1}, a^{-1}a$ (in arbitrary order).

\begin{theorem}
For all $n \in \N$, the free group $F_n$ admits an invariant midpointed convex geometry.
\end{theorem}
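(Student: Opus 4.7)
The plan is to take the Cayley graph $T$ of $F_n$ with respect to its free generating set --- which is the $2n$-regular tree with vertex set $F_n$ --- and apply Theorem~\ref{thm:TreeConvex} to obtain the tree convex geometry $\C_T \subset \FinSet(F_n)$. Two properties then remain to check: invariance of $\C_T$ under left translation, and midpointedness.

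First, invariance is immediate. For each $k \in F_n$, the map $L_k \colon g \mapsto kg$ sends each edge $\{u, ua\}$ of $T$ to $\{ku, kua\}$ and is therefore a graph automorphism of $T$. In particular it preserves the distance function and all geodesics. Since the tree convexity condition is phrased purely in terms of distances and geodesics between points of the candidate set, $L_k(C) \in \C_T$ whenever $C \in \C_T$.

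For midpointedness, we must show $1_{F_n} \in \overline{\{g, g^{-1}\}}$ for every $g \in F_n$. If $g = 1$ this is trivial. Otherwise I would use cyclic reduction to write $g = w h w^{-1}$, where $h = h_1 \cdots h_\ell \in F_n$ is cyclically reduced and nontrivial, the last letter of $w$ is not the inverse of either $h_1$ or $h_\ell$, and $|g| = 2|w| + |h|$. Then the geodesic from $1$ to $g$ decomposes as the concatenation of reduced subpaths $1 \to w \to wh \to g$ of lengths $|w|, |h|, |w|$, and similarly $\geod(1, g^{-1})$ passes through $w$ and then $w h^{-1}$. Cyclic reducedness of $h$ gives $h_1 \neq h_\ell^{-1}$, so the two geodesics $\geod(1, g)$ and $\geod(1, g^{-1})$ first diverge precisely at $w$. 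Consequently $\geod(g, g^{-1})$ passes through the vertex $v := w$ with $d(v, g) = d(v, g^{-1}) = |w| + |h|$, whereas $d(v, 1) = |w|$. Since $|h| \geq 1$, the defining condition for tree convexity from Theorem~\ref{thm:TreeConvex} directly yields $1 \in \overline{\{g, g^{-1}\}}$, as required.

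\textbf{Main obstacle.} The heavy lifting has already been done by Theorem~\ref{thm:TreeConvex}, so no serious obstacle remains. The one step to carry out carefully is the bookkeeping of cyclic reduction: the identity $v = w$ for the nearest common ancestor of $g$ and $g^{-1}$ in $T$ rooted at $1$, and the resulting equalities $d(v, g) = d(v, g^{-1}) = |w| + |h|$, are exactly what cyclic reducedness of $h$ and absence of cancellation between $w$ and $h$ are designed to guarantee.
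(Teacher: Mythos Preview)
Your proof is correct and follows essentially the same approach as the paper: apply Theorem~\ref{thm:TreeConvex} to the Cayley tree, note invariance via graph automorphisms, and verify midpointedness by cyclic reduction $g = whw^{-1}$ (the paper writes $g = uvu^{-1}$) so that $w$ lies on $\geod(g,g^{-1})$ with $d(w,1) = |w| < |w|+|h| = d(w,g) = d(w,g^{-1})$. One tiny slip: the no-cancellation condition at the second junction should read that the last letter of $w$ is not equal to $h_\ell$ (rather than to $h_\ell^{-1}$), but you never actually use the misstated form, so the argument is unaffected.
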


\begin{proof}
The Cayley graph of $G = F_n$ over a free generating set $A^{\pm}$ is a $2|A|$-regular tree. The tree convex sets $\C$ for this tree are defined in terms of the geodesic metric, thus are invariant under any tree automorphism, in particular the group translations.

We show that $\C$ is midpointed. Since it is invariant, it is enough to show $1_G \in \overline{\{g, g^{-1}\}}$ for any $g \in G$. Write $g$ in reduced form as a word over $A^{\pm}$ and then as $g = uvu^{-1}$ for $u$ of maximal length (where $u^{-1}$ is the word obtained by reversing the word and changing the exponent of each letter). We have $|v| \geq 1$ and $g^{-1} = uv^{-1}u^{-1}$. 

Now, $d(1_G, u) = |u|$ and $d(u, g) = |vu^{-1}| = |v^{-1}u^{-1}| = d(u, g^{-1}) > |u|$, so indeed $1_G \in \tau(\{g, g^{-1}\})$.
\end{proof}


We give an analog of Example~\ref{ex:ConvexOrdering} for the free group.

\begin{example}
The ball $C$ of radius $4$ w.r.t.\ the free generators of the free group is easily seen to be tree convex (every ball is). As in Example~\ref{ex:ConvexOrdering}, we can construct an anti-shelling by increasing our set at random by adding random elements of $C$, without breaking convexity. One such sequence is shown in Figure~\ref{fig:TreeConvexOrdering}.

\begin{figure}
\begin{center}
\begin{tikzpicture}[scale=1.05]
\input{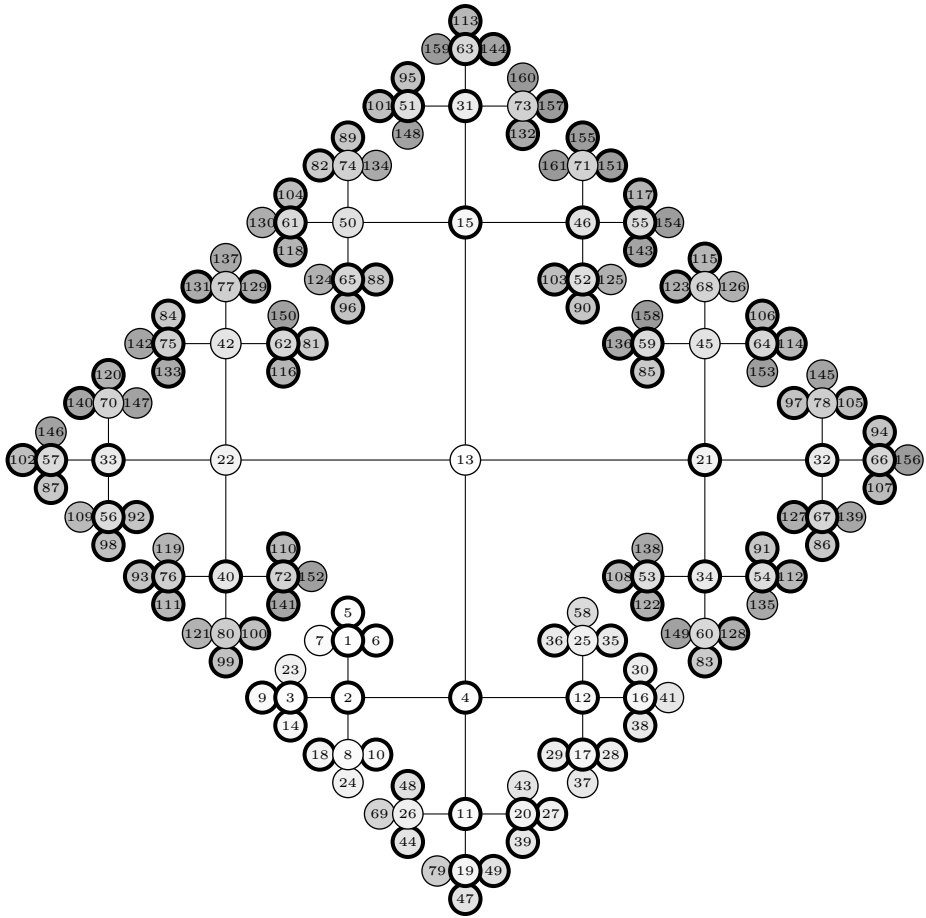}
\end{tikzpicture}
\end{center}
\caption{A free group analog of Figure~\ref{fig:ConvexOrdering}. The freely choosable cells for the shape $\{1_{F_2}, a, b, a^{-1}, b^{-1}\}$ have a thicker border.}
\label{fig:TreeConvexOrdering}
\end{figure}

We can use this sequence to list all legal patterns of shape $C$, for any TEP subshift (with any $S$). For example, this applies to shapes $S = \{1_G, g\}$, implying that the ordering must have the property that the induced ordering of every left coset of every cyclic subgroups $\langle g \rangle$ sees an anti-shelling of $\Z$, i.e.\ a \emph{unimodal sequence} where the values first decrease and then increase. For example on the central horizontal line $\langle a \rangle \cap C$ we see $(102, 57, 33, 22, 13, 21, 32, 66, 156)$.

Using the shape $S = \{1_{F_2}, a, b, a^{-1}, b^{-1}\}$, and considering any TEP subshift with respect to that shape, one can check that values of $108$ cells can be picked freely, and the rest are determined by a translate of $S$ (of course, each by exactly one translate of $S$). This reconfirms that there are $|A|^{108}$ legal configurations, as we also saw in Example~\ref{ex:FreeContour}. Because $C$ is tree convex, they all indeed extend to a legal configuration on the entire free group.

The shape $S = \{1_{F_2}, a, b, a^{-1}, b^{-1}\}$ is rather special, and one could actually even use the convex geometry of geodesically convex shapes when working with it: pairs $C \subset D$ of geodesically closed sets with $|D| = |C|+1$ amount to adding a protruding edge to $C$, and $S$ can clearly be positioned at most one way so that it touches the new vertex. With most shapes, for example the shape $T = \{1_{F_2}, a, b, ab\}$, the geodesically convex sets would not work, i.e.\ it is possible to find an anti-shelling of $C$ with geodesically convex sets such that at some point, a single coordinate is determined by two distinct translates of $T$. For example, this happens when $ab$ is added to $\{1_{F_2}, b, a, a^2, a^2b\}$. \qee
\end{example}

\subsection{The Heisenberg group}
\label{sec:Heisenberg}

For most of our applications it suffices to prove that a group admits $S$-midpointed (invariant) convex geometries for a particular set $S$, and we construct such convex geometries for all strongly polycyclic groups in Section~\ref{sec:GroupExtensions}, which covers the Heisenberg group. Of course, having the same convex geometry work for all $S$ at once is desireable for aesthetic reasons, especially as Theorem~\ref{thm:Measure} is to some extent only as interesting as the convex geometry it is applied to. The construction in Section~\ref{sec:GroupExtensions} (in fact necessarily in that generality) does not achieve this, i.e.\ the construction depends on $S$.

\begin{question}
Do all strongly polycyclic groups admit an invariant midpointed convex geometry?
\end{question}

We asked in MathOverflow \cite{ConvexMO} whether strongly polycyclic groups, and especially the discrete Heisenberg group, admit invariant midpointed convex geometries. Yves de Cornulier proved the following theorem.

\begin{theorem}
Every finitely generated torsion-free $2$-step nilpotent group admits an invariant midpointed convex geometry.
\end{theorem}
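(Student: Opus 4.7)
The plan is to linearize $G$ via its Mal'cev completion and pull back the standard real convex geometry. By Mal'cev's theorem, the finitely generated torsion-free nilpotent group $G$ embeds as a lattice in a simply connected nilpotent real Lie group $G_{\R}$, whose Lie algebra $\mathfrak{g}$ is $2$-step nilpotent (i.e.\ $[\mathfrak{g},[\mathfrak{g},\mathfrak{g}]] = 0$) because $G$ is. The exponential map $\exp : \mathfrak{g} \to G_{\R}$ is then a diffeomorphism, and the Baker--Campbell--Hausdorff formula truncates to
\[ \log(\exp(X)\exp(Y)) = X + Y + \tfrac{1}{2}[X,Y]. \]
Fix a linear isomorphism $\mathfrak{g} \cong \R^d$ and set $\Gamma = \log(G) \subset \R^d$. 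Declare a finite $A \Subset G$ to be \emph{convex} iff $\log(A) = \mathrm{conv}_{\R^d}(\log(A)) \cap \Gamma$. This is the family obtained by restricting the standard convex geometry of $\R^d$ to $\Gamma$, so by Lemma~\ref{lem:ToSubset} we obtain a convex geometry $\C$ on $G$; it remains to verify invariance and midpointedness.

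For invariance, the key computation is that left multiplication by $g = \exp(X)$ becomes, in exponential coordinates, the map $Y \mapsto X + Y + \tfrac{1}{2}[X,Y] = (I + \tfrac{1}{2}\mathrm{ad}_X)Y + X$. Because $[\mathfrak{g},\mathfrak{g}]$ is central in $\mathfrak{g}$, $\mathrm{ad}_X$ is an $\R$-linear endomorphism, so this map is \emph{affine} on $\R^d$. Affine maps preserve real convex hulls, hence send sets of the form $\mathrm{conv}_{\R^d}(\log(A)) \cap \Gamma$ to sets of the same form, and so left translation preserves $\C$. For midpointedness, if $g = \exp(X)$ then $g^{-1} = \exp(-X)$, and $0 = \tfrac{1}{2}X + \tfrac{1}{2}(-X)$ lies in $\mathrm{conv}_{\R^d}\{X, -X\}$, so $1_G = \exp(0)$ is forced into the $\C$-closure of $\{g, g^{-1}\}$.

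The main point that needs care is the BCH truncation: it is precisely the 2-step assumption that makes the group law polynomial of degree at most one in each argument separately, which in turn is what makes left translation \emph{affine} rather than merely polynomial in exponential coordinates. This is what forces the approach to be specific to nilpotency class two; for higher class the pulled-back real convex geometry would only be quasi-invariant up to bounded-degree polynomial corrections, and one would have to invent a replacement notion of convexity (which is the real obstacle Cornulier's construction sidesteps for class $\leq 2$). The remaining ingredients---existence of the Mal'cev completion, the exponential being a global diffeomorphism in the simply connected nilpotent case, and Lemma~\ref{lem:ToSubset}---are standard or already at our disposal.
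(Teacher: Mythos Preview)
Your proposal is correct and is exactly the Cornulier construction the paper describes: pull back real convexity through exponential coordinates on the Mal'cev completion, with the $2$-step hypothesis truncating BCH so that left translations become affine and inversion becomes negation. The paper only spells this out for the Heisenberg special case, and your write-up is the natural general version of that same argument (one cosmetic remark: $\mathrm{ad}_X$ is linear in any Lie algebra---the role of ``$[\mathfrak{g},\mathfrak{g}]$ central'' is to kill the higher BCH terms, not to make $\mathrm{ad}_X$ linear).
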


The convex sets of this construction are very natural ones, they are obtained by embedding the group into a continuous Lie group, and taking as convex sets the intersections of the group with the images of standard convex sets of $\R^d$ in the exponential map. We show what this means concretely for the Heisenberg group, and give a proof from first principles in this special case.

\begin{proposition}
Let $H = \langle x, y, z \;|\; z = [x, y], [x, z] = [y, z] = 1 \rangle$ be the discrete Heisenberg group. Then $G$ admits an invariant midpointed convex geometry.
\end{proposition}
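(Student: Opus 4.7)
The plan is to pull back the standard real convex geometry of $\R^3$ through the Lie-theoretic logarithm. The group $H$ embeds in the real Heisenberg Lie group $H_\R$, which is simply connected and $2$-step nilpotent, so the exponential map $\exp \colon \mathfrak{h} \to H_\R$ is a diffeomorphism; identifying $\mathfrak{h} \cong \R^3$ by reading off strict upper-triangular entries, a direct computation using $\exp(X) = I + X + X^2/2$ gives $\log \Hmatr{a}{b}{c} = \Zmatr{a}{b}{c - ab/2}$, so $\log(H)$ is a discrete subset of $\R^3$. I would declare $C \Subset H$ convex iff $\log(C) = \mathrm{conv}_{\R^3}(\log(C)) \cap \log(H)$, where $\mathrm{conv}_{\R^3}$ is the ordinary real convex hull. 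The convex geometry axioms are then inherited from the standard convex geometry of $\R^3$ restricted to $\log(H)$, exactly as in the $\Z^d$ case: closure under intersection and containment of every finite set in a convex set are immediate, and anti-exchange descends from $\R^3$ to any subset, since if $y \in \mathrm{conv}(C \cup \{z\})$ and $z \in \mathrm{conv}(C \cup \{y\})$ with $y, z \notin C$, a short convex-combination calculation forces $y \in \mathrm{conv}(C)$, contradicting restricted convexity of $C$.

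The heart of the argument is invariance, and this is where $2$-step nilpotency is essential. The Baker--Campbell--Hausdorff formula in this class truncates to $\log(gh) = \log(g) + \log(h) + \tfrac{1}{2}[\log(g), \log(h)]$, and the Lie bracket on $\mathfrak{h}$ is bilinear with image in the one-dimensional centre, so for fixed $g$ with $\log(g) = (\alpha,\beta,\gamma)$ the conjugated translation $\log \circ L_g \circ \exp$ is the affine bijection
\[ (\alpha',\beta',\gamma') \mapsto \bigl(\alpha+\alpha',\; \beta+\beta',\; \gamma+\gamma'+\tfrac{1}{2}(\alpha\beta'-\alpha'\beta)\bigr) \]
of $\R^3$. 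Affine bijections preserve real convex hulls, and $L_g$ permutes $\log(H)$, so $\log(gC) = L_g(\log(C))$ satisfies the convexity condition whenever $\log(C)$ does. Midpointedness is then immediate: applying the same formula to $gg^{-1} = 1_H$ forces $\log(g^{-1}) = -\log(g)$, so $\log(1_H) = 0 = \tfrac{1}{2}(\log(g) + \log(g^{-1}))$ lies on the real segment from $\log(g)$ to $\log(g^{-1})$, giving $1_H \in \overline{\{g, g^{-1}\}}$.

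The main obstacle is the invariance step, and it really is what restricts the construction to $2$-step nilpotent groups: for nilpotency class $\geq 3$, BCH contributes nested commutator terms that are nonlinear in $\log(g)$ and $\log(h)$, so left translation becomes only a polynomial map on $\mathfrak{h}$, and the pullback of the real convex geometry ceases to be translation-invariant. Extending this argument to higher-class groups (for example the general strongly polycyclic case raised in the open question above) would seem to require a genuinely different idea.
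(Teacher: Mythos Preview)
Your proof is correct and essentially identical to the paper's: both pull back the standard convex geometry of $\R^3$ via the exponential map, observe that the truncated BCH formula makes left translation affine on $\mathfrak{h}$ (giving invariance), and obtain midpointedness from $\log(g^{-1}) = -\log(g)$. The paper phrases the last point as the square root of $(a,b,c)$ being $(a/2,b/2,c/2)$, and writes the exponential coordinates out slightly more concretely, but the argument is the same.
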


\begin{proof}
We recall the representation of the Heisenberg group in exponential coordinates. First, it is well-known that the discrete Heisenberg group is isomorphic to the group of matrices of the form
$\left(\begin{smallmatrix} 1 & a & c \\ 0 & 1 & b \\ 0 & 0 & 1 \end{smallmatrix}\right)$
where $a, b, c \in \Z$. We can see it as a (cocompact) lattice in the \emph{continuous Heisenberg group} obtained by replacing $a, b, c$ by real numbers.

The Lie algebra of the Heisenberg group can be identified with real matrices where the diagonal and subdiagonal are zero, and the exponential map amounts to
\[ \exp\left(\begin{matrix} 0 & a & c \\ 0 & 0 & b \\ 0 & 0 & 0 \end{matrix}\right) = \left(\begin{matrix} 1 & a & c + \frac{ab}2 \\ 0 & 1 & b \\ 0 & 0 & 1 \end{matrix}\right), \]
which is clearly bijective. Conjugating the group operation through this map, we obtain that the continuous Heisenberg group can be seen as $\R^3$ with the following group operation
\[ (a, b, c) * (a', b', c') = (a + a', b + b', c + c' + \frac{ab' - a'b}2). \]
(This is also a special case of the Baker--Campbell--Hausdorff formula.)

Conjugated through the exponential map, the discrete Heisenberg group is the subgroup where $a$, $b$ and $c + \frac{ab}2$ are integers, i.e.\ the set $H = \{(a, b, c) \in \Z^2 \times \frac12\Z \;:\; c \in \Z \iff 2 | ab \}$. The group $(H, *)$ may be called the representation of the discrete Heisenberg group in exponential coordinates. It is a cocompact lattice in the continuous Heisenberg group $(\R^3, *)$.

Now, let $\C \subset \mathcal{P}(\R^3)$ be the family of compact convex sets in the standard sense of $\R^3$. Observe that, apart from the restriction that the sets $C \in \C$ be finite, $\C$ satisfies our axioms of a convex geometry. Thus it is easy to see that $\D = \{C \cap H \;|\; C \in \C\} \subset \mathcal{P}(H)$ is a convex geometry.

To see that $\D$ is invariant with respect to $*$, observe that $\vec v \mapsto \vec u * \vec v$ is affine in $\vec v$ for fixed $\vec u$. Thus, if $D \in \D$ then $D = C \cap H$ for some $C \in \C$, and $\vec u * C \subset \R^3$ is convex as an affine image of a convex set. If $\vec u \in H$, we have $\vec u * D = (\vec u * C) \cap H$ since $H$ is invariant under $H$-translations (since it is a subgroup), thus we have $\vec u * D \in \D$.

To see that $\D$ is midpointed, we observe that (for example by direct computation) every element of $\R^3$ has a unique square root with respect to $*$, and this turns out to be $\sqrt{(a, b, c)} = (a/2, b/2, c/2)$. Thus the claim follows since convex sets of $\R^3$ are midpointed.
\end{proof}

\begin{example}
Consider the Heisenberg group $(H, *)$ in exponential coordinates as in the above proof. Clearly
\[ K = \langle (0,1,0), (0,0,1) \rangle = \{0\} \times \Z^2 = H \cap (\{0\} \times \R^2), \]
so the subgroup $K$ is convex. Suppose now that $S \Subset H$ and $\forall g \in H: gS \not\subset K$. Then Theorem~\ref{thm:Measure} implies that every TEP subshift $X \subset A^H$ with shape $S$ admits an invariant measure that samples the contents of $K$ from the uniform Bernoulli distribution of $A^K$.

In the special case of a shape satisfying $S \subset (\{0\} \times \Z^2) \cup \{(1,0,0)\}$, one can easily deduce the statement of Theorem~\ref{thm:UCE} that the restriction to $K$ is full, i.e. $X|_K = A^K$, from a cellular automata style argument. We sketch this argument: Given any values for the subgroup $K$, the values in the coset $(\{0\} \times \Z^2) * (1,0,0)$ are obtained by first applying a (linear) shear map and then a classical two-dimensional cellular automaton rule. That rule is totally extremally permutive in the sense of \cite{SaTo13b}, and thus surjective, thus it is also surjective when composed by the shear map. A compactness argument shows that the restriction to $\langle (0,1,0), (0,0,1) \rangle$ is indeed full. \qee
\end{example}

\subsection{Groups without midpointed convex geometries}
\label{sec:NoMidpointed}

\begin{lemma}
\label{lem:ProgsInside}
Let $G$ be a group and let $\C \subset \FinSet(G)$ be a midpointed convexoid. Then for all $C \in \C$ and for all $g,h \in G$, if $g \in C$ and $gh^n \in C$ for $n > 0$, then $gh^i \in C$ for all $i \in \{1, ..., n-1\}$.
\end{lemma}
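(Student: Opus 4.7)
My plan is strong induction on $n$, with the base case $n = 1$ vacuous. The inductive step reduces to a key claim: \emph{for $n \geq 2$, there exists $j \in \{1, \ldots, n-1\}$ with $gh^j \in C$.} Granted this, applying the inductive hypothesis to the pairs $(g, gh^j)$ (with exponent $j < n$) and $(gh^j, gh^n)$ (with exponent $n-j < n$, viewing $gh^j$ as the new base element) fills in every remaining $gh^i$, $i \in \{1, \ldots, n-1\} \setminus \{j\}$.

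For the key claim when $n$ is even, midpointedness applied with $x = gh^{n/2}$ and $k = h^{n/2}$ gives $xk = gh^n$ and $xk^{-1} = g$, both in $C$, so $gh^{n/2} \in C$ and we take $j = n/2$.

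The harder case is $n$ odd, which I would handle by contradiction. Assume no $j \in \{1, \ldots, n-1\}$ satisfies $gh^j \in C$. By the convexoid axiom, choose $F \in \C$ with $F \supseteq C \cup \{gh, gh^2, \ldots, gh^{n-1}\}$; then $F \supsetneq C$, and Lemma~\ref{lem:GrowthSequences} yields an anti-shelling $C = C_0 \subsetneq C_1 \subsetneq \cdots \subsetneq C_M = F$ in $\C$. Let $j^\star$ be the smallest index such that the element added to form $C_{j^\star+1}$ lies in $\{gh, \ldots, gh^{n-1}\}$, and denote this element $gh^m$ with $m \in \{1, \ldots, n-1\}$. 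Then $C_{j^\star+1} = C_{j^\star} \cup \{gh^m\}$ is convex, contains $g, gh^m, gh^n$, and contains no other $gh^i$ with $1 \leq i \leq n-1$. If $m$ is even, midpointedness of the pair $\{g, gh^m\}$ (via $x = gh^{m/2}$, $k = h^{m/2}$) forces $gh^{m/2} \in C_{j^\star+1}$, impossible since $m/2 \in \{1, \ldots, n-1\} \setminus \{m\}$. If $m$ is odd then $m + n$ is even, and midpointedness of $\{gh^m, gh^n\}$ (via $x = gh^{(m+n)/2}$, $k = h^{(n-m)/2}$) forces $gh^{(m+n)/2} \in C_{j^\star+1}$, impossible since $(m+n)/2 \in \{m+1, \ldots, n-1\}$. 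Either case contradicts the minimality of $j^\star$.

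The main obstacle is precisely this odd case: restricted to the progression $\{gh^i\}_i$, midpointedness alone is powerless when the exponent set of $C$-members is $\{0, n\}$ with $n$ odd, because no two such exponents have an integer midpoint. The real work is to genuinely invoke convexoid corner addition to produce a third element of the progression in a larger convex set; once the triple $\{g, gh^m, gh^n\}$ is present, at least one of the two pairs $\{g, gh^m\}$ and $\{gh^m, gh^n\}$ has an even exponent gap and midpointedness finally gives new information, forcing an absent element and producing the contradiction.
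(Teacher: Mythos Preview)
Your proof is correct, but it takes a noticeably more circuitous route than the paper's. Both arguments pick an anti-shelling from $C$ to a convex set containing the whole progression, but you look at the \emph{first} set in which some intermediate $gh^m$ appears, while the paper looks at the \emph{last} set $E$ still missing one. In the paper's setup $E$ has exactly one hole $gh^i$ with $i\in\{1,\dots,n-1\}$, so its immediate neighbours $gh^{i-1},gh^{i+1}\in E$ are present and midpointedness with $h$ itself (not a power) yields the contradiction in one line. This avoids the strong induction, the even/odd split, and the parity bookkeeping on $m$ versus $n$ that your ``first added'' approach forces on you. What your approach buys is a slightly more direct handling of the even-$n$ case (no anti-shelling needed there), but since you must invoke the anti-shelling anyway for odd $n$, this gain is illusory.

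One small point you glossed over: when you conclude ``impossible since $m/2\in\{1,\dots,n-1\}\setminus\{m\}$'' (and similarly for $(m+n)/2$), you are implicitly using that the elements $gh^0,\dots,gh^{n-1}$ are distinct, so that $gh^{m/2}\neq gh^m$ as elements. This is fine, because your standing assumption ``no $j\in\{1,\dots,n-1\}$ has $gh^j\in C$'' already forces the order of $h$ to exceed $n-1$ (else $gh^d=g\in C$ for the order $d$), but it is worth stating. The paper handles the same issue by an explicit reduction to $n$ at most the order of $h$ before starting.
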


\begin{proof}
If $n = 1$ there is nothing to prove. The full claim clearly follows from the claim for $n$ at most equal to the order of $h$. 
Suppose then that $n$ is at most the order of $h$, so the elements $gh^i$ are distinct for $i \in \{1, ..., n-1\}$. Suppose the lemma is false and $D \supset \{g, gh, gh^2, ..., gh^n\}$ and pick an anti-shelling from $C$ to $D$. Let $E$ be the last set in this sequence which does not yet contain all of $\{g, gh, gh^2, ..., gh^n\}$. Then $E$ is convex and $gh^i \notin E$ for a unique $i$, meaning $gh^i \notin \overline{\{gh^{i-1}, gh^{i+1}\}}$, contradicting midpointedness.
\end{proof}

\begin{proposition}
If $G$ is not torsion-free then it does not admit any midpointed convexoid.
\end{proposition}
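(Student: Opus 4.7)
The plan is to derive a contradiction by combining two simple observations: any convexoid must contain some singleton, and Lemma~\ref{lem:ProgsInside} applied to such a singleton forces every cyclic subgroup to collapse.

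First I would show that any convexoid $\C \subset \FinSet(G)$ contains some singleton $\{b\}$. Fix any $a \in G$; by the convexoid axioms, $\emptyset \in \C$ and $\{a\}$ is contained in some $D \in \C$, hence $\emptyset \subsetneq D$. The corner addition property applied to this strict inclusion yields some $b \in D$ with $\{b\} = \emptyset \cup \{b\} \in \C$.

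Next, assume for contradiction that $\C$ is midpointed and $G$ has an element $h$ of finite order $n \geq 2$. Apply Lemma~\ref{lem:ProgsInside} to $C = \{b\}$, $g = b$, and this $n$: both hypotheses $g = b \in C$ and $g h^n = b \cdot 1_G = b \in C$ hold trivially, so the conclusion gives $b h^i \in \{b\}$ for every $i \in \{1, \dots, n-1\}$. In particular $bh = b$, which forces $h = 1_G$, contradicting the assumption that $h$ has order $n \geq 2$.

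The argument is short and I do not anticipate a substantive obstacle; the only slightly subtle point is recognizing that Lemma~\ref{lem:ProgsInside} is applicable even in the degenerate configuration where $g$ and $g h^n$ coincide, which is exactly what happens when $h^n = 1_G$. Once one notices this, the role of corner addition in producing singleton convex sets combines with the lemma to give the conclusion with no further work, and in particular without requiring invariance of $\C$.
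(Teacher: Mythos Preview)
Your proof is correct and follows essentially the same approach as the paper: both invoke Lemma~\ref{lem:ProgsInside} with $n$ equal to the order of a torsion element to force any convex set containing $g$ to contain the whole coset $g\langle h\rangle$, then derive a contradiction from the existence of a singleton in $\C$. You have simply spelled out more explicitly why a convexoid must contain a singleton (via corner addition from $\emptyset$), which the paper states without justification.
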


\begin{proof}
Suppose $h \in G$ satisfies $h^n = 1_G$ for $n \geq 2$. By the previous lemma, any convex set containing an element $g$ would have to already contain $g \langle h \rangle$. But every convexoid contains an element of cardinality one.
\end{proof}

Say $g \in G$ has \emph{infinitely many roots} if for infinitely many $n \in \N$ there exist $h \in G$ such that $h^n = g$.

\begin{proposition}
If $G$ is a group where some element $g \in G \setminus \{1_G\}$ has infinitely many roots. Then $G$ does not admit a midpointed convexoid.
\end{proposition}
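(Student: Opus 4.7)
The plan is to reduce to the torsion-free case via the immediately preceding proposition, and then use Lemma~\ref{lem:ProgsInside} to pack arbitrarily many distinct elements into a \emph{single fixed} convex set, contradicting finiteness.

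First, if $G$ contains torsion, the previous proposition already precludes a midpointed convexoid, so I may assume $G$ is torsion-free and suppose for contradiction that $\C \subset \FinSet(G)$ is a midpointed convexoid. Using the convexoid axiom, I fix one $C_0 \in \C$ with $\{1_G, g\} \subset C_0$. This $C_0$ stays fixed for the rest of the argument, and its (finite) cardinality will furnish the contradiction.

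Now, for each $n \in \N$ for which $g$ admits an $n$-th root $h \in G$, I apply Lemma~\ref{lem:ProgsInside} with base element $1_G$ and step $h$: from $1_G \in C_0$ and $1_G \cdot h^n = g \in C_0$ the lemma forces $h^i \in C_0$ for all $1 \le i \le n-1$, so the entire set $\{h^0, h^1, \ldots, h^n\}$ sits inside $C_0$. Because $G$ is torsion-free and $h \ne 1_G$ (as $h^n = g \ne 1_G$), the element $h$ has infinite order, so these $n+1$ powers are pairwise distinct; hence $|C_0| \ge n+1$. Choosing $n > |C_0|$, which is possible by the infinitely-many-roots hypothesis, yields the desired contradiction.

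There is no real obstacle: the only step requiring any thought is establishing that the forced progression has $n+1$ distinct terms, and this is precisely where torsion-freeness is used, having been arranged by the initial reduction to the previous proposition.
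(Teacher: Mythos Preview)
Your proof is correct and follows essentially the same strategy as the paper: reduce to the torsion-free case via the preceding proposition, fix a convex set containing $\{1_G,g\}$, and use Lemma~\ref{lem:ProgsInside} together with the abundance of roots to overload that finite set. The only cosmetic difference is that the paper picks a single root $h\notin C$ (possible because the infinitely many roots are distinct in a torsion-free group) and invokes the lemma once to force $h\in C$, whereas you pick a root of large order $n$ and use the lemma to force all $n+1$ powers into $C_0$; both arguments exploit exactly the same finiteness obstruction.
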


\begin{proof}
If $G$ is not torsion-free this follows from the previous proposition, so suppose $G$ is torsion-free. Suppose there is a midpointed convexoid on such $G$ and let $C \supset \{g, g'\}$ be any convex set. By torsion-freeness, the solutions $h$ to $h^n = g^{-1}g'$ are distinct for distinct $n \in \N$, so since there are infinitely many roots for $g^{-1}g'$, we can find an equation $h^n = g^{-1}g'$ such that $gh \notin C$. We have $g \in C, gh \notin C, h^n = g' \in C$, contradicting Lemma~\ref{lem:ProgsInside}.
\end{proof}

\begin{example}
The abelian groups $(\Z[\frac1n], +)$ do not admit any midpointed convex geometries. It follows that the finitely-generated torsion-free metabelian groups $\Z[\frac1n] \rtimes \Z$ (where $\Z$ acts by multiplication by $n$) do not admit midpointed convex geometries either. No \emph{divisible group} (meaning all elements have roots of all orders) admits a midpointed convex geometry, for example $(\Q, +)$ does not (of course it also contains $(\Z[\frac1n], +)$). \qee
\end{example}

\begin{example}
There is no midpointed convexoid on $\Z^2$ which contains both the standard convex geometry and the set $B' = \{(0,0), (3,-1), (2,3)\} \subset \Z^2$. Suppose there were one, call it $\C$. The standard convex hull of $B'$ is $C = B' \cup \{(1,0), (2,0), (1,1), (2,1), (2,2)\}$, and there is an anti-shelling from $B'$ to $C$ since these sets are convex in $\C$. It is easy to see that we must first add $(1,0)$ to $B'$ as every other choice breaks midpointedness. After adding $(1,0)$, every possible choice contradicts Lemma~\ref{lem:ProgsInside}. \qee
\end{example}

\subsection{Strongly polycyclics, direct limits, group extensions}
\label{sec:GroupExtensions}

\begin{lemma}
\label{lem:Subgroup}
Let $H \leq G$ and $S \Subset H$. Then $G$ admits an invariant $S$-midpointed convex geometry if and only if $H$ does.
\end{lemma}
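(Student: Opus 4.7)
The plan is to handle the two implications separately, each time using the restriction and partition constructions from Section~\ref{sec:Convexity}.

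For the forward implication, given an invariant $S$-midpointed convex geometry $\C$ on $G$, I would take $\D = \{C \cap H : C \in \C\}$, which is a convex geometry on $H$ by Lemma~\ref{lem:ToSubset}. Invariance under $H$-translations is immediate since $h \in H \leq G$ gives $h(C \cap H) = hC \cap H \in \D$. For $S$-midpointedness, given $s \in S \subset H$, the closure $\overline{\{s, s^{-1}\}}$ computed in $\D$ equals $\overline{\{s, s^{-1}\}}_\C \cap H$, which contains $1_H = 1_G$ since $\C$ itself is $S$-midpointed.

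For the reverse implication, suppose $\D$ is an invariant $S$-midpointed convex geometry on $H$. Fix a system $(g_i)_{i \in I}$ of left coset representatives for $H$ in $G$, with $g_{i_0} = 1_G$ for some $i_0$. Via the bijection $g_i H \leftrightarrow H$, $g_i h \mapsto h$, transport $\D$ to a convex geometry $\D_i$ on each coset $g_i H$. Lemma~\ref{lem:FromPartition} applied to the partition $G = \bigsqcup_i g_i H$ then yields a convex geometry
\[ \C = \{C \Subset G \;|\; \forall i: g_i^{-1}(C \cap g_i H) \in \D\} \]
on $G$. It remains to check $G$-invariance and $S$-midpointedness. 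For invariance, fix $g \in G$ and $C \in \C$: for each $i$, the element $g$ sends $g_i H$ to some $g_j H$, so $g g_i = g_j h_i$ for a unique $h_i \in H$, and one computes
\[ g_j^{-1}\bigl((gC) \cap g_j H\bigr) = g_j^{-1} g (C \cap g_i H) = h_i \cdot g_i^{-1}(C \cap g_i H), \]
which lies in $\D$ by $H$-invariance of $\D$; hence $gC \in \C$. For $S$-midpointedness, since $S \subset H$, any $C \in \C$ containing $\{s, s^{-1}\}$ satisfies $\{s, s^{-1}\} \subset C \cap H \in \D$, so $1_H \in C \cap H$ by $S$-midpointedness of $\D$, giving $1_G \in C$ and thus $1_G \in \overline{\{s, s^{-1}\}}_\C$.

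The only mildly subtle point is ensuring the constructed family on $G$ is invariant under the full $G$-action and not merely under $H$-translations; this reduces to the observation that each $g \in G$ permutes the cosets $g_i H$ while acting on each coset by left multiplication by an element of $H$, and is therefore absorbed by the $H$-invariance of $\D$. The choice $g_{i_0} = 1_G$ is made purely to make the identity coset literally equal to $H$, which keeps the verification of midpointedness transparent.
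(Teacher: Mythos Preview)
Your proof is correct and follows essentially the same approach as the paper: Lemma~\ref{lem:ToSubset} for the forward direction, Lemma~\ref{lem:FromPartition} with the convex geometries $g_i\D$ on the left cosets for the reverse direction, and the same coset-permutation computation to upgrade $H$-invariance of $\D$ to $G$-invariance of $\C$. One small remark: in both directions you verify $S$-midpointedness only at $g = 1_G$, which is fine since you have already established invariance (the paper notes just after the definition that for an invariant family it suffices to check $1_G \in \overline{\{h, h^{-1}\}}$), but it would do no harm to say so explicitly.
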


\begin{proof}
By Lemma~\ref{lem:ToSubset}, if $\C$ is an invariant $S$-midpointed convex geometry on $G$, then $\{C \cap H \;|\; C \in \C\}$ is a convex geometry on $H$. Its $H$-invariance and $S$-midpointedness are easy to check. If $\C$ is an invariant $S$-midpointed convex geometry on $H$, then we obtain one on $G$ by applying the closure operation independently in left $H$-cosets, in the sense of Lemma~\ref{lem:FromPartition}, where on $gH$ we of course pick $g\C$ as the convex geometry. The $S$-midpointedness of the resulting convex geometry $\D$ is easy to check.

For $G$-invariance of $\D$, pick left coset representatives $(g_i)_i$ for $H$ and observe that for $D \in \D$, 
\[ g D \cap g_i H = g (g_jH \cap D) \cap g_i H \]
for some $g_j$, because the natural action $G \curvearrowright G/H$ permutes the left cosets. If this set is empty, we are done. Otherwise, write $g g_j = g_k h$ for $h \in H$ so
\[ g (g_jH \cap D) \cap g_i H = g_k h H \cap g D \cap g_i H = g_k H \cap g D \cap g_i H \neq \emptyset \]
if and only if $g_k = g_i$, and then $g D = g_i h D \in g_i \C$ because $\C$ is $H$-invariant.
\end{proof}

\begin{lemma}
\label{lem:Locally}
Let $G = \bigcup G_n$ for an increasing union of groups $G_i \leq G_{i+1}$. Suppose each $G_i$ admits an invariant $S$-midpointed convex geometry for each $S \Subset G_i$. Then $G$ admits an invariant $S$-midpointed convex geometry for each $S \Subset G$.
\end{lemma}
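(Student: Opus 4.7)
The plan is to reduce the lemma directly to Lemma~\ref{lem:Subgroup}. Given any $S \Subset G$, the key observation is that since $S$ is finite and $G = \bigcup_n G_n$ is an \emph{increasing} union, there exists some index $n$ with $S \subset G_n$. By the hypothesis of the lemma applied to this $n$ and this $S$, the group $G_n$ admits an invariant $S$-midpointed convex geometry $\C_n$. Since $G_n \leq G$ and $S \Subset G_n$, Lemma~\ref{lem:Subgroup} immediately yields an invariant $S$-midpointed convex geometry on $G$. Concretely, the construction amounts to choosing left coset representatives $(g_j)_j$ of $G_n$ in $G$ and applying Lemma~\ref{lem:FromPartition} to the partition $G = \bigsqcup_j g_j G_n$ with the convex geometry $g_j \C_n$ on the coset $g_j G_n$ (transported via the left $g_j$-translate); the verification that the resulting family is an invariant $S$-midpointed convex geometry is exactly what was carried out in the proof of Lemma~\ref{lem:Subgroup}.

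There is essentially no obstacle here: the proof of Lemma~\ref{lem:Subgroup} is purely formal in the index set of left cosets and does not use any finiteness or countability assumption on $[G:H]$, so it applies verbatim to our situation in which $G$ is a direct limit and the number of cosets of $G_n$ in $G$ may be very large. In particular, the increasing-union structure and the compatibility of the $\C_i$'s for different $i$ are never needed: a single index $n$ large enough to contain $S$ suffices, and the rest of $G$ is dealt with generically by the coset construction.
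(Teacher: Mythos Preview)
Your proof is correct and follows essentially the same approach as the paper: pick an index $n$ with $S \Subset G_n$ and apply Lemma~\ref{lem:Subgroup} with $H = G_n$. The paper's proof is just this two-line reduction, without the additional (correct) commentary you provide about why the coset construction goes through without further hypotheses.
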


\begin{proof}
Let $S \Subset G$. Then $S \Subset G_i$ for some $i$, and $G_i$ admits an invariant $S$-midpointed convex geometry. Apply the previous lemma with $H = G_i$.
\end{proof}

The following covers for example the rationals and dyadic rationals, which do not admit any midpointed convex geometry by the previous section.

\begin{proposition}
Let $G$ be a torsion-free abelian group. Then $G$ admits an invariant $S$-midpointed convex geometry for each $S \Subset G$.
\end{proposition}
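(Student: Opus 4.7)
The plan is to reduce to the finitely generated case via Lemma~\ref{lem:Subgroup}, and then to invoke the structure theorem for finitely generated abelian groups together with the standard convex geometry on $\Z^d$.

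First I would fix $S \Subset G$ and let $H = \langle S \rangle \leq G$ be the subgroup generated by $S$. Since $G$ is abelian, $H$ is a finitely generated abelian group, and since $G$ is torsion-free, so is $H$. By the classification of finitely generated abelian groups, $H \cong \Z^d$ for some $d \in \N$. Transporting the standard convex geometry of $\Z^d$ through this isomorphism yields an invariant convex geometry $\C_H$ on $H$ whose convex sets are (the images of) sets of the form $\mathrm{conv}(B) \cap \Z^d$ with $B \Subset \Z^d$.

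Next I would observe that $\C_H$ is in fact midpointed, hence certainly $S$-midpointed: this is one of the three properties of the standard convex geometry recorded immediately after Lemma~\ref{lem:FromUnion} (namely, if $C$ is convex and $\vec u - \vec v, \vec u + \vec v \in C$ then $\vec u \in C$, which in additive group notation is the midpointedness condition $1_H \in \overline{\{h, h^{-1}\}}$ after translation). Translation invariance of $\C_H$ is also part of the same list.

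Finally I would apply Lemma~\ref{lem:Subgroup} with this choice of $H \leq G$ and $S \Subset H$: since $H$ admits an invariant $S$-midpointed convex geometry, so does $G$. This completes the proof.

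I do not expect a genuine obstacle here; the only point worth verifying carefully is that ``$S$-midpointed'' transports through the isomorphism $H \cong \Z^d$ and survives the passage from $H$ to $G$ in Lemma~\ref{lem:Subgroup}, but both are immediate from the definitions (midpointedness is a property of triples $(g, gh, gh^{-1})$, which is preserved under group isomorphism and, by the coset construction in the proof of Lemma~\ref{lem:Subgroup}, under extension to $G$ since any such triple lies in a single left coset of $H$).
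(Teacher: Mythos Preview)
Your proof is correct and follows essentially the same route as the paper: reduce to a finitely generated subgroup containing $S$, identify it with $\Z^d$ equipped with the standard (midpointed, invariant) convex geometry, and invoke Lemma~\ref{lem:Subgroup}. The paper phrases this via the intermediate Lemma~\ref{lem:Locally} (writing $G$ as an increasing union of finitely generated subgroups), but the proof of that lemma is precisely your direct application of Lemma~\ref{lem:Subgroup}, so the arguments are the same.
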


\begin{proof}
A torsion-free abelian group is by definition locally torsion-free finitely-generated abelian, i.e.\ locally-($\Z^d$ for some $d$). The group $\Z^d$ admits an invariant midpointed convex geometry, a fortiori it admits an invariant $S$-midpointed convex geometry for each $S \Subset G_i$, and the previous lemma applies.
\end{proof}

\begin{theorem}
\label{thm:GroupExtensions}
Let $1 \rightarrow K \rightarrow G \overset{\pi}{\rightarrow} H \rightarrow 1$ be exact. If $K$ admits invariant $S$-midpointed convex geometries for all finite sets $S \Subset K$, and $H$ is torsion-free abelian, then $G$ admits invariant $S$-midpointed convex geometries for all finite sets $S \Subset G$. 
\end{theorem}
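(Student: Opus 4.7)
The plan is to reduce to the case $H = \Z$ by induction on the rank of $H$, then construct the required invariant convex geometry as the image of an explicit closure operator.

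\emph{Reduction.} Given $S \Subset G$, the subgroup $H_0 := \langle \pi(S) \rangle \leq H$ is finitely generated, hence (being torsion-free abelian) $H_0 \cong \Z^d$ for some $d \geq 0$. Setting $G_0 = \pi^{-1}(H_0)$, we have $S \subset G_0$ and an extension $1 \to K \to G_0 \to H_0 \to 1$. By Lemma~\ref{lem:Subgroup} it suffices to build an invariant $S$-midpointed CG on $G_0$, so we may assume $H = \Z^d$. Induct on $d$: for $d=0$ the statement reduces to the hypothesis; for $d \geq 1$, write $\Z^d = \Z^{d-1} \oplus \Z$ and set $K' = \pi^{-1}(\Z^{d-1} \oplus \{0\})$. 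The sequence $1 \to K \to K' \to \Z^{d-1} \to 1$ is handled by the inductive hypothesis, so $K'$ admits invariant $T$-midpointed CGs for all $T \Subset K'$. Applying the base case $H = \Z$ to $1 \to K' \to G_0 \to \Z \to 1$ completes the reduction.

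\emph{Base case $H = \Z$.} Fix $t \in G$ with $\pi(t) = 1$, so every $g \in G$ has a unique expression $g = t^n k$ with $k \in K$; let $\phi \in \Aut(K)$ denote conjugation by $t$. Writing each $s \in S$ as $s = t^{n_s}k_s$ and using $k t^n = t^n \phi^{-n}(k)$, one computes $gs = t^{m+n_s}\phi^{-n_s}(k)k_s$ and $gs^{-1} = t^{m-n_s}\phi^{n_s}(kk_s^{-1})$ for $g = t^m k$. Hence the $S$-midpointedness constraints link the fibers at $m \pm n_s$ with the fiber at $m$ through $\phi$ and the elements $k_s^{\pm 1}$. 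Collect the finitely many $K$-elements appearing in these relations (i.e., conjugates $\phi^i(k_s^{\pm 1})$ for $|i| \leq N := \max_s|n_s|$, and the products needed to express the midpoint correspondences) into a finite set $T \Subset K$, and use the hypothesis on $K$ to choose an invariant $T$-midpointed CG $\C_K$ on $K$.

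Define a closure operator $\tau: \FinSet(G) \to \FinSet(G)$ by iterating, until stable, the two operations: (a) replace each fiber $t^{-n}(F \cap t^n K) \subset K$ by its $\C_K$-convex hull; (b) for every $g \in G, s \in S$ with $\{gs, gs^{-1}\}$ in the current set, adjoin $g$. The output is finite because (a) only adds finitely many elements per fiber and (b) only produces elements whose horizontal coordinates lie in the convex hull of $\pi(F) \subset \Z$ and whose $K$-parts stay in a $T$-controlled finite region. Set $\C = \{\tau(F) : F \Subset G\}$. By Lemma~\ref{lem:Moore}, $\C$ is a convex pregeometry; it is $G$-invariant because $\tau$ is equivariant (step (a) uses $K$-invariance of $\C_K$, step (b) is purely algebraic), and it is $S$-midpointed by step (b).

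The main obstacle will be verifying the anti-exchange axiom, which via Lemma~\ref{lem:AECE} upgrades $\C$ to a genuine convex geometry. Given $C \in \C$, $y, z \notin C$ with $y \in \tau(C \cup \{z\})$, I would show $z \notin \tau(C \cup \{y\})$ by tracing the production of $y$ through the iterated closure: each step either performs a single $\C_K$-corner addition in some fiber (where anti-exchange is inherited from $\C_K$) or forces a midpoint $g$ uniquely determined by an existing pair $(gs, gs^{-1})$. Induction on the step at which $y$ first appears, combined with the rigidity of the midpoint relations (each midpoint is produced by a specific pair already present in $C$ or produced earlier, and the $T$-midpointedness of $\C_K$ makes the intra-fiber dependencies acyclic), rules out a cyclic dependency between $y$ and $z$.
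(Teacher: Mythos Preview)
Your reduction to $H=\Z$ is correct and matches the paper. The gap is in the base case: you have not established anti-exchange, and the sketch you give is not a proof.

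Iterating ``take $\C_K$-hull in each fiber'' and ``force $S$-midpoints'' certainly yields a closure operator, hence a convex pregeometry via Lemma~\ref{lem:Moore}. But anti-exchange (equivalently, corner addition) is the entire content of the theorem at this point, and your argument for it consists of assertions. That ``each step either performs a single $\C_K$-corner addition \ldots\ or forces a midpoint uniquely determined by an existing pair'' does not imply the composite closure satisfies anti-exchange: an element of $\tau(C\cup\{z\})$ can be produced along several chains mixing the two kinds of steps, and nothing in your sketch rules out a cyclic dependency between $y$ and $z$. You would need a well-founded rank on $G$ that strictly decreases along \emph{both} kinds of forcing, and you have not exhibited one. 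The phrase ``the $T$-midpointedness of $\C_K$ makes the intra-fiber dependencies acyclic'' is not an argument.

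A secondary gap: your finiteness claim for $\tau(F)$ is also unproven. Step (a) can enlarge a fiber substantially (the $\C_K$-hull is finite but not bounded in terms of $T$ alone), and this feeds back into step (b) through $\phi$; ``$K$-parts stay in a $T$-controlled finite region'' needs justification, especially when $K$ is nonabelian and $\phi$ nontrivial.

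The paper takes a different route that avoids both problems. After arranging $\pi(S)\subset\N$ (harmless, since $S$-midpointedness is symmetric in $s\leftrightarrow s^{-1}$), it declares $C$ convex when (i) each fiber is $\C_K$-convex for a fixed $(S\cap K)$-midpointed invariant $\C_K$, and (ii) whenever $c,e\in C$, $d\in G$, $\pi(c)<\pi(d)<\pi(e)$ and $c^{-1}d\in S$, then $d\in C$. Condition (ii) is strictly stronger than your midpoint rule: it forces $cs\in C$ as soon as $c\in C$ and \emph{some} element of $C$ sits at a higher $\pi$-level than $cs$, with no requirement that $cs^{-1}\in C$. This overshoot is precisely what makes corner addition provable by a short case split on whether $D\setminus C$ meets $\pi^{-1}(\pi(C))$, and if not, whether one extends the interval $\pi(C)$ at the bottom or the top; in each case a single element of $D\setminus C$ can be adjoined while preserving (i) and (ii). $S$-midpointedness then drops out of (i) for $s\in S\cap K$ and of (ii) for $s$ with $\pi(s)>0$.

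So the missing idea is: do not aim for the minimal $S$-midpointed closure; impose a stronger, $\pi$-monotone closure condition under which corner addition becomes a direct case analysis.
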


\begin{proof}
We first observe that it is enough to show this for $H = \Z$: By Lemma~\ref{lem:Locally} it is enough to show this for all finitely-generated subgroups, thus we may assume $G$, and thus also $H$, is finitely-generated, i.e.\ $H \cong \Z^d$. We prove the csae $H = \Z$, from which the general case follows by induction. Since $\Z$ is free, the extension splits and we may suppose $G = K \rtimes \Z$ with $K \triangleleft G$ a normal subgroup.

Let $S \Subset G$, which we may assume generates $G$. We construct an invariant $S$-midpointed convex geometry on $G$.  We may suppose $\pi(S) \subset \N$ by possibly replacing some elements of $S$ with their inverses. On $K$, fix an $(S \cap K)$-midpointed invariant convex geometry. As convex sets pick sets $C \Subset G$ with the following two properties:
\begin{itemize}
\item for all $g \in G$, $g^{-1}(gK \cap C) \subset K$ is convex in $K$, and
\item if $c, e \in C$, $d \in G$, $\pi(c) < \pi(d) < \pi(e)$ and $c^{-1}d \in S$, then $d \in C$.
\end{itemize}

Invariance of this family is easy to show. The first property is an alternative description of the construction in Lemma~\ref{lem:Subgroup}, se we can talk about convex sets on cosets $gK$ (which are also easily seen to also be convex in $G$ in the sense of the previous definition). We claim that this gives a convex geometry. It is clear that the empty set is convex, and it is easy to show that the intersection of two convex sets is convex. We show the anti-exchange axiom. By Lemma~\ref{lem:AECE}, it is enough to show the corner addition property. Let thus $C \subsetneq D$, $C, D \in \C$.


Suppose first $(\pi^{-1}(\pi(C)) \cap D) \setminus C \neq \emptyset$, then pick any element $a$ of this set which maximizes $\pi(a)$, and is such that $(C \cap aK) \cup \{a\}$ is convex in $aK$ with respect to the convex geometry of $K$ (this is possible since the convex geometry of $K$, thus that of $aK$, has the corner addition property). Then $C \cup \{a\}$ is convex in $G$, i.e.\ the two properties stated above hold. Namely, the first property was explicitly enforced. Suppose the second property fails for some triple $c,d,e$. Then we must have $c = a$, since the choices of $e$ only deal with the $\pi$-projection, which was not changed by the addition of $a$. However, since $\pi(a)$ was taken to be maximal and $D$ is convex, we must in fact have $d \in C$, a contradiction. Thus, the second property holds.

Suppose then that $\pi^{-1}(\pi(C)) \cap D = C$. Observe that it follows from the second property and our choice of $S$ that all convex sets of $G$ have intervals as their $\pi$-projections. Thus $\pi(C), \pi(D) \subset \Z$ are intervals, and we are in the case where $C$ and $D$ agree when restricted to the preimage of the interval $\pi(C)$ in $G$. If $\min (\pi(D)) < \min(\pi(C))$, then pick any element $a \in D \setminus C$ which maximizes $\pi(a)$ under the constraint $\pi(a) < \min \pi(C)$, i.e.\ any element that extends the $\pi(C \cup \{a\})$ by adding a new minimum. We must have $\pi(a) = \min(\pi(C)) - 1$ since $\pi(D)$ is an interval. Then $C \cup \{a\}$ is convex -- singletons are closed in the convex geometry of $K$, thus in that of the coset $aK$, so the only problem could be that the second property fails for some triple $c, d, e$ with $c = a$, and $e \in C$. Since $D$ is convex and $e \in C$, we have $d \in D$, and thus in fact $d \in C$ since $\pi(d)$ is in the interval where $C$ and $D$ agree. Thus the second property must in fact hold.

If $\max (\pi(D)) > \max(\pi(C))$, then pick any element $a \in D \setminus C$ which minimizes $\pi(a)$ under the constraint $\pi(a) > \max \pi(C)$. Again $C \cup \{a\}$ is convex -- singletons are closed in the convex geometry of $K$, so the only problem could be that the second property fails for some triple $c, d, e$ with $e = a$. Since $D$ is convex, we have $d \in D$, thus $d \in C$ because $\pi(d)$ is again in the interval where $C$ and $D$ agree. Thus the second property must hold in this case as well.

Finally, we show $S$-midpointedness. Suppose $C$ is convex, and $gh, gh^{-1} \in C$ with $h \in S$. If $\pi(h) = 0$, $g \in C$ follows from the $S$-midpointedness of the convex geometry of $K$ and the first property of our convex sets. If $\pi(h) > 0$, then consider the triple $c = gh^{-1}, d = g, e = gh$. We have $\pi(c) < \pi(d) < \pi(e)$, $c, e \in C$, and $c^{-1}d = h \in S$. Thus, $d \in C$ by the second property of convexity, and thus our convex geometry is $S$-midpointed.
%
\end{proof}


\begin{corollary}
\label{cor:Polycyclics}
Let $G$ be a strongly polycyclic group. Then, for every finite set $S \Subset G$, there exists an invariant $S$-midpointed decidable convex geometry.
\end{corollary}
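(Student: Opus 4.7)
The corollary is an essentially immediate consequence of Theorem~\ref{thm:GroupExtensions}, proved by induction on the length of the strongly polycyclic series of $G$. The base case is the trivial group $\{1\}$, which admits the convex geometry $\{\emptyset, \{1\}\}$: it is invariant, vacuously $S$-midpointed for the only possible choice of $S$, and trivially decidable. So the work is entirely in the inductive step plus the verification that decidability propagates.

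For the inductive step, suppose $G$ fits in an exact sequence $1 \to K \to G \to \Z \to 1$ with $K$ strongly polycyclic of shorter length. The inductive hypothesis supplies, for every finite $S' \Subset K$, an invariant $S'$-midpointed decidable convex geometry on $K$. Given $S \Subset G$, I would apply Theorem~\ref{thm:GroupExtensions} with $H = \Z$, feeding it the convex geometry on $K$ corresponding to $S' = S \cap K$. The theorem then produces an invariant $S$-midpointed convex geometry on $G$, which settles the structural content of the corollary.

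The decidability claim requires revisiting the construction inside the proof of Theorem~\ref{thm:GroupExtensions}. Recall that a set $C \Subset G$ is declared convex there iff (i) $g^{-1}(gK \cap C) \subset K$ is convex in the chosen convex geometry of $K$ for every $g \in G$ (equivalently, for one representative of each coset that meets $C$), and (ii) the $\pi$-interval condition involving $S$ holds: whenever $c, e \in C$ and $c^{-1}d \in S$ with $\pi(c) < \pi(d) < \pi(e)$, then $d \in C$. Given a finite $C$, only finitely many cosets $gK$ meet $C$, so condition (i) reduces to finitely many convexity checks in $K$, each decidable by induction. Condition (ii) reduces to bounded search over $d$ whose $\pi$-image lies in the finite interval $[\min \pi(C), \max \pi(C)]$ and such that $c^{-1}d \in S$ for some $c \in C$, which is a finite check using the decidable word problem of $G$ (strongly polycyclic groups are finitely presented and have decidable word problem, indeed one may realize $G$ concretely as integer matrices via its polycyclic presentation).

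The main obstacle -- to the extent there is one -- is simply the bookkeeping around decidability: one must confirm that a fixed choice of coset representatives for $K$ in $G$, together with the decidability of the word problem of $G$ and of the convex geometry of $K$, is enough to uniformly decide both (i) and (ii). Since $G / K \cong \Z$, coset representatives may be taken as powers of any lift of a generator of $\Z$, and projecting to $\Z$ is computable from the polycyclic presentation; hence everything is effective. No subtle geometric argument is needed beyond what Theorem~\ref{thm:GroupExtensions} already provides.
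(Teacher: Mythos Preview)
Your proposal is correct and follows essentially the same approach as the paper: induction on the polycyclic length, with the inductive step supplied by Theorem~\ref{thm:GroupExtensions} applied to the extension $1 \to K \to G \to \Z \to 1$. Your treatment of decidability is in fact more explicit than the paper's, which simply asserts that strongly polycyclic groups have decidable word problem and that ``it is easy to see from the proof'' that the resulting convex geometries are decidable; your unpacking of why conditions (i) and (ii) are each effectively checkable is exactly the verification the paper leaves to the reader.
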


\begin{proof}
By definition, a strongly polycyclic group is obtained by repeated $\Z$-extensions $1 \to K \to G \to \Z \to 1$, and the previous theorem applies. Strongly polycyclic groups are well-known to have decidable word problems, and it is easy to see from the proof that the convex geometries obtained are decidable.
\end{proof}

The following is direct from the previous theorem, Theorem~\ref{thm:UCE} and Lemma~\ref{lem:UCP}.

\begin{corollary}
Let $G$ be a strongly polycyclic group $S \Subset G$, $\T \subset A^S$ have $k$-uniform $S$-extensions for some $k$, and $X$ be the corresponding $k$-TEP subshift. Then $X$ has a decidable language.
\end{corollary}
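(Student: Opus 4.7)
The plan is to combine the three main ingredients already established: the construction of invariant midpointed convex geometries on strongly polycyclic groups (Corollary~\ref{cor:Polycyclics}), the equivalence between midpointedness and unique corner positioning (Lemma~\ref{lem:UCP}), and the decidability half of Theorem~\ref{thm:UCE}. There is essentially no new content to prove; the work is in invoking each result with the correct parameters.

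First I would apply Corollary~\ref{cor:Polycyclics} not to $S$ itself but to the finite set $S^{-1}S \Subset G$, obtaining an invariant $(S^{-1}S)$-midpointed convex geometry $\C \subset \FinSet(G)$ that is moreover decidable. Then by Lemma~\ref{lem:UCP}, the fact that $\C$ is $S^{-1}S$-midpointed implies that $\C$ has $S$-UCP, so in particular $\C$ is an $S$-UCP convexoid. Next I would observe that the hypothesis ``$\T$ has $k$-uniform $S$-extensions'' automatically gives the $k$-TEP property: by definition $k$-TEP requires $k$-uniform $g^{-1}\angle gS$-extensions for every $g \in G$, and since $g^{-1}\angle gS \subset S$ for all $g$, uniform $S$-extensions trivially restrict to uniform extensions on any subset, as noted in the paragraph immediately following the definition of TEP.

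At this point all hypotheses of Theorem~\ref{thm:UCE} are in place. Strongly polycyclic groups are well-known to have decidable word problem (and this was already noted in the proof of Corollary~\ref{cor:Polycyclics}), and the convex geometry $\C$ supplied by Corollary~\ref{cor:Polycyclics} is decidable, hence a fortiori recursively enumerable. The decidability clause of Theorem~\ref{thm:UCE} then delivers that $X$ has computable, hence decidable, language.

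There is no real obstacle: the only point requiring a bit of care is the choice of feeding $S^{-1}S$ rather than $S$ into Corollary~\ref{cor:Polycyclics}, since $S$-UCP requires $S^{-1}S$-midpointedness in the statement of Lemma~\ref{lem:UCP}. If one forgets this, one would only get weak $S$-UCP (for trivial reasons), which is not what Theorem~\ref{thm:UCE} asks for. Everything else is bookkeeping.
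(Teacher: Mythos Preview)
Your proposal is correct and follows exactly the route the paper intends: invoke Corollary~\ref{cor:Polycyclics} (applied to $S^{-1}S$), pass through Lemma~\ref{lem:UCP} to get $S$-UCP, and then apply the decidability clause of Theorem~\ref{thm:UCE}. You have even isolated the one point requiring care, namely that Corollary~\ref{cor:Polycyclics} must be fed $S^{-1}S$ rather than $S$; the paper's one-line proof sketch leaves this implicit.
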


\begin{example}
The above theorem applies to $\Z^d$, giving another construction of $S$-midpointed convex geometries for all $S$ for these groups (but these convex geometries are not midpointed). It also implies that the Baumslag-Solitar groups $\Z[\frac1n] \rtimes \Z$ (which by the previous section do not admit any midpointed convex geometries) admit invariant $S$-midpointed convex geometries for all finite sets $S$. The same is true for the wreath product $\Z \wr \Z$. By the results of Section~\ref{sec:FreeGroup} we have that $F_2 \times \Z$ admits $S$-midpointed invariant convex geometries for all finite sets $S$. In each case, it is clear from the construction that the convex geometries are decidable, giving decidability of languages of $k$-TEP subshifts. \qee
\end{example}

The following theorem summarizes our results about invariant $S$-midpointed convex geometries.

\begin{theorem}
\label{thm:ClassTransInv}
Let $\mathcal{G}$ be the smallest family of groups such that
\begin{itemize}
\item free groups and torsion-free abelian groups are in $\mathcal{G}$,
\item $\mathcal{G}$ is closed under direct unions,
\item $\mathcal{G}$ is closed under taking subgroups
\item $\mathcal{G}$ is closed under group extensions by actions of torsion-free abelian groups.
\end{itemize}
Then every group $G \in \mathcal{G}$ admits an invariant $S$-midpointed convex geometry for each $S \Subset G$.
\end{theorem}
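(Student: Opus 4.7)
The plan is to prove this by structural induction on the inductive definition of $\mathcal{G}$. That is, I would show that the property ``$G$ admits an invariant $S$-midpointed convex geometry for every $S \Subset G$'' holds for the generating class and is preserved by each of the three closure operations, so it must hold on all of $\mathcal{G}$. Each of the four verifications has already been essentially carried out in the preceding subsections, so the proof reduces to assembling the right citations.

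For the base cases, free groups are handled by the tree convexity construction of Section~\ref{sec:FreeGroup}: the tree convex sets form an invariant (fully) midpointed convex geometry, which is in particular $S$-midpointed for every finite $S$. Torsion-free abelian groups are handled by the proposition just before Theorem~\ref{thm:GroupExtensions}: since every torsion-free abelian group is locally-$\Z^d$, we combine the standard convex geometry on each $\Z^d$ (which is midpointed) with Lemma~\ref{lem:Locally} to produce invariant $S$-midpointed convex geometries for each finite $S$.

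For the closure steps, I would invoke the three lemmas/theorems directly. Closure under direct unions is exactly Lemma~\ref{lem:Locally}: if $G = \bigcup_n G_n$ with each $G_n$ in $\mathcal{G}$ admitting invariant $S$-midpointed convex geometries for every finite $S \Subset G_n$, then any $S \Subset G$ sits inside some $G_n$, and we get the desired geometry on $G$. Closure under subgroups is Lemma~\ref{lem:Subgroup}, which shows the property transfers both ways for any fixed $S$ living in the subgroup. Finally, closure under group extensions by actions of torsion-free abelian groups is precisely Theorem~\ref{thm:GroupExtensions}.

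There is no real obstacle here, since the technical content has been isolated into the earlier results; the only thing to watch is that the inductive hypothesis must be uniform over all finite $S$ (not just for a particular $S$), because Theorem~\ref{thm:GroupExtensions} needs, in the extension $1 \to K \to G \to H \to 1$, an invariant $(S \cap K)$-midpointed convex geometry on $K$ for the specific $S$ we are handling in $G$. This is why the statement of the theorem quantifies over all $S \Subset G$ rather than fixing a single $S$, and the four closure operations visibly preserve this stronger ``for every $S$'' formulation. With this remark in place, the induction goes through.
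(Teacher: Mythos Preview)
Your proposal is correct and matches the paper's approach exactly: the paper presents Theorem~\ref{thm:ClassTransInv} as a summary statement with no separate proof, since it is simply the structural induction you describe, with base cases and closure steps handled by Section~\ref{sec:FreeGroup}, the torsion-free abelian proposition, Lemma~\ref{lem:Locally}, Lemma~\ref{lem:Subgroup}, and Theorem~\ref{thm:GroupExtensions} respectively. Your remark about needing the induction hypothesis to be uniform over all finite $S$ is exactly the point that makes the assembly go through.
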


\begin{question}
Which groups admit an invariant $S$-midpointed convex geometry for each $S \Subset G$? What about convexoids?
\end{question}

\subsection{Non-invariant convex geometries}
\label{sec:NonTranslationInvariant}

In this section, we show that constructing not necessarily invariant $S$-midpointed convex geometries is equivalent to ordering the group in a way that avoids lacking midpoints. This is the bare minimum needed to apply Theorem~\ref{thm:UCE} and Theorem~\ref{thm:Measure}. As an application, we show that groups admitting such convex geometries are closed under group extensions, and give some examples not (obviously) covered by our invariant constructions.

\begin{definition}
Let $<$ be a total order on a subset $A$ of a group $G$, and let $S \subset G$. We say $<$ is an \emph{$S$-midpointed order} if $\forall g \in A, h \in S: \{gh^{-1}, gh\} \subset A \implies g \leq \max(gh^{-1}, gh)$. It is \emph{midpointed} if $S = G$.
\end{definition}

\begin{lemma}
\label{lem:IffOrdering}
Let $G$ be a countably infinite group and $S \subset G$. Then $G$ admits an $S$-midpointed convex geometry if and only if it admits an $S$-midpointed ordering of order type $\omega$.
\end{lemma}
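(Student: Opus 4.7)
The plan is to identify $S$-midpointed orderings of order type $\omega$ with $S$-midpointed convex geometries whose convex sets are initial segments of the order, and to obtain the order from an arbitrary $S$-midpointed convex geometry via an anti-shelling.

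For the ``$\Leftarrow$'' direction, suppose $<$ is an $S$-midpointed ordering of $G$ of type $\omega$, and write $G = \{g_0, g_1, g_2, \ldots\}$ with $g_i < g_{i+1}$. Declare $\C = \{C_n \;|\; n \in \N\}$, where $C_n = \{g_0, \ldots, g_{n-1}\}$. This family is linearly ordered by inclusion, hence closed under intersection; it contains $\emptyset = C_0$; and every $B \Subset G$ is contained in some $C_n$. Defining $\tau(B) = C_{m+1}$ for $m = \max\{i \;|\; g_i \in B\}$ (and $\tau(\emptyset) = \emptyset$) realizes $\C$ as the images of a closure operator, and anti-exchange is immediate from linearity of $<$. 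Finally, the closure $\overline{\{gh, gh^{-1}\}}$ is the initial segment ending at $\max(gh, gh^{-1})$, so $g \in \overline{\{gh, gh^{-1}\}}$ is equivalent to $g \leq \max(gh, gh^{-1})$, which is exactly the $S$-midpointed ordering hypothesis (the degenerate case $gh = gh^{-1}$ is handled by the same inequality).

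For the ``$\Rightarrow$'' direction, suppose $\C$ is an $S$-midpointed convex geometry on $G$. Apply Lemma~\ref{lem:GrowthSequences} with base $\emptyset$ to get an unbounded anti-shelling $\emptyset = C_0 \subset C_1 \subset C_2 \subset \cdots$ with $\bigcup_i C_i = G$; let $g_n$ be the unique element of $C_{n+1} \setminus C_n$, and declare $g_i < g_j \iff i < j$. This is an order of type $\omega$ since $G$ is countably infinite. To verify $S$-midpointedness, fix $g \in G$ and $h \in S$ with $\{gh, gh^{-1}\} \subset G$, and let $g = g_n$. If we had $\max(gh, gh^{-1}) < g$, then $\{gh, gh^{-1}\} \subset C_n$, and since $C_n \in \C$ we get $\overline{\{gh, gh^{-1}\}} \subset C_n$; by the $S$-midpointedness of $\C$, this forces $g \in C_n$, contradicting $g = g_n \notin C_n$. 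Thus $g \leq \max(gh, gh^{-1})$ as required.

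There is no real obstacle in either direction: both arguments reduce, modulo Lemma~\ref{lem:GrowthSequences}, to unwinding the definitions of ``initial segment'', ``anti-shelling'', and the closure $\overline{\,\cdot\,}$. The point of the lemma is really just to record that these two presentations of the same data -- a well-ordering of type $\omega$, versus an inclusion chain of convex sets exhausting $G$ -- carry the $S$-midpointedness condition back and forth unchanged.
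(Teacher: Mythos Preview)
Your proof is correct and follows essentially the same approach as the paper: the paper also obtains the ordering from an unbounded anti-shelling (via Lemma~\ref{lem:GrowthSequences}) and, conversely, takes the lower sets of the ordering as the convex geometry. You have simply filled in the details that the paper leaves implicit.
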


\begin{proof}
From an $S$-midpointed convex geometry $\C$, we obtain a midpointed ordering by taking any unbounded anti-shelling and collecting the corners. Namely this gives an ordering $G = \{g_1, g_2, ...\}$ such that $\{g_1, g_2, ..., g_n\}$ is convex for all $n$, and the fact $\C$ is $S$-midpointed directly translates into this ordering being $S$-midpointed.

Conversely, if $G$ admits an $S$-midpointed ordering of order type $\omega$, then the lower sets of this order are easily seen to yield an $S$-midpointed convex geometry.
\end{proof}

\begin{definition}
Let $G$ be a countably infinite set and for each $i \in \N$ let $F_i$ be a subset of $G$. We say that two orderings of $G$ are \emph{$(F_i)_i$-consistent} if for all $i$, the orderings they determine on $F_i$ are equal.
\end{definition}

\begin{lemma}
Let $G$ be a countably infinite set, let $(G, <)$ be a total well-order, and let $(F_t)_t$ be a family of finite subsets such that each $g \in G$ appears in finitely many of the $F_i$. Then there exists an ordering $\prec$ of $G$ of order type $\omega$, which is $(F_t)_t$-consistent with $<$.
\end{lemma}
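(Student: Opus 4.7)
The plan is to build $\prec$ as the linear extension of a well-founded partial order that already captures the constraints imposed by the $F_t$'s.

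First, I define the partial order $<^*$ on $G$ by $h <^* g \iff h < g$ and $\exists t: \{h,g\} \subset F_t$. This is contained in $<$, hence well-founded, and any $g \in G$ has only finitely many immediate $<^*$-predecessors, because $g$ lies in only finitely many $F_t$, each of which is itself finite. Call $D_g = \{h \;|\; h \leq^* g\}$ the $\leq^*$-downward closure. The crucial technical step is to show that $D_g$ is finite for every $g \in G$. Suppose otherwise and let $A = \{g \in G \;|\; D_g \text{ is infinite}\}$; since $<^*$ is well-founded so is its restriction to $A$, so $A$ has a $<^*$-minimal element $g$. Then every immediate $<^*$-predecessor $h$ of $g$ has $D_h$ finite, and there are only finitely many such $h$, so $D_g = \{g\} \cup \bigcup_{h <^* g} D_h$ is finite, contradicting $g \in A$.

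Now I construct the $\omega$-enumeration. Fix an arbitrary bijection $\N \to G$, $n \mapsto h_n$ (possible since $G$ is countable). I build a sequence $g^{(1)}, g^{(2)}, \ldots$ in batches: at batch $n$, let $m_n$ be the least index such that $h_{m_n}$ has not yet appeared among the $g^{(i)}$, and append to the sequence all elements of $D_{h_{m_n}}$ not yet appearing, listed in any linear extension of the restriction of $<^*$ to that (finite) subset. Each batch adds finitely many elements (since $D_{h_{m_n}}$ is finite), and after batch $n$ the element $h_{m_n}$ has been listed, so in particular $h_m$ is listed after at most $m$ batches. Thus the sequence enumerates $G$ and has order type $\omega$, and I define $g^{(i)} \prec g^{(j)} \iff i < j$.

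For consistency, take $t$ and $g, h \in F_t$ with $h < g$; then by definition $h <^* g$. Within the batch where $g$ is added, $h$ is either already present (from a previous batch) or is added in the same batch but earlier in the topological ordering we chose (since $h <^* g$ and $h \in D_g \subset D_{h_{m_n}}$ whenever $g$ appears in the batch for $m_n$). Either way $h \prec g$, so $\prec|_{F_t} = <|_{F_t}$. The main potential obstacle is the finiteness of $D_g$; without this, the downward closure of $h_m$ could in principle be infinite (since $<$ may have order type far beyond $\omega$), and the batching scheme would fail, but the well-founded induction above handles it cleanly.
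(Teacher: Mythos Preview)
Your proof is correct, but the route differs from the paper's. The paper runs a single greedy pass: fix an auxiliary $\omega$-enumeration $g_1,g_2,\ldots$ of $G$, and at step $i$ output the $g_j$ of least index all of whose $<$-predecessors within any $F_t$ have already been output; well-foundedness of $<$ is invoked directly to show every element is eventually emitted. You instead isolate the structural fact that each $D_g$ is finite (via well-founded induction on the relation $<^*$), and then use this to build the enumeration in finite batches. Your approach makes the key finiteness explicit and separates it cleanly from the enumeration, at the cost of a slightly longer argument; the paper's approach is shorter but hides the same finiteness inside the ``every element is eventually added'' step. One terminological quibble: you call $<^*$ a partial order, but it is not transitive in general; the argument only needs that it is a well-founded relation with finitely many immediate predecessors, and your use of $D_g$ implicitly passes to its reflexive--transitive closure, which is fine.
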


\begin{proof}
Let $G = \{g_1, g_2, g_3, ...\}$ be some well-order of order type $\omega$. Construct another order $G = \{h_1, h_2, h_3, ...\}$ by always setting $h_i = g_j$ where $j$ is minimal such that for all $F_t \ni g_j$, all the elements $g_k \in F_t$ which appear before $g_j$ in the order $<$ already appear in $\{h_1, h_2, ..., h_{i-1}\}$.

First, we observe that this process never stops, i.e.\ that an infinite sequence $(h_i)_i$ is indeed constructed: the first element $g_j$ of the initial $\omega$-segment of $<$ which has not yet been added is always available for adding, since all its $<$-predecessors have been added.

Next, we claim that $(h_i)_i$ indeed enumerates $G$. Suppose not, and let $g_j$ be minimal in the order $<$ such that $g_j$ is never added as $h_i$. After finitely many enumeration steps, we never add $g_{j'}$ with $j' < j$ as $h_i$ (because there are finitely many such $j'$), thus the only possible reason $g_j$ is not enumerated on a particular step $i$ is that for some $F_t \ni g_j$, some $g_k \in F_t$ that appears before $g_j$ in the order $<$ does not appear as $\{h_1, h_2, ..., h_{i-1}\}$. But all such $g_k$ (of which there are only finitely many) are eventually added into the order, since $g_j$ was taken to be $<$-minimal, a contradiction. Thus, the sequence $(h_i)_i$ is an ordering of $G$ with order type $\omega$.

Next, we show that $(h_i)_i$ is $(F_t)_t$-consistent. Suppose not, and for some $t$, we have $h_i, h_{i'} \in F_t$ with $i < i'$ but $h_i > h_{i'}$. This means that at step $i$, we enumerated $h_i$ even though $h_{i'} < h_i$ had not yet been enumerated, contrary to the process.
\end{proof}

\begin{lemma}
\label{lem:Reordering}
Let $S \Subset G$ be finite. Then a countably infinite subset $A \subset G$ of a group admits an $S$-midpointed well-order if and only if it admits an $S$-midpointed order of order-type $\omega$.
\end{lemma}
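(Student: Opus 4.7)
My plan is to reduce this to the preceding lemma on producing an $\omega$-order consistent with a prescribed well-order on a family of finite subsets. The forward implication (order type $\omega$ implies well-order) is immediate from definitions, so I focus on the converse: given an $S$-midpointed well-order $<$ on $A$, I want to produce an $S$-midpointed order $\prec$ of order type $\omega$ on $A$.

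The key observation is that the $S$-midpointed property is ``local'': it is a constraint on the relative order within triples of the form $F_{g,h} := \{g, gh^{-1}, gh\}$, ranging over $g \in A$ and $h \in S$ with $\{gh^{-1}, gh\} \subset A$. Indeed, the defining condition ``$g \leq \max(gh^{-1}, gh)$'' depends only on the restriction of the order to $F_{g,h}$. Hence if I can find a new order $\prec$ of type $\omega$ whose restriction to each $F_{g,h}$ coincides with that of $<$, then $\prec$ will automatically be $S$-midpointed.

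To invoke the previous lemma on the family $(F_{g,h})$, I need each element $g' \in A$ to lie in only finitely many $F_{g,h}$. This is straightforward: $g' \in F_{g,h}$ forces $g \in \{g', g'h, g'h^{-1}\}$, so for each of the $|S|$ choices of $h$ there are at most three choices of $g$, yielding at most $3|S|$ indices $(g,h)$ for which $g' \in F_{g,h}$. Applying the previous lemma then produces the desired order $\prec$ of type $\omega$ that is $(F_{g,h})$-consistent with $<$, and by the observation above $\prec$ is $S$-midpointed. I do not foresee a real obstacle here: the substance of the argument has already been carried out in the preceding lemma, and this lemma reduces to packaging $S$-midpointedness as a local constraint satisfying the finite-multiplicity hypothesis that lemma requires.
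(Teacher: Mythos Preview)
Your proposal is correct and matches the paper's proof essentially line for line: the paper also sets $F_{g,h} = \{gh^{-1}, g, gh\} \cap A$, notes that each element of $A$ lies in only finitely many of these (since $S$ is finite), observes that $S$-midpointedness is determined by the restrictions to the $F_{g,h}$, and invokes the preceding lemma. Your write-up is in fact slightly more explicit about the bound $3|S|$ than the paper's own proof.
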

 
\begin{proof}
The non-trivial direction is to show that the order type of a a midpointed well-order can be changed to $\omega$. We observe that, setting $F_{g,h} = \{gh^i \;|\; i \in \{-1,0,1\}\} \cap A$ for $g \in A, h \in S$, we obtain a countable family of finite sets such that each $a \in A$ appears in only finitely many of them. Any ordering that is $(F_{g, h})_{g, h}$-consistent with an $S$-midpointed order is an $S$-midpointed order. Thus the claim follows from the previous lemma.
\end{proof}

\begin{lemma}
\label{lem:UselessLemma}
Let $G$ be a countable group, $1 \rightarrow K \rightarrow G \overset{\pi}{\rightarrow} H \rightarrow 1$ be an exact sequence and $S \Subset G$ finite. If $H$ admits a $\pi(S)$-midpointed convex geometry, and for all $g \in G$, $K$ admits an $(S \cap K)$-midpointed convex geometry, then $G$ admits an $S$-midpointed convex geometry.
\end{lemma}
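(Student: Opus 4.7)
The plan is to reduce via Lemma~\ref{lem:IffOrdering} to building an $S$-midpointed order on $G$, and construct such an order lexicographically from orders on $H$ and on each $K$-coset. Extract from the hypotheses a $\pi(S)$-midpointed ordering $<_H$ of $H$ of type $\omega$ and an $(S\cap K)$-midpointed ordering $<_K$ of $K$ of type $\omega$ (the ``for all $g\in G$'' quantifier in the hypothesis is, as far as this argument is concerned, vacuous --- a single copy of $<_K$ is all we need). Fix a section $\sigma:H\to G$ of $\pi$, transfer $<_K$ to each coset $\sigma(h)K$ via the bijection $k\mapsto\sigma(h)k$, and define $\prec$ on $G$ lexicographically: $g\prec g'$ iff $\pi(g)<_H\pi(g')$, or else $\pi(g)=\pi(g')$ and $g$ precedes $g'$ in the transferred order on their common coset. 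Lex products of well-orders are well-orders, so $\prec$ is a well-order on $G$.

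To verify that $\prec$ is $S$-midpointed, fix $g\in G$ and $h\in S$. If $\pi(h)\neq 1_H$, then $\pi(g),\pi(gh^{-1}),\pi(gh)$ are pairwise distinct in $H$. The $\pi(S)$-midpointedness of $<_H$ then gives $\pi(g)\leq_H\max_{<_H}(\pi(gh^{-1}),\pi(gh))$, and pairwise distinctness upgrades this to strict (since $\pi(g)$ cannot itself realize the maximum), so $g\prec\max_\prec(gh^{-1},gh)$ is decided by the first lex coordinate. If $\pi(h)=1_H$ instead, then $h\in S\cap K$ and $g,gh^{-1},gh$ all lie in $\sigma(\pi(g))K$; writing $g=\sigma(\pi(g))k$, the within-coset comparison of these three elements is exactly the $<_K$-comparison of $k,kh^{-1},kh$, and $(S\cap K)$-midpointedness of $<_K$ at $k$ gives $k\leq_K\max_{<_K}(kh^{-1},kh)$, hence $g\preceq\max_\prec(gh^{-1},gh)$.

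Lemma~\ref{lem:Reordering} then converts the $S$-midpointed well-order $\prec$ into an $S$-midpointed order of type $\omega$, and Lemma~\ref{lem:IffOrdering} converts this in turn into an $S$-midpointed convex geometry on $G$. The only mildly subtle point is the first case of the midpointedness check: the lex order defers to the within-coset ordering when the first coordinates agree, so the proof must ensure they disagree --- which is exactly what the three-way distinctness of $\pi(g),\pi(gh^{-1}),\pi(gh)$ provides, by promoting the midpointedness inequality of $<_H$ from $\leq$ to $<$. Finite-group degeneracies are either trivial ($H$ trivial gives $G=K$) or handled by the analogous finite version of the machinery.
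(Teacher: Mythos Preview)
Your proof is correct and follows essentially the same route as the paper: reduce to orderings via Lemma~\ref{lem:IffOrdering}, build a lexicographic order from a section and the given orderings on $H$ and $K$, verify $S$-midpointedness by the same case split on $\pi(h)=1_H$ versus $\pi(h)\neq 1_H$, and finish with Lemma~\ref{lem:Reordering}. One small imprecision: when $\pi(h)\neq 1_H$ the three projections $\pi(g),\pi(gh^{-1}),\pi(gh)$ need not be \emph{pairwise} distinct (if $\pi(h)$ has order $2$ the latter two coincide), but what you actually use---that $\pi(g)$ differs from both of the others, hence cannot realize the maximum---is true and suffices.
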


\begin{proof}
We argue with orderings, using Lemma~\ref{lem:IffOrdering}. Pick a section $h_1, h_2, ...$ for $\pi$, ordered according to a $\pi(S)$-midpointed ordering of $H$, and for $k, k' \in K$, order $G$ by $h_i k < h_j k'$ when $i < j$ or $i = j$ and $k < k'$ in the $(S \cap K)$-midpointed ordering of $K$. The order type is $\omega^2$.

This ordering is $S$-midpointed: If $g > \max(gh^{-1}, gh)$ in this ordering, then we must have $\pi(h) = 1_H$, since we ordered the cosets according to a $\pi(S)$-midpointed ordering of $H$. But if $\pi(h) = 1_H$, then $h \in S \cap K$, and $g > \max(gh^{-1}, gh)$ contradicts the fact we used an $(S \cap K)$-midpointed ordering of $K$ on the individual cosets.

Since $\omega^2$ is a well-order, the result follows from Lemma~\ref{lem:Reordering}.
\end{proof}

In particular, by $\forall$-quantifying the sets $\pi(S)$ and $S \cap K$, we get a version of Theorem~\ref{thm:GroupExtensions} in the non-invariant setting. Note that here there are no restrictions on the extensions.

\begin{theorem}
Let $1 \rightarrow K \rightarrow G \overset{\pi}{\rightarrow} H \rightarrow 1$ be exact. If $K$ and $H$ admit $S$-midpointed convex geometries for all finite sets, then so does $G$.
\end{theorem}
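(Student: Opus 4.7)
The theorem is an immediate corollary of Lemma~\ref{lem:UselessLemma}, the preceding ``useless lemma''. My plan is simply to unpack the ``for all finite sets'' quantifier and feed the output into that lemma. Fix an arbitrary finite $S \Subset G$; the task is to produce an $S$-midpointed convex geometry on $G$. Since $S$ is finite, both $\pi(S) \Subset H$ and $S \cap K \Subset K$ are finite subsets of their respective groups. By the hypothesis on $H$, it admits a $\pi(S)$-midpointed convex geometry; by the hypothesis on $K$, it admits an $(S \cap K)$-midpointed convex geometry. Lemma~\ref{lem:UselessLemma}, applied to the given exact sequence with these two convex geometries as input, then produces an $S$-midpointed convex geometry on $G$. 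Since $S \Subset G$ was arbitrary, this is exactly the conclusion.

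The only point that merits comment is countability: Lemma~\ref{lem:UselessLemma} is stated for countable $G$ because its proof passes through orderings of order type $\omega$ via Lemma~\ref{lem:IffOrdering}. Throughout this paper the acting groups are countable, so in that setting no further work is needed and the above derivation suffices. For an uncountable extension $G$, the argument goes through essentially unchanged after two minor adjustments: replace the $\omega^2$-ordering constructed in the proof of Lemma~\ref{lem:UselessLemma} by a well-ordering of the appropriate transfinite order type (lexicographic on cosets, using a well-ordering of $H$ refining its $\pi(S)$-midpointed order and, on each coset, a well-ordering of $K$ refining its $(S\cap K)$-midpointed order), and verify that the lower sets of such a well-ordering, restricted to finite sets, form an $S$-midpointed convex geometry. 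The midpointedness check is literally the same two-case analysis ($\pi(h) \neq 1_H$ versus $\pi(h) = 1_H$) as in Lemma~\ref{lem:UselessLemma}. I do not expect any substantive obstacle here; the combinatorial content is entirely supplied by Lemma~\ref{lem:UselessLemma}, and the only potential pitfall is bookkeeping around transfinite orderings, which is routine.
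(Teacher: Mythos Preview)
Your proof is correct and is exactly the paper's approach: the theorem is stated immediately after Lemma~\ref{lem:UselessLemma} with the remark that one obtains it ``by $\forall$-quantifying the sets $\pi(S)$ and $S \cap K$,'' which is precisely your derivation. Your side remark on countability is apt but unnecessary in the paper's setting, where all acting groups are taken to be countable.
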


By the results of Section~\ref{sec:FreeGroup} and the previous theorem we have for example that the wreath product $\Z \wr F_2$ (the semidirect product where the free group acts on $\Z^{F_2}$ by translation) admits $S$-midpointed convex geometries for all finite sets $S$. Though we include no precise decidability statements, it is clear that this convex geometry can be constructed so that the convex sets are a decidable family, thus TEP subshifts on this group have decidable languages. This is not, at least for any obvious reason, covered by the results of the previous section.

We have the following non-invariant analog of Lemma~\ref{lem:Locally} (with a similar proof).

\begin{lemma}
Let $G = \bigcup G_n$ for an increasing union of groups $G_i \leq G_{i+1}$. Suppose each $G_i$ admits an $S$-midpointed convex geometry for each $S \Subset G_i$. Then $G$ admits an $S$-midpointed convex geometry for each $S \Subset G$.
\end{lemma}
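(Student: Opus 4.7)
The plan is to reduce the problem to constructing $S$-midpointed orderings of $G$ via Lemma~\ref{lem:IffOrdering} and Lemma~\ref{lem:Reordering}, and then to build such an ordering on $G$ by stacking the layer orderings coming from the $G_n$.

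Given $S \Subset G$, I will first pick $n_0$ with $S \subset G_{n_0}$, which exists because $S$ is finite and the union is increasing. Then for every $n \geq n_0$ one has $S \subset G_n$, so by hypothesis and Lemma~\ref{lem:IffOrdering} there is an $S$-midpointed ordering $<_n$ of $G_n$ of type $\omega$. For $g \in G$ I will set $r(g) = \min\{n \geq n_0 : g \in G_n\}$, and define $g \prec g'$ to hold iff either $r(g) < r(g')$, or $r(g) = r(g') =: m$ and $g <_m g'$. This is visibly a well-ordering of $G$ (countable since each $G_n$ is, via Lemma~\ref{lem:IffOrdering}), with order type at most $\omega^2$.

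The main point to verify is $S$-midpointedness of $\prec$. Fix $h \in S$ and $g \in G$ and let $m := r(g)$. Since $h, h^{-1} \in G_{n_0} \subseteq G_m$, both $gh$ and $gh^{-1}$ lie in $G_m$, so their ranks are at most $m$. If $m = n_0$, then $g, gh, gh^{-1}$ all lie in $G_{n_0}$ and are compared by the $S$-midpointed $<_{n_0}$, giving $g \preceq \max\{gh, gh^{-1}\}$. If $m > n_0$ and one had $r(gh) < m$, then $gh \in G_{m-1}$ and $h^{-1} \in G_{n_0} \subseteq G_{m-1}$ would force $g = (gh)h^{-1} \in G_{m-1}$, contradicting $r(g) = m$; so $r(gh) = m$ and symmetrically $r(gh^{-1}) = m$. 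Then all three elements lie in layer $m$, where $\prec$ is just $<_m$, which is $S$-midpointed on $G_m$, so again $g \preceq \max\{gh, gh^{-1}\}$.

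Finally, Lemma~\ref{lem:Reordering} converts $\prec$ into an $S$-midpointed order of $G$ of type $\omega$, and then Lemma~\ref{lem:IffOrdering} applied in the other direction produces the desired $S$-midpointed convex geometry on $G$. The only delicate step is the rank argument for $S$-midpointedness; it works precisely because $h$ lies in the fixed subgroup $G_{n_0}$, so that right multiplication by $h^{\pm 1}$ cannot decrease the rank of $g$, which keeps the triangle $\{g, gh, gh^{-1}\}$ inside a single layer.
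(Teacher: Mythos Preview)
Your argument is correct. The rank computation is the crux, and you handle it properly: since $h \in S \subset G_{n_0} \subset G_{m-1}$ whenever $m > n_0$, right multiplication by $h^{\pm 1}$ cannot lower the rank, so the triple $\{g, gh, gh^{-1}\}$ stays in a single layer where $<_m$ applies. The well-order has type at most $\omega^2$, and Lemma~\ref{lem:Reordering} brings it down to $\omega$.

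However, your route is more elaborate than what the paper intends. The paper says this lemma has a proof ``similar'' to Lemma~\ref{lem:Locally}, and that proof uses only a \emph{single} $G_i$: one picks $i$ with $S \subset G_i$, takes the $S$-midpointed convex geometry $\C$ on $G_i$, and extends it to $G$ by placing a translated copy $g\C$ on each left coset $gG_i$ via Lemma~\ref{lem:FromPartition}. The check that the resulting geometry on $G$ is $S$-midpointed is immediate, because $S \subset G_i$ forces each triple $\{g, gh, gh^{-1}\}$ into a single coset, and $S$-midpointedness of $\C$ transfers under the translation by the chosen coset representative. (This is the easy direction of Lemma~\ref{lem:Subgroup}, and it does not require invariance of $\C$.) No well-orders, no Lemma~\ref{lem:Reordering}, and no use of the $G_n$ for $n > i$.

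What your approach buys is that it lives entirely inside the ordering framework of Section~\ref{sec:NonTranslationInvariant}, and it makes the resulting order completely explicit. What the paper's approach buys is brevity and a cleaner structural picture: the problem reduces to a single subgroup, and the increasing-union hypothesis is used only to find one $G_i$ containing $S$. In particular, your use of the full sequence $(<_n)_{n \ge n_0}$ is unnecessary; once you observe that each layer $G_m \setminus G_{m-1}$ is a union of left $G_{n_0}$-cosets, you could replace $<_m$ on that layer by translated copies of $<_{n_0}$ and recover essentially the paper's construction phrased in terms of orderings.
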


The following theorem summarizes our results about non-invariant $S$-midpointed convex geometries.

\begin{theorem}
\label{thm:Class}
Let $\mathcal{H}$ be the smallest family of groups such that
\begin{itemize}
\item free groups and torsion-free abelian groups are in $\mathcal{H}$,
\item $\mathcal{H}$ is closed under direct limits,
\item $\mathcal{H}$ is closed under subgroups,
\item $\mathcal{H}$ is closed under group extensions.
\end{itemize}
Then every group $G \in \mathcal{H}$ admits an $S$-midpointed convex geometry for each $S \Subset G$.
\end{theorem}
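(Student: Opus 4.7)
The proof will be a straightforward induction on the inductive construction of $\mathcal{H}$, assembling the closure properties that have already been established in Section~\ref{sec:NonTranslationInvariant}. Define $\mathcal{H}'$ to be the class of groups $G$ such that for every finite $S \Subset G$, the group $G$ admits an $S$-midpointed convex geometry; we must show $\mathcal{H} \subseteq \mathcal{H}'$, for which it suffices (by minimality of $\mathcal{H}$) to verify that $\mathcal{H}'$ contains the two base classes and is closed under the three operations.

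For the base cases, free groups lie in $\mathcal{H}'$ by the construction of tree convex geometries in Section~\ref{sec:FreeGroup} (which yields even invariant midpointed convex geometries, hence $S$-midpointed ones for every $S$), and torsion-free abelian groups lie in $\mathcal{H}'$ by the Proposition in Section~\ref{sec:GroupExtensions} obtained by combining Lemma~\ref{lem:Locally} with the standard convex geometry of $\Z^d$. Closure under direct unions is exactly the lemma stated immediately before the theorem (the non-invariant analog of Lemma~\ref{lem:Locally}), and closure under group extensions is the theorem proved just above, obtained by iterating Lemma~\ref{lem:UselessLemma}.

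The only step not already explicitly written out is closure under subgroups. Suppose $G \in \mathcal{H}'$ and $H \leq G$, and let $S \Subset H$. Apply Lemma~\ref{lem:ToSubset} to the $S$-midpointed convex geometry $\C$ of $G$: the family $\C' = \{C \cap H \;|\; C \in \C\}$ is a convex geometry on $H$. For any $A \Subset H$ one checks directly from the definition $\overline{A} = \bigcap_{A \subset C \in \C'} C$ that $\overline{A}^{\C'} = \overline{A}^{\C} \cap H$, since every element of $\C'$ containing $A$ has the form $C \cap H$ with $C \in \C$ containing $A$. Hence for $g \in H$ and $h \in S$ with $\{gh, gh^{-1}\} \subset H$, the $S$-midpointedness condition $g \in \overline{\{gh, gh^{-1}\}}^{\C}$ transfers verbatim to $\C'$, so $\C'$ is $S$-midpointed.

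There is no genuine obstacle here; the proof is a bookkeeping exercise. If anything is mildly delicate it is the subgroup step, where one must be careful to note that the closure in the restricted family really does coincide with the ambient closure intersected with $H$ (so that midpointedness, which is defined in terms of closures, survives restriction). Once this is observed, the induction closes immediately.
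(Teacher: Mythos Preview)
Your proof is correct and matches the paper's intent: the theorem is stated there as a summary (``The following theorem summarizes our results\ldots'') without an explicit proof, and you have correctly assembled the pieces --- free groups from Section~\ref{sec:FreeGroup}, torsion-free abelian groups from the proposition in Section~\ref{sec:GroupExtensions}, direct unions from the lemma immediately preceding the theorem, and group extensions from the theorem just above. Your explicit verification of the subgroup step via Lemma~\ref{lem:ToSubset} (checking that $\overline{A}^{\C'} = \overline{A}^{\C}\cap H$ so that $S$-midpointedness descends) is a detail the paper leaves implicit in the non-invariant setting, and your argument for it is sound.
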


\begin{question}
Which groups admit an $S$-midpointed convex geometry for each $S \Subset G$?
\end{question}

\section{Kaplansky's and Gottshalk's conjectures}
\label{sec:Kaplansky}

The definition of a TEP subshift depends on a particular convexoid, but because the translated lax corners of a shape $S$ are always contained in it, assuming $k$-uniform $S$-extensions we have uniform extensions in the translated lax corners with respect to any convexoid. Thus, when discussing groups without a fixed convexoid, it is natural to redefine TEP to mean an SFT defined by allowed patterns $\T \subset A^S$ having $k$-uniform $S$-extensions, and we take this approach in the present section.

By Proposition~\ref{prop:AtLeastPoints}, the existence of $S$-UCP convex geometries for all finite sets $S$ (equivalently, $S$-midpointed convex geometries for all finite sets $S$) implies that all TEP subshifts with shape $|S| \geq 2$ have more than one configuration, and all $k$-TEP subshifts with $k \geq 2$ are uncountable.

\begin{question}
\label{q:AtLeastA}
Are TEP subshifts with shape $|S| \geq 2$ nonempty on all torsion-free groups? Do the always have at least $|A|$ configurations (where $A$ is the alphabet)?
\end{question}

We do not know the answer, but we show that proving that they always have at least two configurations should be difficult if it is true: in the linear case, whether a TEP subshift has at least two configurations is directly related to Kaplansky's conjectures and Gottshalk's surjunctivity conjecture.

An element $p \in R$ of a ring $R$ admits a \emph{weak inverse} if $pqp = p$ for some $q \in R$. The element $p \in F[G]$ defines a linear TEP subshift $X_p = \{x \in F^G \;|\; xp = 0\}$, where $x \in F^G$ is identified with the formal sum $\sum_{g \in G} x_g \cdot g$. It also defines a linear map $f_p : F^G \to F^G$ by $f_p(x) = x \cdot p$, $X_p = \ker f_p$. This is clearly continuous and shift-commuting for the left shift $gx_h = x_{g^{-1}h}$. We say $p$ is injective (resp. surjective, bijective) if this map is injective (resp. surjective, bijective).

We name some statements about $p \in F[G]$ with $|\supp(p)| \geq 2$, for $F$ a field, $G$ a group.

\begin{itemize}
\item $T \iff \;$ ``$X_p$ contains at least $|F|$ configurations''
\item $O \iff \;$ ``$f_p$ is not injective''
\item $U \iff \;$ ``$p$ does not have a right inverse in $F[G]$''
\item $S \iff \;$ ``$f_p$ injective implies $f_p$ surjective''
\item $W \iff \;$ ``$p$ does not have a weak inverse''
\end{itemize}

The property $T$ corresponds to the second part of Question~\ref{q:AtLeastA}.

Kaplansky's unit conjecture states that $U$ holds for any torsion-free group $G$. Gottshalk's surjunctivity conjecture \cite{Go73} implies that $S$ holds universally (and indeed even for non-linear cellular automata in place of $f_p$). It is not clear to us what the strength of $W$ is, though clearly it implies $U$.  Note that all of these statements are true for $G = \Z$, $F$ any field and $p \in F[G]$ arbitrary (with support size at least two). We record some connections between these.

\begin{proposition}
Given any group ring element $p \in F[G]$, $F$ a finite field and $G$ torsion-free, $X_p$ has at least $|F|$ configurations if and only if it has at least two configurations if and only if $f_p$ is not injective.
\end{proposition}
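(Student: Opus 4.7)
The plan is to observe that $X_p$ inherits the structure of an $F$-vector subspace of $F^G$, which forces its cardinality (when nonzero) to be at least $|F|$. No subtle group-theoretic input is needed here; torsion-freeness of $G$ plays no role in the argument. The three conditions will collapse essentially by linear algebra.

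First I would note that $f_p : F^G \to F^G$ given by $f_p(x) = x \cdot p$ is an $F$-linear map: for $x,y \in F^G$ and $a \in F$ we have $(x+y)\cdot p = x\cdot p + y\cdot p$ and $(ax)\cdot p = a(x\cdot p)$. Consequently $X_p = \ker f_p$ is an $F$-linear subspace of $F^G$, in particular closed under cellwise addition and scalar multiplication. Since $0^G \in X_p$, the equivalence between ``$X_p$ has at least two configurations'' and ``$f_p$ is not injective'' is immediate: $X_p \ni 0^G$ has a second element precisely when $\ker f_p \neq \{0^G\}$.

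For the remaining direction I would take any $x \in X_p \setminus \{0^G\}$, pick a coordinate $g \in G$ with $x_g \neq 0$, and consider the family $\{a x \mid a \in F\} \subset X_p$. Because $F$ is a field and $x_g \neq 0$, the map $a \mapsto a x_g$ from $F$ to $F$ is injective, so the configurations $ax$ are pairwise distinct as $a$ ranges over $F$. This produces $|F|$ distinct elements of $X_p$, and the converse implication is trivial since $|F| \geq 2$.

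There is no substantial obstacle: the statement is really a manifestation of the fact that a nonzero $F$-vector space has at least $|F|$ elements, combined with the elementary remark $X_p = \ker f_p$. The only thing to keep in mind while writing it up is to be explicit that scalar multiplication on $F^G$ restricts to $X_p$ (which is what allows us to upgrade ``$\geq 2$'' to ``$\geq |F|$'') and that $F$ being a field (not merely a ring) is used to rule out $ax = bx$ for $a \neq b$ via the nonvanishing coordinate.
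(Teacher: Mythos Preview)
Your proof is correct and follows the same approach as the paper: both arguments observe that $X_p = \ker f_p$ is an $F$-vector subspace, so it is either $\{0\}$ or has dimension at least $1$ (hence at least $|F|$ elements). The paper's version is a one-line remark to this effect; yours spells out the scalar-multiple argument explicitly and correctly notes that torsion-freeness is not used.
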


\begin{proof}
Since $f_p$ is linear and $X_p = \ker f_p$, $f_p$ is injective if and only if $X_p = 0$ if and only if $\dim X_p \not\geq 1$.
\end{proof}

\begin{lemma}
For any fixed group $G$, field $F$ and element $p \in F[G]$, we have
\[ W \implies O \iff T \implies U \]
\[ S \wedge U \implies T \]
\end{lemma}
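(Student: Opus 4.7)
The plan is to prove each of the four implications $W \Rightarrow O$, $O \iff T$, $T \Rightarrow U$, $S \wedge U \Rightarrow T$ in turn. The equivalence $O \iff T$ is immediate from the preceding proposition: $X_p = \ker f_p$ is an $F$-linear subspace of $F^G$, so its cardinality is either $1$ or a positive power of $|F|$, making $|X_p| \geq 2$ equivalent to $|X_p| \geq |F|$. For $T \Rightarrow U$ I argue by contrapositive: if $pq = 1$ for some $q \in F[G]$, then any $x \in F^G$ with $xp = 0$ satisfies $x = x(pq) = (xp)q = 0$, so $X_p = \{0\}$ and $T$ fails.

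For $W \Rightarrow O$ I prove the contrapositive $\neg O \Rightarrow \neg W$ using $F$-linear duality between $F[G]$ and its full algebraic $F$-linear dual $(F[G])^* \cong F^G$, given by the pairing $\langle x, y \rangle = \sum_g x_g y_g$ for $x \in F[G]$, $y \in F^G$. Write $p^\vee \in F[G]$ for the involution $(p^\vee)_g = p_{g^{-1}}$, which is an anti-homomorphism of the group ring. A direct computation shows that the $F$-linear dual of the right-multiplication map $\mu_p : F[G] \to F[G]$, $\mu_p(x) = xp$, is $f_{p^\vee} : F^G \to F^G$. Since $F$-linear duality of vector spaces is exact, $\mu_{p^\vee}$ is surjective iff $(\mu_{p^\vee})^* = f_p$ is injective, so
\[
 f_p \text{ injective} \iff \mu_{p^\vee} \text{ surjective} \iff p^\vee \text{ has a left inverse in } F[G].
\]
If $q \in F[G]$ satisfies $q p^\vee = 1$, applying $(\cdot)^\vee$ yields $p q^\vee = 1$, so $p$ admits a right inverse $q^\vee$, which is in particular a weak inverse since $p q^\vee p = (p q^\vee) p = p$. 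Thus $\neg W$.

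For $S \wedge U \Rightarrow T$ I argue by contradiction. Assume $\neg T$. Then $O \iff T$ gives $f_p$ injective, and $S$ upgrades this to bijectivity. Since $f_p$ is a continuous bijection of the compact Hausdorff space $F^G$, $f_p^{-1}$ is continuous, shift-commuting, and $F$-linear. By the Curtis–Hedlund–Lyndon theorem $f_p^{-1}$ has a finite local rule, and combined with $F$-linearity that rule is realized by right-multiplication by some $q \in F[G]$ (the converse of the construction in Example~\ref{ex:Ledrappier}): $f_p^{-1}(x) = xq$. Then $(f_p^{-1} \circ f_p)(x) = x(pq) = x$ for all $x \in F^G$, forcing $pq = 1$ and contradicting $U$.

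The main obstacle I expect is the duality step for $W \Rightarrow O$: identifying the algebraic dual of $\mu_p$ as $f_{p^\vee}$ (which requires introducing the involution $(\cdot)^\vee$ and carefully unwinding indices in the bilinear form $\sum_g x_g y_g$), and invoking exactness of $F$-linear duality to swap injectivity with surjectivity. Once that identification is in hand, the remaining arguments reduce to short algebraic manipulations in $F[G]$ together with a standard application of the Curtis–Hedlund–Lyndon theorem.
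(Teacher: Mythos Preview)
Your proofs of $O \iff T$, $T \Rightarrow U$, and $S \wedge U \Rightarrow T$ match the paper's line-for-line (the paper phrases the last one as ``by the above proof of $\neg O \Rightarrow \neg W$ we have $pqp=p$, then $f_q$ must be the inverse of $f_p$, so $pq=1$'', which is exactly your Curtis--Hedlund--Lyndon argument spelled out).

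Your argument for $W \Rightarrow O$, however, is genuinely different from the paper's. The paper works topologically: assuming $\neg O$, it uses compactness of $F^G$ (hence finiteness of $F$) to get a continuous inverse of $f_p$ on its image, identifies that inverse with right multiplication by some $q \in F[G]$, and concludes $pqp=p$, i.e.\ only a \emph{weak} inverse. You instead work purely algebraically: identifying $F^G$ with the full $F$-linear dual of $F[G]$ and $f_p$ with the transpose of $\mu_{p^\vee}$, you use exactness of $\mathrm{Hom}_F(-,F)$ to get that $f_p$ injective forces $\mu_{p^\vee}$ surjective, hence $p$ has an honest \emph{right} inverse. This is correct (both the identification $(\mu_{p^\vee})^* = f_p$ and the implication ``$T^*$ injective $\Rightarrow$ $T$ surjective'' for vector-space maps check out), and it buys you two things the paper's proof does not: it works over an arbitrary field, and it actually establishes the stronger equivalence $O \iff U$. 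In particular your duality step already gives $U \Rightarrow O \iff T$ outright, so the surjunctivity hypothesis $S$ in the second displayed implication becomes redundant---a point you may want to make explicit, since as written you still invoke $S$ in that final paragraph.
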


\begin{proof}
The equivalence $O \iff T$ is the proposition above. For the implication $W \implies O$, we show the contrapositive $\neg O \implies \neg W$. If $f_p$ is injective then $f_p : F^G \to f_p(F^G)$ is bijective so by compactness it admits a continuous inverse $g : f_p(F^G) \to F^G$ which is automatically shift-invariant and linear, thus we can write $g(x) = x \cdot q$ for some $q \in F[G]$ (valid on the image of $f_p$). We have $x \cdot pqp = f(g(f(x))) = f(x) = x \cdot p$ for all $x \in F^G$, in particular by applying this to the configuration with $1 \in F$ at identity and $0 \in F$ elsewhere (so the formal series $1 \cdot 1_G$) we obtain $pqp = p$, and $W$ does not hold.

Next we show $O \implies U$, again let us show the contrapositive $\neg U \implies \neg O$ instead. Suppose $pq = 1_G$. Then $f_q(f_p(x)) = xpq = x$, so $f_p$ is injective, that is, $\neg O$.

Suppose then $S \wedge U$. We show that $O$ holds (since $O \iff T$). If $\neg O$, then $f_p$ is one-to-one, and $S$ then implies is it also bijective. By the above proof of $\neg O \implies \neg W$, we have $pqp = p$ for some $q \in F[G]$. Then $f_q$ must in fact be the inverse of $f_p$, so we have $pq = 1$.
\end{proof}

We restate the above observations (except the one about $W$) in words:

\begin{proposition}
Consider any group ring element $p \in F[G]$ with support size at least two, $F$ a finite field and $G$ a group.
If $X_p$ has at least two legal configurations, then Kaplansky's unit conjecture holds for $p$.
If Gottshalk's surjunctivity conjecture holds for $f_p$ and Kaplansky's unit conjecture holds for $p$, then $X_p$ has at least $|F|$ legal configurations.
\end{proposition}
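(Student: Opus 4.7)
The plan is to deduce this proposition as a direct repackaging of the preceding lemma, once the statement ``$X_p$ has at least two configurations'' is translated into the statement $O$ (non-injectivity of $f_p$) and into the statement $T$ (at least $|F|$ configurations). First I would observe that $X_p = \ker f_p$ is an $F$-linear subspace of $F^G$, so its cardinality is either $1$ or at least $|F|$; hence ``at least two configurations'' is equivalent to $O$ and also to $T$. With this vocabulary, the first sentence of the proposition becomes $T \Rightarrow U$ and the second becomes $S \wedge U \Rightarrow T$, both already stated in the lemma.

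For the first implication I would simply restate the easy direction $O \Rightarrow U$: if $p$ had a right inverse $q \in F[G]$ with $pq = 1_G$, then $f_q \circ f_p = \mathrm{id}$ on $F^G$ (this is where associativity of the $F[G]$-action on $F^G$ is used), so $f_p$ would be injective, contradicting $O$. No compactness is needed here.

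For the second implication I would assume $U$ and, toward a contradiction, also assume $\neg T$, i.e.\ $f_p$ is injective. By Gottshalk's surjunctivity hypothesis $S$, injectivity of $f_p$ forces surjectivity, so $f_p$ is a continuous bijection of the compact metrizable space $F^G$, and its inverse $g$ is continuous. The map $g$ is shift-commuting (because $f_p$ is) and $F$-linear (because $f_p$ is and inversion preserves linearity), so by the standard correspondence between continuous shift-commuting $F$-linear maps $F^G \to F^G$ and elements of $F[G]$ acting by multiplication, $g(x) = x \cdot q$ for some $q \in F[G]$. Evaluating $f_p \circ g$ and $g \circ f_p$ on $1_G \in F^G$ gives $pq = qp = 1_G$, contradicting $U$. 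Hence $f_p$ is non-injective, so $X_p$ contains at least $|F|$ configurations.

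The only non-trivial ingredient is the passage from a continuous shift-commuting $F$-linear inverse $g$ to an element of the group ring $F[G]$, and this is really the content of the Curtis--Hedlund--Lyndon theorem together with $F$-linearity forcing the local rule to be a linear combination of finitely many coordinates; since this is exactly the argument already given inside the proof of $\neg O \Rightarrow \neg W$ in the preceding lemma, there is no new obstacle, and the proposition follows by citing the lemma.
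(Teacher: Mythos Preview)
Your proposal is correct and matches the paper's approach: the paper explicitly introduces this proposition as a restatement in words of the preceding lemma (together with the earlier equivalence of ``at least two configurations'', $O$, and $T$), and you have identified precisely this. Your reproofs of $O \Rightarrow U$ and $S \wedge U \Rightarrow T$ are fine (indeed your direct extraction of a two-sided inverse in the second part is slightly cleaner than the paper's detour through the weak inverse $pqp = p$), but strictly speaking citing the lemma suffices.
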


%
%
%
%

\subsection*{Acknowledgements}

The author thanks especially Pierre Guillon, Benjamin Hellouin de Menibus and Guillaume Theyssier for discussions. The $\Z^2$-case of the TEP notion and the uniform measure was developed for \cite{HeSaTh17}, but did not fit. Benjamin suggested putting the ``contour'' on the left, I was stubbornly trying to put it in the middle.

\bibliographystyle{plain}
\bibliography{../../../../bib/bib}{}

\end{document}